\let\oldforall\forall
\let\forall\undefined
\DeclareMathOperator{\forall}{\oldforall}
\let\oldexists\exists
\let\exists\undefined
\DeclareMathOperator{\exists}{\oldexists}
\newcommand\capmystring[1]{\capmystringaux#1\relax}
\def\capmystringaux#1#2\relax{\uppercase{#1}\lowercase{#2}}
\newcommand{\registerTheoremType}[1]{
	\newaliascnt{#1}{theorem}  
	\newtheorem{#1}[#1]{\capmystring{#1}}  
	\aliascntresetthe{#1}
	\expandafter\providecommand\csname #1autorefname\endcsname{\capmystring{#1}} %
}
\theoremstyle{plain}
\newtheorem{theorem}{Theorem}[section]
\theoremstyle{definition}
\numberwithin{equation}{section}
\newcolumntype{C}{>{$}c<{$}} 
\DeclareMathOperator{\Ima}{Im}
\DeclareMathOperator{\sgn}{sgn}
\newcommand{\define}{\mathrel{\mathop:}=}
\newcommand{\field}[1]{\mathbb{F}_{#1}}
\newcommand{\fieldInvertible}[1]{\field{#1}^\times}
\newcommand{\trace}[2]{\operatorname{Tr}_{{#1}/{#2}}}
\newcommand{\complex}{\mathbb{C}}
\newcommand{\factor}[2]{\faktor{#1}{#2}}
\DeclareRobustCommand*{\mfaktor}[3][]
{
	{ \mathpalette{\mfaktor@impl@}{{#1}{#2}{#3}} }
}
\newcommand*{\mfaktor@impl@}[2]{\mfaktor@impl#1#2}
\newcommand*{\mfaktor@impl}[4]{
	\settoheight{\faktor@zaehlerhoehe}{\ensuremath{#1#2{#3}}}%
	\settoheight{\faktor@nennerhoehe}{\ensuremath{#1#2{#4}}}%
	\raisebox{-0.5\faktor@zaehlerhoehe}{\ensuremath{#1#2{#3}}}%
	\mkern-4mu\diagdown\mkern-5mu%
	\raisebox{0.5\faktor@nennerhoehe}{\ensuremath{#1#2{#4}}}%
}
\newcommand{\rfactor}[2]{\mfaktor{#2}{#1}}
\newcommand{\linearGroupField}[1]{\field{#1}}
\newcommand{\SP}[2]{Sp_{#1}({\linearGroupField{#2}})}
\newcommand{\XL}[3]{#1L_{#2}(\linearGroupField{#3})}
\newcommand{\SL}[2]{\XL{S}{#1}{#2}}
\newcommand{\GL}[2]{\XL{G}{#1}{#2}}
\newcommand{\GammaL}[2]{\XL{\Gamma}{#1}{#2}}
\newcommand{\GammaLExt}[3]{\Gamma L_{#1}\left(\factor{\linearGroupField{#2}}{\linearGroupField{#3}}\right)}
\newcommand{\aut}[1]{\operatorname{Aut}\left(#1\right)}
\newcommand{\gal}[2]{\operatorname{Gal}\left(\factor{#1}{#2}\right)}
\newcommand{\projective}{\mathbb{P}}
\newcommand{\projectiveVector}[1]{\overline{#1}}
\newcommand{\projectiveSemiPairs}[1]{\Phi(#1)}
\newcommand{\PXL}[3]{\projective\XL{#1}{#2}{#3}}
\newcommand{\PGammaL}[2]{\PXL{\Gamma}{#1}{#2}}
\newcommand{\PGL}[2]{\PXL{G}{#1}{#2}}
\newcommand{\PSL}[2]{\PXL{S}{#1}{#2}}
\newcommand{\AGL}[2]{A\GL{#1}{#2}}
\newcommand{\AGammaL}[2]{A\GammaL{#1}{#2}}
\newcommand{\cardinality}[1]{\left\lvert#1\right\rvert}
\newcommand{\classicalFamily}[2]{\classicalFamilySymbol{#1}(#2)}
\newcommand{\classicalFamilySymbol}[1]{\mathcal{C}_{#1}}
\newcommand{\classicalExceptions}{\mathcal{S}}
\newcommand{\characters}[1]{\operatorname{Ch}\left({#1}\right)}
\newcommand{\irreducibleCharacters}[1]{\widehat{#1}}
\newcommand{\innerProduct}[2]{\left< #1, #2 \right>}
\newcommand{\trivialCharacter}[1]{\mathbbm{1}_{#1}}
\newcommand{\characterSum}[2]{\innerProduct{{#1}\vert_{#2}}{\trivialCharacter{#2}}_{#2}}
\newcommand{\linearSpan}[1]{Span({#1})}
\newcommand{\groupSpan}[1]{\left< #1 \right>}
\newcommand{\induced}[3]{\operatorname{Ind}_{#2}^{#3}(#1)}
\newcommand{\inducedTrivial}[2]{\induced{\trivialCharacter{#1}}{#1}{#2}}
\newcommand{\restricted}[2]{{#1}\vert_{#2}}
\newcommand{\hcLikePullback}[3]{\operatorname{P}_{#2}^{#3}(#1)}
\newcommand{\hcLikeInduction}[3]{\operatorname{R}_{#2}^{#3}(#1)}
\newcommand{\pullbackChiByF}[2]{\pullbackSymbol{#2}#1}
\newcommand{\pullbackSymbol}[1]{#1^{*}}
\newcommand{\stab}[2]{{#1}_{#2}}
\newcommand{\orbit}[2]{\operatorname{O}_{#1}({#2})}
\newcommand{\fixXg}[2]{#1^#2}
\newcommand{\normalSubgroup}{\trianglelefteq}
\newcommand{\subgroup}{\le}
\newcommand{\properSubgroup}{\lneqq}
\newcommand{\supergroup}{\ge}
\newcommand{\normalizer}[2]{\operatorname{N}_{#2}({#1})}
\newcommand{\unit}{e}
\newcommand{\groupCenter}[1]{Z(#1)}
\newcommand{\symmetric}[1]{S_{#1}}
\newcommand{\alternating}[1]{A_{#1}}
\newcommand{\cyclic}[1]{C_{#1}}
\newcommand{\symmetries}[1]{\operatorname{Sym}\left(#1\right)}
\newcommand{\subgroupIndex}[2]{\left[ #1 \colon #2 \right]}
\newcommand{\acts}{\circlearrowleft}
\newcommand{\divides}{\mathlarger{\mathlarger{\mid}}}
\renewcommand{\epsilon}{\varepsilon}
\newcommand{\defineNotation}[1]{#1\marginnote{$#1$}}
\newcommand{\defineTextNotation}[1]{#1\marginnote{#1}}
\newcommand{\defineNotationsSilent}[1]{\marginnote{$#1$}}
\newcommand{\textChiVanishesOnK}[2]{$\chiKDoesntContainTrivial{#1}{#2}$}
\newcommand{\textChiDoesntVanishOnK}[2]{$\chiKContainsTrivial{#1}{#2}$}
\newcommand{\chiKContainsTrivial}[2]{\chiContainsTrivial{\restricted{#1}{#2}}{#2}}
\newcommand{\chiKDoesntContainTrivial}[2]{\chiDoesntContainTrivial{\restricted{#1}{#2}}{#2}}
\newcommand{\chiContainsTrivial}[2]{\phiContainsPsi{#1}{\trivialCharacter{#2}}}
\newcommand{\chiDoesntContainTrivial}[2]{\phiDoesntContainPsi{#1}{\trivialCharacter{#2}}}
\newcommand{\phiContainsPsi}[2]{#1 \supseteq #2}
\newcommand{\phiDoesntContainPsi}[2]{#1 \not \supseteq #2}
\newcommand{\specialRepresentationsSymbol}{\mathcal{R}}
\newcommand{\specialCharactersSymbol}{\mathcal{X}}
\newcommand{\specialCharacterSet}[1]{\specialCharactersSymbol\left(#1\right)}
\newcommand{\specialCharacterSetGLnq}[2]{\specialCharacterSet{\GL{#1}{#2}}}
\newcommand{\specialCharacterSetSLnq}[2]{\specialCharacterSet{\SL{#1}{#2}}}
\newcommand{\specialCharacterSetXLnq}[1]{\specialCharacterSet{#1}}
\newcommand{\specialMinCharacterSetCardinality}[1]{\operatorname{c}_{\min}\left(#1\right)}
\newcommand{\textCMarksProperSubgroups}[2]{#1 marks every proper subgroup of #2}
\newcommand{\textCMarksAllProperSubgroups}[2]{\textCMarksProperSubgroups{#1}{#2}}
\newcommand{\textCMarksSubgroups}[2]{#1 marks every subgroup of #2}
\newcommand{\textCMarksSubgroupsThatDoNotContainH}[3]{\textCMarksSubgroups{#1}{#2} that does not contain #3}
\newcommand{\textCMarksSubgroupsThatAreNotxkTransitive}[4]{\textCMarksSubgroups{#1}{#2} that is not \xkGTransitive{#3}{#4}{#2}}
\newcommand{\textCMarksSubgroupsThatAreNotkTransitive}[3]{\textCMarksSubgroups{#1}{#2} that is not #3-transitive}
\newcommand{\textCMarkSubgroups}[2]{#1 mark every subgroup of #2}
\newcommand{\textCMarkSubgroupsThatDoNotContainH}[3]{\textCMarkSubgroups{#1}{#2} that does not contain #3}
\newcommand{\cuspidalForGLTwo}{\phi_1}
\newcommand{\transitivityCharacter}{\theta_1}
\newcommand{\twoTransitivityCharacter}{\theta_2}
\newcommand{\transitivityCharacters}{\theta_i}
\newcommand{\unipotentCharacter}[1]{\chi^{#1}}
\newcommand{\additionalForGLTwo}{\psi_1}
\newcommand{\snCharacter}[1]{\chi^{#1}}
\newcommand{\xKOrK}[2]{\ifthenelse{\equal{#1}{}}{#2}{{#1}_{#2}}}
\newcommand{\xOneToXk}[2]{\ifthenelse{\equal{#2}{1}}{\xKOrK{#1}{1}}{\ifthenelse{\equal{#2}{2}}{\xKOrK{#1}{1}, \xKOrK{#1}{2}}{\ifthenelse{\equal{#2}{3}}{\xKOrK{#1}{1}, \xKOrK{#1}{2}, \xKOrK{#1}{3}}{\xKOrK{#1}{1}, \dots, \xKOrK{#1}{#2}}}}}
\newcommand{\xGTransitive}[2]{#1-#2-transitive}
\newcommand{\xkGTransitive}[3]{\xGTransitive{$(\xOneToXk{#1}{#2})$}{#3}}
\newcommand{\xGTransitivity}[2]{#1-#2-transitivity}
\newcommand{\xkGTransitivity}[3]{\xGTransitivity{$(\xOneToXk{#1}{#2})$}{#3}}
\newcommand{\declareFootnote}[2]{\footnote{\label{footnote:#1}#2}}
\newcommand{\integers}{\mathbb{Z}}
\newcommand{\positiveIntegers}{\integers_{\ge 1}}
\newcommand{\cayleyGraph}[2]{\operatorname{Cay}(#1, #2)}
\newcommand{\regularRepresentation}{\rho_{Reg}}
\newcommand{\mathSuchThat}{\text{ s.t. }}
\newcommand{\declareAbcCounter}[1]{%
	\newcounter{#1}
	\expandafter\def\csname the#1\endcsname{(\alph{#1})}%
}
\newcommand{\abcCounter}[1]{\refstepcounter{#1}\expandafter\csname the#1\endcsname}
\title{A Generation Criterion for Subsets of $SL_n(\mathbb{F}_q)$}
\author{Ziv Greenhut \\ Tel Aviv University}
\begin{document}
	\maketitle
	\begin{abstract}
		Let $G_0$ be a either $\SL{n}{q}$, the special linear group over the finite field with $q$ elements, or $\PSL{n}{q}$, its projective quotient, and let $\Sigma$ be a symmetric subset of $G_0$, namely, if $x \in \Sigma$ then $x^{-1} \in \Sigma$. We find a certain set $\specialRepresentationsSymbol(G_0)$ of irreducible representations of $G_0$ whose size is at most $5$, such that $\Sigma$ generates $G_0$ if and only if $\cardinality{\Sigma}$ is not an eigenvalue of ${\sum_{\sigma \in \Sigma} \rho(\sigma)}$ for every $\rho \in \specialRepresentationsSymbol(G_0)$.
		
		To achieve this result, let $G$ be either $\GL{n}{q}$ or $\PGL{n}{q}$. We consider $\specialCharacterSet{G}$, some set of irreducible nontrivial characters of $G$, whose size is at most $5$. We show that for every subgroup $K \subgroup G$ that does not contain $G_0$, the restriction to $K$ of at least one of the characters in $\specialCharacterSet{G}$ contains the trivial character as an irreducible summand. We then restrict the characters to $G_0$ and use standard arguments about the Cayley graph of $G$ to imply the result. In addition, we obtain slightly weaker results about the generation of symmetric subsets of $G$.
		
		We finish by considering $\symmetric{n}$, the symmetric group on $n$ elements, and presenting $\specialRepresentationsSymbol(\symmetric{n})$, a set of eight irreducible nontrivial representations of $\symmetric{n}$, such that a symmetric subset $\Sigma \subseteq \symmetric{n}$ generates $\symmetric{n}$ if and only if $\cardinality{\Sigma}$ is not an eigenvalue of ${\sum_{\sigma \in \Sigma} \rho(\sigma)}$ for every $\rho \in \specialRepresentationsSymbol(\symmetric{n})$, which is an improvement upon the previously known set of $12$ irreducible nontrivial representations of $\symmetric{n}$ that satisfies this condition.
	\end{abstract}

	\tableofcontents
	
	\section{Introduction} \label{section:introduction}
	\paragraph{ } Let $G$ be a finite group, and let $\Sigma$ be a symmetric subset of $G$, namely, if $x \in \Sigma$ then $x^{-1} \in \Sigma$. We look for criteria to determine whether $\Sigma$ generates $G$. 
	
	For a representation $\rho$ of $G$, define
	\[\defineNotation{A_\rho(\Sigma)} \define \sum_{s \in \Sigma}\rho(s).\]
	Then, the following proposition presents one such criterion.
	
	\begin{proposition} \label{proposition:characters_eigenvalues_generation}
		Let $G$ be a finite group, and let $\Sigma$ be a symmetric subset of $G$. Then, $\Sigma$ generates $G$ if and only if for every nontrivial irreducible representation $\rho$ of $G$, the maximal eigenvalue of $A_\rho(\Sigma)$ is strictly less than $\cardinality{\Sigma}$.
	\end{proposition}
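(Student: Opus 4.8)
The plan is to reduce both implications to a single statement about the regular representation of $G$ and the Cayley graph $\cayleyGraph{G}{\Sigma}$, so that essentially everything becomes bookkeeping with character theory once one spectral fact about regular graphs is in hand.

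First I would record two elementary facts about $A_\rho(\Sigma)$ for an arbitrary finite-dimensional representation $\rho$. Fixing a $G$-invariant Hermitian inner product, each $\rho(s)$ is unitary, so $A_\rho(\Sigma)^{*} = \sum_{s \in \Sigma}\rho(s)^{-1} = \sum_{s \in \Sigma}\rho(s^{-1}) = A_\rho(\Sigma)$ because $\Sigma = \Sigma^{-1}$; hence $A_\rho(\Sigma)$ is self-adjoint, its eigenvalues are real, and "maximal eigenvalue" is meaningful. Moreover $\lVert A_\rho(\Sigma)\rVert_{\mathrm{op}} \le \sum_{s \in \Sigma}\lVert\rho(s)\rVert_{\mathrm{op}} = \cardinality{\Sigma}$, so every eigenvalue lies in $[-\cardinality{\Sigma},\cardinality{\Sigma}]$. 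Consequently, for each nontrivial irreducible $\rho$, the condition "the maximal eigenvalue of $A_\rho(\Sigma)$ is strictly less than $\cardinality{\Sigma}$" is equivalent to "$\cardinality{\Sigma}$ is not an eigenvalue of $A_\rho(\Sigma)$".

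Next I would analyze the regular representation. The operator $A_{\regularRepresentation}(\Sigma)$ on $\complex[G]$ is (with the standard identification) the adjacency operator of the Cayley graph $\cayleyGraph{G}{\Sigma}$, which is $\cardinality{\Sigma}$-regular and whose connected components are exactly the cosets of $\groupSpan{\Sigma}$; thus the number of components is $\subgroupIndex{G}{\groupSpan{\Sigma}}$, and this equals $1$ precisely when $\Sigma$ generates $G$. The one piece of genuine content is the spectral fact that for a $k$-regular graph the eigenvalue $k$ of the adjacency operator has multiplicity equal to the number of connected components: the bound $k$ on eigenvalues is immediate from the row sums, and on a single connected component a maximum-principle argument — evaluate the eigenvalue equation at a vertex where $\lvert f\rvert$ is maximal and propagate the resulting equality along edges — forces any $k$-eigenfunction to be constant. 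Hence $\subgroupIndex{G}{\groupSpan{\Sigma}}$ is the multiplicity of $\cardinality{\Sigma}$ as an eigenvalue of $A_{\regularRepresentation}(\Sigma)$.

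Finally I would combine this with the decomposition $\regularRepresentation \cong \bigoplus_{\rho}\rho^{\oplus \dim\rho}$ over the irreducible representations of $G$, which gives $A_{\regularRepresentation}(\Sigma) \cong \bigoplus_{\rho} A_\rho(\Sigma)^{\oplus \dim\rho}$. Writing $m_\rho$ for the multiplicity of $\cardinality{\Sigma}$ as an eigenvalue of $A_\rho(\Sigma)$, we get $\subgroupIndex{G}{\groupSpan{\Sigma}} = \sum_{\rho}(\dim\rho)\,m_\rho$; since $A_\rho(\Sigma) = (\cardinality{\Sigma})$ for the trivial representation, $m_{\mathbbm{1}} = 1$, so $\subgroupIndex{G}{\groupSpan{\Sigma}} = 1 + \sum_{\rho \text{ nontrivial}}(\dim\rho)\,m_\rho$. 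The left-hand side is $1$ if and only if every summand on the right vanishes, i.e. $m_\rho = 0$ for every nontrivial irreducible $\rho$, which by the first step is exactly the asserted eigenvalue condition; this settles both directions simultaneously. The main obstacle, such as it is, is the regular-graph spectral fact above (one may instead invoke Perron–Frobenius); for the forward implication there is also a direct shortcut, since if $A_\rho(\Sigma)v = \cardinality{\Sigma}v$ with $v \neq 0$ then equality in $\lVert\sum_{s}\rho(s)v\rVert \le \sum_{s}\lVert\rho(s)v\rVert = \cardinality{\Sigma}\,\lVert v\rVert$ forces $\rho(s)v = v$ for all $s \in \Sigma$, so $\complex v$ is $G$-invariant once $\Sigma$ generates $G$, contradicting the nontriviality and irreducibility of $\rho$.
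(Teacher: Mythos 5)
Your proof is correct and follows essentially the same route as the paper: identify $A_{\regularRepresentation}(\Sigma)$ with the adjacency operator of $\cayleyGraph{G}{\Sigma}$, decompose the regular representation into irreducibles, and translate connectivity into the statement that the eigenvalue $\cardinality{\Sigma}$ occurs only in the trivial summand. The only difference is that you supply proofs of the standard facts the paper invokes as known (self-adjointness, the multiplicity-equals-number-of-components fact, and the count $\subgroupIndex{G}{\groupSpan{\Sigma}} = 1 + \sum_{\rho}(\dim\rho)m_\rho$), which makes your write-up more self-contained but not a different argument.
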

		
	We prove this proposition using Cayley graphs.	Before we prove the proposition, we define and state several facts about Cayley graphs. 
	
	Define $\cayleyGraph{G}{\Sigma}$, the Cayley graph of $G$ with respect to $\Sigma$, as the $\cardinality{\Sigma}$-regular graph whose vertices are the elements of $G$, and edges are $(g, gs)$ for all $g \in G$ and $s \in \Sigma$ (with a single edge between $g$ and $gs$ if $s^2 = 1$). By the definition of the Cayley graph, $\Sigma$ generates $G$ if and only if $\cayleyGraph{G}{\Sigma}$ is connected.
	
	For a graph $\Gamma$ with $n$ vertices, let 
	\[\lambda_1(\Gamma) \ge \lambda_2(\Gamma) \ge \dots \lambda_n(\Gamma)\] 
	be the eigenvalues of the adjacency matrix of the graph. The graph is connected if and only if $\lambda_1(\Gamma)$ has multiplicity $1$, namely, if ${\lambda_1(\Gamma) \gneqq \lambda_2(\Gamma)}$. In this situation, we say that the graph admits a spectral gap.
	
	Applying it to the case of Cayley graphs, we deduce that the graph $\cayleyGraph{G}{\Sigma}$ admits a spectral gap if and only if $\Sigma$ generates $G$. Therefore, we consider the adjacency matrix of the Cayley graph, and study its spectrum.
	
	The adjacency matrix of $\cayleyGraph{G}{\Sigma}$ is equal to $A_{\regularRepresentation}(\Sigma)$, where $\regularRepresentation$ is the right regular representation of $G$, that is, the permutation representation of the action of $G$ on itself, defined by right multiplication.
	
	A known fact in representation theory\declareFootnote{representation_theory_reference}{Throughout this paper we use standard facts about representation theory. \cite{representation_theory_fulton_harris} is a good reference for this subject.} states that the regular representation $\regularRepresentation$ decomposes to the sum of all of the irreducible representations of $G$, each with multiplicity equal to its dimension. Therefore, the eigenvalues of $A_{\regularRepresentation}(\Sigma)$ are (up to multiplicities) the union of the eigenvalues of $A_\rho(\Sigma)$, for every irreducible representation $\rho$ of $G$. As such, every eigenvalue of $A_{\regularRepresentation}(\Sigma)$ can be associated with some irreducible representation $\rho$ of $G$. For example, the eigenvalue associated with the trivial representation is ${\lambda_1(\cayleyGraph{G}{\Sigma}) = \cardinality{\Sigma}}$.
	
	We can now prove the proposition.
	
	\begin{proof}[Proof of \autoref{proposition:characters_eigenvalues_generation}]
		The set $\Sigma$ generates $G$ if and only if the Cayley graph $\cayleyGraph{G}{\Sigma}$ is connected. This graph is connected if and only if it admits a spectral gap, namely that $\lambda_2(\cayleyGraph{G}{\Sigma}) \lneqq \cardinality{\Sigma}$.
		
		The eigenvalues of $A_{\regularRepresentation}(\Sigma)$ are (up to multiplicities) the union of the eigenvalues of $A_\rho(\Sigma)$, for every irreducible representation $\rho$ of $G$. This implies that $\Sigma$ generates $G$ if and only if for every nontrivial irreducible representation $\rho$ of $G$, the maximal eigenvalue of $A_\rho(\Sigma)$ is strictly less than $\cardinality{\Sigma}$.
	\end{proof}
	
	When $n \in \positiveIntegers$ is large enough, consider the symmetric group $\symmetric{n}$. It is known that for several types of sets $\Sigma$, there exists some set $\specialRepresentationsSymbol$ of irreducible representations of $\symmetric{n}$ with $\cardinality{\specialRepresentationsSymbol} \le 8$, such that the second largest eigenvalue of the Cayley graph is always associated with some representation in $\specialRepresentationsSymbol$. Then, it is much easier to determine the spectral gap ${\lambda_1(\cayleyGraph{\symmetric{n}}{\Sigma}) - \lambda_2(\cayleyGraph{\symmetric{n}}{\Sigma})}$. In particular, it is much easier to determine whether $\cayleyGraph{\symmetric{n}}{\Sigma}$ admits a spectral gap, as the matrices of these representations are of a dimension much smaller than $n! = \cardinality{\symmetric{n}}$, the dimension of the regular representation. See \cite{aldous_proof} and \cite{aldous_proof_normal_sets}.
	
	Given a finite group $G$, this problem can be naturally generalized to the problem of finding a relatively small set $\specialRepresentationsSymbol$ of irreducible representations of $G$, such that the second largest eigenvalue of $A_{\regularRepresentation}(\Sigma)$ is associated with some representation in $\specialRepresentationsSymbol$. Then, the second largest eigenvalue of $A_{\regularRepresentation}(\Sigma)$ is the largest eigenvalue of one of the matrices $\{A_\rho(\Sigma) \colon \rho \in \specialRepresentationsSymbol\}$.
	
	In this paper, we consider the following relaxation of this generalized problem. Given some finite group $G$, we wish to find some relatively small set $\specialRepresentationsSymbol$ of irreducible representations of $G$, such that for every symmetric subset $\Sigma$, $\cayleyGraph{G}{\Sigma}$ admits a spectral gap (equivalently, $\Sigma$ generates $G$) if and only if the eigenvalues of $A_\rho(\Sigma)$ are strictly less than $\cardinality{\Sigma}$ for all $\rho \in \specialRepresentationsSymbol$.
	
	To study this problem, we introduce the following definitions. We use the notation $\innerProduct{f_1}{f_2}_G \define \frac{1}{\cardinality{G}}\sum_{g \in G}f_1(g)\overline{f_2(g)}$, for $f_1, f_2 \colon G \to \complex$.
	\begin{definition}
		Let $G$ be a finite group, and let $\phi$ and $\psi$ be some characters of $G$. We write $\defineNotation{\phiContainsPsi{\phi}{\psi}}$ if there exists some character $\phi'$ of $G$ such that $\phi = \psi + \phi'$.
		
		If $\psi$ is an irreducible character, $\phiContainsPsi{\phi}{\psi}$ if and only if $\psi$ is a summand of $\phi$ in its decomposition to irreducible characters, namely, if and only if $\innerProduct{\phi}{\psi}_G > 0$.
	\end{definition}

	\begin{definition}
		Let $G$ be a finite group, let $\specialCharactersSymbol$ be a set of characters of $G$ and let $K$ be a subgroup of $G$. We write \defineTextNotation{\textit{$\specialCharactersSymbol$ marks $K$}} if there exists some $\chi \in \specialCharactersSymbol$ such that $\chiContainsTrivial{\chi}{K}$.
		
		If $\specialCharactersSymbol = \{\chi\}$, we may omit the curly brackets and write that $\chi$ marks $K$.
	\end{definition}

	The following proposition shows the connection between these definitions and the eigenvalues of $A_\rho(\Sigma)$. Note that the proposition considers only a subset of the nontrivial irreducible representations of $G$, unlike \autoref{proposition:characters_eigenvalues_generation} that considers every nontrivial irreducible representation.
	\begin{proposition} \label{proposition:characters_on_subgroups_and_generation}
		Let $G$ be a finite group, and let $\specialCharactersSymbol$ be a set of nontrivial irreducible characters of $G$. For a character $\chi$, let $\rho_\chi$ be the corresponding representation. Let ${\specialRepresentationsSymbol \define \{\rho_\chi \colon \chi \in \specialCharactersSymbol\}}$, and assume that \textCMarksProperSubgroups{$\specialCharactersSymbol$}{$G$}. Let $\Sigma \subseteq G$ be a symmetric subset. Then the following are equivalent:
		\begin{enumerate}
			\item\label{proposition:characters_on_subgroups_and_generation_generates} The subset $\Sigma$ generates $G$.
			\item\label{proposition:characters_on_subgroups_and_generation_spectral_gap} The graph $\cayleyGraph{G}{\Sigma}$ admits a spectral gap.
			\item\label{proposition:characters_on_subgroups_and_generation_maximal_eigenvalue} The maximal eigenvalue of $A_\rho(\Sigma)$ is strictly less than $\cardinality{\Sigma}$, for all $\rho \in \specialRepresentationsSymbol$.
		\end{enumerate}
	\end{proposition}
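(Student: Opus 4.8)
The plan is to prove the cycle of implications $(\ref{proposition:characters_on_subgroups_and_generation_generates}) \Rightarrow (\ref{proposition:characters_on_subgroups_and_generation_spectral_gap}) \Rightarrow (\ref{proposition:characters_on_subgroups_and_generation_maximal_eigenvalue}) \Rightarrow (\ref{proposition:characters_on_subgroups_and_generation_generates})$, relying on the facts already assembled above. The equivalence $(\ref{proposition:characters_on_subgroups_and_generation_generates}) \Leftrightarrow (\ref{proposition:characters_on_subgroups_and_generation_spectral_gap})$ is immediate from the discussion of Cayley graphs: $\Sigma$ generates $G$ if and only if $\cayleyGraph{G}{\Sigma}$ is connected, and a graph is connected if and only if it admits a spectral gap. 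For $(\ref{proposition:characters_on_subgroups_and_generation_generates}) \Rightarrow (\ref{proposition:characters_on_subgroups_and_generation_maximal_eigenvalue})$ I would simply invoke \autoref{proposition:characters_eigenvalues_generation}: if $\Sigma$ generates $G$, then the maximal eigenvalue of $A_\rho(\Sigma)$ is strictly less than $\cardinality{\Sigma}$ for \emph{every} nontrivial irreducible representation $\rho$ of $G$, and every $\rho \in \specialRepresentationsSymbol$ is such a representation, since the characters in $\specialCharactersSymbol$ are nontrivial and irreducible by hypothesis.

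The content is in $(\ref{proposition:characters_on_subgroups_and_generation_maximal_eigenvalue}) \Rightarrow (\ref{proposition:characters_on_subgroups_and_generation_generates})$, which I would prove by contraposition. Suppose $\Sigma$ does not generate $G$, and set $K \define \groupSpan{\Sigma}$, a proper subgroup of $G$ containing $\Sigma$. Since \textCMarksProperSubgroups{$\specialCharactersSymbol$}{$G$} by assumption, there is some $\chi \in \specialCharactersSymbol$ with $\chiKContainsTrivial{\chi}{K}$, that is, $\innerProduct{\restricted{\chi}{K}}{\trivialCharacter{K}}_K > 0$. Hence the restriction of $\rho_\chi$ to $K$ contains the trivial representation as a subrepresentation, so there is a nonzero vector $v$ in the representation space of $\rho_\chi$ fixed by $\rho_\chi(k)$ for all $k \in K$. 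Because $\Sigma \subseteq K$, we obtain $A_{\rho_\chi}(\Sigma)\,v = \sum_{s \in \Sigma} \rho_\chi(s)\,v = \cardinality{\Sigma}\,v$, so $\cardinality{\Sigma}$ is an eigenvalue of $A_{\rho_\chi}(\Sigma)$.

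It remains to check that $\cardinality{\Sigma}$ is in fact the \emph{maximal} eigenvalue of $A_{\rho_\chi}(\Sigma)$, and this is the only point where the symmetry of $\Sigma$ is used. Fixing a $\rho_\chi$-invariant Hermitian inner product, each $\rho_\chi(s)$ is unitary, and since $\Sigma$ is symmetric the operator $A_{\rho_\chi}(\Sigma) = \sum_{s \in \Sigma}\rho_\chi(s)$ is self-adjoint; by the triangle inequality its operator norm is at most $\sum_{s \in \Sigma} \lVert \rho_\chi(s) \rVert = \cardinality{\Sigma}$. Thus every eigenvalue of $A_{\rho_\chi}(\Sigma)$ lies in $[-\cardinality{\Sigma}, \cardinality{\Sigma}]$, and since $\cardinality{\Sigma}$ is attained it is the maximal one. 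Hence $(\ref{proposition:characters_on_subgroups_and_generation_maximal_eigenvalue})$ fails for $\rho = \rho_\chi$, which completes the contrapositive and the proof. There is no genuine obstacle here; the only subtlety is that one must apply the marking hypothesis to the specific proper subgroup $\groupSpan{\Sigma}$ and then use the symmetry of $\Sigma$ to upgrade "$\cardinality{\Sigma}$ is an eigenvalue" to "$\cardinality{\Sigma}$ is the largest eigenvalue".
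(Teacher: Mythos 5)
Your proof is correct and follows essentially the same route as the paper: the equivalences with generation and the spectral gap come from the Cayley-graph discussion, and the key implication is proved by applying the marking hypothesis to $K = \groupSpan{\Sigma}$ and observing that a $K$-fixed vector gives $\cardinality{\Sigma}$ as an eigenvalue of $A_{\rho_\chi}(\Sigma)$. Your explicit self-adjointness/operator-norm argument just spells out the maximality step that the paper leaves implicit.
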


	\begin{proof}
		The implications (\ref{proposition:characters_on_subgroups_and_generation_generates}) $\iff$ (\ref{proposition:characters_on_subgroups_and_generation_spectral_gap}) $\implies$ (\ref{proposition:characters_on_subgroups_and_generation_maximal_eigenvalue}) follow from the discussion above. Thus, to finish the proof, we only need to prove the implication (\ref{proposition:characters_on_subgroups_and_generation_maximal_eigenvalue}) $\implies$ (\ref{proposition:characters_on_subgroups_and_generation_generates}). We prove that if $\Sigma$ does not generate $G$, there exists some $\rho \in \specialRepresentationsSymbol$ for which the largest eigenvalue of $A_\rho(\Sigma)$ is equal to $\cardinality{\Sigma}$.
		
		Let $K \define \groupSpan{\Sigma}$, and assume $K \properSubgroup G$. Then, there exists some $\chi \in \specialCharactersSymbol$ for which \textChiDoesntVanishOnK{\chi}{K}. Let $\rho$ be its representation.
		
		Since $\Sigma \subseteq K$, $A_\rho(\Sigma) = A_{\restricted{\rho}{K}}(\Sigma)$, we deduce that the eigenvalues of $A_\rho(\Sigma)$ contain the single eigenvalue of $A_{\trivialCharacter{K}}(\Sigma)$, which is precisely $\cardinality{\Sigma}$.
	\end{proof}
	
	\paragraph{Remark} The condition that $\specialCharactersSymbol$ contains only nontrivial characters is important, since for every group $G$ the character \textCMarksSubgroups{$\trivialCharacter{G}$}{$G$}.
	
	\medskip
	
	\begin{example} \label{example:four_transitive_characters_of_sn}
		Using this proposition, we see that the classification of finite multiply transitive groups (e.g. {\cite[Chapter 7.7]{permutation_groups}} or {\cite[Theorem 5.3]{permutation_groups_2_transitive}}) gives a subset $\overline{\specialCharacterSet{\symmetric{n}}}$ of characters of 
		$\symmetric{n}$, for $n \ge 25$, such that \textCMarksProperSubgroups{$\overline{\specialCharacterSet{\symmetric{n}}}$}{$\symmetric{n}$}.
		
		Let $n \ge 25$, and let $\chi'$ be the character of the permutation representation associated with the action of $\symmetric{n}$ on tuples of $4$ distinct elements of $\{1,\dots,n\}$. Denote $\chi = \chi' - \trivialCharacter{\symmetric{n}}$. It is easy to see (with the details given in \autoref{proposition:stabilizer-induced-character}) that for all $K \properSubgroup G$, \textChiVanishesOnK{\chi}{K} if and only if $K$ acts transitively on these $4$-tuples. This is equivalent to saying that $K$ is $4$-transitive with respect to this action. By the classification, this implies that $K$ is either $\symmetric{n}$ or the alternating group $\alternating{n}$.
		
		Therefore, we define $\overline{\specialCharacterSet{\symmetric{n}}}$ to be the collection of irreducible summands of $\chi$, along with the sign character. Then, \textCMarksProperSubgroups{$\overline{\specialCharacterSet{\symmetric{n}}}$}{$\symmetric{n}$}. By calculating the decomposition of $\chi$ to irreducible characters, we deduce that the characters in $\overline{\specialCharacterSet{\symmetric{n}}}$ are parametrized\declareFootnote{sn_characters_reference}{\cite{sn_characters} is a good reference for this connection between partitions and characters of $\symmetric{n}$, along with the representation theory of $\symmetric{n}$ in general.} by the following\declareFootnote{sn_result}{We present a smaller set of characters with $8$ elements in \autoref{section:sn}.} $12$ partitions of $n$: 
		
		$\begin{Bmatrix}
			(n-1, 1), &(n-2, 2), &(n-2, 1^2), &(n-3, 3), \\
			(n-3, 2, 1), &(n-3, 1^3), &(n-4, 4), &(n-4, 3, 1), \\
			(n-4, 2^2), &(n-4, 2, 1^2), &(n-4, 1^4), &(1^{n})
		\end{Bmatrix}.$
	\end{example}
	
	\paragraph{ }
	For a prime power $q$, let $\field{q}$ be the field with $q$ elements. In this paper, we prove similar results about the general linear group over a finite field $\GL{n}{q}$ and its relatives -- $\PGL{n}{q}$, $\SL{n}{q}$ and $\PSL{n}{q}$. For $n \ge 3$, the group $\GL{n}{q}$ acts $2$-transitively on the projective space $\projective^{n-1}\field{q}$, and every subgroup of $\GL{n}{q}$ with a $2$-transitive action on the projective space contains $\SL{n}{q}$ (\cite{2_transitive_collineation_groups}), except in the exceptional case where $n = 4$ and $q = 2$. Thus, we may attempt to apply the methods we used for $\symmetric{n}$ on these groups, along with the classification of multiply transitive groups. However, two differences with the case of $\symmetric{n}$ arise.
	
	First, in the case of $\symmetric{n}$, we define a set of characters $\specialCharacterSet{\symmetric{n}}$ \textCMarksSubgroupsThatDoNotContainH{that}{$\symmetric{n}$}{$\alternating{n}$}, and then we add the sign character, which satisfies \textChiDoesntVanishOnK{\sgn}{\alternating{n}}, to obtain $\overline{\specialCharacterSet{\symmetric{n}}}$.
	
	In the case of $\GL{n}{q}$, the same arguments would give a set of characters $\specialCharacterSetGLnq{n}{q}$ \textCMarksSubgroupsThatDoNotContainH{that}{$\GL{n}{q}$}{$\SL{n}{q}$}. In general, there exists no single character we can add to $\specialCharacterSetGLnq{n}{q}$ to obtain some set $\overline{\specialCharacterSetGLnq{n}{q}}$ \textCMarksAllProperSubgroups{that}{$\GL{n}{q}$} (see \autoref{corollary:XGLn_lower_bound}), and the number of characters we need to add can be very large for values of $q$ where $q-1$ has a large number of prime factors.
	
	Thus, we only find a set of characters $\specialCharacterSetGLnq{n}{q}$ \textCMarksSubgroupsThatDoNotContainH{that}{$\GL{n}{q}$}{$\SL{n}{q}$} in this paper. Then, by restricting the characters in $\specialCharacterSetGLnq{n}{q}$ to $\SL{n}{q}$ (all of the characters in this set reduce irreducibly), we obtain a set of characters $\specialCharacterSetSLnq{n}{q}$ of $\SL{n}{q}$ \textCMarksProperSubgroups{that}{$\SL{n}{q}$}.
	
	Second, the character of the permutation representation associated with the action of $\GL{n}{q}$ on pairs of projective vectors in $\projective^{n-1}\field{q}$ decomposes to ${q+3}$ distinct irreducible characters for every $n \ge 4$. Our aim is to obtain sets $\specialCharacterSetGLnq{n}{q}$ of nontrivial irreducible characters whose sizes are bounded uniformly in $n$ and $q$.
	
	Therefore, we take a slightly different approach. Then, we obtain sets $\specialCharacterSetGLnq{n}{q}$ with $\cardinality{\specialCharacterSetGLnq{n}{q}} \le 5$. 
		
	\subsection{Main Results} \label{section:main_results}
	\paragraph{ } In the representation theory of $\GL{n}{q}$, several classes of characters arise. One of these classes is the class of unipotent characters, which are irreducible characters parametrized by partitions of $n$ (for more information about the representation theory of $\GL{n}{q}$, see \autoref{section:glnq_characters}).
	
	The unipotent characters are closely related to the action of $\GL{n}{q}$ on the projective space $\projective^{n-1}\field{q}$, and have a fundamental role in the group's representation theory. For example, $\unipotentCharacter{(n-1,1)}(g)$ counts the number of fixed points of $g$ in this action, minus $1$. The character $\unipotentCharacter{(n-2, 2)}(g)$ counts the number of $2$-dimensional projective subspaces $W$ with $gW=W$, minus the number of fixed points of $g$.
	
	It turns out that the unipotent characters can be defined in a natural way for the projective and special groups as well -- see \autoref{section:special_linear_unipotent_restriction} and \autoref{section:projective_unipotent_restriction}.	Thus, by abuse of notation, we define
	\[\defineNotation{\specialCharacterSetXLnq{G}} \define \{\unipotentCharacter{(n-3, 1^3)}, \unipotentCharacter{(n-3, 2, 1)}, \unipotentCharacter{(n-2, 1^2)}, \unipotentCharacter{(n-2, 2)}, \unipotentCharacter{(n-1, 1)}\} \subseteq \irreducibleCharacters{G},\]
	for $G$ either $\GL{n}{q}$ or one of its relatives, when $n \ge 3$ and $(n, q) \ne (4, 2)$. The character $\unipotentCharacter{\alpha}$ is omitted if $\alpha$ is not a partition\declareFootnote{alpha_omitted}{For example, when $n=3$, $(n-2, 2) = (1, 2)$ which is not a partition, because the numbers are not non-increasing.}.
	
	\begin{theorem} \label{theorem:nonvanishing_characters_of_gln}
		Let $n \ge 3$ and $q$ a prime power, such that $(n, q) \ne (4, 2)$. Let $G$ be either $\GL{n}{q}$ or $\PGL{n}{q}$, and let $G' \subgroup G$ be $\SL{n}{q}$ or $\PSL{n}{q}$ respectively. Then, \textCMarksSubgroupsThatDoNotContainH{$\specialCharacterSetXLnq{G}$}{$G$}{$G'$}.
	\end{theorem}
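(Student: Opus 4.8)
The plan is to relate the characters in $\specialCharacterSetXLnq{G}$ to permutation characters of $G$ acting on projective configurations, use this to rewrite the failure of $\specialCharacterSetXLnq{G}$ to mark $K$ as transitivity properties of $K$, and then invoke the classification of $2$-transitive collineation groups of $\projective^{n-1}\field{q}$.

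First I would set up the combinatorial dictionary. Writing $\pi_k$ for the character of the action of $G$ on the $k$-dimensional subspaces of $\field{q}^n$, the representation theory of $\GL{n}{q}$ recalled in \autoref{section:glnq_characters} gives the multiplicity-free decomposition $\pi_k=\sum_{j=0}^{\min(k,\,n-k)}\unipotentCharacter{(n-j,j)}$, and, more generally, the permutation character of the partial flag variety of type $(1,2)$, and of type $(1,2,3)$, decomposes --- as the $q$-analogue of the corresponding $\symmetric{n}$ fact --- into the unipotent characters $\unipotentCharacter{\mu}$ with Kostka-number multiplicities, dropping any $\unipotentCharacter{\mu}$ whose index is not a partition of $n$. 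With these in hand, the characters of $\specialCharacterSetXLnq{G}$ are exactly the nontrivial unipotent constituents of the type-$(1,2,3)$ flag character that do not already occur in $\pi_3$: subtracting $\pi_3$ from that flag character cancels the trivial character and $\unipotentCharacter{(n-3,3)}$ and leaves a positive combination of precisely the (at most five) characters of $\specialCharacterSetXLnq{G}$. This is the device which bounds $\cardinality{\specialCharacterSetXLnq{G}}$ by $5$, rather than by the $q+3$ distinct --- and partly non-unipotent --- constituents of the full permutation character on pairs of projective vectors mentioned in the introduction. For $G=\PGL{n}{q}$ everything above is inflated from $\GL{n}{q}$ through the centre, all the relevant actions being trivial on scalars, cf.\ \autoref{section:projective_unipotent_restriction}.

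Next comes the reduction, which I would run through its contrapositive: suppose $\specialCharacterSetXLnq{G}$ does not mark $K$, i.e.\ $\innerProduct{\restricted{\unipotentCharacter{\alpha}}{K}}{\trivialCharacter{K}}_K=0$ for every $\unipotentCharacter{\alpha}\in\specialCharacterSetXLnq{G}$, and deduce $K\supseteq G'$. By \autoref{proposition:stabilizer-induced-character}, for any permutation character $\pi$ the number $\innerProduct{\restricted{\pi}{K}}{\trivialCharacter{K}}_K$ is the number of $K$-orbits on the underlying set. Pairing $\trivialCharacter{K}$ against the permutation characters above, and using that each of their nontrivial unipotent constituents lies in $\specialCharacterSetXLnq{G}$, forces $K$ to act with a single orbit on the relevant flag varieties; unwinding this, $K$ is $2$-transitive on $\projective^{n-1}\field{q}$. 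Its image in $\PGL{n}{q}$ is therefore a $2$-transitive collineation group, so by the classification (\cite{2_transitive_collineation_groups}; see also \cite{permutation_groups_2_transitive}), and since $n\ge 3$ with $(n,q)\ne(4,2)$, that image contains $\PSL{n}{q}$; for $G=\PGL{n}{q}$ this gives $K\supseteq G'$ at once, and for $G=\GL{n}{q}$ a short commutator argument (using that $\SL{n}{q}$ is perfect for $n\ge 3$) promotes $K\cdot\groupCenter{G}\supseteq\SL{n}{q}$ to $K\supseteq\SL{n}{q}=G'$. This yields \autoref{theorem:nonvanishing_characters_of_gln}.

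I expect the main obstacle to be making this middle step tight: one must choose the system of permutation characters so that the vanishing of the inner products against only the (at most five) unipotent characters in $\specialCharacterSetXLnq{G}$ still forces the transitivity hypothesis the classification needs --- this is precisely the point of the ``slightly different approach'' over the naive one. Concretely it requires knowing exactly which unipotent characters occur in the type-$(1,2,3)$ flag module, so that $\unipotentCharacter{(n-3,3)}$ is genuinely redundant, and it requires separate attention to the small ranks $n=3,4$, where $\specialCharacterSetXLnq{G}$ collapses to two or four characters (with $\unipotentCharacter{(n-2,1^2)}$, resp.\ $\unipotentCharacter{(n-3,1^3)}$, becoming the Steinberg character) and the clean flag-variety argument degenerates. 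Finally, applying the classification cleanly is delicate in itself: the hypothesis $(n,q)\ne(4,2)$ is exactly what rules out the exceptional $2$-transitive action of $\alternating{7}$ inside $\PSL{4}{2}$, and one must be sure no other small parameter leaves a case uncovered.
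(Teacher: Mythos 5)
Your character-theoretic bookkeeping is essentially the paper's: the identity you want is $\twoTransitivityCharacter \circ \trivialCharacter{(n-3)} = \trivialCharacter{(n-3,1^3)} - \trivialCharacter{(n-3,3)}$, whose Kostka-number decomposition into the (at most five) characters of $\specialCharacterSet{G}$ is exactly how the paper produces $\specialCharacterSet{G}$, and the classification of $2$-transitive collineation groups is indeed the final step (it applies to subgroups of $\GammaL{n}{q}$ directly, so no commutator argument is needed; also your description of $\specialCharacterSet{G}$ as the constituents ``not already occurring in $\pi_3$'' is off, since $\unipotentCharacter{(n-1,1)}$ and $\unipotentCharacter{(n-2,2)}$ do occur in $\pi_3$ --- only the identity after subtraction is correct). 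The genuine gap is the middle step. If no character of $\specialCharacterSet{G}$ marks $K$, pairing permutation characters with $\trivialCharacter{K}$ gives you only: $K$ is transitive on points and on incident point--line flags of $\projective^{n-1}\field{q}$, and the number of $K$-orbits on flags of signature $(1,2,3)$ equals the number of $K$-orbits on $3$-subspaces (equivalently, for every $3$-subspace $W$ the group induced by $\stab{K}{W}$ on $\projective(W)$ is flag-transitive). You cannot conclude ``a single orbit on the relevant flag varieties,'' because $\unipotentCharacter{(n-3,3)}$ occurs in the type-$(1,2,3)$ flag character with multiplicity one and is not in $\specialCharacterSet{G}$; and, more seriously, ``flag-transitive, hence $2$-transitive on $\projective^{n-1}\field{q}$'' is not a valid implication for collineation groups. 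For $n=3$ your hypothesis amounts exactly to transitivity on the complete flags of the projective plane, and the Frobenius group of order $21$ (the normalizer of a Singer cycle) inside $\GL{3}{2}$ is transitive on the $21$ flags of the Fano plane while its order is too small for $2$-transitivity on the $7$ points; classically (Higman--McLaughlin) flag-transitivity of a plane collineation group does not by itself force $2$-transitivity. So the deduction you call ``unwinding'' is precisely the content of the theorem and cannot be extracted from orbit counting alone.

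What your proposal omits are the paper's two main ingredients, which exist exactly to bridge this gap. First, \autoref{section:maximal_subgroups_of_glnq} analyses the subgroups of $\GL{3}{q}$ through Aschbacher's theorem --- the geometric classes in \autoref{section:geometric_subgroups_of_glnq_restriction} and the exceptional class via the tables of maximal subgroups in \autoref{section:non_geometric_subgroups} --- to prove \autoref{corollary:no-non-vanishing-subgroups-3}, namely that $\twoTransitivityCharacter$ marks every subgroup of $\GL{3}{q}$ not containing $\SL{3}{q}$; this case-by-case work is where flag-transitive but non-$2$-transitive candidates must be excluded, and no counting argument replaces it. Second, \autoref{section:induced_characters_of_stab} supplies the induction-from-the-stabilizer machinery, \autoref{lemma:stabilizer-character-permutaitons-global-transitivity} and \autoref{corollary:glnq-stabilizer-character}, whose inductive patching argument upgrades the local statement (for every translate of the $3$-dimensional subspace, the induced group on it is suitably transitive) to the global statement that $K$ is \xkGTransitive{\projectiveVector{e}}{2}{$\GL{n}{q}$}, i.e.\ $2$-transitive on $\projective^{n-1}\field{q}$; only then does \autoref{theorem:2_transitive_collineation_groups} give $K \supseteq \SL{n}{q}$ outside $(n,q)=(4,2)$, and the decomposition of $\twoTransitivityCharacter \circ \trivialCharacter{(n-3)}$ finishes the proof. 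Without these two steps your argument assumes the crux of the theorem rather than proving it.
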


	\begin{theorem} \label{theorem:nonvanishing_characters_of_sln}
		Let $n \ge 3$ and $q$ a prime power, such that $(n, q) \ne (4, 2)$. Let $G$ be either $\SL{n}{q}$ or $\PSL{n}{q}$. Then, \textCMarksProperSubgroups{$\specialCharacterSetXLnq{G}$}{$G$}.
	\end{theorem}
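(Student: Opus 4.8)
The plan is to deduce \autoref{theorem:nonvanishing_characters_of_sln} from \autoref{theorem:nonvanishing_characters_of_gln} by a restriction argument. Write $\widetilde{G}$ for $\GL{n}{q}$ when $G = \SL{n}{q}$, and for $\PGL{n}{q}$ when $G = \PSL{n}{q}$; in either case $G \normalSubgroup \widetilde{G}$, and $G$ is exactly the subgroup playing the role of $G'$ in \autoref{theorem:nonvanishing_characters_of_gln}. Recall from \autoref{section:special_linear_unipotent_restriction} and \autoref{section:projective_unipotent_restriction} that each unipotent character $\unipotentCharacter{\alpha}$ of $\widetilde{G}$ restricts irreducibly to $G$ and that this restriction is, under the abuse of notation that fixes the definition of $\specialCharacterSetXLnq{G}$, precisely the unipotent character $\unipotentCharacter{\alpha}$ of $G$; moreover a partition $\alpha$ is dropped from $\specialCharacterSetXLnq{\widetilde{G}}$ under exactly the same numerical condition under which it is dropped from $\specialCharacterSetXLnq{G}$. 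Hence $\chi \mapsto \restricted{\chi}{G}$ is a well-defined surjection $\specialCharacterSetXLnq{\widetilde{G}} \to \specialCharacterSetXLnq{G}$.

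First I would fix a proper subgroup $K \properSubgroup G$. Since $K \subsetneq G$, when regarded as a subgroup of $\widetilde{G}$ it does not contain $G$, so \autoref{theorem:nonvanishing_characters_of_gln} (with $G'$ there equal to $G$) applies and produces a character $\chi \in \specialCharacterSetXLnq{\widetilde{G}}$ with $\chiKContainsTrivial{\chi}{K}$, that is, $\innerProduct{\restricted{\chi}{K}}{\trivialCharacter{K}}_K > 0$. Then I would restrict through the chain $K \subgroup G \subgroup \widetilde{G}$: by transitivity of restriction, $\restricted{(\restricted{\chi}{G})}{K} = \restricted{\chi}{K}$, so $\innerProduct{\restricted{(\restricted{\chi}{G})}{K}}{\trivialCharacter{K}}_K > 0$. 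Since $\restricted{\chi}{G} \in \specialCharacterSetXLnq{G}$, this says exactly that $\specialCharacterSetXLnq{G}$ marks $K$, which is the claim.

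Because the reduction is purely formal, there is essentially no obstacle beyond what is already contained in \autoref{theorem:nonvanishing_characters_of_gln}. The one point that genuinely needs care — and which I would isolate as a short lemma in \autoref{section:special_linear_unipotent_restriction} and \autoref{section:projective_unipotent_restriction}, prior to this proof — is the compatibility statement used above: that the intrinsically defined set $\specialCharacterSetXLnq{G}$ coincides with $\{\restricted{\chi}{G} : \chi \in \specialCharacterSetXLnq{\widetilde{G}}\}$, irreducibility of each restriction included, so that no member of $\specialCharacterSetXLnq{\widetilde{G}}$ is lost or becomes reducible when passing to $G$. Granting this, the argument above is complete.
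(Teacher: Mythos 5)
Your proposal is correct and is essentially the paper's own argument: the paper also deduces this theorem from \autoref{theorem:nonvanishing_characters_of_gln} by identifying each character in $\specialCharacterSetXLnq{G}$ with the restriction of the corresponding character of the larger group (irreducible by \autoref{corollary:restriction_of_unipotent_characters_to_sl}), and by observing that a proper subgroup $K \properSubgroup G$, viewed inside the larger group, does not contain $G$, so the marking character restricts through $K \subgroup G$ unchanged. The only organizational difference is the $\PSL{n}{q}$ case: you restrict from $\PGL{n}{q}$ down to $\PSL{n}{q}$, whereas the paper first proves the $\SL{n}{q}$ case and then passes to $\PSL{n}{q}$ by projecting through the center via \autoref{corollary:projection_of_nonvanishing_characters}; the two routes agree precisely because of the compatibility you correctly flag as the lemma to isolate, namely that restriction to the special subgroup and projection modulo the center commute on the unipotent characters, which follows from the kernel containment noted after \autoref{corollary:projection_of_nonvanishing_characters} together with \autoref{corollary:restriction_of_unipotent_characters_to_sl}.
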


	Note that we do not claim that these sets $\specialCharacterSet{G}$ are unique, nor that they are minimal.

	\paragraph{$\mathbf{\GL{4}{2}}$} For completeness, we present the following theorem that deals with the exceptional case of the previous theorems. For this case, let $\psi$ be the single irreducible character of degree $7$ of $\GL{4}{2} \cong \alternating{8}$ (the character of the permutation representation associated with the action on $8$ elements, minus the trivial). Here, $q = 2$ implies that $\GL{4}{2}$ is equal to all of its relatives. Denote
	
	\[\defineNotation{\specialCharacterSetGLnq{4}{2}} \define \{\unipotentCharacter{(1^4)}, \unipotentCharacter{(2, 1^2)}, \unipotentCharacter{(2^2)}, \unipotentCharacter{(3, 1)}, \psi\} \subseteq \irreducibleCharacters{\GL{4}{2}}.\]

	\begin{theorem} \label{theorem:nonvanishing_characters_of_gl42}
		\textCMarksProperSubgroups{$\specialCharacterSetGLnq{4}{2}$}{$\GL{4}{2}$}.
	\end{theorem}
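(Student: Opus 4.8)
The plan is to split a proper subgroup $K \properSubgroup \GL{4}{2}$ into two cases according to the action of $K$ on the $15$ points of $\projective^{3}\field{2}$, using the identification $\GL{4}{2} \cong \alternating{8}$. First observe that $\specialCharacterSetGLnq{4}{2}$ is the degree-$4$ instance $\specialCharacterSetXLnq{\GL{4}{2}} = \{\unipotentCharacter{(1^4)}, \unipotentCharacter{(2, 1^2)}, \unipotentCharacter{(2^2)}, \unipotentCharacter{(3,1)}\}$ of the set appearing in \autoref{theorem:nonvanishing_characters_of_gln}, enlarged by the extra character $\psi$; and that if a character marks a subgroup $M$, it marks every subgroup of $M$ (write $\restricted{\chi}{M} = \trivialCharacter{M} + \phi'$ and restrict further). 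So it is enough to treat each maximal subgroup $K$ (here $\SL{4}{2} = \GL{4}{2}$, so ``not containing $\SL{4}{2}$'' just means proper).

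For the first case, suppose $K$ is \emph{not} $2$-transitive on $\projective^{3}\field{2}$. I would argue that $\specialCharacterSetXLnq{\GL{4}{2}}$ already marks $K$, by re-running the proof of \autoref{theorem:nonvanishing_characters_of_gln} for $n=4$: its only use of the hypothesis $(n,q)\neq(4,2)$ is the appeal to the classification of $2$-transitive collineation groups (\cite{2_transitive_collineation_groups}) forcing a $2$-transitively acting subgroup to contain $\SL{n}{q}$, and that step is not needed here. Concretely, $\unipotentCharacter{(3,1)}$ is the permutation character on the $15$ points minus $\trivialCharacter{}$, so it marks every $K$ not transitive on the points (for instance the two classes of maximal parabolics $2^{3}{:}L_{3}(2)$, whose orders are not divisible by $5$), and the transitive-but-not-$2$-transitive $K$ are absorbed by the higher unipotent characters $\unipotentCharacter{(2^2)}, \unipotentCharacter{(2, 1^2)}, \unipotentCharacter{(1^4)}$ via the restriction of the permutation character on ordered pairs of distinct points, exactly as in \autoref{theorem:nonvanishing_characters_of_gln}. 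Since $\specialCharacterSetXLnq{\GL{4}{2}} \subseteq \specialCharacterSetGLnq{4}{2}$, this finishes the first case.

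For the second case, suppose $K$ is $2$-transitive on $\projective^{3}\field{2}$. By the classification of $2$-transitive collineation groups (\cite{2_transitive_collineation_groups}), the only proper such $K$ is, up to conjugacy, the exceptional copy of $\alternating{7}$ inside $\GL{4}{2} \cong \alternating{8}$ --- precisely the configuration that excludes $(n,q)=(4,2)$ from \autoref{theorem:nonvanishing_characters_of_gln}. It remains to show $\psi$ marks $\alternating{7}$. Here I would use that the smallest index of a proper subgroup of $\alternating{7}$ is $7$ and that $\alternating{7}$ has no subgroup of index $8$, so any copy of $\alternating{7}$ in $\alternating{8}$ must fix a point of the natural degree-$8$ action and act on the remaining $7$ points as the standard $\alternating{7}$. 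Since $\psi$ is the permutation character of $\alternating{8}$ on $8$ points minus $\trivialCharacter{}$, restriction gives $\restricted{\psi}{\alternating{7}} = \trivialCharacter{\alternating{7}} + \chi$ with $\chi$ the degree-$6$ irreducible character, so $\restricted{\psi}{\alternating{7}} \supseteq \trivialCharacter{\alternating{7}}$, i.e.\ $\psi$ marks $\alternating{7}$ (and hence every subgroup of it). These two cases together cover all proper subgroups.

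The main obstacle is the first case: one must be sure the machinery behind \autoref{theorem:nonvanishing_characters_of_gln} really does mark every non-$2$-transitive proper subgroup at the excluded pair $(4,2)$. Either that proof applies verbatim up to the point where $2$-transitivity is invoked, or --- as a fall-back --- one verifies the claim by a finite computation with the character table of $\alternating{8} \cong \GL{4}{2}$ over its finitely many classes of maximal subgroups (the parabolics $2^{3}{:}L_{3}(2)$, $\symmetric{6} \cong \SP{4}{2}$, $2^{4}{:}(\symmetric{3}\times\symmetric{3})$, and the remaining intransitive and imprimitive ones), each being either intransitive on $\projective^{3}\field{2}$ and thus marked by $\unipotentCharacter{(3,1)}$, or transitive of rank $\ge 3$ and thus marked by one of $\unipotentCharacter{(2^2)}, \unipotentCharacter{(2, 1^2)}$. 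The only genuinely new ingredient beyond \autoref{theorem:nonvanishing_characters_of_gln} is the computation that $\psi$ marks $\alternating{7}$, which is exactly the reason $\psi$ is adjoined to the set.
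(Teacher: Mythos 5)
Your proposal is correct and follows essentially the same route as the paper: the unipotent characters already mark every non-$2$-transitive proper subgroup because the hypothesis $(n,q)\neq(4,2)$ in \autoref{theorem:nonvanishing_characters_of_gln} is only needed for the appeal to the classification of $2$-transitive collineation groups, and that classification leaves only the exceptional $\alternating{7}$, which under $\GL{4}{2}\cong\alternating{8}$ is a point stabilizer and is marked by the degree-$7$ character $\psi$. The paper establishes this last step via Frobenius reciprocity from $\induced{\trivialCharacter{\alternating{7}}}{\alternating{7}}{\GL{4}{2}}=\trivialCharacter{\GL{4}{2}}+\psi$ (citing that the $2$-transitive $\alternating{7}$ is conjugate to a point stabilizer), while you restrict $\psi$ directly and justify the point-stabilizer fact by the subgroup indices of $\alternating{7}$ --- an equivalent computation.
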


	\paragraph{$\mathbf{\GL{2}{q}}$} We present here the case of $n = 2$ as well. In this case, there exists a single nontrivial unipotent character, $\unipotentCharacter{(1, 1)}$, which does not encode enough information about subgroups of $\GL{2}{q}$, since \textChiVanishesOnK{\unipotentCharacter{(1, 1)}}{K} for every $K \subgroup \GL{2}{q}$ that acts transitively on $\projective^{1}\field{q}$. There are such subgroups that do not contain $\SL{2}{q}$. For example, $\GL{1}{q^2}$ acts transitively on $\field{q^2} \setminus 0$. By the isomorphism $\field{q^2} \cong \field{q}^2$ of vector spaces over $\field{q}$. We get an inclusion ${\GL{1}{q^2} \properSubgroup \GL{2}{q}}$. Thus, $\GL{2}{q}$ has transitive subgroups that do not contain $\SL{2}{q}$, and an additional character is required.
	
	For $G$ either $\GL{2}{q}$ or one of its relatives, consider the irreducible character $\additionalForGLTwo$, defined in \autoref{definition:additional_character_2}. Denote
	\[\specialCharacterSetXLnq{G} \define \{\unipotentCharacter{(1, 1)}, \additionalForGLTwo \} \subseteq \irreducibleCharacters{G}.\]
	
	\begin{theorem} \label{theorem:nonvanishing_characters_of_gl2}
		Let $q$ be a prime power, let $G$ be either $\GL{2}{q}$ or $\PGL{2}{q}$, and let $G' \subgroup G$ be $\SL{2}{q}$ or $\PSL{2}{q}$ respectively. Then, \textCMarksSubgroupsThatDoNotContainH{$\specialCharacterSetXLnq{G}$}{$G$}{$G'$}.
	\end{theorem}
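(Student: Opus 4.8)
The plan is to use the two characters of $\specialCharacterSet{G}$ in succession: $\unipotentCharacter{(1,1)}$ to handle every subgroup that does not act transitively on the projective line, and $\additionalForGLTwo$ for the transitive ones, using Dickson's classification of the subgroups of $\PGL{2}{q}$ to control the latter. Recall that $\unipotentCharacter{(1,1)}$ is the degree-$q$ Steinberg character, the unique nontrivial irreducible constituent of $\inducedTrivial{B}{G}$ for a Borel subgroup $B$, so that $\inducedTrivial{B}{G} = \trivialCharacter{G} + \unipotentCharacter{(1,1)}$. By the $\GL{2}{q}$ case of \autoref{proposition:stabilizer-induced-character} (Frobenius reciprocity and orbit counting), for $K \subgroup G$ we have $\chiKContainsTrivial{\unipotentCharacter{(1,1)}}{K}$ precisely when $K$ has at least two orbits on $\projective^{1}\field{q} \cong G/B$, i.e. precisely when $K$ is not transitive on the projective line.

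Next I would reduce to $G = \PGL{2}{q}$ and $G' = \PSL{2}{q}$. Both $\unipotentCharacter{(1,1)}$ and $\additionalForGLTwo$ are trivial on $\groupCenter{\GL{2}{q}}$, and whether a set of characters marks $K$ is invariant under inflation along a central quotient. Moreover, for $q \ge 4$ the group $\PSL{2}{q}$ is simple, and any perfect subgroup of $\GL{2}{q}$ that surjects onto $\PSL{2}{q}$ is $\SL{2}{q}$ itself; hence $K \not\supseteq \SL{2}{q}$ forces the image $\overline{K} \subgroup \PGL{2}{q}$ to satisfy $\overline{K} \not\supseteq \PSL{2}{q}$. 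After checking $q \le 3$ by hand, it remains to prove: every $K \subgroup \PGL{2}{q}$ that is transitive on $\projective^{1}\field{q}$ and does not contain $\PSL{2}{q}$ satisfies $\chiKContainsTrivial{\additionalForGLTwo}{K}$.

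For such a $K$, Dickson's classification says that $K$ is contained in a Borel subgroup; or in the normalizer of a split torus, dihedral of order $2(q-1)$; or in the normalizer $N \define \normalizer{T}{\PGL{2}{q}}$ of a nonsplit torus $T$ (the image of $\field{q^2}^\times$), which is dihedral of order $2(q+1)$; or in a subfield subgroup $\PGL{2}{q_0}$ with $q = q_0^{r}$; or in one of $\alternating{4}$, $\symmetric{4}$, $\alternating{5}$ — the one remaining case, $K \supseteq \PSL{2}{q}$, being excluded. A Borel fixes a point of $\projective^{1}\field{q}$, and $\PGL{2}{q_0}$ has $\projective^{1}\field{q_0}$ as an orbit, of size $q_0 + 1 < q + 1$, so neither is transitive; and $2(q-1)$ is not divisible by $q + 1$ once $q > 3$, so the split-torus normalizer is not transitive either. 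Thus, outside finitely many small $q$, a transitive $K$ not containing $\PSL{2}{q}$ lies inside the normalizer $N$ of a nonsplit torus. Now $\additionalForGLTwo$ is defined (\autoref{definition:additional_character_2}) so that $\innerProduct{\restricted{\additionalForGLTwo}{N}}{\trivialCharacter{N}}_{N} > 0$ — concretely as a nontrivial irreducible constituent of $\inducedTrivial{N}{\PGL{2}{q}}$, equivalently as a cuspidal character of degree $q - 1$ attached to a suitable regular character of $\field{q^2}^\times$ trivial on $\field{q}^\times$ (so that it descends to all the relatives). Its representation space then contains an $N$-fixed vector, which is a fortiori fixed by every subgroup of $N$; hence $\chiKContainsTrivial{\additionalForGLTwo}{K}$ for every $K \subgroup N$, in particular for every transitive such $K$.

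It remains to treat the sporadic transitive subgroups — copies of $\alternating{4}$, $\symmetric{4}$, $\alternating{5}$ inside $\PGL{2}{q}$ for the finitely many prime powers $q$ with $q + 1 \mid 60$ (for instance $\alternating{5} \subgroup \PSL{2}{11}$, which acts transitively on the twelve points of $\projective^{1}\field{11}$), together with the tiny cases $q \in \{2, 3\}$; for these the reduction to $N$ above does not apply. Each is a finite check: the condition $q + 1 \mid \cardinality{K}$ bounds $q$, and $\innerProduct{\restricted{\additionalForGLTwo}{K}}{\trivialCharacter{K}}_{K}$ is read off from the character table of $\PGL{2}{q}$. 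I expect the main obstacle to be precisely this: the nonsplit-torus case is essentially formal once $\additionalForGLTwo$ is known to lie over $N$, so the real content is choosing the single character $\additionalForGLTwo$ in \autoref{definition:additional_character_2} — one that has an $N$-fixed vector and simultaneously marks every sporadic transitive subgroup — which forces an explicit computation of the values of $\additionalForGLTwo$ on $N$ and on these sporadic subgroups, via Clifford theory along $T \subgroup N$ together with the (well-understood but case-laden) character table of $\PGL{2}{q}$.
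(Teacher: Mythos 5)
Your overall skeleton matches the paper's: $\unipotentCharacter{(1,1)} = \transitivityCharacter$ disposes of every subgroup that is not transitive on $\projective^{1}\field{q}$ (\autoref{proposition:stabilizer-induced-character}), the problem is pushed to $\PGL{2}{q}$, and what remains are the transitive subgroups not containing $\PSL{2}{q}$, which up to finitely many sporadic cases ($\alternating{4}$, $\symmetric{4}$, $\alternating{5}$, small $q$) sit inside the normalizer $N$ of a nonsplit torus; the paper reaches the same reduction via the classes $\classicalFamilySymbol{3}$, $\classicalFamilySymbol{6}$, $\classicalExceptions$ instead of Dickson, and handles the sporadic cases by the same order-divisibility-plus-character-table finite check you propose. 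The genuine gap is in the step you call ``essentially formal''. You choose $\additionalForGLTwo$ to be a cuspidal character $\pi(\phi)$ of degree $q-1$ with $\phi$ trivial on $\field{q}^{\times}$, argue that its restriction to the nonsplit torus $T$ contains the trivial character, and conclude that ``its representation space then contains an $N$-fixed vector''. That inference is unjustified: the trivial character of $T$ occurs with multiplicity exactly one in $\restricted{\pi(\phi)}{T} = \induced{\trivialCharacter{\field{q}^{\times}}}{\field{q}^{\times}}{T} - \phi - \phi^{q}$, so the $T$-fixed space is a line on which the outer involution of $N/T$ acts by $\pm 1$, and nothing forces the sign to be $+1$. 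In fact it is $-1$ whenever $q$ is even: the coset $N \setminus T$ lifts to the semilinear elements $x \mapsto c x^{q}$ of $\GammaLExt{1}{q^2}{q}$, which in characteristic $2$ are all unipotent-times-central with eigenvalue $\sqrt{N(c)}$, and summing the cuspidal character values over the whole group gives $(q^2-1) - (q^2-1) = 0$ for every regular $\phi$. So for even $q>2$ no cuspidal character marks $N$, your proposed uniform choice of $\additionalForGLTwo$ fails there, and your claimed equivalence between ``constituent of $\inducedTrivial{N}{\PGL{2}{q}}$'' and ``cuspidal of degree $q-1$'' is false.

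This is precisely why \autoref{definition:additional_character_2} is case-split: for odd $q$ the paper verifies by an explicit conjugacy-class computation (\autoref{proposition:c_3_gl2_odd_q}) that $\cuspidalForGLTwo = \pi(\phi)$ with $\phi$ of order $q+1$ does contain the trivial on the full normalizer (multiplicity exactly $1$, via the integrality trick determining the class count $c = \frac{q+1}{2}$), while for even $q > 2$ it switches to the principal series character $\alpha \circ \bar{\alpha}$ with $\alpha \ne \trivialCharacter{\fieldInvertible{q}}$, whose restriction to $N$ is checked directly, with further special treatment at $q = 2$ and, for the special groups, at $q = 3$ where $\cuspidalForGLTwo$ splits into the oscillator characters. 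So the real computational content sits exactly where you declared the argument formal; to repair your proof you must either carry out the normalizer computation for each parity (recovering the paper's case split) or find a different single character with a genuinely $N$-fixed vector, which the calculation above shows cannot be cuspidal when $q$ is even.
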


	\begin{theorem} \label{theorem:nonvanishing_characters_of_sl2}
		Let $q$ be a prime power, and let $G$ be either $\SL{2}{q}$ or $\PSL{2}{q}$. Then, \textCMarksProperSubgroups{$\specialCharacterSetXLnq{G}$}{$G$}.
	\end{theorem}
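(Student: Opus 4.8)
The plan is to deduce this theorem from \autoref{theorem:nonvanishing_characters_of_gl2} by a restriction argument, parallel to the way \autoref{theorem:nonvanishing_characters_of_sln} is obtained from \autoref{theorem:nonvanishing_characters_of_gln} in the case $n \ge 3$. The underlying principle is the following elementary observation: if $G \subgroup \tilde{G}$, if $\specialCharactersSymbol$ is a set of irreducible characters of $\tilde{G}$ each of which restricts irreducibly to $G$, and if $\specialCharactersSymbol$ marks every subgroup of $\tilde{G}$ that does not contain $G$, then the set $\{\restricted{\chi}{G} \colon \chi \in \specialCharactersSymbol\}$ marks every proper subgroup of $G$. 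Indeed, given $K \properSubgroup G$, the subgroup $K$, regarded inside $\tilde{G}$, does not contain $G$, so some $\chi \in \specialCharactersSymbol$ satisfies $\innerProduct{\restricted{\chi}{K}}{\trivialCharacter{K}}_K > 0$; since $K \subgroup G$ we have $\restricted{(\restricted{\chi}{G})}{K} = \restricted{\chi}{K}$, hence $\innerProduct{\restricted{(\restricted{\chi}{G})}{K}}{\trivialCharacter{K}}_K > 0$, and as $\restricted{\chi}{G}$ is irreducible this says precisely that $\restricted{\chi}{G}$ marks $K$.

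I would then apply this with $\tilde{G} = \GL{2}{q}$ and $G = \SL{2}{q}$ (respectively $\tilde{G} = \PGL{2}{q}$ and $G = \PSL{2}{q}$), taking $\specialCharactersSymbol = \specialCharacterSetXLnq{\tilde{G}} = \{\unipotentCharacter{(1, 1)}, \additionalForGLTwo\}$: by \autoref{theorem:nonvanishing_characters_of_gl2} this set marks every subgroup of $\tilde{G}$ that does not contain $G$, so the principle above gives that $\{\restricted{\unipotentCharacter{(1,1)}}{G}, \restricted{\additionalForGLTwo}{G}\}$ marks every proper subgroup of $G$. What remains is to identify this restricted set with $\specialCharacterSetXLnq{G}$, i.e. to know that $\unipotentCharacter{(1,1)}$ and $\additionalForGLTwo$ restrict irreducibly along $\GL{2}{q} \supergroup \SL{2}{q}$ and along $\PGL{2}{q} \supergroup \PSL{2}{q}$, and that the resulting irreducible characters are exactly those denoted $\unipotentCharacter{(1,1)}$ and $\additionalForGLTwo$ on $\SL{2}{q}$ and $\PSL{2}{q}$ by the paper's conventions; this is the content of \autoref{section:special_linear_unipotent_restriction}, \autoref{section:projective_unipotent_restriction} and \autoref{definition:additional_character_2}. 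For the projective groups one also uses that both characters are trivial on $\groupCenter{\SL{2}{q}}$, so that they genuinely descend to $\PSL{2}{q}$ and $\PGL{2}{q}$.

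With those ingredients in hand the present theorem is essentially a one-line consequence; a self-contained alternative would be to run through Dickson's classification of the subgroups of $\PSL{2}{q}$ and $\SL{2}{q}$ and mark each type directly, but that is strictly more work. The real difficulty lies upstream, in \autoref{theorem:nonvanishing_characters_of_gl2} --- which must use $\unipotentCharacter{(1,1)}$ and $\additionalForGLTwo$ to mark every subgroup of $\GL{2}{q}$ not containing $\SL{2}{q}$, in particular the non-split torus $\GL{1}{q^2} \properSubgroup \GL{2}{q}$ and its normalizer together with the remaining maximal subgroups missing $\SL{2}{q}$ --- and in the choice of $\additionalForGLTwo$ in \autoref{definition:additional_character_2}, which must be arranged so that $\additionalForGLTwo$ simultaneously performs that marking and restricts irreducibly to $\SL{2}{q}$ and $\PSL{2}{q}$ (with the smallest values of $q$, where these groups are solvable or otherwise degenerate, possibly requiring separate checks). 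The main obstacle is thus to make $\additionalForGLTwo$ meet those two competing requirements at once; deriving this theorem from that choice is routine.
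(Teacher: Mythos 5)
Your proposal follows essentially the same route as the paper: the proof in \autoref{section:nonvanishing_characters_2} deduces this theorem from \autoref{theorem:nonvanishing_characters_of_gl2} by exactly the restriction/projection arguments of \autoref{section:special_linear_unipotent_restriction} and \autoref{section:projective_unipotent_restriction}, after checking from the character tables that $\unipotentCharacter{(1,1)}$ and $\additionalForGLTwo$ restrict irreducibly to the special and projective groups. The only place your irreducible-restriction principle does not literally apply is $q=3$ for $\SL{2}{3}$ and $\PSL{2}{3}$, where $\restricted{\cuspidalForGLTwo}{\SL{2}{3}}$ splits into the two oscillator characters and the paper checks directly that the chosen summand $\omega_0^+$ (the $\additionalForGLTwo$ of \autoref{definition:additional_character_2} in that case) still marks the relevant subgroups --- precisely the ``separate check for small $q$'' you anticipated.
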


	\subsection{Proof Strategy}
	\paragraph{ } Consider the family of subgroups $\GL{n}{q}$ with $n \ge 3$, along with their special subgroups $\SL{n}{q} \subgroup \GL{n}{q}$. Denote by $\irreducibleCharacters{\GL{n}{q}}$ the collection of irreducible characters of $\GL{n}{q}$. We wish to find sets of nontrivial characters ${\specialCharacterSetGLnq{n}{q} \subseteq \irreducibleCharacters{\GL{n}{q}}}$ of bounded size (with respect to both $n$ and $q$), \textCMarkSubgroupsThatDoNotContainH{that}{$\GL{n}{q}$}{$\SL{n}{q}$}.
	
	The following notion of transitivity plays an important role in the proof:
	\begin{definition} \label{definition:g_transitivity}
		Let $G$ be a finite group acting on some set $X$, let ${K \subgroup G}$, let ${\ell \in \positiveIntegers}$, and let $\xOneToXk{x}{\ell}$ be distinct elements of $X$. We say that $K$ is \defineTextNotation{\xkGTransitive{x}{\ell}{$G$}} if for every $g \in G$, there exists $k \in K$ such that ${\forall i \colon kx_i = gx_i}$.
	\end{definition}
	\begin{example}
		\begin{enumerate}
			\item \xkGTransitivity{x}{\ell}{$G$} is equivalent to transitivity on the orbit $\orbit{G}{(x_1,\dots, x_\ell)}$ in the diagonal action of $G$ on $X^\ell$.
			\item For $\ell \le n$, a subgroup $K \le \symmetric{n}$ is \xkGTransitive{}{\ell}{$\symmetric{n}$} if and only if the action of $K$ on $\{1,\dots,n\}$ is $\ell$-transitive.
			\item If the action of $G$ on $X$ is $\ell$-transitive, $K \subgroup G$ is \xkGTransitive{x}{\ell}{$G$} if and only if the action of $K$ is  $\ell$-transitive.
			\item Let $\projectiveVector{e}_i$ be the point in $\projective^{n-1}\field{q}$ represented by the $i$th element of the standard basis of $\field{q}^n$. For $\ell \le n$, $K \subgroup \GL{n}{q}$ is
			\xkGTransitive{\projectiveVector{e}}{\ell}{$\GL{n}{q}$} if and only if for every pair of $\ell$-tuples of linearly independent projective vectors -- $(x_1, \dots, x_\ell)$ and $(y_1, \dots, y_\ell)$ -- there exists an element $k \in K$ such that ${\forall i \colon kx_i = y_i}$.
			\item $\SL{n}{q}$ is \xkGTransitive{\projectiveVector{e}}{n}{$\GL{n}{q}$}.
		\end{enumerate}
	\end{example}

	We can now describe the framework for the proof. Let $G_n= \GL{n}{q}$ for $n \ge 3$, acting on the sets $X_n = \projective^{n-1}\field{q}$. The sets $X_n$ are increasing with respect to inclusion $i_n\colon \projective^{n-1}\field{q} \hookrightarrow \projective^{n}\field{q}$, ${i_n(x_1,\dots,x_n) = (x_1,\dots,x_n, 0)}$. Denote ${Y \define X_3 = \projective^2\field{q}}$, and consider the stabilizer $\stab{(G_n)}{Y} = {\{g \in G_n \colon gY = Y\}}$, which can be realized as the group of upper diagonal block matrices 
	\[
	P_{(3, n-3)} \define
	\begin{bmatrix}
	\GL{3}{q}	& *					\\
	0			& \GL{n - 3}{q}  \\
	\end{bmatrix} 
	\subgroup \GL{n}{q}.
	\]
	It projects to the symmetry group of $Y$, and we denote its image by $H$. Note that $H \cong \PGL{3}{q}$.
	
	Given a character of $H$, we pull it back using the projection from the stabilizer. Here we take a character of a small group, with a relatively small degree, and lift it to a much larger group without increasing its degree, nor the number of its irreducible summands. We get a character of $\stab{(G_n)}{Y}$ which is then induced to the group $G_n$.
	
	This allows us to study $H$ and its characters, and then use the described process to lift them to the sequence of groups $G_n$ and obtain results on these characters. Thus, assume that we have some subset $\specialCharacterSet{H} \subseteq \irreducibleCharacters{H} = \irreducibleCharacters{\PGL{3}{q}}$, such that \textCMarksSubgroups{$\specialCharacterSet{H}$}{$H$} that is not \xkGTransitive{\projectiveVector{e}}{3}{$H$}. Lift the characters to characters of $\stab{(G_n)}{Y}$, then induce them to characters of $G_n$. Denote the induced characters by $\specialCharacterSet{G_n}'$.
	
	We prove that since $H$, $\specialCharacterSet{H}$, $G_n$ and $Y$ satisfy several properties, which are described below (with $\ell = 3$), \textCMarksSubgroups{$\specialCharacterSet{G_n}'$}{$G_n$} that is not \xkGTransitive{\projectiveVector{e}}{2}{$G_n$}.
	
	It turns out that this implies that \textCMarksSubgroupsThatDoNotContainH{$\specialCharacterSet{\GL{n}{q}}'$}{$\GL{n}{q}$}{$\SL{n}{q}$}, outside the exceptional case of ${q = 2}, {n = 4}$. Therefore, to finish the proof of the main theorems, it remains to decompose the characters in $\specialCharacterSet{G_n}'$ to irreducible characters, and deduce the theorems.
	
	The method described above can be used for every sequence of groups $G_n$ acting on sets $X_n$, which are increasing with respect to inclusion. In the general case, we need the following three conditions to hold in order to apply this method:
	
	\begin{enumerate}
		\item Y contains some $\ell$ element subset $Y_0 = \{a_1, \dots, a_\ell\}$, which can be thought of as the basis of $Y$.
		\item The translations of $Y$ by elements of $G_n$ cover all $\ell$-element subsets of $X_n$.
		\item If we can translate less than $\ell$ of the elements of $Y_0$, with some element of $G_n$, while keeping them in $Y$, we can translate these same elements in the same way with some element of $\stab{(G_n)}{Y}$.
	\end{enumerate}

	Then, we do the following to finish the proof:
	\begin{enumerate}
		\item Find some subset of characters $\specialCharacterSet{H} \subseteq \irreducibleCharacters{H}$ \textCMarksSubgroupsThatAreNotxkTransitive{that}{$H$}{a}{\ell}.
		\item Decompose the characters of $\specialCharacterSet{G_n}'$ to irreducibles, to obtain some set $\specialCharacterSet{G_n}$.
		\item Classify\declareFootnote{classification_not_done}{The classification in the case of $\GL{n}{q}$ is actually implied by the classification of multiply transitive groups, and is not done in this paper} all \xkGTransitive{a}{\ell-1}{$G_n$} subgroups of $G_n$.
	\end{enumerate}
	
	To finish the proof of the main theorems, we need to make sure the decomposed characters $\specialCharacterSet{\GL{n}{q}}$ restrict to irreducible characters of $\SL{n}{q}$ and project to irreducible characters of $\PGL{n}{q}$ and $\PSL{n}{q}$ for almost all pairs $n \in \positiveIntegers$, $q$ a prime power. Then we deal with the exceptional cases that arise.
	
	\subsection{Paper Organization}
	\paragraph{ } In \autoref{section:induced_characters_of_stab}, we study characters induced by action stabilizers, which are the characters of permutation representations. The main result of this section is \autoref{corollary:glnq-stabilizer-character}. To prove it, we prove several technical lemmas. In general, we prove that there exist certain characters such that if their restriction to some $K \le G$ does not contain the trivial representation, the action of $K$ has certain transitivity properties.
	
	In \autoref{section:glnq_characters}, we introduce general facts about the characters of $\GL{n}{q}$, and study certain characters that are important to the rest of the paper.
	
	In \autoref{section:maximal_subgroups_of_glnq}, we study maximal subgroups of $\GL{n}{q}$. \autoref{section:geometric_subgroups_of_glnq_description} describes the families of a class of maximal subgroups called the geometric subgroups, \autoref{section:geometric_subgroups_of_glnq_restriction} restricts certain characters to the geometric subgroups and proves these do not contain the trivial representation, and \autoref{section:non_geometric_subgroups} handles the remaining case of the non-geometric maximal subgroups.
	
	In \autoref{section:nonvanishing_characters}, we prove the main theorems for $n \ge 3$: for the general case (\autoref{section:nonvanishing_characters_ge_3}), the special cases (\autoref{section:special_linear_unipotent_restriction}), the projective cases  (\autoref{section:projective_unipotent_restriction}), and the exceptional case of $\GL{4}{2}$ (\autoref{section:gl42}).
	
	In \autoref{section:nonvanishing_characters_2}, we prove the main theorems for $n = 2$.
	
	In \autoref{section:open_questions}, we present several open questions raised in this paper.
	
	In \autoref{section:sn}, we apply the method described in the introduction on $\symmetric{n}$ and state the results.
	
	In \autoref{section:group_extensions}, we consider the situation where $G$ is some finite group, with $N \normalSubgroup G$. We show how sets of nontrivial irreducible characters $\specialCharacterSet{G}$, $\specialCharacterSet{\factor{G}{N}}$ and $\specialCharacterSet{N}$ behave with respect to one another, where these sets mark proper subgroups of $G$, $\factor{G}{N}$ and $N$ respectively.
	
	\subsection{Notations}
	\begin{tabular}{Cc}
		\text{Symbol} & Definition\\ \hline
		\trace{\mathbb{E}}{\mathbb{F}} & The trace of the field extension $\mathbb{E}/\mathbb{F}$ \\
		\projective V & The projective space of $V$ \\
		\projectiveVector{v} & The point in the projective space represented by $v$ \\
		\characters{G} & The collection of characters of $G$ (non-virtual)\\
		\irreducibleCharacters{G} & The collection of irreducible characters of $G$ \\
		\innerProduct{u}{v} & The inner product of $u$ and $v$ \\
		\innerProduct{\psi}{\phi}_G & The inner product of $\psi$ and $\phi$ in the character ring of $G$ \\
		\trivialCharacter{G} & The trivial character of $G$ \\
		\linearSpan{v_1, \dots, v_k} & The linear span of $v_1, \dots, v_k$ \\
		e_i \in \field{q}^n & The $i$th element of the standard basis of $\field{q}^n$\\
		\groupSpan{g_1, \dots, g_k} & The subgroup generated by $g_1, \dots, g_k \in G$ \\
		\induced{\chi}{K}{G} & The induced character from $K$ to $G$ \\
		\groupCenter{G} & The center of $G$ \\
		\symmetric{n} & The symmetric group on $n$ elements \\
		\alternating{n} & The alternating group on $n$ elements \\
		G \acts X & $G$ acts on $X$ \\
		\stab{G}{Y} & The stabilizer of $Y$ -- $\{g \in G \colon gY = Y\}$, where $G \acts X$ and $Y \subseteq X$ \\
		\orbit{G}{Y} & The orbit of $Y$ -- $\{gy \colon g \in G, y \in Y\}$ where $G \acts X$ and $Y \subseteq X$ \\
		\normalizer{K}{G} & The normalizer of $K \subgroup G$ \\
		\lambda \vdash n & $\lambda$ is a partition of $n$ \\
		\lambda' & The partition conjugate to $\lambda \vdash n$ \\
	\end{tabular}

	\section{Induced Characters of Action Stabilizers} \label{section:induced_characters_of_stab}
	\paragraph{ } In this section, we relate the notion of \xkGTransitivity{y}{\ell}{$G$} and the properties of characters of groups. We start by stating several standard results in representation theory that we need.
	\begin{theorem}[Frobenius Reciprocity] \label{frobenius_reciprocity}
		Let $G$ be a group and let $K \subgroup G$. Let $\psi \in \characters{K}$ and $\phi \in \characters{G}$. Then
		\[\innerProduct{\induced{\psi}{K}{G}}{\phi}_G = \innerProduct{\psi}{\restricted{\phi}{K}}_K.\]
	\end{theorem}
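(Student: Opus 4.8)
The plan is to compute both sides directly from the definition of the induced character. Recall that for a class function $\psi$ on $K$, extended by zero to a function $\dot\psi$ on $G$ (so $\dot\psi(g) = \psi(g)$ when $g \in K$ and $\dot\psi(g) = 0$ otherwise), one has
\[\induced{\psi}{K}{G}(g) = \frac{1}{\cardinality{K}}\sum_{x \in G} \dot\psi\left(x^{-1} g x\right).\]
First I would substitute this into the left-hand inner product and expand $\innerProduct{\induced{\psi}{K}{G}}{\phi}_G$ as a double sum over $g \in G$ and $x \in G$ of the terms $\dot\psi(x^{-1}gx)\,\overline{\phi(g)}$, divided by $\cardinality{G}\cardinality{K}$.

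The key step is a change of variables: for each fixed $x \in G$, replace $g$ by $h \define x^{-1}gx$, which is a bijection of $G$. Since $\phi$ is a class function, $\overline{\phi(g)} = \overline{\phi(xhx^{-1})} = \overline{\phi(h)}$, so the inner sum becomes $\sum_{h \in G}\dot\psi(h)\,\overline{\phi(h)}$, which is independent of $x$; the terms with $h \notin K$ vanish because $\dot\psi$ is supported on $K$, leaving $\sum_{h \in K}\psi(h)\,\overline{\phi(h)}$. Summing over the $\cardinality{G}$ choices of $x$ cancels the $\cardinality{G}$ in the denominator, and what remains is $\frac{1}{\cardinality{K}}\sum_{h \in K}\psi(h)\,\overline{\restricted{\phi}{K}(h)} = \innerProduct{\psi}{\restricted{\phi}{K}}_K$, as desired.

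Since the statement concerns only genuine characters, an alternative route is to establish the natural isomorphism $\operatorname{Hom}_G(\induced{W}{K}{G}, V) \cong \operatorname{Hom}_K(W, \restricted{V}{K})$ of complex vector spaces, where $W$ and $V$ afford $\psi$ and $\phi$: a $K$-equivariant map $W \to \restricted{V}{K}$ extends uniquely to a $G$-equivariant map $\induced{W}{K}{G} \to V$, and conversely. Taking dimensions then gives the identity, because for representations the Hermitian inner product of characters computes the dimension of the intertwiner space. I expect no serious obstacle on either path; the only point requiring care is bookkeeping with the normalization constant $\cardinality{K}$ and checking that $g \mapsto x^{-1}gx$ is indeed a bijection of $G$ for each fixed $x$ (respectively, that the extension-of-morphisms map is well defined and bijective). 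In the write-up I would simply cite this as a standard fact (e.g.\ from the reference given) and include the short computation above for completeness.
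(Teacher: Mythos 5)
Your argument is correct: the change-of-variables computation with the zero-extended function $\dot\psi$ is the standard proof of Frobenius reciprocity, and the normalization by $\cardinality{K}$ and $\cardinality{G}$ works out exactly as you describe; the alternative via $\operatorname{Hom}_G(\induced{W}{K}{G}, V) \cong \operatorname{Hom}_K(W, \restricted{V}{K})$ is equally valid. Note that the paper itself offers no proof of this statement at all — it is listed among the "standard results in representation theory" and used as a black box — so your plan to cite it as standard and include the short computation for completeness is entirely consistent with (indeed slightly more than) what the paper does.
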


	\begin{proposition}[e.g. {\cite[Equation 3.18]{representation_theory_fulton_harris}}] \label{proposition:induced_character_formula}
		Let $G$ be a finite group, let $K \subgroup G$, and let $\chi \in \characters{K}$. Then
		\[\induced{\chi}{K}{G}(g) = \frac{1}{\cardinality{K}} \sum_{t \in G \mathSuchThat \\ t^{-1}gt \in K}\chi(t^{-1}gt).\]
	\end{proposition}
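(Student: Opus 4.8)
The plan is to prove this by the classical trace computation, realizing the induced representation in a basis adapted to the cosets of $K$ in $G$. Fix a representation $(\rho, W)$ of $K$ affording $\chi$, and take $\induced{W}{K}{G} = \complex[G] \otimes_{\complex[K]} W$ (one may equally use the model by functions $f \colon G \to W$ with $f(tk) = \rho(k)^{-1}f(t)$; the trace comes out the same). Choosing left coset representatives $t_1, \dots, t_m$ for $G/K$, where $m = \subgroupIndex{G}{K}$, the underlying space splits as $\bigoplus_{i=1}^{m} t_i \otimes W$, with $G$ acting by $g \cdot (t_i \otimes w) = (g t_i) \otimes w$.

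First I would write $g t_i = t_{\sigma_g(i)}\, k_i$ for a permutation $\sigma_g$ of $\{1, \dots, m\}$ and elements $k_i = k_i(g) \in K$, so that $g \cdot (t_i \otimes w) = t_{\sigma_g(i)} \otimes \rho(k_i)w$. In block form the matrix of $\rho_{\induced{W}{K}{G}}(g)$ therefore has, in block-column $i$, a single nonzero block, located in block-row $\sigma_g(i)$ and equal to the matrix of $\rho(k_i)$. Taking the trace annihilates every block-column with $\sigma_g(i) \ne i$; the indices that survive are exactly those $i$ with $g t_i \in t_i K$, i.e. $t_i^{-1} g t_i \in K$, and such an $i$ contributes $\operatorname{tr}\rho(t_i^{-1} g t_i) = \chi(t_i^{-1} g t_i)$ since then $k_i = t_i^{-1} g t_i$. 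This gives $\induced{\chi}{K}{G}(g) = \sum_{i \,:\, t_i^{-1} g t_i \in K} \chi(t_i^{-1} g t_i)$.

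Next I would pass to the averaged form. If $t = t_i k$ with $k \in K$ then $t^{-1} g t = k^{-1}(t_i^{-1} g t_i)k$, so the membership condition $t^{-1} g t \in K$ depends only on the coset $t_i K$, and when it holds $\chi(t^{-1} g t) = \chi(t_i^{-1} g t_i)$ because $\chi$ is a class function on $K$. Each coset $t_i K$ has exactly $\cardinality{K}$ elements, so summing the previous identity over all $t \in G$ with $t^{-1} g t \in K$ simply multiplies it by $\cardinality{K}$; dividing by $\cardinality{K}$ yields $\induced{\chi}{K}{G}(g) = \frac{1}{\cardinality{K}}\sum_{t \in G,\ t^{-1} g t \in K}\chi(t^{-1} g t)$, as claimed. (Equivalently, extend $\chi$ by zero to $\dot\chi \colon G \to \complex$ and note the right-hand side is $\frac{1}{\cardinality{K}}\sum_{t \in G}\dot\chi(t^{-1} g t)$.)

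I do not expect a real obstacle: this is a classical formula, and the cited reference already supplies it. The only place that calls for a little care is the block-matrix step — one must check that blocks moved off the diagonal by $\sigma_g$ contribute nothing to the trace, and that each fixed block is genuinely the matrix of $\rho(t_i^{-1} g t_i)$, which uses $g t_i = t_i(t_i^{-1} g t_i)$ exactly when $\sigma_g(i) = i$. The concluding averaging is then just a reindexing that leans on $\chi$ being constant on $K$-conjugacy classes.
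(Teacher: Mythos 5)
Your proof is correct: the block-matrix trace computation over coset representatives, followed by averaging over each coset using that $\chi$ is a class function on $K$, is exactly the standard derivation of the induced character formula. The paper does not prove this proposition at all — it cites it as a known fact from the reference — and your argument is essentially the textbook proof given there, so there is nothing to reconcile.
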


	\begin{lemma}[Burnside's Lemma] \label{burnsides_lemma}
		Let $G$ be a finite group acting on some set $X$. For each $g \in G$, denote by $\fixXg{X}{g} \define \{x \in X \colon gx = x\}$. Let $\cardinality{\rfactor{X}{G}}$ denote the number of orbits of the action. Then
		\[\cardinality{\rfactor{X}{G}} = \frac{1}{\cardinality{G}}\sum_{g \in G}\cardinality{\fixXg{X}{g}}.\]
	\end{lemma}
	
	\begin{proposition} \label{proposition:stabilizer-induced-character}
		Let $G$ be a finite group acting transitively on some set $X$, let $K \subgroup G$, and let $\chi$ be the character of the permutation representation associated with the action, namely, $\chi(g)$ is the number of $x \in X$ fixed by $g$. Then, ${\characterSum{\chi}{K} = 1}$ $\iff$ $K$ acts transitively on $X$.
	\end{proposition}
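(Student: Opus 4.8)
The plan is to identify the quantity $\characterSum{\chi}{K} = \innerProduct{\restricted{\chi}{K}}{\trivialCharacter{K}}_K$ with the number of orbits of the action of $K$ on $X$, and then observe that this number is $1$ precisely when $K$ is transitive (using that $X$ is nonempty, which follows from the transitivity of $G$).

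First I would unwind the definition of the inner product: since the trivial character takes the constant value $1$, we have
\[
\characterSum{\chi}{K} = \frac{1}{\cardinality{K}}\sum_{k \in K}\chi(k)\overline{\trivialCharacter{K}(k)} = \frac{1}{\cardinality{K}}\sum_{k \in K}\chi(k).
\]
Next I would use the hypothesis that $\chi$ is the permutation character of the action on $X$, so that $\chi(k) = \cardinality{\fixXg{X}{k}}$ for every $k \in K$. Substituting, the sum becomes $\frac{1}{\cardinality{K}}\sum_{k \in K}\cardinality{\fixXg{X}{k}}$, which is exactly the expression appearing in Burnside's Lemma (\autoref{burnsides_lemma}) applied to the action of $K$ on $X$. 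Hence $\characterSum{\chi}{K} = \cardinality{\rfactor{X}{K}}$, the number of $K$-orbits on $X$.

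Finally I would conclude: because $G$ acts transitively on $X$, the set $X$ is nonempty, so $\cardinality{\rfactor{X}{K}} \ge 1$, with equality if and only if $K$ acts transitively on $X$. Combining with the previous identity gives $\characterSum{\chi}{K} = 1 \iff K$ acts transitively on $X$, as claimed. There is essentially no obstacle here; the only points to be careful about are that the permutation character is real-valued (so the complex conjugate in the inner product is harmless) and that $X \ne \emptyset$ is genuinely needed to rule out the degenerate case where there are zero orbits — this is supplied by the transitivity of $G$. The argument does not use transitivity of $G$ in any deeper way, and indeed the identity $\characterSum{\chi}{K} = \cardinality{\rfactor{X}{K}}$ holds for any $G$-set $X$.
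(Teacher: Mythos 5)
Your proposal is correct and follows essentially the same route as the paper: restrict $\chi$ to $K$, recognize $\frac{1}{\cardinality{K}}\sum_{k \in K}\cardinality{\fixXg{X}{k}}$ as the orbit count via Burnside's Lemma, and note that one orbit means transitivity. The extra remarks about real-valuedness and $X \ne \emptyset$ are fine but not needed beyond what the paper already assumes.
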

	\begin{proof}
		By \nameref{burnsides_lemma}, 
		\begin{equation*}
			\characterSum{\chi}{K} = \frac{1}{\cardinality{K}}\sum_{g \in K}\chi(g) = \frac{1}{\cardinality{K}}\sum_{g \in K}\cardinality{\fixXg{X}{g}} = \cardinality{\rfactor{X}{K}}.
		\end{equation*}
		The right hand side is $1$ if and only if $K$ acts transitively on $X$.
	\end{proof}

	The equality $\characterSum{\chi}{K} = 1$ means that $\chiKContainsTrivial{\chi}{K}$ and $\trivialCharacter{K}$ has multiplicity $1$ in $\restricted{\chi}{K}$.

	\begin{remark} \label{remark:permutation-representation}
		The character of the permutation representation associated with a transitive group action is equal to the character $\inducedTrivial{\stab{G}{x}}{G}$, induced from the stabilizer $\stab{G}{x}$ for every $x \in X$.
	\end{remark}
	
	\begin{corollary} \label{corollary:permutation-character-marks-transitive}
		Let $G$ be a finite group acting transitively on some set $X$, and let $\chi$ be the character of the permutation representation associated with the action. Denote $\chi' \define \chi - \trivialCharacter{G}$. Then, \textCMarksSubgroups{$\chi'$}{$G$} that does not act transitively on $X$.
	\end{corollary}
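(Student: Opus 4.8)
The plan is to package \autoref{proposition:stabilizer-induced-character} directly into the language of marking, with only one tiny extra observation. First I would fix a subgroup $K \subgroup G$ that does not act transitively on $X$; the goal is to establish $\chiKContainsTrivial{\chi'}{K}$, i.e. that $\trivialCharacter{K}$ occurs as an irreducible summand of $\restricted{\chi'}{K}$, which is exactly the assertion that $\chi'$ marks $K$.

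Next I would compute the multiplicity of the trivial character in the restriction. Since $\chi = \trivialCharacter{G} + \chi'$ by definition of $\chi'$, bilinearity of $\innerProduct{\cdot}{\cdot}_K$ gives $\innerProduct{\restricted{\chi'}{K}}{\trivialCharacter{K}}_K = \characterSum{\chi}{K} - 1$. By \nameref{burnsides_lemma} — exactly as in the proof of \autoref{proposition:stabilizer-induced-character} — the quantity $\characterSum{\chi}{K}$ equals the number of $K$-orbits $\cardinality{\rfactor{X}{K}}$. Because $X$ is nonempty (it carries a transitive $G$-action) and $K$ does not act transitively, this orbit count is at least $2$, so the multiplicity $\innerProduct{\restricted{\chi'}{K}}{\trivialCharacter{K}}_K$ is at least $1$. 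Hence $\restricted{\chi'}{K} - \trivialCharacter{K}$ is a genuine character, i.e. $\trivialCharacter{K}$ is a summand of $\restricted{\chi'}{K}$, and therefore $\chi'$ marks $K$. Since $K$ was arbitrary among the subgroups of $G$ not acting transitively on $X$, this proves the corollary.

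Two minor bookkeeping points I would record but not belabor: that $\chi'$ is an honest (non-virtual) character, since the trivial representation appears with multiplicity exactly $1$ in the permutation representation of a transitive action, so $\chi - \trivialCharacter{G}$ is the character of the orthogonal complement of the $G$-fixed line; and that the statement is vacuously fine when $G$ has no non-transitive subgroups to consider. I do not expect any real obstacle here — the corollary is a reformulation of \autoref{proposition:stabilizer-induced-character} — the only thing to be careful about is passing from ``number of orbits $\ne 1$'' to ``number of orbits $\ge 2$'', which is where the nonemptiness of $X$ is used.
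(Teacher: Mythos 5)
Your argument is correct and is exactly the intended derivation in the paper: the corollary follows from \autoref{proposition:stabilizer-induced-character} (via \nameref{burnsides_lemma}) by noting that a non-transitive $K$ has at least two orbits, so $\characterSum{\chi}{K} \ge 2$ and hence the multiplicity of $\trivialCharacter{K}$ in $\restricted{\chi'}{K}$ is at least $1$. The bookkeeping remarks (that $\chi'$ is a genuine character and that the orbit count is at least $2$ rather than merely $\ne 1$) are sensible but add nothing beyond what the paper leaves implicit.
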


	\paragraph{Notation for the central example} Here, we introduce notation useful for the remainder of this section. We start by an example and then define the notation in general.
	
	Let $G = \GL{n}{q}$ acting on $X = \projective^{n-1}\field{q}$, and let $\ell \le n$. Consider the projective subspace $Y = \projective\left(\linearSpan{e_1, \dots, e_\ell}\right) \subseteq X$, whose translations by elements of $G$ are precisely the $\ell$-dimensional projective subspaces of $X$.	The stabilizer $\stab{G}{Y}$ is
	\[
	P_{(\ell, n-\ell)} \define
	\begin{bmatrix}
	\GL{\ell}{q}	& *					\\
	0				& \GL{n-\ell}{q}  \\
	\end{bmatrix} 
	\subgroup \GL{n}{q}.
	\]
	
	Let $p \colon P_{(\ell, n-\ell)} \to \PGL{\ell}{q}$ be the projection to the upper left block, composed with the projection to the projective group. This corresponds to the restriction of the action of $\stab{G}{Y}$ to $Y$, with $p$ being the projection from $\stab{G}{Y}$ to $\symmetries{Y}$, the symmetric group on the set $Y$. We denote by $H$ the image of this morphism -- $\PGL{\ell}{q}$.
	
	For every $t \in \GL{n}{q}$, we define $p_t$ and $H_t$ in a similar way -- by projecting from $\stab{G}{tY}$ to $\symmetries{tY}$. Note that the translations $tY$ of $Y$ are the $\ell$-dimensional projective subspaces of $X$.
	
	For every $t \in \GL{n}{q}$, the group $H_t$ is isomorphic to $\PGL{\ell}{q}$, as this is the group of linear morphisms of the $\ell$-dimensional projective space $tY$. Let $f_t \colon H \to H_t$ be the natural isomorphisms, defined by lifting an element of $H$ to some element of $\stab{G}{Y}$, conjugating by $t$ and projecting to $H_t$. The resulting element of $H_t$ is independent of the choice of the element of $\stab{G}{Y}$.
	
	Let $\chi$ be a character of $H = \PGL{\ell}{q}$. The pull-back of $\chi$ to $\stab{G}{Y}$ through the projection then defines a new character $\hcLikePullback{\chi}{H}{\stab{G}{Y}} \in \characters{\stab{G}{Y}}$, which is applied on an element of $\stab{G}{Y}$ as follows:
	
	\[
	\hcLikePullback{\chi}{H}{\stab{G}{Y}} \left(
	\begin{bmatrix}
	A	& B	\\
	0	& C \\
	\end{bmatrix}
	\right) = \chi(A)
	.
	\]
	The character $\hcLikePullback{\chi}{H}{\stab{G}{Y}}$ is then induced to $G$ to define a character ${\hcLikeInduction{\chi}{H}{G} \in \characters{G}}$.
	
	\paragraph{ }	
	Before we move to the general case, we define the pull-back of a character. Let $G$ and $H$ be finite groups. For a character $\chi$ of $H$ and a morphism ${f \colon G \to H}$, we denote by $\defineNotation{\pullbackChiByF{\chi}{f}}$ the pull-back of $\chi$ by $f$. The pull-back is a character of $G$, with $\pullbackChiByF{\chi}{f}(g) = \chi(f(g))$.
	
	\begin{notation} \label{notation:section2}
		Given a finite group $G$ acting on some set $X$, a subset $Y \subseteq X$ and some $t \in G$, define the following.
		\begin{enumerate}[label*=\arabic*.]
			\item We denote by $\defineNotation{H_t} \subgroup \symmetries{tY}$ the restriction of the action of $\stab{G}{tY}$ on $tY$ (formally -- the quotient of $\stab{G}{tY}$ by the kernel of its action on $tY$).
			\item The corresponding epimorphism is denoted by $\defineNotation{p_t} \colon \stab{G}{tY} \twoheadrightarrow H_t$.
			\item We specialize the case when $t = \unit$, the identity of $G$, and denote $\defineNotationsSilent{H, p}$ ${H \define H_\unit}$ and ${p \define p_\unit}$.
			\item Denote by $\defineNotation{f_t} \colon H \to H_t$ -- the "conjugation maps"\declareFootnote{f_t_well_defined}{The morphisms $f_t$ are well defined, as $a \in \ker(p) \Rightarrow tat^{-1} \in \ker(p_t)$.} $f_t(g) \define p_t(tp^{-1}(g)t^{-1})$, which make the following diagram commute. Note that $f_t$ is an isomorphism.
			\begin{center}
			\begin{tikzpicture}[scale=1.5]
				\node (GY) at (0,1) {$\stab{G}{Y}$};
				\node (GtY) at (1,1) {$\stab{G}{tY}$};
				\node (H) at (0,0) {$H$};
				\node (Ht) at (1,0) {$H_t$};
	
				\node (H-Y) at (-0.25,0) {\reflectbox{$\acts$}};
				\node (Y) at (-0.5, 0) {$Y$};
	
				\node (Ht-tY) at (1.25,0) {$\acts$};
				\node (tY) at (1.5, 0) {$tY$};
				
				\path[->,font=\scriptsize]
				(GY) edge node[above]{$g \mapsto tgt^{-1}$} (GtY)
				(H) edge node[below]{$f_t$} (Ht);
				
				\path[->>,font=\scriptsize]
				(GY) edge node[left]{$p$} (H)
				(GtY) edge node[right]{$p_t$} (Ht);
			\end{tikzpicture}
			\end{center}
			
			\item For a character $\chi$ of $H$:
			\begin{enumerate}[label*=\arabic*.]
			\item Define $\chi_t \define \pullbackChiByF{\chi}{(f_t^{-1})} \in \characters{H_t}$, that is, ${\chi_t(g) = \chi(f^{-1}_t(g))}$.
			\item Define $\hcLikePullback{\chi}{H}{\stab{G}{Y}} \define \pullbackChiByF{\chi}{p}$, that is, $\hcLikePullback{\chi}{H}{\stab{G}{Y}}(g) \define \chi(p(g))$.
			\item Define\declareFootnote{harish_chandra_remark}{This definition is similar to the definition of $\circ$, described in \autoref{section:glnq_characters}.} $\hcLikeInduction{\chi}{H}{G} \in \characters{G}$ as the induction of $\hcLikePullback{\chi}{H}{\stab{G}{Y}}$ to $G$, namely, ${\hcLikeInduction{\chi}{H}{G} \define \induced{\hcLikePullback{\chi}{H}{\stab{G}{Y}}}{\stab{G}{Y}}{G}}$.
			\end{enumerate}
		\end{enumerate}
	\end{notation}
	
	\paragraph{ }
	In the rest of this section, we study the induced characters $\hcLikeInduction{\chi}{H}{G}$. Our end goal is the following lemma:
	
	\begin{lemma} \label{lemma:stabilizer-character-permutaitons-global-transitivity}
		Let $G$ be a finite group acting on some set $X$ and let $Y \subseteq X$. Using \autoref{notation:section2} let $\chi \in \characters{H}$. Assume: 
		\begin{enumerate}
			\item $Y\supseteq Y_0=\{a_1 \dots a_\ell\}$
			\item \textCMarksSubgroupsThatAreNotxkTransitive{$\chi$}{$H$}{a}{\ell}.
			\item \label{lemma:stabilizer-character-permutaitons-global-transitivity-covering} The translations of $Y$ cover all $\ell$-element subsets of $X$, that is, for every $B \subseteq X$ with $\cardinality{B} \le \ell$, there exists some $t \in G$ such that $B \subseteq tY$.
			\item \label{lemma:stabilizer-character-permutaitons-global-transitivity-permutation} For every $t \in G$ and $i < \ell$, if $ta_j \in Y$ for all $j \le i$, then there exists some $s \in \stab{G}{Y}$ such that $\forall j \le i \colon sa_j = ta_j$.
		\end{enumerate}
		
		Then \textCMarksSubgroupsThatAreNotxkTransitive{$\hcLikeInduction{\chi}{H}{G}$}{$G$}{a}{\ell-1}.
	\end{lemma}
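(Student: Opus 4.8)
The plan is to reduce the statement to a purely combinatorial assertion about transitivity and then to prove that assertion by an induction which moves a tuple of points into place one coordinate at a time, confining every intermediate step to a single translate of $Y$.

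\emph{Reduction.} Let $K \subgroup G$ be a subgroup that is not \xkGTransitive{a}{\ell-1}{$G$}; we must show $\innerProduct{\restricted{\hcLikeInduction{\chi}{H}{G}}{K}}{\trivialCharacter{K}}_K > 0$. I would evaluate this multiplicity directly: writing it as $\frac{1}{\cardinality{K}}\sum_{k \in K}\hcLikeInduction{\chi}{H}{G}(k)$, expanding $\hcLikeInduction{\chi}{H}{G} = \induced{\hcLikePullback{\chi}{H}{\stab{G}{Y}}}{\stab{G}{Y}}{G}$ via \autoref{proposition:induced_character_formula}, and interchanging the order of summation, one obtains
\[
\innerProduct{\restricted{\hcLikeInduction{\chi}{H}{G}}{K}}{\trivialCharacter{K}}_K = \frac{1}{\cardinality{K}\cardinality{\stab{G}{Y}}}\sum_{t \in G}\cardinality{\stab{G}{Y} \cap t^{-1}Kt}\cdot\characterSum{\chi}{\hat{K}_t},
\]
where $\hat{K}_t \define f_t^{-1}\left(p_t\left(K \cap \stab{G}{tY}\right)\right) \subgroup H$; here one uses $\stab{G}{tY} = t\stab{G}{Y}t^{-1}$ together with the fact that pulling a character back along a surjection scales the relevant sum by the constant fibre size of that surjection. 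All summands are non-negative, so it is enough to produce a single $t \in G$ with $\characterSum{\chi}{\hat{K}_t} > 0$. Unwinding the definitions of $f_t$ and $p_t$ — an element of $\hat{K}_t$ is exactly the permutation $y \mapsto t^{-1}(k(ty))$ of $Y$ for some $k \in K \cap \stab{G}{tY}$, and every element of $H$ arises this way with $k$ replaced by some $g \in \stab{G}{tY}$ — one checks that $\hat{K}_t$ is \xkGTransitive{a}{\ell}{$H$} if and only if $K \cap \stab{G}{tY}$ is $(ta_1, \dots, ta_\ell)$-transitive for the action of $\stab{G}{tY}$ on $tY$, and likewise for each shorter prefix (\xkGTransitivity{a}{m}{$G$} follows from \xkGTransitivity{a}{\ell}{$G$} whenever $m \le \ell$, by projecting the orbit of $\ell$-tuples onto the orbit of $m$-tuples). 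In view of the second hypothesis of the lemma, it therefore suffices to establish the following statement, for then its contrapositive furnishes the required $t$: if $K \cap \stab{G}{tY}$ is $(ta_1, \dots, ta_\ell)$-transitive relative to $\stab{G}{tY}$ for \emph{every} $t \in G$, then $K$ is \xkGTransitive{a}{\ell-1}{$G$}.

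\emph{The combinatorial core.} Fix $g_0 \in G$; I would prove by induction on $i = 0, 1, \dots, \ell-1$ that some $k \in K$ satisfies $ka_j = g_0 a_j$ for all $j \le i$, the case $i = \ell-1$ being the desired conclusion. The case $i = 0$ is vacuous. For the step, with $i \le \ell-2$, pick such a $k$, put $g \define k^{-1}g_0$ (so $g$ fixes $a_1, \dots, a_i$), and reduce to finding $k' \in K$ that fixes $a_1, \dots, a_i$ and has $k'a_{i+1} = ga_{i+1}$ (then $kk'$ works at level $i+1$). The set $\{a_1, \dots, a_{i+1}, ga_{i+1}\}$ has at most $i+2 \le \ell$ elements, so by hypothesis~(\ref{lemma:stabilizer-character-permutaitons-global-transitivity-covering}) it lies inside a translate $t_1 Y$. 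Applying hypothesis~(\ref{lemma:stabilizer-character-permutaitons-global-transitivity-permutation}) to $t_1^{-1}$ (legitimate since $i+1 < \ell$) and replacing $t_1$ by $t_1 s$ for a suitable $s \in \stab{G}{Y}$, I may assume in addition that $ta_j = a_j$ for all $j \le i+1$, while still $ga_{i+1} \in tY$. Then $g$ maps each of $ta_1, \dots, ta_{i+1}$ into $tY$, so the conjugated form of hypothesis~(\ref{lemma:stabilizer-character-permutaitons-global-transitivity-permutation}) — apply it to $t^{-1}gt$ and conjugate back by $t$ — yields $w \in \stab{G}{tY}$ with $wa_j = ga_j$ for $j \le i+1$. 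Hence the tuples $(ta_1, \dots, ta_{i+1}) = (a_1, \dots, a_{i+1})$ and $(a_1, \dots, a_i, ga_{i+1})$ lie in a common orbit of $\stab{G}{tY}$ on $(tY)^{i+1}$. Finally, since $K \cap \stab{G}{tY}$ is $(ta_1, \dots, ta_{i+1})$-transitive relative to $\stab{G}{tY}$ — equivalently, transitive on the orbit $\orbit{\stab{G}{tY}}{(ta_1, \dots, ta_{i+1})}$ — it contains an element $k'$ carrying the first of these tuples to the second, which is exactly the element sought.

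\emph{Expected main obstacle.} The delicate point is the clash of basepoints. The translate $t_1 Y$ furnished by the covering hypothesis is a priori unrelated to the distinguished tuple $(a_1, \dots, a_\ell)$ inside $Y$ on which $\hat{K}_t$, and hence the second hypothesis of the lemma, is calibrated, so without further input one controls $K \cap \stab{G}{tY}$ only on the ``wrong'' points $t^{-1}a_1, \dots, t^{-1}a_\ell$. Hypothesis~(\ref{lemma:stabilizer-character-permutaitons-global-transitivity-permutation}) is exactly what repairs this: it lets us nudge $t_1$ by an element of $\stab{G}{Y}$ so that the first $i+1$ basepoints of the translate become $a_1, \dots, a_{i+1}$. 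Keeping the two applications of that hypothesis — one to align the translate, one in conjugated form to realise $g$ by an element of $\stab{G}{tY}$ — compatible with the index bounds $i \le \ell-2$, $i+1 < \ell$, $i+2 \le \ell$ is the only place where genuine care is required; the representation-theoretic half (the induced-character computation, and the invariance of the trivial-multiplicity count under pull-back along the surjections $p_t$) is routine by comparison.
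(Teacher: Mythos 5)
Your proposal is correct and follows essentially the same route as the paper: your ``reduction'' reproduces inline the content of \autoref{lemma:stabilizer-character} and \autoref{lemma:stabilizer-character-permutaitons-local-transitivity} (the induced-character expansion as a nonnegative combination over translates $tY$, plus the transfer of transitivity from subgroups of $H$ to $K \cap \stab{G}{tY}$), and your inductive ``combinatorial core'' is the paper's point-by-point construction, using the covering hypothesis once and hypothesis~(\ref{lemma:stabilizer-character-permutaitons-global-transitivity-permutation}) twice (once to align the translate, once in conjugated form) before invoking local transitivity. The only difference is bookkeeping: you normalize so that $ta_j = a_j$ for $j \le i+1$, whereas the paper keeps the images $g_{i-1}(a_j)$ and conjugates the stabilizer element instead; both are valid and equivalent.
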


	We apply the lemma on $G = \GL{n}{q}$ acting on $X = \projective^{n-1}\field{q}$, along with ${Y = \projective\left(\linearSpan{e_1, \dots, e_\ell}\right) \subseteq X}$ and $Y_0 = \{\projectiveVector{e}_1,\dots,\projectiveVector{e}_\ell\}$. Since $\PSL{\ell}{q}$ is \xkGTransitive{\projectiveVector{e}}{\ell}{$\PGL{\ell}{q}$}, if $\chi$ is a character of $\PGL{\ell}{q}$ \textCMarksSubgroupsThatDoNotContainH{that}{$\PGL{\ell}{q}$}{$\PSL{\ell}{q}$}, \textCMarksSubgroupsThatAreNotxkTransitive{$\chi$ also}{$\PGL{\ell}{q}$}{\projectiveVector{e}}{\ell}.
	
	\begin{corollary} \label{corollary:glnq-stabilizer-character}
		Let $\ell \le n$ be integers, let $G = \GL{n}{q}$, $H = \PGL{\ell}{q}$, and let $K \subgroup G$. Let $\chi \in \characters{H}$ be a character \textCMarksSubgroupsThatDoNotContainH{that}{$H$}{$\PSL{\ell}{q}$}. Then, \textCMarksSubgroupsThatAreNotxkTransitive{$\hcLikeInduction{\chi}{H}{G}$}{$G$}{\projectiveVector{e}}{\ell-1}.
	\end{corollary}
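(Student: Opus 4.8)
The plan is to derive Corollary~\ref{corollary:glnq-stabilizer-character} as a direct instance of Lemma~\ref{lemma:stabilizer-character-permutaitons-global-transitivity}, by checking the four hypotheses for the concrete setup $G = \GL{n}{q}$ acting on $X = \projective^{n-1}\field{q}$, with $Y = \projective(\linearSpan{e_1,\dots,e_\ell})$ and $Y_0 = \{\projectiveVector{e}_1,\dots,\projectiveVector{e}_\ell\}$. First I would record that $H = H_\unit$ in this situation is exactly $\PGL{\ell}{q}$, the group of collineations of the $\ell$-dimensional projective space $Y$, matching the statement; this is the identification already set up in the ``notation for the central example'' paragraph.

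Next I would verify the hypotheses of the lemma one at a time. Hypothesis (1), $Y \supseteq Y_0$, is immediate since each $\projectiveVector{e}_i$ with $i \le \ell$ lies in $\projective(\linearSpan{e_1,\dots,e_\ell})$. For hypothesis (2), the assumption is that $\chi$ marks every subgroup of $H = \PGL{\ell}{q}$ that does not contain $\PSL{\ell}{q}$; since $\PSL{\ell}{q}$ is $(\projectiveVector{e}_1,\dots,\projectiveVector{e}_\ell)$-transitive in $\PGL{\ell}{q}$ (this is the last item of the example list, applied with the base group being $\PGL{\ell}{q}$ and using that $\SL{\ell}{q}$ maps onto a transitive subgroup), any subgroup $K \subgroup H$ that is $(\projectiveVector{e}_1,\dots,\projectiveVector{e}_\ell)$-transitive must contain $\PSL{\ell}{q}$, hence the contrapositive gives that $\chi$ marks every subgroup that is \emph{not} $(\projectiveVector{e}_1,\dots,\projectiveVector{e}_\ell)$-transitive---this is precisely hypothesis (2). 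Hypothesis (3) says the $\GL{n}{q}$-translates of $Y$ cover all $\ell$-element subsets of $X$; in fact they cover all subsets of size $\le \ell$, because any $\le \ell$ projective points span a subspace of projective dimension $\le \ell$, which is contained in some $\ell$-dimensional projective subspace, and $\GL{n}{q}$ acts transitively on $\ell$-dimensional projective subspaces.

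The one hypothesis needing a genuine (if short) argument is (4): for $t \in G$ and $i < \ell$, if $ta_j \in Y$ for all $j \le i$, then there is $s \in \stab{G}{Y}$ with $sa_j = ta_j$ for all $j \le i$. Here $a_j = \projectiveVector{e}_j$, so the hypothesis is that $t$ maps the first $i$ basis points into $Y = \projective(\linearSpan{e_1,\dots,e_\ell})$; I would lift to linear algebra: $t e_j \in \linearSpan{e_1,\dots,e_\ell}$ for $j \le i$ (up to scalars, which can be absorbed). Since $i < \ell$, the vectors $t e_1,\dots,t e_i$ are $i$ linearly independent vectors of the $\ell$-dimensional space $\linearSpan{e_1,\dots,e_\ell}$, so they extend to a basis of that space, and this extension together with $e_{\ell+1},\dots,e_n$ (or any completion on the lower block) defines an element $s$ of $P_{(\ell,n-\ell)} = \stab{G}{Y}$ with $s e_j = t e_j$ for $j \le i$; projectively, $s a_j = t a_j$ for $j \le i$. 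I expect this step to be the main obstacle only in the bookkeeping sense---making sure the block-matrix completion genuinely lies in $P_{(\ell,n-\ell)}$ and that the projective normalization is handled cleanly---but there is no deep difficulty. With all four hypotheses verified, Lemma~\ref{lemma:stabilizer-character-permutaitons-global-transitivity} yields that $\hcLikeInduction{\chi}{H}{G}$ marks every subgroup $K \subgroup G$ that is not $(\projectiveVector{e}_1,\dots,\projectiveVector{e}_{\ell-1})$-transitive, which is the assertion of the corollary.
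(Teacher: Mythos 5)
Your overall route is exactly the paper's: the corollary is deduced by applying \autoref{lemma:stabilizer-character-permutaitons-global-transitivity} to $G = \GL{n}{q}$ acting on $X = \projective^{n-1}\field{q}$, with $Y = \projective\left(\linearSpan{e_1,\dots,e_\ell}\right)$ and $Y_0 = \{\projectiveVector{e}_1,\dots,\projectiveVector{e}_\ell\}$, and your verifications of hypotheses (1), (3) and (4) are correct (the paper leaves (3) and (4) implicit, so the explicit block-matrix completion for (4) is welcome detail and works: $te_1,\dots,te_i$ are linearly independent vectors of $\linearSpan{e_1,\dots,e_\ell}$, extend them to a basis of that space and append $e_{\ell+1},\dots,e_n$ to get $s \in P_{(\ell,n-\ell)} = \stab{G}{Y}$ with $se_j = te_j$ for $j \le i$).

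The one point to repair is the justification of hypothesis (2), which you state with the implication reversed. You claim that any subgroup of $H$ that is $(\projectiveVector{e}_1,\dots,\projectiveVector{e}_\ell)$-$\PGL{\ell}{q}$-transitive must contain $\PSL{\ell}{q}$; this does not follow from the fact you cite (that $\PSL{\ell}{q}$ is itself $(\projectiveVector{e}_1,\dots,\projectiveVector{e}_\ell)$-$\PGL{\ell}{q}$-transitive), it is not needed, and its contrapositive does not give hypothesis (2). The implication the argument needs is the opposite one: if a subgroup $J \subgroup H$ contains $\PSL{\ell}{q}$, then $J$ is $(\projectiveVector{e}_1,\dots,\projectiveVector{e}_\ell)$-$\PGL{\ell}{q}$-transitive, since the existential condition of \autoref{definition:g_transitivity} is already witnessed by elements of $\PSL{\ell}{q} \subseteq J$. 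Its contrapositive says that every subgroup of $H$ that is not $(\projectiveVector{e}_1,\dots,\projectiveVector{e}_\ell)$-$\PGL{\ell}{q}$-transitive fails to contain $\PSL{\ell}{q}$ and is therefore marked by $\chi$ by assumption, which is precisely hypothesis (2); this is how the paper phrases it just before the corollary. With that one-line correction your proof coincides with the paper's.
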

	
	\paragraph{ } \autoref{lemma:stabilizer-character-permutaitons-global-transitivity} is not limited to the case of $\GL{n}{q}$. For example, by applying the lemma for $G = S_n, Y = Y_0 = \{1,\dots,\ell\}$, we deduce:
	
	\begin{corollary} \label{corollary:sn-stabilizer-character}
		Let $\ell \le n$ be integers, and let $K \subgroup \symmetric{n}$. Assume there exists some $\chi \in \characters{\symmetric{\ell}}$ such that \textCMarksProperSubgroups{$\chi$}{$\symmetric{\ell}}$. Then,
		\textCMarksSubgroupsThatAreNotkTransitive{$\hcLikeInduction{\chi}{\symmetric{\ell}}{\symmetric{n}}$}{$\symmetric{n}$}{$(\ell-1)$}.
	\end{corollary}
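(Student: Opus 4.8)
The plan is to obtain \autoref{corollary:sn-stabilizer-character} as the direct specialization of \autoref{lemma:stabilizer-character-permutaitons-global-transitivity} to $G = \symmetric{n}$ acting on $X = \{1,\dots,n\}$ in the natural way, with $Y = Y_0 = \{a_1,\dots,a_\ell\}$ where $a_i = i$. First I would unwind \autoref{notation:section2} in this setting: the stabilizer $\stab{\symmetric{n}}{Y}$ is the Young subgroup $\symmetric{\ell} \times \symmetric{n-\ell}$, the kernel of its action on $Y$ is $\{\unit\} \times \symmetric{n-\ell}$, so $H = \symmetries{Y} \cong \symmetric{\ell}$ and $p$ is the projection $\symmetric{\ell} \times \symmetric{n-\ell} \twoheadrightarrow \symmetric{\ell}$ onto the first factor. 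Consequently $\hcLikeInduction{\chi}{H}{G}$ is precisely the character $\hcLikeInduction{\chi}{\symmetric{\ell}}{\symmetric{n}}$ appearing in the statement, and it remains to check the four hypotheses of the lemma.

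Hypothesis~1 is immediate because $Y = Y_0$. For hypothesis~2, the action of $H = \symmetric{\ell}$ on the $\ell$-element set $Y$ is $\ell$-transitive, so by the examples following \autoref{definition:g_transitivity} a subgroup $K \subgroup \symmetric{\ell}$ is \xkGTransitive{a}{\ell}{$\symmetric{\ell}$} if and only if it acts $\ell$-transitively on $Y$; since $\cardinality{Y} = \ell$, this forces $K = \symmetric{\ell}$. Hence the subgroups of $\symmetric{\ell}$ that fail to be \xkGTransitive{a}{\ell}{$\symmetric{\ell}$} are exactly the proper ones, so the assumption ``$\chi$ marks every proper subgroup of $\symmetric{\ell}$'' is precisely hypothesis~2. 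Hypothesis~3 holds because $\{tY : t \in \symmetric{n}\}$ is the collection of all $\ell$-element subsets of $X$, so any $B$ with $\cardinality{B} \le \ell$ is contained in one of them. For hypothesis~4, given $t \in \symmetric{n}$ and $i < \ell$ with $ta_j \in Y$ for all $j \le i$, the assignment $j \mapsto ta_j$ ($j \le i$) is an injection from an $i$-element set into $Y$; since $i \le \ell - 1 < \ell = \cardinality{Y}$ there is at least one free value, so this partial map extends to a bijection $Y \to Y$, which, combined with the identity on $X \setminus Y$, yields an element $s \in \stab{\symmetric{n}}{Y}$ with $sa_j = ta_j$ for all $j \le i$.

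With the four hypotheses verified, \autoref{lemma:stabilizer-character-permutaitons-global-transitivity} gives that $\hcLikeInduction{\chi}{\symmetric{\ell}}{\symmetric{n}}$ marks every subgroup of $\symmetric{n}$ that is not \xkGTransitive{a}{\ell-1}{$\symmetric{n}$}. To finish, I would translate this back into ordinary transitivity: again by the examples after \autoref{definition:g_transitivity}, a subgroup $K \subgroup \symmetric{n}$ is \xkGTransitive{a}{\ell-1}{$\symmetric{n}$} if and only if its action on $\{1,\dots,n\}$ is $(\ell-1)$-transitive, so the conclusion reads exactly as \textCMarksSubgroupsThatAreNotkTransitive{$\hcLikeInduction{\chi}{\symmetric{\ell}}{\symmetric{n}}$}{$\symmetric{n}$}{$(\ell-1)$}. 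No step presents a genuine difficulty once \autoref{lemma:stabilizer-character-permutaitons-global-transitivity} is in hand; the only place needing a moment's attention is hypothesis~4, whose content is just the elementary observation that a partial permutation of $Y$ defined on fewer than $\ell$ points always extends to a permutation of $Y$ --- and here the strict inequality $i < \ell$ is precisely what is used.
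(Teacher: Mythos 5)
Your proposal is correct and follows exactly the paper's route: the paper obtains this corollary precisely by specializing \autoref{lemma:stabilizer-character-permutaitons-global-transitivity} to $G = \symmetric{n}$ with $Y = Y_0 = \{1,\dots,\ell\}$, so that $H \cong \symmetric{\ell}$ and the hypotheses hold as you verify. Your write-up merely makes explicit the verification of the four hypotheses and the translation between \xkGTransitivity{a}{\ell-1}{$\symmetric{n}$} and ordinary $(\ell-1)$-transitivity, which the paper leaves implicit.
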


	\paragraph{Remark} \autoref{corollary:sn-stabilizer-character} gives an example where we "lose a transitivity degree", that is, the resulting character $\hcLikeInduction{\chi}{H}{G}$ does not mark all subgroups of $G$ that are not \xGTransitive{$(a_1, \dots, a_{\ell - 1}, a_\ell)$}{$G$}, but only those that are not \xkGTransitive{a}{\ell-1}{$G$}. For example, choose $G = \symmetric{4}$ with the usual action on $\{1, 2, 3, 4\}$, $Y = \{1, 2\}$, $\ell = 2$, $H = \symmetric{2}$ and $\chi$ the single nontrivial irreducible character of $H$. Then, $\hcLikeInduction{\chi}{\symmetric{2}}{\symmetric{4}}$ decomposes as the sum of the characters parametrized by $(3, 1)$ and $(2, 1, 1)$. These are not enough to mark all subgroups of $\symmetric{4}$ that are not $2$-transitive, as neither marks the subgroups of $\symmetric{4}$ of index $3$, which are conjugate to $K=\groupSpan{(1, 2, 3, 4), (1, 3)}$ (with $(x_1, \dots, x_a)$ denoting the cyclic permutation of $x_1, \dots, x_a$).
	
	\medskip

	\paragraph{ } We prove \autoref{lemma:stabilizer-character-permutaitons-global-transitivity} in several steps. Let $K \le G$ and $t \in G$. We consider the action of $K \cap \stab{G}{tY}$ on $tY$. To understand this action, we can consider the projection to $H_t$ -- the group ${J_t \define p_t(K \cap \stab{G}{tY})}$, which is the quotient of $K \cap \stab{G}{tY}$ by the kernel of this action.
	
	In \autoref{lemma:stabilizer-character}, we deduce that if \textChiVanishesOnK{\hcLikeInduction{\chi}{H}{G}}{K}, then \textChiVanishesOnK{\chi_t}{J_t} for all $t \in G$. This allows us to reduce the question about the subgroups and characters of $G$ to a question about the subgroups and characters of $H$.
	
	In \autoref{lemma:stabilizer-character-permutaitons-local-transitivity}, we assume there exist some $a_1, \dots, a_\ell \in Y$ and some character $\chi$ of $H$ \textCMarksSubgroupsThatAreNotxkTransitive{that}{$H$}{a}{\ell}. Then, we show that this $\chi$ can be "pulled-back" to show that \textChiVanishesOnK{\hcLikeInduction{\chi}{H}{G}}{K} implies that $K \cap \stab{G}{tY}$ is \xkGTransitive{a}{\ell}{$\stab{G}{tY}$} for all $t \in G$. We proceed to proving the lemmas.
	
	\begin{lemma} \label{lemma:stabilizer-character}
		Continuing with \autoref{notation:section2}, let $G$ be a finite group acting on some set $X$, let $Y \subseteq X$ be some subset, and let $\chi \in \characters{H}$. For every $K \subgroup G$, we have $\chiDoesntContainTrivial{\hcLikeInduction{\chi}{H}{G}}{K}$ if and only if for every $t \in G$, the group $J_t \define p_t(K \cap \stab{G}{tY})$ satisfies $\chiKDoesntContainTrivial{\chi_t}{J_t}$.
	\end{lemma}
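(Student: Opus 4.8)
The plan is to compute the multiplicity $\innerProduct{\restricted{\hcLikeInduction{\chi}{H}{G}}{K}}{\trivialCharacter{K}}_K$ directly from the induced-character formula, rewrite it as a sum over $t \in G$ of manifestly nonnegative terms, and observe that the $t$-th term is a positive multiple of $\innerProduct{\restricted{\chi_t}{J_t}}{\trivialCharacter{J_t}}_{J_t}$. Since a finite sum of nonnegative reals vanishes if and only if every summand does, this yields the stated equivalence, and it avoids the need for Mackey's theorem: only \autoref{proposition:induced_character_formula} is used.

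First I would unwind the notation of \autoref{notation:section2}: $\hcLikeInduction{\chi}{H}{G} = \induced{\hcLikePullback{\chi}{H}{\stab{G}{Y}}}{\stab{G}{Y}}{G}$ with $\hcLikePullback{\chi}{H}{\stab{G}{Y}}(s) = \chi(p(s))$ for $s \in \stab{G}{Y}$, so by \autoref{proposition:induced_character_formula}, for $k \in K$,
\[
\hcLikeInduction{\chi}{H}{G}(k) = \frac{1}{\cardinality{\stab{G}{Y}}} \sum_{\substack{s \in G \\ s^{-1}ks \in \stab{G}{Y}}} \chi\!\left(p(s^{-1}ks)\right).
\]
Since $\innerProduct{\restricted{\hcLikeInduction{\chi}{H}{G}}{K}}{\trivialCharacter{K}}_K = \frac{1}{\cardinality{K}}\sum_{k \in K}\hcLikeInduction{\chi}{H}{G}(k)$, I would substitute and interchange the order of summation, grouping the terms by $s$. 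For fixed $s$ the constraint $s^{-1}ks \in \stab{G}{Y}$ is equivalent to $k \in s\,\stab{G}{Y}\,s^{-1} = \stab{G}{sY}$, so writing $t \define s$ the inner sum ranges over $k \in K \cap \stab{G}{tY}$, and the summand becomes $\chi(p(t^{-1}kt))$ (note $t^{-1}kt \in \stab{G}{Y}$ for such $k$).

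The \emph{key step} is the identity
\[
\chi\!\left(p(t^{-1}kt)\right) = \chi_t\!\left(p_t(k)\right) \qquad (t \in G,\ k \in \stab{G}{tY}),
\]
which says that conjugation by $t$ carries $\pullbackChiByF{\chi}{p}$ on $\stab{G}{Y}$ to $\pullbackChiByF{\chi_t}{p_t}$ on $\stab{G}{tY}$. To prove it I would apply $f_t$ to $h \define p(t^{-1}kt)$: using $f_t(h) = p_t\!\left(t\,p^{-1}(h)\,t^{-1}\right)$ together with the fact that $t\ker(p)t^{-1} = \ker(p_t)$ (the inclusion is the footnoted observation in \autoref{notation:section2} that makes $f_t$ well defined; equality follows since $f_t$ is an isomorphism, by a cardinality count), one computes $f_t(p(t^{-1}kt)) = p_t\!\left(k\,\ker(p_t)\right) = p_t(k)$, whence $p(t^{-1}kt) = f_t^{-1}(p_t(k))$ and, since $\chi_t = \pullbackChiByF{\chi}{(f_t^{-1})}$, the identity follows. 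I expect this bookkeeping — keeping straight the kernels, the direction of the conjugation, and the maps $p$, $p_t$, $f_t$ — to be the only genuinely delicate point; the rest is routine.

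Granting the identity, for fixed $t$ the inner sum $\sum_{k \in K \cap \stab{G}{tY}} \chi_t(p_t(k))$ collapses along the surjection $p_t \colon K \cap \stab{G}{tY} \twoheadrightarrow J_t$, whose kernel has size $\cardinality{K \cap \stab{G}{tY}}/\cardinality{J_t}$, to $\cardinality{K \cap \stab{G}{tY}} \cdot \frac{1}{\cardinality{J_t}}\sum_{j \in J_t}\chi_t(j) = \cardinality{K \cap \stab{G}{tY}}\;\innerProduct{\restricted{\chi_t}{J_t}}{\trivialCharacter{J_t}}_{J_t}$. Collecting everything,
\[
\innerProduct{\restricted{\hcLikeInduction{\chi}{H}{G}}{K}}{\trivialCharacter{K}}_K = \frac{1}{\cardinality{K}\,\cardinality{\stab{G}{Y}}} \sum_{t \in G} \cardinality{K \cap \stab{G}{tY}}\;\innerProduct{\restricted{\chi_t}{J_t}}{\trivialCharacter{J_t}}_{J_t}.
\]
Every factor $\innerProduct{\restricted{\chi_t}{J_t}}{\trivialCharacter{J_t}}_{J_t}$ is a multiplicity of $\trivialCharacter{J_t}$ in an honest character, hence $\ge 0$, and every coefficient $\cardinality{K \cap \stab{G}{tY}} \ge 1$ since $\unit \in K \cap \stab{G}{tY}$. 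Therefore the left-hand side is zero if and only if $\innerProduct{\restricted{\chi_t}{J_t}}{\trivialCharacter{J_t}}_{J_t} = 0$ for every $t \in G$; recalling that for an honest character $\phi$ and a subgroup $L$ one has $\chiKContainsTrivial{\phi}{L}$ exactly when $\innerProduct{\restricted{\phi}{L}}{\trivialCharacter{L}}_L > 0$, this is precisely the asserted equivalence $\chiKDoesntContainTrivial{\hcLikeInduction{\chi}{H}{G}}{K} \iff \bigl(\forall t \in G \colon \chiKDoesntContainTrivial{\chi_t}{J_t}\bigr)$.
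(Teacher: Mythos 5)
Your proposal is correct and follows essentially the same route as the paper: both apply \autoref{proposition:induced_character_formula}, interchange the two sums, collapse the inner sum along the quotient map to obtain $\characterSum{\hcLikeInduction{\chi}{H}{G}}{K} = \sum_{t \in G} c_t \characterSum{\chi_t}{J_t}$ with strictly positive coefficients, and conclude by nonnegativity of the multiplicities. The only (cosmetic) difference is bookkeeping: the paper substitutes $g' = t^{-1}gt$ and works with $(t^{-1}Kt) \cap \stab{G}{Y}$ and $p$, while you keep $k \in K \cap \stab{G}{tY}$ and verify the intertwining identity $\chi(p(t^{-1}kt)) = \chi_t(p_t(k))$ — the same fact the paper encodes in $J_t = f_t(p((t^{-1}Kt) \cap \stab{G}{Y}))$.
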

	\begin{proof}
		Let $K \subgroup G$. For every $t \in G$, $J_t = f_t(p((t^{-1}Kt) \cap \stab{G}{Y}))$. Now,
		\begin{multline*}
			\begin{aligned}
				\characterSum{\hcLikeInduction{\chi}{H}{G}}{K} &= \frac{1}{\cardinality{K}}\sum_{g \in K}\hcLikeInduction{\chi}{H}{G}(g)  \stackrel{\mathclap{\normalfont\mbox{(1)}}}{=} \frac{1}{\cardinality{K}}\sum_{g \in K} \frac{1}{\cardinality{\stab{G}{Y}}} \sum_{t \in G \mathSuchThat t^{-1}gt\in \stab{G}{Y}}\hcLikePullback{\chi}{H}{\stab{G}{Y}}(t^{-1}gt) \\
										 &= \frac{1}{\cardinality{K}\cdot \cardinality{\stab{G}{Y}}}\sum_{t \in G} \left( \sum_{g \in K \mathSuchThat t^{-1}gt\in \stab{G}{Y}}\hcLikePullback{\chi}{H}{\stab{G}{Y}}(t^{-1}gt)\right) \\
										 &= \frac{1}{\cardinality{K}\cdot \cardinality{\stab{G}{Y}}}\sum_{t \in G} \left( \sum_{g'\in \stab{G}{Y} \mathSuchThat tg't^{-1} \in K}\hcLikePullback{\chi}{H}{\stab{G}{Y}}(g')\right) \\
 										 &= \frac{1}{\cardinality{K}\cdot \cardinality{\stab{G}{Y}}}\sum_{t \in G} \left( \sum_{g' \in (t^{-1}Kt) \cap \stab{G}{Y}}\chi(p(g'))\right) \\
 										 &= \frac{1}{\cardinality{K}\cdot \cardinality{\stab{G}{Y}}}\sum_{t \in G} \cardinality{\ker(\restricted{p}{(t^{-1}Kt) \cap \stab{G}{Y}})} \left( \sum_{h \in p((t^{-1}Kt) \cap \stab{G}{Y})}\chi(h)\right) \\
 										 &= \frac{1}{\cardinality{K}\cdot \cardinality{\stab{G}{Y}}}\sum_{t \in G} \cardinality{\ker(\restricted{p}{(t^{-1}Kt) \cap \stab{G}{Y}})} \left( \sum_{h' \in J_t}\chi_t(h')\right) \\
 										 &= \sum_{t \in G} c_t \characterSum{\chi_t}{J_t},
			\end{aligned}
		\end{multline*}
		where $(1)$ follows from \autoref{proposition:induced_character_formula}, and
		\[c_t = \frac{\cardinality{\ker(\restricted{p}{(t^{-1}Kt) \cap \stab{G}{Y}})} \cdot \cardinality{p((t^{-1}Kt) \cap \stab{G}{Y})}}{\cardinality{K}\cdot \cardinality{\stab{G}{Y}}} = \frac{\cardinality{(t^{-1}Kt) \cap \stab{G}{Y}}}{\cardinality{K} \cdot \cardinality{\stab{G}{Y}}} > 0.\]
		Therefore, since ${\characterSum{\chi'}{J_t} \ge 0}$ for every $t \in G$,
		\[\characterSum{\hcLikeInduction{\chi}{H}{G}}{K} = 0 \iff \forall t \in G: \characterSum{\chi}{J_t} = 0.\]
	\end{proof}

	\begin{lemma} \label{lemma:stabilizer-character-permutaitons-local-transitivity}
		Continuing with \autoref{notation:section2}, let $G$ be a finite group $G$ acting on some set $X$, let $Y \subseteq X$ be some subset, and let $\chi \in \characters{H}$. Assume: 
		\begin{enumerate}
			\item $Y\supseteq Y_0=\{a_1 \dots a_\ell\}$
			\item\label{lemma:stabilizer-character-permutaitons-local-transitivity-h-assumption} \textCMarksSubgroupsThatAreNotxkTransitive{$\chi$}{$H$}{a}{\ell}.
		\end{enumerate}
	
		Let $K \subgroup G$ such that \textChiVanishesOnK{\hcLikeInduction{\chi}{H}{G}}{K}. Then for every $t \in G$, $K \cap \stab{G}{tY}$ is \xkGTransitive{ta}{\ell}{$\stab{G}{tY}$}.
	\end{lemma}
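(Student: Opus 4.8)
The plan is to push the hypothesis down the tower set up by \autoref{lemma:stabilizer-character}. Fix $t \in G$. Since \textChiVanishesOnK{\hcLikeInduction{\chi}{H}{G}}{K}, that lemma gives \textChiVanishesOnK{\chi_t}{J_t} for the group $J_t = p_t(K \cap \stab{G}{tY})$, i.e. $\characterSum{\chi_t}{J_t} = 0$. Because $\chi_t = \pullbackChiByF{\chi}{(f_t^{-1})}$ and $f_t$ restricts to a group isomorphism $J_t \xrightarrow{\sim} f_t^{-1}(J_t)$, and since pulling a character back along a group isomorphism sends the trivial character to the trivial character and preserves inner products, we get $\characterSum{\chi_t}{J_t} = \characterSum{\chi}{f_t^{-1}(J_t)}$. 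Hence $\chi$ does not mark the subgroup $f_t^{-1}(J_t) \subgroup H$, so by the contrapositive of hypothesis~\ref{lemma:stabilizer-character-permutaitons-local-transitivity-h-assumption} the subgroup $f_t^{-1}(J_t)$ is \xkGTransitive{a}{\ell}{$H$}.

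It remains to transport this statement across $f_t$ and then along $p_t$, landing at \xkGTransitivity{ta}{\ell}{$\stab{G}{tY}$} of $K \cap \stab{G}{tY}$. The one identity I would record first is the intertwining relation $f_t(g)(ta_i) = t(ga_i)$, valid for all $g \in H$ and all $i$: lifting $g$ to $\tilde g \in \stab{G}{Y}$ with $p(\tilde g) = g$, the element $t\tilde g t^{-1} \in \stab{G}{tY}$ maps $ty$ to $t(\tilde g y) = t(gy)$, and $f_t(g) = p_t(t\tilde g t^{-1})$ is exactly the permutation of $tY$ it induces. With this in hand, the first diagram chase is: given $\bar g \in H_t$, put $g = f_t^{-1}(\bar g) \in H$; \xkGTransitivity{a}{\ell}{$H$} of $f_t^{-1}(J_t)$ produces $k \in f_t^{-1}(J_t)$ with $ka_i = ga_i$ for all $i$; then $\bar k = f_t(k) \in J_t$ satisfies $\bar k(ta_i) = t(ka_i) = t(ga_i) = \bar g(ta_i)$, so $J_t$ is \xkGTransitive{ta}{\ell}{$H_t$}. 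The second chase is: given $g \in \stab{G}{tY}$, put $\bar g = p_t(g) \in H_t$; choose $\bar k \in J_t$ with $\bar k(ta_i) = \bar g(ta_i)$ for all $i$, and lift to $k \in K \cap \stab{G}{tY}$ with $p_t(k) = \bar k$ (possible since $J_t = p_t(K \cap \stab{G}{tY})$); as $p_t(k)$ is the permutation of $tY$ induced by $k$, we get $k(ta_i) = \bar k(ta_i) = \bar g(ta_i) = g(ta_i)$ for all $i$, which is precisely \xkGTransitivity{ta}{\ell}{$\stab{G}{tY}$} of $K \cap \stab{G}{tY}$. Since $t$ was arbitrary, this proves the lemma.

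I do not expect a serious obstacle beyond bookkeeping: the two chases are routine, and the real care goes into keeping straight the three actions in play --- of $H$ on $Y$, of $H_t$ on $tY$, and of $G$ on $X$ --- together with the maps $p$, $p_t$, $f_t$ linking them, and in particular into checking that $f_t$ carries the distinguished tuple $(a_1,\dots,a_\ell)$ in $Y$ to $(ta_1,\dots,ta_\ell)$ in $tY$, which is exactly what lets the transitivity notion transfer across $f_t$. One should also note in passing that $a_1,\dots,a_\ell$ being distinct and $t$ acting injectively on $X$ force $ta_1,\dots,ta_\ell$ to be distinct, so that \autoref{definition:g_transitivity} does apply to the latter tuple.
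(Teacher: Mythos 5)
Your proposal is correct and follows essentially the same route as the paper: apply \autoref{lemma:stabilizer-character} to get that $\chi_t$ restricted to $J_t$ contains no trivial summand, transfer the resulting transitivity from $H$ to $H_t$ via the intertwining identity $f_t(g)(tx)=t(gx)$, and finally lift along $p_t$, using $J_t = p_t(K \cap \stab{G}{tY})$, to reach the conclusion about $K \cap \stab{G}{tY}$. The one point where you are more careful than the paper's write-up is the subgroup of $H$ to which hypothesis (2) is applied: you use $f_t^{-1}(J_t) = p\left((t^{-1}Kt) \cap \stab{G}{Y}\right)$, noting that the isomorphism $f_t$ identifies $\restricted{\chi_t}{J_t}$ with $\restricted{\chi}{f_t^{-1}(J_t)}$, whereas the paper's text applies the hypothesis to $J_\unit = p(K \cap \stab{G}{Y})$ and then pushes an element of $J_\unit$ forward by $f_t$; since $f_t(J_\unit) = p_t\left(tKt^{-1} \cap \stab{G}{tY}\right)$ need not lie in $J_t$ unless $t$ normalizes $K$, your choice is the one that makes the argument go through verbatim for all $t$, and it matches the identity $J_t = f_t\left(p\left((t^{-1}Kt) \cap \stab{G}{Y}\right)\right)$ already recorded in the proof of \autoref{lemma:stabilizer-character}. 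The two diagram chases and the final lift along $p_t$ coincide with the paper's, so there is no gap in your argument.
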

	\begin{proof}
		Assume that \textChiVanishesOnK{\hcLikeInduction{\chi}{H}{G}}{K}. By \autoref{lemma:stabilizer-character}, for every $t \in G$, ${J_t = p_t(K \cap \stab{G}{tY})}$ satisfies $\chiDoesntContainTrivial{\chi_t}{J_t}$.
		
		Let $t \in G$. It is easy to see that for every $g \in H$, $f_t(g)tx = tgx$. For every $h \in H_t$ there exists some $g \in H$ such that $f_t(g) = h$. Since $\chiKDoesntContainTrivial{\chi}{J_e}$, by assumption \ref{lemma:stabilizer-character-permutaitons-local-transitivity-h-assumption} of the lemma, $J_{\unit}$ is \xkGTransitive{a}{\ell}{$H$}. As such, there exists some $g' \in J_{\unit}$ such that $ga_i = g'a_i$ for every $1 \le i \le \ell$. 
		
		If we denote $h' \define f_t(g')$, we get 
		\begin{multline*}
			\begin{aligned}
				h'(ta_i) = f_t(g')t(a_i) = tg'(a_i) = tg(a_i) = f_t(g)t(a_i) = h(ta_i)
			\end{aligned}
		\end{multline*}
		for every $1 \le i \le \ell$. As such, $J_t$ is \xkGTransitive{ta}{\ell}{$H_t$}. We pull it back by $p_t$ (since $p_t$ is the quotient by the kernel of the action) to show that $K \cap \stab{G}{tY}$ is \xkGTransitive{ta}{\ell}{$\stab{G}{tY}$}.
	\end{proof}

	\begin{proof}[Proof of \autoref{lemma:stabilizer-character-permutaitons-global-transitivity}]
		Let $t_0 \in G$, and let $b_i \define t_0 a_i$ for all $1 \le i \le \ell - 1$. Define $g_i \in K$ by induction, for $0 \le i \le \ell-1$, so that $\forall j \le i \colon g_i(a_j) = b_j$.
		
		\textit{Base case -- $i = 0$}: define $g_0 = id$.
		
		\textit{Induction step}: assume that $g_j \in K$ are defined for all $j < i$. We define the following elements of $G$:
		
		\begin{description}[leftmargin=1cm, style=nextline]
			\item[$\bm{t'}$] 
			By assumption \ref{lemma:stabilizer-character-permutaitons-global-transitivity-covering} of \autoref{lemma:stabilizer-character-permutaitons-global-transitivity}, with $i+1 \le \ell$, there exists some $t' \in G$ such that
				\[{g_{i-1}(a_1)=b_1, \dots, g_{i-1}(a_{i-1})=b_{i-1}, g_{i-1}(a_i), b_i}\]
				all belong to $t'Y$.
			\item[$\bm{s_1}$]
		Since $t'^{-1}g_{i-1}(a_j) \in Y$ for all $j \le i$, assumption \ref{lemma:stabilizer-character-permutaitons-global-transitivity-permutation} of \autoref{lemma:stabilizer-character-permutaitons-global-transitivity} implies that there exists some ${s_1 \in \stab{G}{Y}}$, such that $s_1(a_j) = t'^{-1}g_{i-1}(a_j)$ for all $j \le i$. Equivalently, $t's_1(a_j) = g_{i-1}(a_j)$, for all $j \le i$.

			\item[$\bm{t}$]
			Denote $t \define t's_1$. Since $tY = t'Y$, $t^{-1}t'Y = Y$. 
			
			\item[$\bm{s_2}$]
			By assumption \ref{lemma:stabilizer-character-permutaitons-global-transitivity-permutation} of \autoref{lemma:stabilizer-character-permutaitons-global-transitivity} with $t^{-1}(b_j) \in Y$ for $j= 1, \dots, i$, we obtain some $s_2 \in \stab{G}{Y}$, such that $s_2(a_j) = t^{-1}(b_j)$ for all $j \le i$.

			\item[$\bm{s_3}$] Denote $s_3 \define ts_2t^{-1} \in \stab{G}{tY}$. For every $j \le i$,		
			\[s_3(g_{i-1}(a_j)) = ts_2t^{-1}t(a_j) = ts_2(a_j) = b_j.\]

			\item[$\bm{h}$] Since, by \autoref{lemma:stabilizer-character-permutaitons-local-transitivity}, $K \cap \stab{G}{tY}$ is \xkGTransitive{ta}{\ell}{$\stab{G}{tY}$} and ${t(a_j) = g_{i-1}(a_j)}$ for all $j \le i$, there exists some $h \in K$ such that for every $j \le i$, ${h(g_{i-1}(a_j)) = s_3(g_{i-1}(a_j)) = b_j}$.
		\end{description}

		 We can now define $g_i \define hg_{i-1} \in K$. Then: $\forall j \le i: g_i(a_j) = b_j$. By letting $g \define g_{\ell-1} \in K$, we deduce that $g(a_i) = b_i$ for all $1 \le i \le \ell-1$.
	\end{proof}

	\section{Characters of $\GL{n}{q}$} \label{section:glnq_characters}
	\paragraph{ } This section introduces several facts about the characters of $\GL{n}{q}$, and uses them to define and understand the main characters we need in the paper. We follow the terminology of \cite{gl_combinatorics}.
	
	\begin{definition} \label{definition:parabolic_group_characters_and_circ}
		Let $\alpha = (\alpha_1, \alpha_2, \dots, \alpha_s)$, such that $\alpha_1, \dots, \alpha_s \ge 1$ and ${\sum_{i=1}^s \alpha_i = n}$. Define the parabolic subgroup
		\[
		P_\alpha \define
		\begin{bmatrix}
		\GL{\alpha_1}{q}	& *					& \dots		& *					\\
		0					& \GL{\alpha_2}{q}	& \dots		& *					\\
		\vdots				& \vdots			& \ddots	& \vdots			\\
		0					& 0					& \dots		& \GL{\alpha_s}{q}	\\
		\end{bmatrix} 
		\subgroup \GL{n}{q}.
		\]
	
		Let $\chi_i\in\characters{\GL{\alpha_i}{q}}$. Define the characters
		\[(\chi_1 \times \chi_2 \times \dots \times \chi_s)
		\left(
		\begin{bmatrix}
		A_1		& *		 & \dots & *		\\
		0		& A_2	 & \dots & *		\\
		\vdots	& \vdots & \ddots& \vdots	\\
		0		& 0		 & \dots & A_s		\\
		\end{bmatrix} 
		\right)
		\define \prod_{i=1}^s(\chi_i(A_i))
		,\]
		
		\[\defineNotation{\chi_1 \circ \chi_2 \circ \dots \circ \chi_s} \define \induced{\chi_1 \times \chi_2 \times \dots \times \chi_s}{P_\alpha}{\GL{n}{q}} \in \characters{\GL{n}{q}}. \]
	\end{definition}

	\begin{proposition}[{\cite[Lemma 2.5]{green_gl_characters}}]
		The operator $\circ$ is bilinear, associative and commutative.
	\end{proposition}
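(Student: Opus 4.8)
The plan is to deduce all three properties from elementary facts about ordinary character induction, organized around the tower of parabolic subgroups $P_\alpha \subseteq \GL{n}{q}$. Recall from \autoref{definition:parabolic_group_characters_and_circ} that, for $\chi_i \in \characters{\GL{\alpha_i}{q}}$, the character $\chi_1 \times \cdots \times \chi_s$ is the function on $P_\alpha$ obtained by evaluating $\chi_i$ on the $i$-th diagonal block; equivalently, it is the inflation to $P_\alpha$ of the external product of the $\chi_i$ along the projection $P_\alpha \twoheadrightarrow \GL{\alpha_1}{q} \times \cdots \times \GL{\alpha_s}{q}$ that kills the unipotent radical, and $\chi_1 \circ \cdots \circ \chi_s = \induced{\chi_1 \times \cdots \times \chi_s}{P_\alpha}{\GL{n}{q}}$. \emph{Bilinearity} is then immediate: each of the three operations involved---forming the external product on the Levi, inflating to $P_\alpha$, and inducing to $\GL{n}{q}$---is additive in every argument on which it depends, so $(\chi_1 + \chi_1') \circ \chi_2 = \chi_1 \circ \chi_2 + \chi_1' \circ \chi_2$ and likewise in the second slot, and the $s$-fold product is additive in each slot by the same reasoning.

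\emph{Associativity.} I would use transitivity of induction together with the compatibility of inflation with induction. Write $n = \alpha_1 + \alpha_2 + \alpha_3$ and consider the chain $P_{(\alpha_1,\alpha_2,\alpha_3)} \subseteq P_{(\alpha_1+\alpha_2,\alpha_3)} \subseteq \GL{n}{q}$. The key structural observation is that $P_{(\alpha_1,\alpha_2,\alpha_3)}$ is exactly the preimage of the parabolic $P_{(\alpha_1,\alpha_2)} \times \GL{\alpha_3}{q}$ of the Levi $\GL{\alpha_1+\alpha_2}{q} \times \GL{\alpha_3}{q}$ under the projection $P_{(\alpha_1+\alpha_2,\alpha_3)} \twoheadrightarrow \GL{\alpha_1+\alpha_2}{q} \times \GL{\alpha_3}{q}$, and that the resulting square of inclusions and quotient maps is a pullback whose two vertical maps share the same kernel, namely the unipotent radical of $P_{(\alpha_1+\alpha_2,\alpha_3)}$. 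Since induction commutes with passage to a direct product of groups, $(\chi_1 \circ \chi_2) \times \chi_3$ is the inflation to $P_{(\alpha_1+\alpha_2,\alpha_3)}$ of $\bigl(\induced{\chi_1 \times \chi_2}{P_{(\alpha_1,\alpha_2)}}{\GL{\alpha_1+\alpha_2}{q}}\bigr) \boxtimes \chi_3$; pushing the inflation through the pullback square turns it into $\induced{\ \cdot\ }{P_{(\alpha_1,\alpha_2,\alpha_3)}}{P_{(\alpha_1+\alpha_2,\alpha_3)}}$ of the inflation of $\chi_1 \times \chi_2 \times \chi_3$; and transitivity of induction up to $\GL{n}{q}$ gives $(\chi_1 \circ \chi_2) \circ \chi_3 = \induced{\chi_1 \times \chi_2 \times \chi_3}{P_{(\alpha_1,\alpha_2,\alpha_3)}}{\GL{n}{q}}$. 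The symmetric computation through $P_{(\alpha_1,\alpha_2+\alpha_3)}$ yields the same character for $\chi_1 \circ (\chi_2 \circ \chi_3)$, and the $s$-fold associativity follows by induction on $s$.

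\emph{Commutativity.} Let $w \in \GL{n}{q}$ be the permutation matrix interchanging the first $\alpha_1$ coordinates with the last $\alpha_2$ coordinates. Conjugation by $w$ carries $P_{(\alpha_1,\alpha_2)}$ onto $P_{(\alpha_2,\alpha_1)}$ and, on the Levi, swaps the two general linear factors, so it carries the function $\chi_1 \times \chi_2$ on $P_{(\alpha_1,\alpha_2)}$ to $\chi_2 \times \chi_1$ on $P_{(\alpha_2,\alpha_1)}$. Because inducing a subgroup-and-character pair that has been simultaneously conjugated by an element of the ambient group gives the same induced character (conjugation by $w$ is an inner automorphism of $\GL{n}{q}$, which acts trivially on class functions of $\GL{n}{q}$), we get $\chi_1 \circ \chi_2 = \induced{\chi_1 \times \chi_2}{P_{(\alpha_1,\alpha_2)}}{\GL{n}{q}} = \induced{\chi_2 \times \chi_1}{P_{(\alpha_2,\alpha_1)}}{\GL{n}{q}} = \chi_2 \circ \chi_1$. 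Combined with associativity, this shows the $s$-fold product is invariant under arbitrary permutations of its factors.

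The step I expect to require the most care is the associativity argument---specifically, verifying cleanly that forming $(\chi_1 \circ \chi_2) \times \chi_3$ inside the larger Levi and then inducing agrees with inducing $\chi_1 \times \chi_2 \times \chi_3$ directly from $P_{(\alpha_1,\alpha_2,\alpha_3)}$. This rests entirely on the pullback-square description of $P_{(\alpha_1,\alpha_2,\alpha_3)}$ above and the resulting interchange of inflation and induction; once that bookkeeping with unipotent radicals is pinned down, the remaining manipulations are just transitivity of induction and the compatibility of induction with direct products. Everything for bilinearity and commutativity is routine by comparison.
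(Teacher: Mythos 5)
The paper itself offers no argument for this proposition --- it simply cites Green --- so the only question is whether your proof stands on its own. Your bilinearity argument and your associativity argument are sound: the identification of $P_{(\alpha_1,\alpha_2,\alpha_3)}$ as the preimage of $P_{(\alpha_1,\alpha_2)} \times \GL{\alpha_3}{q}$ under the quotient of $P_{(\alpha_1+\alpha_2,\alpha_3)}$ by its unipotent radical is correct, the interchange of inflation with induction along that square is a standard and easily verified fact, and transitivity of induction then gives both bracketings equal to $\induced{\chi_1 \times \chi_2 \times \chi_3}{P_{(\alpha_1,\alpha_2,\alpha_3)}}{\GL{n}{q}}$.

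The commutativity argument, however, has a genuine gap. Conjugation by the block anti-diagonal permutation matrix $w$ does \emph{not} carry $P_{(\alpha_1,\alpha_2)}$ onto $P_{(\alpha_2,\alpha_1)}$: it carries the upper block-triangular group onto the \emph{lower} block-triangular (opposite) parabolic with block sizes $(\alpha_2,\alpha_1)$. Indeed, when $\alpha_1 \ne \alpha_2$ the groups $P_{(\alpha_1,\alpha_2)}$ and $P_{(\alpha_2,\alpha_1)}$ are not conjugate in $\GL{n}{q}$ at all, since the first is the full stabilizer of an $\alpha_1$-dimensional subspace and the second of an $\alpha_2$-dimensional one, so no inner automorphism can match up the two inducing pairs. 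What conjugation-invariance of induction actually gives you is that $\chi_1 \circ \chi_2$ equals the character induced from the opposite parabolic of the correspondingly flipped inflation; the remaining step --- that inducing the inflation of a fixed character of the Levi from a parabolic and from its opposite yields the same character of $\GL{n}{q}$ --- is precisely the nontrivial content of commutativity (the ``independence of the parabolic'' in Harish-Chandra induction) and is not supplied by your argument. It can be repaired, for instance, by invoking the transpose map: since every matrix in $\GL{n}{q}$ is conjugate to its transpose, transposition preserves all class functions of $\GL{n}{q}$, while it exchanges $P_{(\alpha_2,\alpha_1)}$ with its opposite and fixes the value of the inflated Levi character on diagonal blocks; alternatively one can compute inner products via the Mackey formula, or fall back on Green's original proof --- but as written, the chain of equalities asserting $\induced{\chi_1 \times \chi_2}{P_{(\alpha_1,\alpha_2)}}{\GL{n}{q}} = \induced{\chi_2 \times \chi_1}{P_{(\alpha_2,\alpha_1)}}{\GL{n}{q}}$ by inner-automorphism invariance alone does not hold up.
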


	\paragraph{Remark} Let $1 \le \ell \le n$, and let $q$ be a prime power. Then, in the notation of \autoref{section:induced_characters_of_stab}, for every character $\chi$ of $\PGL{\ell}{q}$ we have ${\hcLikeInduction{\chi}{\PGL{\ell}{q}}{\GL{n}{q}} = \chi' \circ \trivialCharacter{n-\ell}}$ where $\chi'$ is the pull-back of $\chi$ to $\GL{\ell}{q}$.
	
	\paragraph{ } Using the operator $\circ$, we can define a family of characters.
	
	\begin{definition}
		Let ${\lambda = (\lambda_1, \dots, \lambda_\ell) \vdash n}$ be a partition of $n$. Define
		\[\defineNotation{\trivialCharacter{\lambda}} \define \trivialCharacter{\lambda_1} \circ \dots \circ \trivialCharacter{\lambda_\ell} \in \characters{\GL{n}{q}}.\]
	\end{definition}

	This family of characters is related to the action of $\GL{n}{q}$ on $\field{q}^n$. This can be seen as follows.
	
	For a sequence of integers $0 = d_0 \le d_1 \le \dots \le d_\ell = n$, we say that the sequence of subspaces ${0 = V_0 \subseteq V_1 \subseteq \dots \subseteq V_\ell = \field{q}^n}$ is a \defineTextNotation{\textit{flag of signature ${(d_1, \dots, d_\ell)}$}} if ${\dim(V_i) = d_i}$ for all ${i \le \ell}$.
	
	For a partition ${\lambda = (\lambda_1, \dots, \lambda_\ell) \vdash n}$ we associate the increasing sequence $d_i(\lambda) \define \sum_{j=1}^{i}\lambda_j$.

	\begin{example}	\label{example:trivial_character_lambda}
		We present examples for the characters $\trivialCharacter{\lambda}$. Since $\circ$ is commutative, we may permute the entries of $\lambda$. For example, $\trivialCharacter{(1, n-1)} = \trivialCharacter{(n-1, 1)}$.
		\begin{enumerate}
			\item The character $\trivialCharacter{(n)}$ is simply the trivial character, since the only flag of signature $(n)$ is $0 \subseteq \field{q}^n$.
			\item The value $\trivialCharacter{(1, n-1)}(g)$ is the number of 1-dimensional subspaces of $\field{q}^n$ that $g$ fixes, or equivalently, the number of fixed points of $g$ in the action of $\GL{n}{q}$ on $\projective^{n-1}\field{q}$.
			\item The value $\trivialCharacter{(1, 1, n-2)}(g)$ is the number of flags $0 \subseteq V_1 \subseteq V_2 \subseteq \field{q}^n$ that $g$ fixes, where $\dim(V_1) = 1$ and $\dim(V_2) = 2$.
			\item By \autoref{remark:permutation-representation}, $\trivialCharacter{\lambda}$ is the character of the permutation representation associated with the action of $\GL{n}{q}$ on flags of signature ${(d_1(\lambda), \dots, d_\ell(\lambda))}$. In other words, $\trivialCharacter{\lambda}(g)$ is the number of flags ${0 = V_0 \subseteq V_1 \subseteq \dots \subseteq V_\ell = \field{q}^n}$ of signature ${(d_1(\lambda), \dots, d_\ell(\lambda))}$ such that $gV_i = V_i$ for all $i \le \ell$.
		\end{enumerate}
	\end{example}

	The characters $\trivialCharacter{\lambda}$ are not irreducible in general. The following lemma decomposes them to irreducible characters.

	\begin{lemma}[{\cite[Lemma 2.4]{gl_combinatorics}}] \label{lemma:induced-trivial-character-split}
		Let $\lambda = (\lambda_1, \dots, \lambda_l) \vdash n$. Then
		\[\trivialCharacter{\lambda} = \sum_{\mu \vdash n}K_{\mu \lambda}\unipotentCharacter{\mu},\]
		where $K_{\mu \lambda}$ are the Kostka numbers, and $\unipotentCharacter{\mu}$ are some irreducible characters\declareFootnote{unipotent_ambiguity}{In \cite{gl_combinatorics}, these characters are labeled $\unipotentCharacter{\mu'^{(1)}}$. We define: $\unipotentCharacter{\mu} = \unipotentCharacter{\mu'^{(1)}}$, which is more similar to the notation of \cite{green_gl_characters}.}.
	\end{lemma}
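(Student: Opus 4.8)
The plan is to prove the decomposition by a triangularity argument, obtaining the $\unipotentCharacter{\mu}$ as the characters produced by orthonormalizing the family $\{\trivialCharacter{\lambda}\}_{\lambda\vdash n}$ against the Kostka matrix. Write $K=(K_{\mu\lambda})_{\mu,\lambda\vdash n}$ for the Kostka matrix; recall that $K_{\mu\lambda}=0$ unless $\mu\trianglerighteq\lambda$ in the dominance order and $K_{\lambda\lambda}=1$, so $K$ is unitriangular and hence invertible over $\integers$. The goal is to show that the Gram matrix of the $\trivialCharacter{\lambda}$ in the character ring of $\GL{n}{q}$ equals $K^{\mathrm T}K$; the decomposition then falls out by inverting $K$.

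First I would record, as in \autoref{example:trivial_character_lambda}, that $\trivialCharacter{\lambda}=\inducedTrivial{P_\lambda}{\GL{n}{q}}$ is the permutation character of $\GL{n}{q}$ on flags of signature $(d_1(\lambda),\dots,d_\ell(\lambda))$. Hence, by Frobenius reciprocity together with Mackey's formula — equivalently, by the standard identity $\innerProduct{\inducedTrivial{H_1}{G}}{\inducedTrivial{H_2}{G}}_G=\cardinality{H_1\backslash G/H_2}$ — one gets
\[\innerProduct{\trivialCharacter{\lambda}}{\trivialCharacter{\mu}}_{\GL{n}{q}}=\cardinality{P_\lambda\backslash\GL{n}{q}/P_\mu}.\]
Next I would compute this double-coset count. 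Using the Bruhat decomposition $\GL{n}{q}=\bigsqcup_{w\in\symmetric{n}}BwB$ with $B=P_{(1^n)}$, and $P_\lambda=\bigsqcup_{u\in\symmetric{\lambda}}BuB$ for the Young subgroup $\symmetric{\lambda}\subgroup\symmetric{n}$, the double cosets $P_\lambda\backslash\GL{n}{q}/P_\mu$ biject with $\symmetric{\lambda}\backslash\symmetric{n}/\symmetric{\mu}$; the latter biject, via $\symmetric{\lambda}w\symmetric{\mu}\mapsto\bigl(\cardinality{I_i\cap wJ_j}\bigr)_{i,j}$ (with $I_i,J_j$ the standard blocks of $\{1,\dots,n\}$ of sizes $\lambda_i,\mu_j$), with the set of nonnegative integer matrices with row sums $\lambda$ and column sums $\mu$. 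Finally, the classical symmetric-function identity — obtained from $h_\lambda=\sum_\nu K_{\nu\lambda}s_\nu$, $s_\nu=\sum_\rho K_{\nu\rho}m_\rho$ and $\innerProduct{h_\lambda}{m_\mu}=\delta_{\lambda\mu}$ — identifies this cardinality with $\sum_{\nu\vdash n}K_{\nu\lambda}K_{\nu\mu}$. Combining these gives $\innerProduct{\trivialCharacter{\lambda}}{\trivialCharacter{\mu}}_{\GL{n}{q}}=(K^{\mathrm T}K)_{\lambda\mu}$. Assembling this chain — Mackey, Bruhat, the margin-matrix bijection, and the Kostka identity — is the technical heart of the proof and the step I expect to be the main obstacle; the rest is formal.

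To finish, set $\unipotentCharacter{\mu}\define\sum_{\lambda\vdash n}(K^{-1})_{\lambda\mu}\trivialCharacter{\lambda}$, a virtual character. From the Gram computation one gets $\innerProduct{\unipotentCharacter{\mu}}{\unipotentCharacter{\nu}}_{\GL{n}{q}}=\delta_{\mu\nu}$, so each $\unipotentCharacter{\mu}$ is $\pm$ an irreducible character, and inverting the relation yields $\trivialCharacter{\lambda}=\sum_{\mu\vdash n}K_{\mu\lambda}\unipotentCharacter{\mu}$. It remains to fix the signs. Take $\lambda=(1^n)$: the permutation character $\trivialCharacter{(1^n)}$ on complete flags is an honest character, and $K_{\mu,(1^n)}$ is the number of standard Young tableaux of shape $\mu$, which is at least $1$ for every $\mu\vdash n$. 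Hence $\innerProduct{\trivialCharacter{(1^n)}}{\unipotentCharacter{\mu}}_{\GL{n}{q}}\ge 0$ forces each $\unipotentCharacter{\mu}$ to be a genuine irreducible character, which completes the proof.

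Alternatively, one could route through Zelevinsky's PSH-algebra formalism: the $\integers$-span of the unipotent characters, with the product $\circ$ and the adjoint parabolic restriction, is a positive self-dual graded Hopf algebra isomorphic to the ring of symmetric functions, under which $\trivialCharacter{(m)}\mapsto h_m$ and the irreducible unipotent characters correspond to Schur functions, so $h_\lambda=\sum_\mu K_{\mu\lambda}s_\mu$ transports directly to the claim. I would nonetheless present the elementary triangularity argument above, since it uses nothing beyond Mackey's formula and the Bruhat decomposition.
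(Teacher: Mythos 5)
Your proof is correct, but note that the paper does not prove this statement at all: it is imported verbatim as \cite[Lemma 2.4]{gl_combinatorics}, and the paper's only contribution is the footnoted relabelling $\unipotentCharacter{\mu}=\unipotentCharacter{\mu'^{(1)}}$. So you are supplying an argument where the paper has a citation. Your route is the classical one (essentially Steinberg's computation of the flag permutation characters): the orbit-counting identity $\innerProduct{\inducedTrivial{P_\lambda}{\GL{n}{q}}}{\inducedTrivial{P_\mu}{\GL{n}{q}}}_{\GL{n}{q}}=\cardinality{P_\lambda\backslash\GL{n}{q}/P_\mu}$, the Bruhat-decomposition bijection of parabolic double cosets with $\symmetric{\lambda}\backslash\symmetric{n}/\symmetric{\mu}$ and hence with nonnegative integer matrices of row sums $\lambda$ and column sums $\mu$, the symmetric-function identity giving $\sum_{\nu}K_{\nu\lambda}K_{\nu\mu}$, and then inversion of the unitriangular Kostka matrix plus the sign-fix at $\lambda=(1^n)$ (where $K_{\mu,(1^n)}=f^{\mu}\ge 1$ rules out $\unipotentCharacter{\mu}$ being the negative of an irreducible). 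Each of these steps is standard and correctly assembled, and orthonormality also gives that the resulting $\unipotentCharacter{\mu}$ are pairwise distinct; moreover, invertibility of $K$ shows any family of irreducibles satisfying the displayed identity is unique, so your characters necessarily coincide with those the paper (and Green) call unipotent, which is all the lemma asserts. What your approach buys is self-containedness at the cost of invoking Mackey, Bruhat and the Kostka/RSK identity; what the paper's citation buys is brevity and automatic consistency with the labelling conventions of \cite{gl_combinatorics} and \cite{green_gl_characters}. Your alternative sketch via Zelevinsky's PSH formalism would prove the same statement more structurally, but the elementary Gram-matrix argument you present is entirely adequate.
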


	We define the unipotent characters ${\{\defineNotation{\unipotentCharacter{\mu}} \colon \mu \vdash n \}}$ as these irreducible summands of ${\{\trivialCharacter{\lambda} \colon \lambda \vdash n\}}$.

	\paragraph{ } In the remainder of the section, we study the characters $\transitivityCharacters$ of $\GL{n}{q}$, defined below. We decompose them to irreducible characters, and characterize the properties of subgroups of $\GL{n}{q}$ that they mark.
	\begin{definition} \label{definition:upper-triangle-characters}
		Define the following characters:
		\begin{multline*}
			\begin{aligned}
				\defineNotation{\transitivityCharacter} \define &\inducedTrivial{P_{(n-1, 1)}}{\GL{n}{q}}-\trivialCharacter{\GL{n}{q}} = \trivialCharacter{(n-1, 1)}-\trivialCharacter{(n)} & (n \ge 2)\\
				\defineNotation{\twoTransitivityCharacter} \define &\inducedTrivial{P_{(n-2, 1, 1)}}{\GL{n}{q}}-\trivialCharacter{\GL{n}{q}} = \trivialCharacter{(n-2, 1, 1)}-\trivialCharacter{(n)} & (n \ge 3)
			\end{aligned}
		\end{multline*}
	\end{definition}

	Using \autoref{lemma:induced-trivial-character-split}, we deduce:
	\begin{corollary} \label{corollary:parabolic-character-decomposition} 
		For the values of $n$ where the characters are defined:
		\begin{enumerate}
			\item $\trivialCharacter{\GL{n}{q}} = \trivialCharacter{(n)} = \unipotentCharacter{(n)}$
			\item $\transitivityCharacter = \unipotentCharacter{(n-1, 1)}$
			\item \label{corollary:parabolic-character-decomposition-2} $\twoTransitivityCharacter = 2\unipotentCharacter{(n-1, 1)}+\unipotentCharacter{(n-2, 1, 1)}+\unipotentCharacter{(n-2, 2)}$
		\end{enumerate}
		In \ref{corollary:parabolic-character-decomposition-2}, the term $\unipotentCharacter{(n-2, 2)}$ is omitted if $n = 3$.
	\end{corollary}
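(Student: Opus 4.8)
The plan is to feed each of the three characters into \autoref{lemma:induced-trivial-character-split}, which reduces the entire statement to computing a handful of Kostka numbers $K_{\mu\lambda}$. Recall that $K_{\mu\lambda}$ counts semistandard Young tableaux of shape $\mu$ and content $\lambda$; in particular $K_{\lambda\lambda}=1$, and $K_{\mu\lambda}=0$ unless $\mu$ dominates $\lambda$, so for each of the $\lambda$'s appearing below only finitely many $\mu$ can contribute, and these are all easy to list from the dominance order.

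First I would dispose of parts (1) and (2). By \autoref{example:trivial_character_lambda}, $\trivialCharacter{(n)}$ is the trivial character, so $\trivialCharacter{\GL{n}{q}}=\trivialCharacter{(n)}$; and since the only tableau of content $(n)$ is a single row of $1$'s, forcing $\mu=(n)$, the lemma gives $\trivialCharacter{(n)}=\unipotentCharacter{(n)}$, which is (1). For (2), the partitions of $n$ dominating $(n-1,1)$ are exactly $(n)$ and $(n-1,1)$, and a one-line tableau count gives $K_{(n),(n-1,1)}=K_{(n-1,1),(n-1,1)}=1$, so $\trivialCharacter{(n-1,1)}=\unipotentCharacter{(n)}+\unipotentCharacter{(n-1,1)}$; subtracting the identity from (1) yields $\transitivityCharacter=\trivialCharacter{(n-1,1)}-\trivialCharacter{(n)}=\unipotentCharacter{(n-1,1)}$.

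For (3), the partitions of $n$ dominating $(n-2,1,1)$ are $(n)$, $(n-1,1)$, $(n-2,2)$ and $(n-2,1,1)$, where $(n-2,2)$ fails to be a partition precisely when $n=3$ and is then simply absent. Counting semistandard tableaux of content $(n-2,1,1)$ — that is, with $n-2$ entries equal to $1$, one $2$, and one $3$ — I expect $K_{(n),(n-2,1,1)}=1$, $K_{(n-1,1),(n-2,1,1)}=2$ (the two tableaux differ in whether the $2$ or the $3$ sits in the second row), $K_{(n-2,2),(n-2,1,1)}=1$ and $K_{(n-2,1,1),(n-2,1,1)}=1$. Hence $\trivialCharacter{(n-2,1,1)}=\unipotentCharacter{(n)}+2\unipotentCharacter{(n-1,1)}+\unipotentCharacter{(n-2,2)}+\unipotentCharacter{(n-2,1,1)}$, and subtracting (1) gives the asserted decomposition of $\twoTransitivityCharacter$, with the $\unipotentCharacter{(n-2,2)}$ summand disappearing automatically when $n=3$.

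The only genuinely delicate point is the tableau bookkeeping in (3): I would be careful to enumerate the dominance order correctly so as not to overlook a contributing shape, and to pin down the multiplicity of $\unipotentCharacter{(n-1,1)}$ as exactly $2$. Everything else is immediate from \autoref{lemma:induced-trivial-character-split} together with the distinctness of the $\unipotentCharacter{\mu}$.
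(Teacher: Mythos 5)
Your proposal is correct and follows essentially the same route as the paper, which deduces the corollary directly from \autoref{lemma:induced-trivial-character-split} by evaluating the relevant Kostka numbers (the paper simply leaves this computation implicit). Your tableau counts, including $K_{(n-1,1),(n-2,1,1)}=2$ and the disappearance of $\unipotentCharacter{(n-2,2)}$ when $n=3$, are all accurate.
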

	
	\begin{proposition} \label{proposition:parabolic-character-sum-criteria}
		Let $K \subgroup \GL{n}{q}$, and let $V \cong \field{q}^n$ be the vector space on which $\GL{n}{q}$ acts. Consider the action of $\GL{n}{q}$ on $\projective(V)$, and denote by $\projectiveSemiPairs{V}$ the collection of flags of signature $(1, 2, n)$, that is,
		\[\defineNotation{\projectiveSemiPairs{V}} \define \left\{(0, V_1, V_2, V) \colon 0 \subseteq V_1 \subseteq V_2 \subseteq V, \dim(V_1) = 1, \dim(V_2) = 2\right\}.\]
		Then:
		\begin{enumerate}
			\item \label{proposition:parabolic-character-sum-criteria-1} $\chiDoesntContainTrivial{\transitivityCharacter}{K} \iff$ $K$ acts transitively on $\projective(V)$
			\item \label{proposition:parabolic-character-sum-criteria-2} $\chiDoesntContainTrivial{\twoTransitivityCharacter}{K} \iff$ $K$ acts transitively on $\projectiveSemiPairs{V} \iff $ $\forall (u, v), (u', v') \in V \times V$, both pairs linearly independent, there exist $A \in K$ and elements of $\field{q}$ -- $\alpha \ne 0, \beta, \gamma \ne 0$ -- such that $Au = \alpha u', Av = \beta u' + \gamma v'$.
		\end{enumerate}
	\end{proposition}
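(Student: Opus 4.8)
The plan is to recognise $\transitivityCharacter$ and $\twoTransitivityCharacter$ as permutation characters of transitive actions of $\GL{n}{q}$, shifted by the trivial character, to reduce the ``does not contain $\trivialCharacter{K}$'' conditions to transitivity of $K$ via \autoref{proposition:stabilizer-induced-character}, and finally to unwind the definition of a flag of signature $(1,2,n)$ to obtain the explicit linear-algebraic reformulation in part~\ref{proposition:parabolic-character-sum-criteria-2}.

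\emph{The two transitivity equivalences.} By \autoref{example:trivial_character_lambda} together with \autoref{remark:permutation-representation}, $\trivialCharacter{(n-1,1)} = \trivialCharacter{(1,n-1)}$ is the permutation character of $\GL{n}{q}$ acting on $\projective(V)$. Likewise, since the operator $\circ$ is commutative, $\trivialCharacter{(n-2,1,1)} = \trivialCharacter{(1,1,n-2)}$, and again by \autoref{example:trivial_character_lambda}/\autoref{remark:permutation-representation} this is the permutation character of $\GL{n}{q}$ acting on the set $\projectiveSemiPairs{V}$ of flags of signature $(d_1,d_2,d_3)=(1,2,n)$. Both actions are transitive; for $\projectiveSemiPairs{V}$ this follows by choosing, for each flag $(0,V_1,V_2,V)$, a basis $u,v,\dots$ of $V$ with $\langle u\rangle=V_1$ and $\langle u,v\rangle=V_2$, the linear map matching two such bases lying in $\GL{n}{q}$ and carrying one flag to the other. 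Since $\transitivityCharacter$ and $\twoTransitivityCharacter$ are genuine characters of $\GL{n}{q}$ (their permutation characters contain $\trivialCharacter{\GL{n}{q}}$ exactly once; alternatively use \autoref{corollary:parabolic-character-decomposition}), their restrictions to $K$ are genuine characters of $K$, so $\restricted{\transitivityCharacter}{K}$ does not contain $\trivialCharacter{K}$ iff $\innerProduct{\restricted{\trivialCharacter{(n-1,1)}}{K}}{\trivialCharacter{K}}_K = 1$, and similarly for $\twoTransitivityCharacter$ with $\projectiveSemiPairs{V}$. Applying \autoref{proposition:stabilizer-induced-character} (equivalently \autoref{corollary:permutation-character-marks-transitive}) to each transitive action converts these equalities into transitivity of $K$ on $\projective(V)$ and on $\projectiveSemiPairs{V}$ respectively, which gives part~\ref{proposition:parabolic-character-sum-criteria-1} and the first equivalence of part~\ref{proposition:parabolic-character-sum-criteria-2}.

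\emph{The explicit reformulation.} Every flag $(0,V_1,V_2,V)\in\projectiveSemiPairs{V}$ equals $(0,\langle u\rangle,\langle u,v\rangle,V)$ for some linearly independent pair $(u,v)$ (take $0\ne u\in V_1$ and $v\in V_2\setminus V_1$), and conversely each linearly independent pair determines such a flag. For $A\in\GL{n}{q}$ we have $A\langle u\rangle=\langle u'\rangle$ exactly when $Au=\alpha u'$ for some $\alpha\in\fieldInvertible{q}$; granting this, $A\langle u,v\rangle=\langle u',v'\rangle$ exactly when $Av=\beta u'+\gamma v'$ with $\beta\in\field{q}$ arbitrary and $\gamma\in\fieldInvertible{q}$ --- the inclusion $A\langle u,v\rangle\subseteq\langle u',v'\rangle$ forces $Av\in\langle u',v'\rangle$ (the two $2$-dimensional spaces then coinciding automatically), while $\gamma\ne 0$ records that $Au,Av$ span a $2$-dimensional space. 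Quantifying over all flags turns ``$K$ acts transitively on $\projectiveSemiPairs{V}$'' into the asserted statement about ordered linearly independent pairs, completing part~\ref{proposition:parabolic-character-sum-criteria-2}.

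I do not expect a serious obstacle. The only points that need care are invoking commutativity of $\circ$ to pass from the parabolic $P_{(n-2,1,1)}$ (whose quotient action is on flags of signature $(n-2,n-1,n)$) to $P_{(1,1,n-2)}$ (whose quotient action is on $\projectiveSemiPairs{V}$), and pinning down the constraints on $\alpha,\beta,\gamma$ --- in particular verifying that $\gamma\ne 0$ is precisely what encodes $A$ preserving, rather than collapsing, the $2$-dimensional step of the flag.
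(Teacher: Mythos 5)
Your proposal is correct and follows exactly the paper's route: the paper's own (one-line) proof likewise identifies $\transitivityCharacter$ and $\twoTransitivityCharacter$ via \autoref{example:trivial_character_lambda} as permutation characters (minus the trivial) of the transitive actions on $\projective(V)$ and on flags of signature $(1,2,n)$, and then applies \autoref{corollary:permutation-character-marks-transitive}. You merely spell out the details the paper leaves implicit, including the commutativity of $\circ$ and the $\alpha,\beta,\gamma$ reformulation, and these details are accurate.
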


	\begin{proof}
		The proposition follows from \autoref{corollary:permutation-character-marks-transitive} and \autoref{example:trivial_character_lambda}.
	\end{proof}
	
	\section{The Maximal Subgroups of $\GL{n}{q}$} \label{section:maximal_subgroups_of_glnq}
	\paragraph{ }
	In this section we study the maximal subgroups of $\GL{n}{q}$ that do not contain $\SL{n}{q}$, and find a character $\chi$ such that \textChiDoesntVanishOnK{\chi}{K} for every maximal subgroup $K$.
	
	It is enough to study maximal subgroups of $\GL{n}{q}$, as \textChiDoesntVanishOnK{\chi}{K} implies \textChiDoesntVanishOnK{\chi}{K'} for every $K' \le K$. This can be easily seen, as \textChiDoesntVanishOnK{\chi}{K} implies that $\restricted{\chi}{K} = \trivialCharacter{K} + \chi'$ for some character $\chi'$ of $K$, which implies that $\restricted{\chi}{K'} = \trivialCharacter{K'} + \restricted{\chi'}{K'}$.
	
	\autoref{section:geometric_subgroups_of_glnq_description} describes the geometric maximal subgroups, \autoref{section:geometric_subgroups_of_glnq_restriction} checks whether $\chiKContainsTrivial{\transitivityCharacters}{K}$ for $K$ which is one of the geometric subgroups, and \autoref{section:non_geometric_subgroups} deals with the rest of the maximal subgroups, for the case of $n = 3$. For the main results, we only need these results for $n=2$ and $n=3$, but we add the general case since it is very similar to our specialized case.
	
	\paragraph{ } Before we proceed to describing the geometric groups, we make several definitions.
	
	 Let $G$ be a finite group, and let $d \in \positiveIntegers$. We denote by $\defineNotation{dG}$ some group extension of $G$ with $\subgroupIndex{dG}{G} = d$. There may exist many non-isomorphic groups that satisfy this condition, but we only consider properties that are common to all of the extensions.
	
	For $K \le \GL{m}{q}$, the wreath product $\defineNotation{K \wr \symmetric{\ell}}$, can be realized the group of matrices with $\ell \times \ell$ blocks, where:
	\begin{enumerate}
		\item Each block is of size $m \times m$.
		\item The non-zero blocks are elements of $K$.
		\item There is exactly one non-zero block in every row.
		\item There is exactly one non-zero block in every column.
	\end{enumerate}

	\paragraph{ }
	We denote by $\gal{\field{q}}{\field{q_0}}$ the Galois group of the extension $\factor{\field{q}}{\field{q_0}}$ where $q$ is a power of $q_0$.
	
	We use the nonstandard notation of $\defineNotation{\GammaLExt{d}{q}{q_0}}$ to denote the group of semi-linear morphisms on $\field{q}^d$ over $\field{q_0}$, that is, morphisms $f \colon \field{q}^d \to \field{q}^d$ that satisfy:
	\begin{enumerate}
		\item For every two vectors $u, v \in \field{q}^d$, $f(u+v) = f(u)+f(v)$.
		\item There exists some automorphism $\sigma \in \gal{\field{q}}{\field{q_0}}$ such that for every $v \in \field{q}^d$, ${f(\lambda u) = \sigma(\lambda) f(u)}$.
	\end{enumerate}
	
	The group $\GammaLExt{d}{q}{q_0}$ can be realized as the semidirect product ${\GL{d}{q} \rtimes \gal{\field{q}}{\field{q_0}}}$, which is the set 
	\[ \{ (A, \sigma) \colon A \in \GL{d}{q}, \sigma \in \gal{\field{q}}{\field{q_0}} \}. \]
	with multiplication defined by $(A, \sigma)(A', \sigma') = (A \sigma(A'), \sigma \sigma')$ where $\gal{\field{q}}{\field{q_0}}$ acts on every matrix entry by entry. The action of $\GammaLExt{d}{q}{q_0}$ on $\field{q}^d$ is defined by $(A, \sigma)(x) = A \sigma(x)$ where $\gal{\field{q}}{\field{q_0}}$ acts on every vector coordinate by coordinate.
	
	When $q_0$ is prime, we write ${\defineNotation{\GammaL{d}{q}} \define \GammaLExt{d}{q}{q_0}}$. Similarly, we denote ${\defineNotation{\aut{\field{q}}} \define \gal{\field{q}}{\field{q_0}}}$.

	\subsection{The Geometric Maximal Subgroups} \label{section:geometric_subgroups_of_glnq_description}
	
	\paragraph{Aschbacher's Theorem} To understand the structure of subgroups of $\GL{n}{q}$, we use a variation of a theorem due to Aschbacher\declareFootnote{aschbacher_references}{See \cite{aschbacher_original} for Aschbacher's original paper, or \cite{subgroup_structure_classical_groups} for a more detailed introduction.}. Roughly speaking, given a finite simple classical group $G_0$ and an extension $G$ of $G_0$ which satisfies ${G_0 \subgroup G \subgroup \aut{G_0}}$, Aschbacher's Theorem describes classes of subgroups  $\classicalFamily{1}{G}, \dots, \classicalFamily{8}{G}$ called "geometric classes" and an exceptional class $\classicalExceptions(G)$. The theorem states that every subgroup of $G$ which does not contain $G_0$ and is maximal among all such subgroups is a member of one of the classes $\classicalFamily{1}{G}, \dots, \classicalFamily{8}{G}, \classicalExceptions(G)$.
	
	The theorem can be generalized to several other families of groups, such as $\GL{n}{q}$. To simplify the definitions and theorems we do not state the general theorem, but restrict to the case of $G = \GL{n}{q}$. We use \cite[Theorem 1.2.1]{subgroup_structure_classical_groups} and the description of the geometric maximal subgroups in \cite[Chapter 4]{subgroup_structure_classical_groups} for the case of $\GL{n}{q}$. While the classes are defined by their geometric meaning, we divide them further by their structure. We present the classes in the following theorem, followed by brief descriptions of the classes.
	
	We note that while these classes may intersect and may contain subgroups that are not maximal among the subgroups of $G$ not containing $G_0$.
	
	\begin{theorem}
		Let $G=\GL{n}{q}$, and let $V \cong \field{q}^n$ be the space upon which $G$ acts with the usual action. Consider the families $\defineNotation{\classicalFamily{i}{\GL{n}{q}}}$ given in \autoref{table:classical-families}. Then, every $K \subgroup G$ such that $K \not \supergroup \SL{n}{q}$ is either contained in a member of $\classicalFamily{i}{G}$ for some $1 \le i \le 8$, or $K \in \classicalExceptions(G)$.

		\begin{table}[]
			\centering
			\caption{The families $\classicalFamily{i}{\GL{n}{q}}$ \label{table:classical-families}}
			\begin{threeparttable}
				\begin{tabular}{CCcC}
					\hline
					\classicalFamilySymbol{i}&Type
					&Description 
					&Conditions  \\ \hline \hline
					\classicalFamilySymbol{1}&P_{(m, n-m)}
					&\makecell{Stabilizer of $W \subseteq V$ \\ with $\dim(W) = m$} 
					& 1 \le m \le n-1 \\ \hline
					\classicalFamilySymbol{2}&\GL{m}{q} \wr \symmetric{t}
					&\makecell{Permutations of decompositions \\ of $V$ to $t$ $m$-dimensional spaces}
					&\makecell{n = mt \\ t \ge 2}\\ \hline
					\classicalFamilySymbol{3}& \GammaLExt{m}{q^r}{q}
					&\makecell{Stabilizers of the structure of $V$ \\ as a vector space over $\field{q^r}$}
					&\makecell{n = mr}\\ \hline
					\classicalFamilySymbol{4}&\GL{n_1}{q} \otimes \GL{n_2}{q}
					&\makecell{Stabilizers of the structure of $V$ \\ as $V \cong V_1 \otimes V_2$} 
					&\makecell{n=n_1n_2 \\ 2 \le n_1 < n_2} \\ \hline
					\classicalFamilySymbol{5}&\GL{n}{q'}
					&\makecell{Matrices with coefficients \\ in $\field{q'}$} 
					&\field{q'} \subseteq \field{q} \\ \hline
					\classicalFamilySymbol{6}&\normalizer{R}{\GL{n}{q}}
					& See \autoref{definition:c6_description}.
					&\makecell{r \text{ is prime} \\ n = r^m, r \divides q-1\\  \text{if } r = 2, 4 \divides q-1} \\ \hline
					\classicalFamilySymbol{7}&\GL{m}{q} \wr \symmetric{t}
					&\makecell{Permutations of decompositions \\ of $V = V_1 \otimes \dots \otimes V_t$ where \\ $dim V_i = m$}
					&\makecell{n=m^t \\ m \ge 3}\\ \hline
					\classicalFamilySymbol{8}&\SP{n}{q}
					& See \autoref{definition:c8_description}.
					&4 \le n \text{ is even}  \\ \hline
					\classicalFamilySymbol{8}&U_n(q^{\frac{1}{2}})
					& See \autoref{definition:c8_description}.
					&\makecell{q \text{ is a square} \\ n \ge 3}  \\ \hline
					\classicalFamilySymbol{8}&O_n^\epsilon(q)
					& See \autoref{definition:c8_description}.
					&\makecell{q \text{ is odd} \\ n \ge 3}\\
				\end{tabular}
			\end{threeparttable}
		\end{table}
	\end{theorem}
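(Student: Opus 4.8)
The plan is to deduce the theorem from the version of Aschbacher's theorem recorded in \cite[Theorem 1.2.1]{subgroup_structure_classical_groups}, specialised to $G = \GL{n}{q}$, with almost all of the work being the dictionary between the abstract geometric classes of that source and the explicit matrix families collected in \autoref{table:classical-families}. First I would reduce to the case that $K$ is maximal among the subgroups of $G$ not containing $\SL{n}{q}$: any $K$ with $K \not\supergroup \SL{n}{q}$ lies inside some such maximal $K'$, containment of $K'$ in a member of a class $\classicalFamily{i}{G}$ is inherited by $K$, and $\classicalExceptions(G)$ may be taken to be closed under passing to subgroups that still do not contain $\SL{n}{q}$; so it suffices to treat maximal $K$.

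Next I would pass to the almost simple group to which the cited theorem applies. Writing $Z = \groupCenter{G}$ for the scalar matrices and $T = \PSL{n}{q}$, one has $G/Z \cong \PGL{n}{q}$ with $T \normalSubgroup \PGL{n}{q} \subgroup \aut{T}$ away from a few small pairs $(n,q)$. A maximal $K$ among the subgroups of $G$ not containing $\SL{n}{q}$ must contain $Z$ (else $KZ$ would be a strictly larger such subgroup), so $K$ is the full preimage of $\overline K = K/Z$, and $K \supergroup \SL{n}{q}$ if and only if $\overline K \supergroup T$; thus $\overline K$ is a maximal subgroup of the almost simple group $\PGL{n}{q}$ not containing its socle $T$. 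Applying \cite[Theorem 1.2.1]{subgroup_structure_classical_groups} to $\overline K \subgroup \PGL{n}{q}$ (the cited theorem being phrased uniformly for the classical groups in this range) places $\overline K$, and hence its preimage $K$, in a member of one of the eight geometric classes or in $\classicalExceptions(G)$.

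It then remains to match the geometric classes of \cite[Chapter 4]{subgroup_structure_classical_groups} with the families of \autoref{table:classical-families}. For each $i$ I would recall the defining geometric datum --- a proper nonzero subspace for $\classicalFamilySymbol{1}$; a decomposition of $V$ into $t$ equidimensional subspaces permuted by the group for $\classicalFamilySymbol{2}$; an $\field{q^r}$-vector-space structure on $V$ for $\classicalFamilySymbol{3}$; a tensor factorisation $V \cong V_1 \otimes V_2$ for $\classicalFamilySymbol{4}$; an $\field{q'}$-rational structure for $\classicalFamilySymbol{5}$; a symplectic-type $r$-subgroup whose normalizer is taken for $\classicalFamilySymbol{6}$; a tensor-power decomposition $V \cong V_1 \otimes \dots \otimes V_t$ for $\classicalFamilySymbol{7}$; and a nondegenerate classical form for $\classicalFamilySymbol{8}$ --- and compute its stabiliser in $\GL{n}{q}$, reading off the structures $P_{(m, n-m)}$, $\GL{m}{q} \wr \symmetric{t}$, $\GammaLExt{m}{q^r}{q}$, $\GL{n_1}{q} \otimes \GL{n_2}{q}$, $\GL{n}{q'}$, $\normalizer{R}{\GL{n}{q}}$, $\GL{m}{q} \wr \symmetric{t}$, $\SP{n}{q}$, $U_n(q^{1/2})$, $O_n^\epsilon(q)$ listed in the table, together with the side conditions ($n = mt$, $n = mr$, $r$ prime with $r \divides q-1$, $n$ even, $q$ a square, $q$ odd, and so on) under which the datum exists and its stabiliser is a proper subgroup. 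Because the statement asserts only containment in some class member and explicitly waives maximality and disjointness of the classes, there is no need here to enumerate the coincidences between classes or the instances in which a class member fails to be maximal.

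The step I expect to be the most delicate is the transfer between $\GL{n}{q}$ and the almost simple group $\PGL{n}{q}$: one must verify that $\PGL{n}{q}$ really is almost simple with socle $T = \PSL{n}{q}$, that the standard correspondence between subgroups of $\PGL{n}{q}$ and subgroups of $\GL{n}{q}$ containing $Z$ exactly matches those not containing $T$ with those not containing $\SL{n}{q}$, and that this correspondence carries each geometric class of $\PGL{n}{q}$ onto the correspondingly-typed family of \autoref{table:classical-families}. This forces one to exclude the pairs $(n,q) = (2,2)$ and $(2,3)$, where $\PSL{n}{q}$ is not simple, and to inspect on their own the few further exceptional pairs (where the Schur multiplier of $\PSL{n}{q}$ exceeds $\gcd(n, q-1)$, or extra outer automorphisms appear); these coincide with the exceptions already singled out in the paper, so they cause no real difficulty. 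Everything else reduces to reading off Chapter 4 of \cite{subgroup_structure_classical_groups}.
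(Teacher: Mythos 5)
The paper gives no proof of this theorem at all: it is presented as a specialization to $\GL{n}{q}$ of Aschbacher's theorem in the formulation of \cite[Theorem 1.2.1]{subgroup_structure_classical_groups}, and the table is the dictionary read off from the descriptions of the geometric classes in \cite[Chapter 4]{subgroup_structure_classical_groups}. Your proposal rests on exactly the same citation and the same dictionary, so in substance you follow the paper's route; what you add is a detour through the almost simple group $\PGL{n}{q}$. Two remarks on that extra step. First, it is unnecessary: the cited form of Aschbacher's theorem applies to any group $G$ with $\SL{n}{q} \subgroup G \subgroup \GammaL{n}{q}$ (and to the projective versions), so it covers $G = \GL{n}{q}$ directly, which is how the paper invokes it. Second, as written your reduction has a small gap: to conclude that a subgroup $K$ maximal among those not containing $\SL{n}{q}$ contains the scalars $Z = \groupCenter{\GL{n}{q}}$, you must know that $KZ$ still fails to contain $\SL{n}{q}$; this is true but needs an argument --- if $KZ \supergroup \SL{n}{q}$ then, $\SL{n}{q}$ being perfect, $\SL{n}{q} = [\SL{n}{q}, \SL{n}{q}] \subgroup [KZ, KZ] = [K, K] \subgroup K$ --- and perfectness fails exactly for $(n,q) \in \{(2,2), (2,3)\}$, which would then have to be checked by hand (they are tiny). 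Finally, $\classicalExceptions(G)$ as defined in the source is not closed under passing to subgroups, so rather than declaring it so, it is cleaner to phrase the reduction as the statement itself does: an arbitrary $K$ not containing $\SL{n}{q}$ lies in a maximal such subgroup, which is either contained in a geometric member or is itself a member of $\classicalExceptions(G)$; this is also the form in which the paper later uses the theorem, via the explicit lists in \cite[Tables 8.3 and 8.4]{maximal_subgroups}.
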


	\paragraph{Remark} For subgroups $\SL{n}{q} \subgroup G' \subgroup \GL{n}{q}$, the families $\defineNotation{\classicalFamily{i}{G'}}$ are defined as ${\{G' \cap C \colon C \in \classicalFamily{i}{\GL{n}{q}}\}}$, and likewise for $\classicalExceptions$.

	\paragraph{Description of the geometric classes} Let $G=\GL{n}{q}$, and let $V \cong \field{q}^n$ be the space upon which $G$ acts with the usual action.
	
	\begin{description}[leftmargin=1.4cm, style=nextline]
		\item[$\mathbf{\classicalFamily{1}{G}}$]
		This class contains $\stab{G}{W}$, which are the stabilizers of subspaces ${W \subseteq V}$. Up to conjugation, these are the groups $P_{(m, n-m)}$ defined in \autoref{definition:parabolic_group_characters_and_circ}, with $m = \dim(W)$.

		\item[$\mathbf{\classicalFamily{2}{G}}$] 
		Write $V = \bigoplus_{i=1}^t V_i$, where each $V_i$ is of dimension $m$. Consider the subgroup of $G$ that stabilizes each of the vector spaces. It can be extended by allowing permutations of the vector spaces, to get a group conjugate to $\GL{m}{q} \wr \symmetric{t}$.
		
		\item[$\mathbf{\classicalFamily{3}{G}}$] 
		If $n = r m$, we can consider $V$ as a vector space of dimension $m$ over $\field{q^r}$. Then, the semi-linear group $\GammaLExt{m}{q^r}{q}$ naturally acts linearly (over $\field{q}$) on $V$ and embeds as a proper\declareFootnote{gl22_exception}{A single exceptional case occurs when $G = \GL{2}{2}$. Then, the subgroup $\GammaLExt{1}{4}{2}$ is not proper, so we define $\classicalFamily{3}{G}$ as the singleton $\{\GL{1}{4}\}$.} subgroup of $\GL{n}{q}$.
		
		\item[$\mathbf{\classicalFamily{4}{G}}$] 
		This class contains the stabilizers of decompositions of the form ${V \cong V_1 \otimes V_2}$.
		
		\item[$\mathbf{\classicalFamily{5}{G}}$] 
		Considering $G$ as a group of matrices, this class contains the subgroups of matrices $\GL{n}{\field{q'}}$, where we restrict the coefficients to some subfield $\field{q'}$ of $\field{q}$.
		
		\item[$\mathbf{\classicalFamily{6}{G}}$] 
		See \autoref{definition:c6_description}.
		
		\item[$\mathbf{\classicalFamily{7}{G}}$] 
		If we write $V = \bigotimes_{i=1}^t V_i$, where each $V_i$ is of dimension $m$, consider the subgroup of $G$ that stabilizes each of the vector spaces. It can be extended by allowing permutations of the vector spaces, to get a group isomorphic to $\GL{m}{q} \wr \symmetric{t}$.
		
		\item[$\mathbf{\classicalFamily{8}{G}}$] 
		See \autoref{definition:c8_description}.
		
	\end{description}

	\paragraph*{The class $\mathbf{\classicalExceptions(G)}$} We only describe the groups in this class for $n = 2, 3$, in \autoref{section:c6_and_s_for_n_eq_2} and \autoref{section:non_geometric_subgroups}. For more information about the general case, see \cite[Section 1.2]{subgroup_structure_classical_groups}.

	\begin{definition} (A full definition is given in {\cite[Section 4.6]{subgroup_structure_classical_groups}}) \label{definition:c6_description} 
		We define the groups in $\classicalFamily{6}{\GL{n}{q}}$. Assume that $n=r^m$, for a prime $r$ that does not divide $q$, and let
		\[
		\fieldInvertible{q} \ni \omega = 
		\begin{cases}
		\text{primitive $r$-root of unity} & \text{if }r \ne 2\\
		\text{primitive $4$-root of unity} & \text{if }r = 2
		\end{cases}.
		\]
		For some $R_i \subgroup \groupSpan{\omega} \wr \symmetric{r}$, $1 \le i \le m$, such that $R_i$ is not contained in the group of diagonal matrices (these are given explicitly in {\cite[Section 4.6]{subgroup_structure_classical_groups}}), let ${R' = R_1 \otimes R_2 \dots \otimes R_m}$ as in $\classicalFamilySymbol{7}$. Let $\omega I$ be the diagonal matrix with $\omega$ on the diagonal, and let $R = \groupSpan{\omega I, R'}$ with its action on ${\field{q}^r \otimes \field{q}^r \otimes \dots \otimes \field{q}^r \cong \field{q}^n}$. We define $\classicalFamily{6}{\GL{n}{q}}$ to be the collection of subgroups of $\GL{n}{q}$ that are conjugate to the normalizers of such groups $R$.
	\end{definition}

	\begin{definition} (A full definition is given in {\cite[Section 2]{subgroup_structure_classical_groups}}) \label{definition:c8_description}
		The class $\classicalFamily{8}{\GL{n}{q}}$ consists of the groups listed in the last three rows of \autoref{table:classical-families}. Each group has some bilinear form, such that the group is the subgroup of $\GL{n}{q}$ that preserves the bilinear form. We describe the bilinear form by some standard basis, defining the group up to conjugation. These are described in \autoref{table:classical-bases}.
		\begin{table}[]
			\centering
			\caption{The bilinear forms defining the subgroups in the class $\classicalFamily{8}{\GL{n}{q}}$ \label{table:classical-bases}}
			\begin{threeparttable}
				\begin{tabular}{CcCC}
					\hline
					K & Basis type
					& Basis
					& Relations \\ \hline \hline
					\makecell{\SP{n}{q} \\ n=2m} & Symplectic
					&\makecell{e_1,\dots, e_m \\ f_1, \dots, f_m}
					&\makecell{\innerProduct{e_i}{e_j}=0 \\ \innerProduct{f_i}{f_j}=0 \\ \innerProduct{e_i}{f_j}=\delta_{i j}}  \\ \hline
					
					\makecell{U_n(q^{\frac{1}{2}}) \\ n=2m} & Unitary
					&\makecell{e_1,\dots, e_m \\ f_1, \dots, f_m}
					&\makecell{\innerProduct{e_i}{e_j}=0 \\ \innerProduct{f_i}{f_j}=0 \\ \innerProduct{e_i}{f_j}=\delta_{i j}}  \\ \hline
					
					\makecell{U_n(q^{\frac{1}{2}}) \\ n=2m+1} & Unitary
					&\makecell{e_1,\dots, e_m \\ f_1, \dots, f_m \\ x}
					&\makecell{\innerProduct{e_i}{e_j}=0, \ \innerProduct{e_i}{x}=0 \\ 
						\innerProduct{f_i}{f_j}=0, \ \innerProduct{f_i}{x}=0 \\ 
						\innerProduct{x}{x}=1, \ \innerProduct{e_i}{f_j}=\delta_{i j}} \\ \hline
					
					\makecell{O_n^+(q) \\ n=2m} & Orthogonal
					&\makecell{e_1,\dots, e_m \\ f_1, \dots, f_m}
					&\makecell{\innerProduct{e_i}{e_i}=0 \\ \innerProduct{f_i}{f_i}=0 \\ \innerProduct{e_i}{f_j}=\delta_{i j}}  \\ \hline
					
					\makecell{O_n^-(q) \\ n=2m} & Orthogonal
					&\makecell{e_1,\dots, e_{m-1} \\ f_1, \dots, f_{m-1} \\ x, y}
					&\makecell{\innerProduct{e_i}{e_i}=0, \ \innerProduct{e_i}{x}=0, \ \innerProduct{e_i}{y}=0 \\ 
					\innerProduct{f_i}{f_i}=0, \ \innerProduct{f_i}{x}=0, \ \innerProduct{f_i}{y}=0 \\ 
					\innerProduct{e_i}{f_j}=\delta_{i j}, \ \innerProduct{x}{x} = 1 \\ \innerProduct{x}{y} = 1, \ \innerProduct{y}{y}\ne 0 }\\ \hline
					
					\makecell{O_n^\circ(q) \\ n=2m+1} & Orthogonal
					&\makecell{e_1,\dots, e_m \\ f_1, \dots, f_m \\ x}
					&\makecell{\innerProduct{e_i}{e_i}=0, \ \innerProduct{e_i}{x}=0 \\ 
						\innerProduct{f_i}{f_i}=0, \ \innerProduct{f_i}{x}=0 \\ 
						\innerProduct{x}{x}\ne 0, \ \innerProduct{e_i}{f_j}=\delta_{i j}} \\
				\end{tabular}
			\end{threeparttable}
		\end{table}
	\end{definition}

	\subsection{The Restriction of the Characters to the Geometric Subgroups} \label{section:geometric_subgroups_of_glnq_restriction}
	\paragraph{ }
	Recall \autoref{definition:upper-triangle-characters} where $\transitivityCharacter$ and $\twoTransitivityCharacter$ were introduced. In this section, we prove that for every subgroup $K \subgroup \GL{n}{q}$ which is a member of one of the geometric families, \textChiDoesntVanishOnK{\chi}{K} for at least one of the characters $\chi = \transitivityCharacters$. While $\transitivityCharacter$ is a summand of $\twoTransitivityCharacter$ for $n \ge 3$, $\twoTransitivityCharacter$ is not defined for $n = 2$. Therefore, whenever possible, we prove the result for $\transitivityCharacter$. 
	
	Throughout the section, we heavily use \autoref{proposition:parabolic-character-sum-criteria} without explicitly referring to it.
	
	\paragraph{($\bm{\transitivityCharacter}$)} In this part of the section, we prove that \textChiDoesntVanishOnK{\transitivityCharacter}{K} if $K$ is a member of one of the families $\classicalFamilySymbol{1}, \classicalFamilySymbol{2}, \classicalFamilySymbol{4}, \classicalFamilySymbol{5}$ or $\classicalFamilySymbol{7}$. It is sufficient to find $v, w \in \projective(V)$ such that no $k \in K$ satisfies $kv = w$.
	\begin{proposition} \label{proposition:trasitivity_c1}
		If $K \in \classicalFamily{1}{\GL{n}{q}}$ then $\chiKContainsTrivial{\transitivityCharacter}{K}$.
	\end{proposition}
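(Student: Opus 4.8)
The plan is to read the statement off \autoref{proposition:parabolic-character-sum-criteria}(\ref{proposition:parabolic-character-sum-criteria-1}): for $K \subgroup \GL{n}{q}$, the restriction $\restricted{\transitivityCharacter}{K}$ contains $\trivialCharacter{K}$ as a summand if and only if $K$ does \emph{not} act transitively on $\projective(V)$. (Recall that $\transitivityCharacter$ is the permutation character of $\GL{n}{q}$ on $\projective(V)$ minus the trivial character, so by \autoref{burnsides_lemma} the quantity $\characterSum{\transitivityCharacter}{K}$ equals the number of $K$-orbits on $\projective(V)$ minus $1$.) Hence it suffices to exhibit, for an arbitrary $K \in \classicalFamily{1}{\GL{n}{q}}$, two points of $\projective(V)$ that lie in different $K$-orbits.

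By the description of the class $\classicalFamily{1}{\GL{n}{q}}$, such a $K$ is the stabilizer $\stab{\GL{n}{q}}{W}$ of some subspace $W \subseteq V$ with $1 \le \dim(W) \le n-1$; up to conjugation $K = P_{(m, n-m)}$ with $W = \linearSpan{e_1, \dots, e_m}$, although the normal form is not needed. Every $k \in K$ satisfies $kW = W$, so $k$ permutes the set $\projective(W)$ of projective points contained in $W$ and also permutes its complement $\projective(V) \setminus \projective(W)$. Since $1 \le \dim(W) \le n-1$, both of these sets are non-empty, so no element of $K$ sends a point of $\projective(W)$ to a point outside $W$. Thus $K$ has at least two orbits on $\projective(V)$, it is not transitive, and therefore \textChiDoesntVanishOnK{\transitivityCharacter}{K}.

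There is no genuine obstacle here; the conclusion is immediate once \autoref{proposition:parabolic-character-sum-criteria} is available. The only point worth a remark is that the property \textChiDoesntVanishOnK{\transitivityCharacter}{K} depends only on the $\GL{n}{q}$-conjugacy class of $K$ — conjugation is an isomorphism of $K$ with $gKg^{-1}$ that preserves the multiplicity of $\trivialCharacter{K}$ in the restriction of the ambient character $\transitivityCharacter$ — so the reduction to the model subgroup $P_{(m, n-m)}$, or equivalently working with an arbitrary subspace stabilizer as above, is legitimate.
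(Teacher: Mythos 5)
Your proof is correct and matches the paper's argument: the paper likewise invokes \autoref{proposition:parabolic-character-sum-criteria}(\ref{proposition:parabolic-character-sum-criteria-1}) and notes that $K$ stabilizes a proper non-zero subspace $W$, so a point $v \in \projective W$ can never be sent to a point $w \notin \projective W$, hence $K$ is not transitive on $\projective(V)$. Your added remarks on orbits and conjugation-invariance are fine but not needed beyond what the paper states.
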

	\begin{proof}
		Let $K \in \classicalFamily{1}{\GL{n}{q}}$. The group $K$ stabilizes some proper non-zero subspace $W$. Choose $v \in \projective W$ and $w \not \in \projective W$. Then, for every $k \in K$, $kv \ne w$.
	\end{proof}

	\begin{proposition}
		If $K \in \classicalFamily{2}{\GL{n}{q}}$ then $\chiKContainsTrivial{\transitivityCharacter}{K}$.
	\end{proposition}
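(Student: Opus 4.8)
The plan is to reduce to the transitivity criterion already established. By \autoref{proposition:parabolic-character-sum-criteria}(\ref{proposition:parabolic-character-sum-criteria-1}), we have $\chiKContainsTrivial{\transitivityCharacter}{K}$ if and only if $K$ does \emph{not} act transitively on $\projective(V)$, so it suffices to exhibit two projective points that no element of $K$ identifies, exactly as in the argument for $\classicalFamily{1}$ in \autoref{proposition:trasitivity_c1}.

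Recall that a member of $\classicalFamily{2}{\GL{n}{q}}$ is, up to conjugacy, the group $\GL{m}{q}\wr\symmetric{t}$ preserving a decomposition $V = V_1\oplus\dots\oplus V_t$ with $\dim V_i = m$ and $t\ge 2$; since conjugation permutes the projective points without affecting whether the action is transitive, I would first assume $K$ is exactly this group in its block realization. The key observation is that every $k\in K$ permutes the summands $V_i$ and acts linearly inside each, hence carries a nonzero vector contained in a single summand to a nonzero vector contained in a single summand. I would then take $v = e_1 \in V_1$ and $w = e_1 + e_{m+1}$, which has nonzero components in both $V_1$ and $V_2$ (this is where $t\ge 2$ enters). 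Since $kv$ always lies in a single summand while every nonzero scalar multiple of $w$ meets two summands, no $k\in K$ can satisfy $kv = \lambda w$ in $V$, so $k\projectiveVector{v}\ne\projectiveVector{w}$ for all $k\in K$, and $K$ is not transitive on $\projective(V)$; the criterion then gives $\chiKContainsTrivial{\transitivityCharacter}{K}$.

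I expect essentially no obstacle here. The only points needing a word of care are the reduction to the standard block form (harmless, as transitivity on $\projective(V)$ is a conjugacy invariant) and the remark that $\transitivityCharacter$ is defined, which holds because $n = mt \ge 2$. Conceptually this mirrors \autoref{proposition:trasitivity_c1}: there the obstruction to transitivity is a fixed invariant subspace, whereas here it is the non-$K$-invariant but $K$-permuted family of vectors supported on a single block of the decomposition.
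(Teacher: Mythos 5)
Your proposal is correct and follows essentially the same route as the paper: both reduce to the transitivity criterion of \autoref{proposition:parabolic-character-sum-criteria} and observe that any element of the block realization of $\GL{m}{q}\wr\symmetric{t}$ sends $\projectiveVector{e}_1$ to a point supported in a single summand, hence cannot reach a point supported in more than one. The only (immaterial) difference is the choice of target point, $\projectiveVector{e_1+e_{m+1}}$ in your argument versus $\projectiveVector{\sum_{i=1}^n e_i}$ in the paper.
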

	\begin{proof}
		Let $K \in \classicalFamily{2}{\GL{n}{q}}$. Consider the basis $\{e_1, \dots, e_n\}$ of $V$ such that the elements of $K$ are realized as block matrices (with respect to the decomposition $V = \bigoplus_{i=1}^t V_i$) over this basis. As a matrix with this basis, the first column of every element of $K$ is all zeros but a single block of length $m < n$. Therefore, there is no element $k \in K$ with $k \projectiveVector{e}_1 = \projectiveVector{\sum_{i=1}^n e_i}$.
	\end{proof}

	\begin{proposition}
		If $K \in \classicalFamily{4}{\GL{n}{q}}$ then $\chiKContainsTrivial{\transitivityCharacter}{K}$.
	\end{proposition}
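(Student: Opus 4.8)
Following the strategy of \autoref{proposition:trasitivity_c1} and the analogous statement for $\classicalFamilySymbol{2}$, the plan is to exhibit two points $v, w \in \projective(V)$ for which $kv \ne w$ for all $k \in K$; by the first part of \autoref{proposition:parabolic-character-sum-criteria} this says that $K$ does not act transitively on $\projective(V)$, hence that $\chiKContainsTrivial{\transitivityCharacter}{K}$. The idea is to exploit the tensor structure defining a $\classicalFamilySymbol{4}$ subgroup: up to conjugacy $K$ stabilizes a decomposition $V \cong V_1 \otimes V_2$ with $\dim V_i = n_i$, $n = n_1 n_2$ and $2 \le n_1 < n_2$, and since $n_1 \ne n_2$ the two tensor factors cannot be interchanged, so every element of $K$ acts on $V$ as a tensor product $A \otimes B$ with $A \in \GL{n_1}{q}$ and $B \in \GL{n_2}{q}$ (identifying $V_i$ with $\field{q}^{n_i}$).

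The key point is then that the subset $\mathcal P \subseteq \projective(V)$ consisting of the projective classes of nonzero decomposable tensors $v_1 \otimes v_2$ is $K$-invariant, since $(A \otimes B)(v_1 \otimes v_2) = (Av_1) \otimes (Bv_2)$ is again a nonzero decomposable tensor. It remains to observe that $\mathcal P$ is a \emph{proper} subset of $\projective(V)$: since $n_1, n_2 \ge 2$, fixing bases $u_1, \dots, u_{n_1}$ of $V_1$ and $w_1, \dots, w_{n_2}$ of $V_2$, the tensor $T \define u_1 \otimes w_1 + u_2 \otimes w_2$ is not decomposable, because under the identification $u_i \otimes w_j \mapsto E_{ij}$ of $V_1 \otimes V_2$ with $n_1 \times n_2$ matrices it becomes a matrix of rank $2$, whereas every decomposable tensor becomes a matrix of rank at most $1$. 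Thus $v \define \projectiveVector{u_1 \otimes w_1} \in \mathcal P$ and $w \define \projectiveVector{T} \notin \mathcal P$ cannot lie in the same $K$-orbit, which is exactly what we need.

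This case carries no real difficulty. The only point that deserves a word of care is the assertion that every element of a $\classicalFamilySymbol{4}$ subgroup has the form $A \otimes B$ --- in particular, that none of them swaps the two tensor factors --- which is precisely where the hypothesis $n_1 < n_2$ enters, and which is part of the description of the class in \autoref{table:classical-families} and \cite[Chapter 4]{subgroup_structure_classical_groups}.
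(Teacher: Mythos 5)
Your proposal is correct and follows essentially the same route as the paper: the paper also takes the decomposable tensor $\projectiveVector{e_1 \otimes e_1'}$ and the rank-two tensor $\projectiveVector{e_1 \otimes e_1' + e_2 \otimes e_2'}$ and notes no $A \otimes B \in K$ can map one to the other, invoking \autoref{proposition:parabolic-character-sum-criteria}. Your write-up merely makes explicit the rank argument for indecomposability and the role of $n_1 < n_2$, which the paper leaves implicit.
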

	\begin{proof}
		Let $K \in \classicalFamily{4}{\GL{n}{q}}$. Recall that $K \cong \GL{n_1}{q} \otimes \GL{n_2}{q}$, and consider the decomposition ${V \cong V_1 \otimes V_2}$. The action of $K$ is defined as follows: $(A \otimes B)(u \otimes v) = Au \otimes Bv$. By denoting $\{e_1, e_2, \dots\}$ and $\{e_1', e_2', \dots\}$ as bases of $V_1$ and $V_2$ respectively, we see that no element in $K$ satisfies: $(A \otimes B)(\projectiveVector{e_1 \otimes e_1'}) = \projectiveVector{e_1 \otimes e_1' + e_2 \otimes e_2'}$.
	\end{proof}

	\begin{proposition}
		If $K \in \classicalFamily{5}{\GL{n}{q}}$ then $\chiKContainsTrivial{\transitivityCharacter}{K}$.
	\end{proposition}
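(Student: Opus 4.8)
The plan is to reduce the claim to a transitivity statement via \autoref{proposition:parabolic-character-sum-criteria}: by part~\ref{proposition:parabolic-character-sum-criteria-1} of that proposition, $\chiKContainsTrivial{\transitivityCharacter}{K}$ holds exactly when $K$ does \emph{not} act transitively on $\projective(V)$, so it suffices to produce two projective points lying in distinct $K$-orbits. By the description of the family $\classicalFamily{5}{\GL{n}{q}}$, up to conjugation we may take $K = \GL{n}{q'}$, the subgroup of matrices with entries in a proper subfield $\field{q'} \subsetneq \field{q}$, acting on $V = \field{q}^n$ in the standard basis $e_1, \dots, e_n$.

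First I would identify the $K$-orbit of $\projectiveVector{e_1}$. For any $k \in K$ the vector $k e_1$ is the first column of $k$, hence a nonzero vector all of whose coordinates lie in $\field{q'}$; thus the orbit of $\projectiveVector{e_1}$ is contained in $\{\projectiveVector{v} \colon 0 \ne v \in \field{q'}^n\}$. Now pick $\lambda \in \field{q} \setminus \field{q'}$ (possible since the subfield is proper) and put $w = \projectiveVector{\lambda e_1 + e_2}$. If $\projectiveVector{k e_1} = w$ for some $k \in K$, then $\lambda e_1 + e_2 = \mu\, k e_1$ for some $\mu \in \fieldInvertible{q}$; comparing coordinates of $k e_1 = (c_1, \dots, c_n) \in \field{q'}^n$ gives $\mu c_2 = 1$, so $\mu = c_2^{-1} \in \field{q'}$, and then $\lambda = \mu c_1 \in \field{q'}$, a contradiction. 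Hence no $k \in K$ satisfies $k\projectiveVector{e_1} = w$, so $K$ is not transitive on $\projective(V)$, which is exactly the desired conclusion.

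I do not anticipate a real obstacle here; the argument is of the same flavor as the witnesses already produced for $\classicalFamilySymbol{2}$ and $\classicalFamilySymbol{4}$. The one point worth stating explicitly is that a member of $\classicalFamily{5}{\GL{n}{q}}$ is genuinely a proper subgroup, i.e. $\field{q'}$ is a proper subfield of $\field{q}$ — without this $\GL{n}{q}$ itself would qualify and the statement would be false. As an alternative to exhibiting the explicit witness $w$, one could just count: the orbit of $\projectiveVector{e_1}$ has size $\frac{{q'}^{n}-1}{{q'}-1} < \frac{q^{n}-1}{q-1} = \cardinality{\projective(V)}$, so $K$ cannot act transitively on $\projective(V)$.
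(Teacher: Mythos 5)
Your argument is correct and follows essentially the same route as the paper: both reduce via \autoref{proposition:parabolic-character-sum-criteria} to showing $K \cong \GL{n}{q'}$ is not transitive on $\projective(V)$, and both exhibit two projective points in distinct $K$-orbits by exploiting that $K$ sends vectors with entries in the proper subfield $\field{q'}$ to vectors of the same kind (the paper's witnesses are $v \in \projective\left(\field{q'}^n\right)$ and a point with later coordinates outside $\field{q'}$, yours is $\projectiveVector{e}_1$ versus $\projectiveVector{\lambda e_1 + e_2}$ with $\lambda \notin \field{q'}$). Your scalar-comparison details and the alternative orbit-size count are both fine.
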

	\begin{proof}
		Let $K \in \classicalFamily{5}{\GL{n}{q}}$. Choose $v \in \projective \left(\field{q'}^n\right), w \in \projective\left(\field{q'} \times (\field{q}-\field{q'}) ^ {n-1}\right)$. Then $kv \ne w$ for all $k \in K$.
	\end{proof}

	\begin{proposition}
		If $K \in \classicalFamily{7}{\GL{n}{q}}$ then $\chiKContainsTrivial{\transitivityCharacter}{K}$.
	\end{proposition}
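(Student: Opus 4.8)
The plan is to apply \autoref{proposition:parabolic-character-sum-criteria}: it suffices to exhibit two points of $\projective(V)$ that lie in different $K$-orbits, since then $K$ does not act transitively on $\projective(V)$ and hence $\chiKContainsTrivial{\transitivityCharacter}{K}$. As in the proof for the family $\classicalFamily{4}{\GL{n}{q}}$, the obstruction comes from the tensor structure. Recall that $V \cong V_1 \otimes \dots \otimes V_t$ with $\dim(V_i) = m \ge 3$, and that $K \cong \GL{m}{q} \wr \symmetric{t}$ acts by letting the base group $\GL{m}{q}^t$ act as $A_1 \otimes \dots \otimes A_t$ and letting $\symmetric{t}$ permute the tensor factors.

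The key observation is that every element of $K$ sends a decomposable tensor $v_1 \otimes \dots \otimes v_t$ to a decomposable tensor: this is clear for the base group, and permuting tensor factors also preserves decomposability. Fixing bases $e_1, \dots, e_m$ of each $V_i$, I would take the point $\projectiveVector{e_1 \otimes \dots \otimes e_1}$, which is the class of a decomposable tensor, and the point $\projectiveVector{e_1 \otimes \dots \otimes e_1 + e_2 \otimes \dots \otimes e_2}$. Since no scalar multiple of $e_1 \otimes \dots \otimes e_1 + e_2 \otimes \dots \otimes e_2$ is decomposable, no $k \in K$ can satisfy $k \projectiveVector{e_1 \otimes \dots \otimes e_1} = \projectiveVector{e_1 \otimes \dots \otimes e_1 + e_2 \otimes \dots \otimes e_2}$, which gives the desired non-transitivity.

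The one calculation to carry out is checking that $e_1 \otimes \dots \otimes e_1 + e_2 \otimes \dots \otimes e_2$ is not a pure tensor. I would do this by flattening along the first factor: the tensor equals $e_1 \otimes u_1 + e_2 \otimes u_2$ with $u_1 = e_1 \otimes \dots \otimes e_1$ and $u_2 = e_2 \otimes \dots \otimes e_2$ linearly independent in $V_2 \otimes \dots \otimes V_t$, so the associated linear map $V_1^{*} \to V_2 \otimes \dots \otimes V_t$ has rank $2$, whereas any pure tensor yields a map of rank at most $1$. This uses only $m \ge 2$, which is part of the definition of $\classicalFamily{7}{\GL{n}{q}}$. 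I do not expect any real difficulty here; the proof is the exact analogue of the $\classicalFamily{4}{\GL{n}{q}}$ argument with matrix rank replaced by tensor decomposability, and the only thing to watch is that the factor-permuting part of the wreath product action indeed preserves decomposability (which it plainly does).
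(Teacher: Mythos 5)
Your proposal is correct and follows essentially the same route as the paper: both pick $\projectiveVector{e_1 \otimes \dots \otimes e_1}$ and $\projectiveVector{e_1 \otimes \dots \otimes e_1 + e_2 \otimes \dots \otimes e_2}$, use that every element of $K$ (base group and factor permutations alike) sends decomposable tensors to decomposable tensors, and conclude non-transitivity via \autoref{proposition:parabolic-character-sum-criteria}. The only cosmetic difference is your verification that the sum is not decomposable (a flattening/rank argument) versus the paper's direct comparison of coefficients, and note the table's condition is $m \ge 3$, which of course covers the $m \ge 2$ you need.
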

	\begin{proof}
		Let $K \in \classicalFamily{7}{\GL{n}{q}}$. Then, an element of $K$ permutes the subspaces and acts on each of the subspaces in the decomposition: $V \cong V_1 \otimes \dots \otimes V_t$. Let $e^{(i)}_1, \dots, e^{(i)}_m$ the basis of $V_i$.
		
		Then, for every $k \in K$ there exist $v^{(i)} = \sum_{j=1}^m \lambda^{(i)}_j e^{(i)}_j \in V_i$ such that
		\[k \left(\projectiveVector{\bigotimes_{i = 1}^t e^{(i)}_1}\right) =
		\projectiveVector{\bigotimes_{i = 1}^t v^{(i)}} =
		\projectiveVector{\bigotimes_{i = 1}^t \sum_{j=1}^m \lambda^{(i)}_j e^{(i)}_j} =
		\projectiveVector{\sum_{j_1, \dots, j_t=1}^m \left(\prod_{i=1}^t \lambda^{(i)}_{j_i} \right) \cdot \left( \bigotimes_{i = 1}^t e^{(i)}_{j_i} \right)}.\]
		By comparing the coefficients, we see that no $k \in K$ can satisfy
		\[k \left(\projectiveVector{\bigotimes_{i = 1}^t e^{(i)}_1}\right) = \projectiveVector{\bigotimes_{i = 1}^t e^{(i)}_1 + \bigotimes_{i = 1}^t e^{(i)}_2},\]
		since this implies that both $\prod_{i=1}^t \lambda^{(i)}_1 = \prod_{i=1}^t \lambda^{(i)}_2 \ne 0$ and ${\lambda^{(1)}_1 \prod_{i=2}^t \lambda^{(i)}_2 = 0}$, which leads to a contradiction.
	\end{proof}

	\paragraph{($\bm{\twoTransitivityCharacter}$)} In this part of the section, we prove that \textChiDoesntVanishOnK{\twoTransitivityCharacter}{K}, for $K$ in one of the families $\classicalFamilySymbol{3}, \classicalFamilySymbol{6}, \classicalFamilySymbol{8}$. Since $\twoTransitivityCharacter$ is only defined for $n \ge 3$, we assume this inequality holds.
	
	\begin{proposition}
		If $K \in \classicalFamily{3}{\GL{n}{q}}$ then $\chiKContainsTrivial{\twoTransitivityCharacter}{K}$.
	\end{proposition}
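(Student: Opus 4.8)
The strategy is to invoke \autoref{proposition:parabolic-character-sum-criteria}, part~(\ref{proposition:parabolic-character-sum-criteria-2}): since $\restricted{\twoTransitivityCharacter}{K}$ contains $\trivialCharacter{K}$ precisely when $K$ \emph{fails} to act transitively on $\projectiveSemiPairs{V}$, the flags $(0,V_1,V_2,V)$ with $\dim V_1 = 1$ and $\dim V_2 = 2$, it suffices to exhibit two such flags lying in distinct $K$-orbits. The defining feature of $\classicalFamilySymbol{3}$ supplies the leverage: writing $n = mr$, the space $V \cong \field{q}^n$ carries a $K$-invariant structure of an $m$-dimensional vector space $W$ over $\field{q^r}$, on which $K$ acts $\field{q^r}$-semilinearly; up to conjugacy, $K = \GammaLExt{m}{q^r}{q}$. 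A semilinear bijection twists scalars by a field automorphism of $\field{q^r}$, hence carries $\field{q^r}$-subspaces of $W$ to $\field{q^r}$-subspaces of the same $\field{q^r}$-dimension. Therefore, for any flag $(0,V_1,V_2,V)$, the number $\dim_{\field{q^r}}\langle V_2\rangle_{\field{q^r}} \in \{1,2\}$ (the $\field{q^r}$-dimension of the $\field{q^r}$-span of $V_2$) is a $K$-invariant of the flag.

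If $m \ge 2$, both values occur. Fix $0 \ne x \in W$. Since $[\field{q^r} : \field{q}] = r \ge 2$, choose $y \in \field{q^r}x$ with $y \notin \field{q}x$; since $\dim_{\field{q^r}} W = m \ge 2$, choose $z \in W$ with $z \notin \field{q^r}x$. Then $V_2 \define \field{q}x + \field{q}y \subseteq \field{q^r}x$ has $\field{q^r}$-span of dimension $1$, whereas $V_2' \define \field{q}x + \field{q}z$ has $\field{q^r}$-span $\field{q^r}x + \field{q^r}z$ of dimension $2$. The flags $(0,\field{q}x,V_2,V)$ and $(0,\field{q}x,V_2',V)$ then lie in different $K$-orbits, so $K$ is not transitive on $\projectiveSemiPairs{V}$ and $\restricted{\twoTransitivityCharacter}{K} \supseteq \trivialCharacter{K}$.

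It remains to treat $m = 1$, where $W = \field{q^r}$ with $r = n$ and $K \subseteq \fieldInvertible{q^r} \rtimes \gal{\field{q^r}}{\field{q}}$ is a Singer normalizer; here the invariant above is identically $1$ and a finer count is needed. For a flag with $V_1 = \field{q}x$, $V_2 = \field{q}x + \field{q}y$, set $z \define yx^{-1} \in \field{q^r}$ using the field multiplication; it lies in $\field{q^r}\setminus\field{q}$ (as $y \notin \field{q}x$) and is well defined modulo the action $z \mapsto az + b$ ($a \in \fieldInvertible{q}$, $b \in \field{q}$) of $\AGL{1}{q}$ arising from the choices of $x$ and $y$. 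A direct computation shows that $\fieldInvertible{q^r}$ fixes this class while $\gal{\field{q^r}}{\field{q}}$ acts on it through its natural action, and that the resulting map is a bijection between the $K$-orbits on $\projectiveSemiPairs{V}$ and the $\gal{\field{q^r}}{\field{q}}$-orbits on $(\field{q^r}\setminus\field{q})/\AGL{1}{q}$. Since $\AGL{1}{q}$ acts freely on $\field{q^r}\setminus\field{q}$, the latter quotient has $\frac{q^{n-1}-1}{q-1}$ elements acted on by a group of order $n$, giving at least $\frac{q^{n-1}-1}{n(q-1)}$ orbits. For $n \ge 3$ this exceeds $1$ except when $(n,q) = (3,2)$, so $K$ is not transitive on $\projectiveSemiPairs{V}$ and the proof is complete. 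The genuine obstacle is exactly the pair $(n,q) = (3,2)$: there $K = \GammaL{1}{8}$, a group of order $21$, acts regularly on the $21$ flags of the Fano plane, so $\restricted{\twoTransitivityCharacter}{K}$ does not contain $\trivialCharacter{K}$, and this case must be excluded from the statement or absorbed by a different character in $\specialCharacterSet{G}$.
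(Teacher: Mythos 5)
Your argument is correct as mathematics, and it takes a genuinely different route from the paper. The paper argues by contradiction from \autoref{proposition:parabolic-character-sum-criteria}: it fixes a primitive element $\alpha$ of $\field{q^r}$ over $\field{q}$ and the basis $e_{i+r(j-1)}=\alpha^{i-1}b_j$, and compares coefficients in $ku=\lambda u$, $kv=\mu u+\nu w$ for the triples $(e_1,e_2,e_{r+1})$ when $m\ge 2$ and $(e_1,e_2,\sigma(\alpha)^2e_1)$ when $m=1$. You instead produce orbit invariants: for $m\ge 2$ the $\field{q^r}$-dimension of the $\field{q^r}$-span of $V_2$ (preserved because a semilinear map carries $\field{q^r}$-subspaces to $\field{q^r}$-subspaces of the same dimension), and for $m=1$ the class of $yx^{-1}$ in $(\field{q^r}\setminus\field{q})/\AGL{1}{q}$ up to Galois, which you then count. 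Both methods dispose of $m\ge 2$ equally quickly; your $m=1$ analysis is sharper, since it determines the exact number of $K$-orbits on $\projectiveSemiPairs{V}$ rather than merely refuting transitivity, and I verified the bijection with the Galois orbits and the freeness of the $\AGL{1}{q}$-action.

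The substantive point is the exception you isolate, and you are right about it: the proposition as printed (whose only standing hypothesis in this part of the section is $n\ge 3$) is false at $(n,q)=(3,2)$. Indeed $K=\GammaLExt{1}{8}{2}$ has order $21$ and meets the flag stabilizer $P_{(1,1,1)}\subgroup\GL{3}{2}$, of order $8$, trivially, so $KP_{(1,1,1)}=\GL{3}{2}$ and $K$ is transitive (in fact regular) on the $21$ flags; by \autoref{proposition:parabolic-character-sum-criteria} this gives $\chiKDoesntContainTrivial{\twoTransitivityCharacter}{K}$. The paper's own proof breaks exactly here: in the $m=1$ case the vector $w=\sigma(\alpha)^2e_1$ is chosen using the automorphism $\sigma$ attached to the element $k$ whose existence is only granted after $(u,v,w)$ has been fixed, so the coefficient comparison is circular; quantifying honestly over a fixed $\sigma_0$, the element supplied by transitivity may carry a different automorphism and no quadratic relation for $\sigma_0(\alpha)$ follows, and at $(3,2)$ no repair is possible because the conclusion is false. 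The consequence you hint at is real: since $\twoTransitivityCharacter=\unipotentCharacter{(1^3)}+2\unipotentCharacter{(2,1)}$ for $n=3$, neither character in $\specialCharacterSetGLnq{3}{2}$ marks this Frobenius subgroup of order $21$, so \autoref{corollary:no-non-vanishing-subgroups-3} and the $(n,q)=(3,2)$ instances of the main theorems need this case excluded or covered by an additional character, exactly as you say. So relative to the printed statement your proposal is not a complete proof, but the defect lies in the statement, not in your argument.
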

	\begin{proof}
		Assume otherwise. Then, for every three vectors $u, v, w \in \field{q}^n$ such that $u,v$ and $u, w$ are linearly independent, there exists some $k \in K$ and $\lambda, \mu, \nu \in \field{q}$ with $\lambda, \nu \ne 0$ such that $k u = \lambda u$ and $k v = \mu u + \nu w$.
		
		Let $\alpha \in \field{q^r}$ be such that $\field{q^r} = \field{q}(\alpha)$, and let $\{b_1, \dots, b_m\}$ be a basis of $V \cong \field{q^r}^m$. Define $e_{i+r(j-1)} = \alpha^{i-1} b_j$ for $i = 1, \dots r$ and $j = 1, \dots, m$. Then $\{e_1, \dots, e_n\}$ is a basis for $V \cong \field{q}^n$.
		
		For every $k = (A, \sigma) \in K = \GammaLExt{m}{q^r}{q}$ such that $k e_1 = \lambda e_1, \lambda \ne 0$, we have $k e_2 = k (\alpha e_1) = \sigma(\alpha) \lambda e_1 = \sigma(\alpha) \lambda b_1$.
		
		Assume that $m \ge 2$. Choose $u = e_1$, $v = e_2$ and $w = e_{r+1}$. Then, there exists some $k \in K$ such that ${ke_1 = \lambda e_1}$ and ${ke_2 = \mu e_1 + \nu e_{r+1} = \mu b_1 + \nu b_2}$ for $\lambda, \mu, \nu \in \field{q}$ with $\lambda, \nu \ne 0$. That implies $\sigma(\alpha) \lambda b_1 = \mu b_1 + \nu b_2$ which leads to a contradiction. Thus, we may assume that $m = 1$ and $r = n \ge 3$.
		
		We note that $\sigma(\alpha)$ cannot have an irreducible polynomial of degree $2$ over $\field{q}$, since then $\alpha$ would have such a polynomial as well, which contradicts $r \ge 3$.
		
		Choose $u = e_1$, $v = e_2$ and $w = \sigma(\alpha)^2 e_1$. Since $r \ge 3$, $\sigma(\alpha)^2 \not \in \field{q}$ (otherwise $\sigma(\alpha)$ would have an irreducible polynomial of degree $2$ over $\field{q}$), so $u$ and $w$ are linearly independent over $\field{q}$. Then, there exists some $k \in K$ such that ${ke_1 = \lambda e_1}$ and ${k e_2 = \mu e_1 + \nu \sigma(\alpha)^2 e_1}$, for ${\lambda, \mu, \nu \in \field{q}}$ with ${\lambda, \nu \ne 0}$. Then, by comparison of coefficients of $e_1$ in the equality ${\sigma(\alpha) \lambda e_1 = k e_2 = \mu e_1 + \nu \sigma(\alpha)^2 e_1}$ we deduce that $\sigma(\alpha)$ has an irreducible polynomial of degree $2$ over $\field{q}$, which leads to a contradiction.
	\end{proof}

	\begin{proposition}
		If $K \in \classicalFamily{6}{\GL{n}{q}}$ then $\chiKContainsTrivial{\twoTransitivityCharacter}{K}$.
	\end{proposition}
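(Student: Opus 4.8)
The plan is as follows. Up to conjugacy we may assume $K = \normalizer{R}{\GL{n}{q}}$ for a symplectic-type $r$-group $R$ as in \autoref{definition:c6_description}; in particular $R \normalSubgroup K$, and by construction $V \cong \field{q}^n$ is a faithful irreducible $R$-module of dimension $n = r^m$. It suffices to show that $K$ does \emph{not} act transitively on $\projective(V)$: by \autoref{proposition:parabolic-character-sum-criteria} this yields $\chiKContainsTrivial{\transitivityCharacter}{K}$, and since $\transitivityCharacter$ is a summand of $\twoTransitivityCharacter$ by \autoref{corollary:parabolic-character-decomposition}, it follows that $\chiKContainsTrivial{\twoTransitivityCharacter}{K}$ as well. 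Thus I aim to exhibit a nonempty proper $K$-invariant subset $\mathcal{L} \subseteq \projective(V)$.

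For a line $L \in \projective(V)$ write $R_L = \{g \in R : gL = L\}$ for its setwise stabilizer. First note that $R_L$ is abelian and contains the (scalar) center $\groupCenter{R} = \groupSpan{\omega I}$: the center is scalar and hence fixes every line, while $R_L$ acts on the one-dimensional $L$ through a homomorphism to $\field{q}^\times$, so $[R_L, R_L]$ acts trivially on $L$; but $[R_L, R_L] \subgroup [R, R]$ and every nontrivial element of $[R, R]$ is a nontrivial scalar matrix, which therefore acts nontrivially on $L$, forcing $[R_L, R_L] = 1$. Consequently $R_L$ lies in a maximal abelian subgroup of $R$ containing $\groupCenter{R}$. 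Such maximal abelian subgroups correspond bijectively to the Lagrangians of the nondegenerate symplectic $\field{r}$-space $R/\groupCenter{R}$, with form induced by commutators; in particular there are exactly $\prod_{i=1}^m(r^i + 1)$ of them and they all share a common order $a$, so $\cardinality{R_L} \le a$ for every $L$. Set
\[
	\mathcal{L} \define \{L \in \projective(V) : \cardinality{R_L} = a\},
\]
so that $L \in \mathcal{L}$ precisely when $R_L$ is itself a maximal abelian subgroup of $R$. Since $K$ normalizes $R$ one has $R_{gL} = gR_Lg^{-1}$ for $g \in K$, so $\mathcal{L}$ is $K$-invariant; and $\mathcal{L}$ is nonempty because for any maximal abelian $A \supseteq \groupCenter{R}$ the module $\restricted{V}{A}$ is diagonalizable (as $r \divides q - 1$), and each eigenline $L$ of $A$ has $R_L$ abelian and containing $A$, hence $R_L = A$ by maximality, so $L \in \mathcal{L}$.

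It remains to bound $\cardinality{\mathcal{L}}$. Every $L \in \mathcal{L}$ is an eigenline of the maximal abelian subgroup $R_L$, so $\mathcal{L}$ is the union, over the $\prod_{i=1}^m(r^i+1)$ maximal abelian subgroups $A \supseteq \groupCenter{R}$, of the eigenlines of $A$. The key point is that each such $A$ has exactly $n$ eigenlines, i.e.\ $\restricted{V}{A}$ is multiplicity-free: comparing dimensions shows $V \cong \induced{\lambda}{A}{R}$ for a linear character $\lambda$ of $A$ whose restriction to $\groupCenter{R}$ is the (faithful) central character of $V$, so $\restricted{V}{A}$ is the direct sum of the $R$-conjugates of $\lambda$, and a short computation — using that $A$ is self-centralizing in $R$ and that $\lambda$ is nontrivial on $[R, R] \subgroup \groupCenter{R}$ — shows these conjugates are pairwise distinct, so there are $\subgroupIndex{R}{A} = n$ of them. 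Hence $\cardinality{\mathcal{L}} \le n \prod_{i=1}^m(r^i + 1) = r^m \prod_{i=1}^m(r^i + 1)$, and I must check $\cardinality{\mathcal{L}} < \cardinality{\projective(V)} = \frac{q^n - 1}{q - 1}$. From $r \divides q - 1$ we get $q \ge r + 1$, and one verifies $r^m \prod_{i=1}^m(r^i + 1) < (r+1)^{r^m - 1} \le q^{n-1} < \cardinality{\projective(V)}$ in every case except $n = 4$ (which forces $r = 2$, $m = 2$), where instead $\cardinality{\projective(V)} = q^3 + q^2 + q + 1 \ge 156 > 60 \ge \cardinality{\mathcal{L}}$, using that $4 \divides q - 1$ forces $q \ge 5$. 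Therefore $\mathcal{L} \subsetneq \projective(V)$, so $K$ cannot be transitive on $\projective(V)$, which completes the argument.

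I expect the main obstacle to be the structural input on $R$ behind the multiplicity-freeness of $\restricted{V}{A}$: realizing $R/\groupCenter{R}$ as a symplectic space and counting its Lagrangians, establishing the identification $V \cong \induced{\lambda}{A}{R}$, and computing the stabilizer of $\lambda$ under conjugation — together with tracking the differences when $r = 2$, where $\groupCenter{R}$ is cyclic of order $4$ and $[R, R]$ has order $2$. The concluding numerical inequality, and the handful of small cases, are routine.
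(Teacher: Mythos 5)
Your proof is correct, but it takes a genuinely different route from the paper. The paper never discusses transitivity on $\projective(V)$ at all: it assumes $\chiKDoesntContainTrivial{\twoTransitivityCharacter}{K}$, i.e.\ transitivity on the flag set $\projectiveSemiPairs{V}$, and derives a contradiction by a hands-on computation in the explicit monomial realization of $R$ (a non-diagonal $r_1 \in R_1$, the vectors $v_1\otimes b_1$, $v_2\otimes b_1$, and a comparison of the cosets $kR$ and $Rk$ applied to them, followed by coefficient comparison). You instead prove the strictly stronger statement that $K = \normalizer{R}{\GL{n}{q}}$ is not even transitive on $\projective(V)$, so that already $\chiKContainsTrivial{\transitivityCharacter}{K}$, and then pass to $\twoTransitivityCharacter$ via \autoref{corollary:parabolic-character-decomposition} (valid since $n \ge 3$ throughout this part of the section). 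Your argument is sound: $R_L$ is abelian because $[R_L,R_L]\subseteq[R,R]$ consists of scalars acting nontrivially on the vectors of $L$; maximal abelian subgroups containing $\groupCenter{R}$ are in bijection with the Lagrangians of $\factor{R}{\groupCenter{R}}$, hence number $\prod_{i=1}^m(r^i+1)$; and the multiplicity-freeness of $\restricted{V}{A}$ (so exactly $n$ invariant lines per $A$) is the standard Stone--von Neumann-type fact -- indeed your Clifford-theoretic sketch becomes immediate once you note $A \normalSubgroup R$ because $\factor{R}{\groupCenter{R}}$ is abelian, rather than invoking self-centralization and a Mackey computation. The numerics also check out, with $(r,m)=(2,2)$ being the unique failure of the generic inequality and correctly settled using $q \ge 5$. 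What each approach buys: the paper's computation is elementary and uses nothing beyond the defining monomial description of $R$, but only yields the $\twoTransitivityCharacter$-statement; your route imports representation theory of symplectic-type $r$-groups and a case split at $n=4$, but shows that for $n \ge 3$ the single character $\transitivityCharacter = \unipotentCharacter{(n-1,1)}$ already marks every member of $\classicalFamily{6}{\GL{n}{q}}$ (consistent with the fact that only the $n=2$ members of $\classicalFamilySymbol{6}$ can act transitively on the projective line, which is exactly why the paper must treat them separately in \autoref{section:nonvanishing_characters_2}).
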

	\begin{proof}
		Let $K \in \classicalFamily{6}{\GL{n}{q}}$ be the normalizer of some $R$ as in \autoref{definition:c6_description}, and assume that $\chiKDoesntContainTrivial{\twoTransitivityCharacter}{K}$. Let $r_1 \in R_1$ be a non-diagonal element. Choose some $v_1 \in V_1$ which is not an eigenvalue of $r_1$, let $v_2 = r_1(v_1)$, and complete these to a basis of $V_1$. Let $B = \{b_1, b_2, \dots\}$ be a basis of $V_2 \otimes \dots \otimes V_m$. Let $u \in R$.
		
		Let $k \in K$. Then $k$ is in the normalizer of $R$. Thus, $kR = Rk$. By $\chiKDoesntContainTrivial{\twoTransitivityCharacter}{K}$, there exists $\lambda, \mu, \nu \in \field{q}$ with $\lambda, \nu \ne 0$ such that:
		\begin{multline*}
			\begin{aligned}
				& k (v_1\otimes b_1) = \lambda v_1\otimes b_1 \text{ and} \\
				& k (v_2\otimes b_1) = \mu v_1\otimes b_1 + \nu (v_2\otimes b_1 + u).
			\end{aligned}
		\end{multline*}
		Then,
		\begin{multline*}
			\begin{aligned}
				& Rk (v_1\otimes b_1) = \{\lambda h (v_1\otimes b_1) \colon h \in R\}, \\
				& kR (v_1\otimes b_1) = \{k h (v_1\otimes b_1) \colon h \in R\}, \\
				& Rk (v_2\otimes b_1) = \{\mu h (v_1\otimes b_1) + \nu h (v_2\otimes b_1) + \nu h (u) \colon h \in R\} \text{ and} \\
				& kR (v_2\otimes b_1) = \{k h (v_2\otimes b_1) \colon h \in R\} = \{k h (v_1\otimes b_1) \colon h \in R\}
			\end{aligned}
		\end{multline*}
		with the last equality calculated by variable change, with ${h' = h (r_1 \otimes I \otimes \dots \otimes I)^{-1}}$.
		
		Comparing the various lines, we deduce 
		\[ \{\lambda h (v_1\otimes b_1) \colon h \in R\} = \{\mu h (v_1\otimes b_1) + \nu h (v_2\otimes b_1) + \nu h (u) \colon h \in R\}. \]
		In particular, there exists some $h_0 \in R$ such that
		\[\lambda h_0 (v_1\otimes b_1) = \mu v_1\otimes b_1 + \nu v_2\otimes b_1 + \nu u. \]
		
		Denote $h_0 = \delta A \otimes C$ where $\delta \in \groupSpan{\omega}, A \in R_1$ and $C \in R_2 \otimes \dots \otimes R_m$. The coefficients of $v_2\otimes b_1$ satisfy
		\[\lambda \delta A_{2, 1} C_{1, 1} = \nu \implies A_{2, 1} \ne 0.\]
		Since $R_i \subgroup \groupSpan{\omega} \wr \symmetric{r}$, for every $s = 1$ or $3 \le s \le r$, $A_{s, 1} = 0$. By comparing the coefficients of $v_1\otimes b_1$, we get $\mu = \lambda \delta A_{1, 1} C_{1, 1} = 0$.
		
		We get $h_0 (v_1 \otimes b_1) = \delta A_{2, 1} v_2 \otimes C b_1$. To finish, it is sufficient to choose (if $r = 2$, $2^m = n \ge 3 \implies m \ge 2$)
		\[
			u = 
			\begin{cases}
			v_3 \otimes b_1 	& r \ge 3\\
			v_1 \otimes b_2,    & r = 2, m \ge 2
			\end{cases}
		\]
		We get a contradiction to $\nu u = \lambda \delta A_{2, 1} v_2 \otimes Cb_1-\nu v_2 \otimes b_1$.
	\end{proof}

	\begin{proposition} \label{proposition:trasitivity_c8}
		If $K \in \classicalFamily{8}{\GL{n}{q}}$ then $\chiKContainsTrivial{\twoTransitivityCharacter}{K}$.
	\end{proposition}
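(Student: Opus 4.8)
The plan is to invoke the second item of \autoref{proposition:parabolic-character-sum-criteria} in contrapositive form: to prove $\chiKContainsTrivial{\twoTransitivityCharacter}{K}$ it is enough to exhibit two pairs of vectors $(u,v)$ and $(u',v')$ in $V$, each linearly independent, for which there is \emph{no} $A\in K$ together with scalars $\alpha\ne 0$, $\beta\in\field{q}$, $\gamma\ne 0$ such that $Au=\alpha u'$ and $Av=\beta u'+\gamma v'$. Let $\innerProduct{\cdot}{\cdot}$ denote the non-degenerate form (symplectic, Hermitian, or symmetric bilinear, according to \autoref{definition:c8_description} and \autoref{table:classical-bases}) that $K$ preserves. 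The key remark is that every $A\in K$ preserves isotropy: $\innerProduct{Ax}{Ax}=0$ iff $\innerProduct{x}{x}=0$, and a subspace $W$ is totally isotropic iff $AW$ is; moreover, replacing $(u',v')$ by $(\alpha u',\beta u'+\gamma v')$ with $\alpha,\gamma\ne 0$ changes neither the isotropy of the first vector nor that of the spanned plane. So it suffices to pick the two pairs in different isotropy classes.

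For $K=\SP{n}{q}$, where $n=2m\ge 4$: every vector is isotropic here, so the obstruction must be read off the plane $\linearSpan{u,v}$. Since $m\ge 2$ there is a totally isotropic $2$-dimensional subspace; take $u,v$ a basis of it, so $\innerProduct{u}{v}=0$, and take $(u',v')=(e_1,f_1)$ a hyperbolic pair, so $\innerProduct{u'}{v'}\ne 0$. If such $A,\alpha,\beta,\gamma$ existed we would get
\[
0=\innerProduct{u}{v}=\innerProduct{Au}{Av}=\innerProduct{\alpha u'}{\beta u'+\gamma v'}=\alpha\gamma\,\innerProduct{u'}{v'}\ne 0,
\]
a contradiction, so $\chiKContainsTrivial{\twoTransitivityCharacter}{K}$.

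For $K$ equal to $U_n(q^{\frac{1}{2}})$ or $O_n^\epsilon(q)$ (with $q$ odd in the orthogonal case), both with $n\ge 3$: the form admits a nonzero isotropic vector $u$ — one may take $u=e_1$, which exists since $n\ge 3$ forces at least one hyperbolic pair in the standard basis of \autoref{table:classical-bases} — and also a non-isotropic vector $u'$ (immediate from the relations in \autoref{table:classical-bases}: take $x$ when the dimension is odd, and, say, $e_1+f_1$ or a suitable $\lambda e_1+f_1$ otherwise). Complete $u$ and $u'$ arbitrarily to linearly independent pairs $(u,v)$ and $(u',v')$. Were $A,\alpha\ne 0,\beta,\gamma$ as above to exist, then $\innerProduct{Au}{Au}$ would be a nonzero multiple of $\innerProduct{u'}{u'}\ne 0$, whereas $\innerProduct{Au}{Au}=\innerProduct{u}{u}=0$, a contradiction. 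Hence $\chiKContainsTrivial{\twoTransitivityCharacter}{K}$ in these cases as well; equivalently, such $K$ already fails to act transitively on $\projective(V)$, hence a fortiori on $\projectiveSemiPairs{V}$.

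The one step that genuinely needs care — the main obstacle — is that the symplectic family contains no non-isotropic vector at all, which is why its case cannot be handled with a line and must use a totally isotropic plane; this is legitimate precisely because the hypothesis forces $n\ge 4$. One should also check explicitly that an isotropic and a non-isotropic line coexist in every admissible orthogonal case (in particular in $O_n^\circ(q)$ with $n=3$, where the Witt index is $1$, so no isotropic plane is available but an isotropic line still is) and in every admissible unitary case; the remainder is routine bookkeeping with the standard bases of \autoref{table:classical-bases}.
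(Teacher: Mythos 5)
Your proposal is correct and takes essentially the same route as the paper: both argue via item 2 of \autoref{proposition:parabolic-character-sum-criteria}, using that elements of $K$ preserve the form of \autoref{table:classical-bases} to exhibit flags that no $A \in K$ can match. The paper does this uniformly (taking $u = e_1$, $w = f_1$ and $v \in \{f_2, x\}$ with $\innerProduct{u}{u} = \innerProduct{u}{v} = 0 \ne \innerProduct{u}{w}$), while you split into the symplectic case (totally isotropic plane versus hyperbolic pair) and the unitary/orthogonal cases (isotropic versus anisotropic line, i.e.\ failure of transitivity already on $\projective(V)$), but the underlying argument is the same.
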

	\begin{proof}
		It is enough to find vectors $u, v, w$ such that $\innerProduct{u}{u} = \innerProduct{u}{v} = 0$ and $\innerProduct{u}{w} \ne 0$. With such vectors, for every $\lambda, \mu, \nu \in \field{q}$ with $\lambda, \nu \ne 0$, 
		\[\innerProduct{\lambda u}{\mu u+ \nu w} = \lambda \nu \innerProduct{u}{w} \ne 0 = \innerProduct{u}{v},\]
		so there is no $k\in K$ such that 
		$ku = \lambda u, kv = \mu u+ \nu w$.
		
		We define the vectors $u, v, w$ as $u = e_1$, $w = f_1$ and
		\[
			v = 
			\begin{cases}
				f_2 & K=\SP{n}{q}, U_{2m}(q^{\frac{1}{2}}) \text{ or } O_n^+(q) \\
				x & K=U_{2m+1}(q^{\frac{1}{2}}), O_n^-(q) \text{ or } O_n^\circ(q)
			\end{cases}.
		\]
	\end{proof}

	From propositions \ref{proposition:trasitivity_c1}-\ref{proposition:trasitivity_c8}, we conclude:
	\begin{corollary} \label{corollary:no-non-vanishing-subgroups-gt-3}
		If $n \ge 3$, \textCMarksSubgroups{$\twoTransitivityCharacter$}{$\GL{n}{q}$} that does not contain $\SL{n}{q}$ and does not belong to $\classicalExceptions(\GL{2}{q})$.
	\end{corollary}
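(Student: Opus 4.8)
The plan is purely to assemble the preceding propositions via the Aschbacher-type theorem stated above for $\GL{n}{q}$, so the argument is short. First I would invoke the elementary monotonicity remarked at the beginning of \autoref{section:maximal_subgroups_of_glnq}: if $\chiKContainsTrivial{\chi}{K}$ and $K' \subgroup K$, then $\chiKContainsTrivial{\chi}{K'}$, since restricting the decomposition $\restricted{\chi}{K} = \trivialCharacter{K} + \chi'$ further to $K'$ retains the trivial summand. Consequently it suffices to show that $\twoTransitivityCharacter$ marks every member of the geometric families $\classicalFamily{1}{\GL{n}{q}}, \dots, \classicalFamily{8}{\GL{n}{q}}$: by the theorem, any $K \subgroup \GL{n}{q}$ that does not contain $\SL{n}{q}$ and does not lie in $\classicalExceptions$ is contained in some such member, and hence $\twoTransitivityCharacter$ marks $K$ too.

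Next I would split the eight families into the two groups already handled. For $i \in \{1,2,4,5,7\}$, the first five of Propositions \ref{proposition:trasitivity_c1}--\ref{proposition:trasitivity_c8} show that $\chiKContainsTrivial{\transitivityCharacter}{K}$ for every $K \in \classicalFamily{i}{\GL{n}{q}}$. By \autoref{corollary:parabolic-character-decomposition}, $\transitivityCharacter = \unipotentCharacter{(n-1,1)}$ while $\twoTransitivityCharacter = 2\unipotentCharacter{(n-1,1)} + \unipotentCharacter{(n-2,1^2)} + \unipotentCharacter{(n-2,2)}$, so $\phiContainsPsi{\twoTransitivityCharacter}{\transitivityCharacter}$; therefore $\chiKContainsTrivial{\transitivityCharacter}{K}$ forces $\chiKContainsTrivial{\twoTransitivityCharacter}{K}$. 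For $i \in \{3,6,8\}$, the remaining three propositions give $\chiKContainsTrivial{\twoTransitivityCharacter}{K}$ directly for every $K \in \classicalFamily{i}{\GL{n}{q}}$. Combining both cases, $\twoTransitivityCharacter$ marks every member of every geometric family, which by the first paragraph finishes the proof.

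The only point requiring care is bookkeeping rather than a genuine obstacle: one checks that the list of geometric classes in the Aschbacher-type theorem coincides with the list covered by Propositions \ref{proposition:trasitivity_c1}--\ref{proposition:trasitivity_c8} (the mutual overlaps and the possibly non-maximal members among the $\classicalFamily{i}{\GL{n}{q}}$ are harmless, since we only use the marking property, which passes to subgroups), and recalls that $\twoTransitivityCharacter$ is defined only for $n \ge 3$ --- whence that hypothesis --- while the exceptional class $\classicalExceptions$ is deliberately excluded here and treated separately in \autoref{section:non_geometric_subgroups}.
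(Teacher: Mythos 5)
Your proposal is correct and matches the paper's argument: the corollary is drawn directly from Propositions \ref{proposition:trasitivity_c1}--\ref{proposition:trasitivity_c8} via the Aschbacher-type theorem, with $\phiContainsPsi{\twoTransitivityCharacter}{\transitivityCharacter}$ (for $n \ge 3$) handling the families where only $\transitivityCharacter$ was shown to mark, and the fact that marking passes to subgroups covering non-maximal members. Nothing is missing.
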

	
	For $n=2$, $\classicalFamily{8}{\GL{2}{q}}$ is empty. Therefore, we can deduce:

	\begin{corollary} \label{corollary:no-non-vanishing-subgroups-2}
		The character \textCMarksSubgroups{$\transitivityCharacter$}{$\GL{2}{q}$} that does not contain $\SL{2}{q}$, and does not belong to $(\classicalFamilySymbol{3} \cup \classicalFamilySymbol{6} \cup \classicalExceptions)(\GL{2}{q})$.
	\end{corollary}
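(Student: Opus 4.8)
The plan is to read this corollary off from Aschbacher's structure theorem together with the propositions just established in this subsection. First I would invoke the structure theorem of \autoref{section:geometric_subgroups_of_glnq_description}, specialized to $n = 2$: every $K \subgroup \GL{2}{q}$ with $K \not\supergroup \SL{2}{q}$ is contained in a member of $\classicalFamily{i}{\GL{2}{q}}$ for some $1 \le i \le 8$, or else $K \in \classicalExceptions(\GL{2}{q})$.

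Next I would inspect the conditions in \autoref{table:classical-families} and discard the families that are vacuous for $n = 2$: the family $\classicalFamilySymbol{4}$ is empty because it requires $n = n_1 n_2$ with $2 \le n_1 < n_2$; the family $\classicalFamilySymbol{7}$ is empty because it requires $n = m^t$ with $m \ge 3$; and all three types in $\classicalFamilySymbol{8}$ require $n \ge 3$ (indeed $n \ge 4$ for the symplectic type). Consequently, a subgroup $K \subgroup \GL{2}{q}$ that does not contain $\SL{2}{q}$ and does not belong to $(\classicalFamilySymbol{3} \cup \classicalFamilySymbol{6} \cup \classicalExceptions)(\GL{2}{q})$ must be contained in a member of $\classicalFamily{1}{\GL{2}{q}}$, $\classicalFamily{2}{\GL{2}{q}}$ or $\classicalFamily{5}{\GL{2}{q}}$.

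Finally, the propositions of \autoref{section:geometric_subgroups_of_glnq_restriction} treating the families $\classicalFamilySymbol{1}$, $\classicalFamilySymbol{2}$ and $\classicalFamilySymbol{5}$ (in particular \autoref{proposition:trasitivity_c1}) show that $\transitivityCharacter$ marks every member $C$ of these three families, i.e.\ $\chiKContainsTrivial{\transitivityCharacter}{C}$. Since marking passes to subgroups --- as observed at the opening of \autoref{section:maximal_subgroups_of_glnq}, $\chiKContainsTrivial{\chi}{C}$ forces $\chiKContainsTrivial{\chi}{K}$ for every $K \subgroup C$ --- it follows that $\transitivityCharacter$ marks $K$, which is the asserted statement. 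This parallels \autoref{corollary:no-non-vanishing-subgroups-gt-3}, with $\transitivityCharacter$ playing the role of $\twoTransitivityCharacter$ and with $\classicalFamilySymbol{3}$ and $\classicalFamilySymbol{6}$ now excluded because $\twoTransitivityCharacter$, which dealt with them for $n \ge 3$, is undefined for $n = 2$.

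There is essentially no obstacle here --- the substantive content lies entirely in the geometric-family propositions already proved. The only point requiring care is the bookkeeping of which Aschbacher families are nonempty when $n = 2$, so that $\classicalFamilySymbol{4}$, $\classicalFamilySymbol{7}$, $\classicalFamilySymbol{8}$ may be dropped outright while $\classicalFamilySymbol{3}$, $\classicalFamilySymbol{6}$ and $\classicalExceptions$ are deliberately set aside (and will be handled separately, later, via the additional character $\additionalForGLTwo$).
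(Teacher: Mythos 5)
Your argument is correct and is essentially the paper's own: the paper deduces the corollary from Propositions \ref{proposition:trasitivity_c1}--4.7 (the $\transitivityCharacter$ propositions for $\classicalFamilySymbol{1},\classicalFamilySymbol{2},\classicalFamilySymbol{4},\classicalFamilySymbol{5},\classicalFamilySymbol{7}$) together with the observation that $\classicalFamily{8}{\GL{2}{q}}$ is empty, exactly as you do. The only cosmetic difference is that you discard $\classicalFamilySymbol{4}$ and $\classicalFamilySymbol{7}$ by noting they are vacuous for $n=2$, whereas the paper does not need to, since its $\transitivityCharacter$ propositions already cover them (vacuously in this case).
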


	\subsection{The Non-Geometric Subgroups for $n = 3$} \label{section:non_geometric_subgroups}
	\begin{theorem}[{\cite[Table 8.4]{maximal_subgroups}}] \label{theorem:classical_exceptions_of_sl3}
		$\classicalExceptions(\SL{3}{q})$ are given in \autoref{table:classical_exceptions_of_sl3}.
		\declareAbcCounter{classicalExceptionsCounter}
		\begin{table}[]
		\centering
			\caption{$\classicalExceptions(\SL{3}{q})$} \label{table:classical_exceptions_of_sl3}
			\begin{threeparttable}
				\begin{tabular}{cCCC}
					\hline
					Case & K' \le \SL{3}{q} & \text{Conditions on } q
					& \text{Order}\\ \hline \hline
					\abcCounter{classicalExceptionsCounter} & \makecell{d\hspace{0.1cm}\PSL{2}{7} \\ d = \gcd(q-1, 3)}
					&\makecell{q\text{ is prime} \\ q \equiv 1, 2, 4 \mod 7 \\ q \ne 2}
					& 168d  \\ \hline
					
					\abcCounter{classicalExceptionsCounter} \label{dsadsa}  & 3\alternating{6}
					&\makecell{q\text{ is prime} \\ q \equiv 1, 4 \mod 15}
					&1080  \\ \hline
					
					\abcCounter{classicalExceptionsCounter} & 3\alternating{6}
					&\makecell{q\text{ is the square of a prime }p \\ p \equiv 2, 3 \mod 5 \\ p \ne 3}
					&1080  \\ \hline
				\end{tabular}
			\end{threeparttable}
		\end{table}
	\end{theorem}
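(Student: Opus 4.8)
This theorem is quoted verbatim from {\cite[Table 8.4]{maximal_subgroups}}, so in the present paper it is simply invoked; what follows is a sketch of how one would reprove it from first principles. The route is Aschbacher's theorem combined with the classification of low-dimensional representations of quasi-simple groups. By Aschbacher's theorem, a subgroup $K' \subgroup \SL{3}{q}$ that is maximal among those not containing $\SL{3}{q}$ either lies in a geometric family $\classicalFamily{1}{\SL{3}{q}}, \dots, \classicalFamily{8}{\SL{3}{q}}$ or lies in $\classicalExceptions(\SL{3}{q})$; membership in the latter means precisely that, modulo scalars, $K'$ is almost simple, acts absolutely irreducibly on $\field{q}^3$, is not conjugate into $\GL{3}{q'}$ for a proper subfield $\field{q'} \subseteq \field{q}$, and preserves no nonzero bilinear, sesquilinear or quadratic form up to scalars.

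The first and main step is to enumerate the quasi-simple groups $\widetilde T$ admitting a faithful absolutely irreducible projective representation of degree $3$ in characteristic $p = \operatorname{char}(\field{q})$, and to discard those whose normalizer is forced into a geometric class. Appealing to the known tables of low-dimensional irreducible representations of quasi-simple groups (in characteristic $0$ this is classical work of Blichfeldt and Mitchell, with modular refinements collected e.g.\ in the Hiss--Malle tables), one finds that the only surviving socles are $\PSL{2}{7}$ and $\alternating{6}$: for instance $\alternating{5}$ preserves a quadratic form and lands in $\classicalFamilySymbol{8}$, groups of Lie type in their defining characteristic produce geometric configurations, and all remaining candidates are ruled out on dimension or subfield grounds. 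This representation-theoretic input is where the real work lies.

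Given the two candidates, I would then pin down the admissible fields and check genuine maximality. The field generated by the character values of the $3$-dimensional representation determines its minimal field of definition, yielding the congruences $q \equiv 1, 2, 4 \bmod 7$ for $\PSL{2}{7}$ and the split into $q \equiv 1, 4 \bmod 15$ (when $q$ is prime) and $q = p^2$ with $p \equiv 2, 3 \bmod 5$ for $3\alternating{6}$; the Frobenius--Schur-type indicator of the representation, together with whether $q$ is a square, decides whether a form is preserved, which is exactly what separates the true $\classicalExceptions$ members from the degenerate cases, and the small exclusions ($q = 2$, $p = 3$) are precisely the coincidences where $\PSL{2}{7}$ or $3\alternating{6}$ either exhausts $\SL{3}{q}$ or fails to sit maximally inside it. Finally one normalizes by scalars: in $\SL{3}{q}$ the relevant group is the preimage of the image of $\PSL{2}{7}$ or $\alternating{6}$ in $\PGL{3}{q}$, of shape $d\,\PSL{2}{7}$ or $3\alternating{6}$ with $d = \gcd(q-1,3)$ and order $168d$ or $1080$, as recorded in \autoref{table:classical_exceptions_of_sl3}. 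The only genuinely hard ingredient is the classification of degree-$3$ representations of quasi-simple groups over all relevant fields; everything downstream — character fields, indicators, orders and the scalar bookkeeping — is routine, which is why it is cleanest here to cite {\cite{maximal_subgroups}} directly.
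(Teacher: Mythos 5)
The paper does not prove this statement at all: it is imported directly from \cite[Table 8.4]{maximal_subgroups}, and your proposal likewise treats the citation as the proof, so you are taking essentially the same approach as the paper. Your accompanying sketch (Aschbacher's theorem plus the classification of degree-$3$ absolutely irreducible representations of quasi-simple groups, then character fields, form indicators and scalar bookkeeping to get $\PSL{2}{7}$ and $3\alternating{6}$ with the stated conditions) is a reasonable outline of how the cited source obtains the table, but it is not needed here.
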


	By looking at the table, we see that there are finitely many possibilities for these groups.	To understand the transitivity of their actions, we use the following immediate consequence of the orbit-stabilizer theorem.
	
	Let $G$ be a finite group, acting transitively on some set $X$. Let $K$ be the kernel of the action, and let $x \in X$. Then 
	\begin{equation} \label{corollary:orbit_stabilizer_orders}
		\subgroupIndex{G}{K} = \subgroupIndex{G}{\stab{G}{x}} \subgroupIndex{\stab{G}{x}}{K} = \cardinality{X}\subgroupIndex{\stab{G}{x}}{K}.
	\end{equation}
	In particular, $\cardinality{X}$ divides $\subgroupIndex{G}{K}$ and $\cardinality{G}$.
	
	\paragraph{ } Recall that $\classicalExceptions(\SL{n}{q}) = \{C \cap \SL{n}{q} \colon C \in \classicalExceptions(\GL{n}{q})\}$. We use \eqref{corollary:orbit_stabilizer_orders} in the following lemma.
	\begin{lemma}
		$\forall K \in \classicalExceptions(\GL{3}{q}) \colon \chiKContainsTrivial{\twoTransitivityCharacter}{K}$.
	\end{lemma}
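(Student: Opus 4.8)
The plan is to rephrase the statement as a non-transitivity statement and then block it with orbit--stabilizer counting. By \autoref{proposition:parabolic-character-sum-criteria}, for $K \subgroup \GL{3}{q}$ one has $\chiKContainsTrivial{\twoTransitivityCharacter}{K}$ exactly when $K$ does \emph{not} act transitively on $\projectiveSemiPairs{V}$, the set of flags $0 \subseteq V_1 \subseteq V_2 \subseteq V$ of signature $(1,2,3)$ in $V \cong \field{q}^3$. A direct count gives $\cardinality{\projectiveSemiPairs{V}} = (q^2+q+1)(q+1)$, since there are $q^2+q+1$ choices for the line $V_1$ and $q+1$ choices for the plane $V_2 \supseteq V_1$. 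So I would assume for contradiction that some $K \in \classicalExceptions(\GL{3}{q})$ acts transitively on $\projectiveSemiPairs{V}$; then \eqref{corollary:orbit_stabilizer_orders}, applied to $K$ acting on $\projectiveSemiPairs{V}$, forces $(q^2+q+1)(q+1) \bigm| \cardinality{K}$.

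Next I would bound $\cardinality{K}$ using \autoref{theorem:classical_exceptions_of_sl3}. Set $K' \define K \cap \SL{3}{q}$; since $\SL{3}{q} \normalSubgroup \GL{3}{q}$, the determinant induces an embedding $K/K' \hookrightarrow \fieldInvertible{q}$, so $\cardinality{K} = \cardinality{K'} \cdot e$ with $e \mid q-1$. By the remark preceding the lemma $K' \in \classicalExceptions(\SL{3}{q})$, so $\cardinality{K'}$ can be read off \autoref{table:classical_exceptions_of_sl3}: it is at most $1080$ in every case, and its prime divisors lie in $\{2,3,7\}$ in case (a) (where $\cardinality{K'} \in \{168, 504\}$) and in $\{2,3,5\}$ in cases (b) and (c) (where $\cardinality{K'} = 1080$). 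Consequently every prime dividing $\cardinality{K}$ either divides $q-1$ or lies in the small set attached to the relevant case.

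The remainder is a finite check. Since $(q^2+q+1)(q+1) > q^3$ while $\cardinality{K} \le 1080(q-1) < 1080\,q$, the divisibility $(q^2+q+1)(q+1) \mid \cardinality{K}$ is impossible once $q^2 \ge 1080$, i.e. for every admissible $q \ge 33$. Reading off the congruence conditions in \autoref{table:classical_exceptions_of_sl3}, the admissible values $q < 33$ are $q \in \{11, 23, 29\}$ in case (a), $q \in \{19, 31\}$ in case (b), and $q = 4$ in case (c). For each I would factor $(q^2+q+1)(q+1)$ and exhibit a prime dividing it but not $\cardinality{K}$: e.g. $19 \mid 1596$ for $q = 11$, $79 \mid 13272$ for $q = 23$, $13 \mid 26130$ for $q = 29$, $127 \mid 7620$ for $q = 19$, $331 \mid 31776$ for $q = 31$, and $7 \mid 105$ for $q = 4$. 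In each case the exhibited prime neither divides the relevant $\cardinality{K'}$ nor divides $q-1$, contradicting $(q^2+q+1)(q+1) \mid \cardinality{K}$. Hence no such $K$ is transitive, and $\chiKContainsTrivial{\twoTransitivityCharacter}{K}$ for all $K \in \classicalExceptions(\GL{3}{q})$.

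The main obstacle is really just the edge case $q = 4$ in case (c): there $\cardinality{\projectiveSemiPairs{V}} = 105$ is too small for the size estimate to help, so one genuinely needs the prime $7$, using both that $7 \nmid 1080 = \cardinality{K'}$ and $7 \nmid q-1 = 3$; in particular it matters here that the applicable order is $1080$ and not $168$ or $504$. Everything else is the routine arithmetic indicated above.
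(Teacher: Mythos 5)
Your proposal is correct and follows essentially the same route as the paper: rephrase the claim via \autoref{proposition:parabolic-character-sum-criteria} as non-transitivity on $\projectiveSemiPairs{V}$, use \eqref{corollary:orbit_stabilizer_orders} to force $(q^2+q+1)(q+1) \divides \cardinality{K}$, bound $\cardinality{K}$ by $\cardinality{K'}\cdot(q-1) \le 1080(q-1)$ using $K' = K \cap \SL{3}{q}$ and \autoref{theorem:classical_exceptions_of_sl3}, and finish with a finite check of small $q$. The only difference is cosmetic: where the paper states the residual verification as ``easy to verify,'' you make it explicit by listing the admissible $q$ and exhibiting a witness prime in each case, which checks out.
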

	\begin{proof}
		By \autoref{theorem:classical_exceptions_of_sl3}, the maximal subgroups of $\SL{3}{q}$ have orders bounded by $1080$.
		
		Assume $\chiKDoesntContainTrivial{\twoTransitivityCharacter}{K}$. Then, $K$ acts transitively on $X = \projectiveSemiPairs{V}$, a set of size $(q^2+q+1)(q+1)$. By \eqref{corollary:orbit_stabilizer_orders}, ${(q^2+q+1)(q+1) \divides \cardinality{K}}$.
		
		Denote $K' \define K \cap \SL{3}{q} \in \classicalExceptions(\SL{n}{q})$. By the second isomorphism theorem, 
		\[\subgroupIndex{K}{K'} = \subgroupIndex{\SL{3}{q} K}{\SL{3}{q}} \divides \subgroupIndex{\GL{3}{q}}{\SL{3}{q}} = q-1.\]
		Therefore, $\frac{\cardinality{K}}{c} \divides \cardinality{K'}$, with $c = \gcd(\cardinality{K}, q-1)$. Then, $\frac{(q^2+q+1)(q+1)}{c'} \divides \cardinality{K'}$, with $c' = \gcd(\cardinality{K}, q-1, (q^2+q+1)(q+1))$. This implies that ${\frac{(q^2+q+1)(q+1)}{q-1} \le \cardinality{K'} \le 1080}$. Since the rational function is increasing with $q$, $q < 32$. 
		 
		 Then, it is easy to verify that for none of the possibilities for $\cardinality{K'}$, $\frac{(q^2+q+1)(q+1)}{c'} \divides \cardinality{K'}$, except for the case where $q = 2$. However, $q=2$ is explicitly ruled out by the conditions for $K$. We deduce that \textChiDoesntVanishOnK{\twoTransitivityCharacter}{K}, for $K \in \classicalExceptions(\GL{3}{q})$, for every prime power $q$.
	\end{proof}

	We conclude:
	
	\begin{corollary} \label{corollary:no-non-vanishing-subgroups-3}
		The character \textCMarksSubgroupsThatDoNotContainH{$\twoTransitivityCharacter$}{$\GL{3}{q}$}{$\SL{3}{q}$}.
	\end{corollary}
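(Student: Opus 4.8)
The plan is to obtain Corollary~\ref{corollary:no-non-vanishing-subgroups-3} by gluing together Aschbacher's classification of the subgroups of $\GL{n}{q}$ not containing $\SL{n}{q}$ (the theorem built on Table~\ref{table:classical-families}) with the restriction computations and the exceptional-subgroup lemma that immediately precede this corollary, using the observation recorded at the beginning of Section~\ref{section:maximal_subgroups_of_glnq}: if a character marks a subgroup $M$, then it marks every subgroup of $M$.

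First I would fix an arbitrary $K \subgroup \GL{3}{q}$ with $K \not\supergroup \SL{3}{q}$ and apply Aschbacher's theorem in the stated form: either $K$ is contained in some member $M \in \classicalFamily{i}{\GL{3}{q}}$ with $1 \le i \le 8$, or $K \in \classicalExceptions(\GL{3}{q})$. In the first case, Propositions~\ref{proposition:trasitivity_c1}--\ref{proposition:trasitivity_c8} give $\chiKContainsTrivial{\twoTransitivityCharacter}{M}$, so $\twoTransitivityCharacter$ marks $M$, and hence it marks $K$ by monotonicity of marking under inclusion. In the second case, the preceding lemma (the one asserting $\chiKContainsTrivial{\twoTransitivityCharacter}{K}$ for all $K \in \classicalExceptions(\GL{3}{q})$) applies directly. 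Either way $\twoTransitivityCharacter$ marks $K$, which is the assertion of the corollary.

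The only point that requires a moment's care is that a subgroup failing to contain $\SL{3}{q}$ need not, a priori, be contained in a \emph{maximal} subgroup that also fails to contain $\SL{3}{q}$; this is precisely why the argument invokes Aschbacher's theorem in its exact form --- ``contained in a member of a geometric family, or lying in $\classicalExceptions(\GL{3}{q})$'' --- rather than a naive reduction to maximal subgroups. Beyond this, there is essentially no obstacle left to overcome: all the substance is already in the restriction computations for the geometric families (Propositions~\ref{proposition:trasitivity_c1}--\ref{proposition:trasitivity_c8}) and in the order-divisibility estimate that rules out transitivity on $\projectiveSemiPairs{V}$ for the groups in $\classicalExceptions(\GL{3}{q})$, so the corollary is just the bookkeeping step that assembles them.
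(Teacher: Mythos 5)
Your proposal is correct and follows essentially the same route as the paper, which obtains the corollary exactly by assembling \autoref{corollary:no-non-vanishing-subgroups-gt-3} (Aschbacher's theorem in the form ``contained in a geometric family member or lying in $\classicalExceptions$'', plus Propositions \ref{proposition:trasitivity_c1}--\ref{proposition:trasitivity_c8} and monotonicity of marking under passing to subgroups) with the lemma handling $\classicalExceptions(\GL{3}{q})$. The only small precision worth adding is that for the families $\classicalFamilySymbol{1}, \classicalFamilySymbol{2}, \classicalFamilySymbol{4}, \classicalFamilySymbol{5}, \classicalFamilySymbol{7}$ the cited propositions literally show that $\transitivityCharacter$ marks the subgroup, so one should also invoke the fact (noted in the paper for $n \ge 3$) that $\transitivityCharacter$ is a summand of $\twoTransitivityCharacter$ in order to conclude that $\twoTransitivityCharacter$ marks those subgroups as well.
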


	\section{Proofs of the Main Theorems for $n \ge 3$} \label{section:nonvanishing_characters}
	\paragraph{ } In this section, we use the results from the previous sections to deduce the main theorems for $n \ge 3$. Since our proof of \autoref{theorem:nonvanishing_characters_of_gln} for $\GL{n}{q}$ uses results about $\PGL{3}{q}$, we prove that \autoref{theorem:nonvanishing_characters_of_gln} for $\GL{n}{q}$ implies \autoref{theorem:nonvanishing_characters_of_gln} for $\PGL{n}{q}$ as well as \autoref{theorem:nonvanishing_characters_of_sln}, and only then prove \autoref{theorem:nonvanishing_characters_of_gln} for $\GL{n}{q}$.
	
	\subsection{The Special Linear Group} \label{section:special_linear_unipotent_restriction}
	\paragraph{ } In this section, we assume \autoref{theorem:nonvanishing_characters_of_gln} for $\GL{n}{q}$, and prove \autoref{theorem:nonvanishing_characters_of_sln} for $\SL{n}{q}$.
	
	\begin{theorem}[e.g. {\cite[Corollary 4.12]{sln_characters}}] \label{theorem:restriction_to_normal_subgroup}
		Let $G$ be a finite group, let $K \normalSubgroup G$ and let $\chi \in \irreducibleCharacters{G}$. Assume that $G = K C$, where $C \subgroup G$ is cyclic and $K \cap C = 1$. Then, if $\restricted{\chi}{K}$ splits into $b$ irreducible components, there are precisely $\frac{\cardinality{C}}{b}$ distinct irreducible characters of $G$ whose restriction to $K$ equals $\restricted{\chi}{K}$.
	\end{theorem}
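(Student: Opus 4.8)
The plan is to read this as a statement in Clifford theory: restriction of an irreducible character to a normal subgroup with \emph{cyclic} quotient is multiplicity-free, and the characters sharing a prescribed restriction form a single fibre whose size is governed by an inertia subgroup. Write $m \define \cardinality{C} = \subgroupIndex{G}{K}$, fix an irreducible constituent $\theta$ of $\restricted{\chi}{K}$, and let $I$ be the inertia subgroup $\{g \in G \colon {}^{g}\theta = \theta\}$, so that $K \normalSubgroup I \subgroup G$ and $I/K$ is a subgroup of the cyclic group $G/K \cong C$, hence itself cyclic.

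First I would show $\restricted{\chi}{K}$ is multiplicity-free. Clifford's theorem gives $\restricted{\chi}{K} = e\sum_{i=1}^{b}\theta_{i}$, where $\theta_{1}, \dots, \theta_{b}$ are the distinct $G$-conjugates of $\theta$, $b = \subgroupIndex{G}{I}$, and $e = \innerProduct{\restricted{\chi}{K}}{\theta}_{K}$. The essential input is the classical fact that an $I$-invariant irreducible character of $K$ extends to $I$ when $I/K$ is cyclic (the cohomological obstruction in $H^{2}(I/K, \complex^{\times})$ vanishes; concretely, one extends a representation affording $\theta$ by choosing an appropriate root of a scalar). Granting this, Gallagher's theorem describes the irreducible characters of $I$ lying over $\theta$ as $\{\beta\hat{\theta}\}$, with $\hat\theta$ a fixed extension and $\beta$ ranging over the irreducible characters of $I/K$; since $I/K$ is abelian every $\beta$ is linear, so each such character restricts to $\theta$ with multiplicity one. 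As $\chi$ corresponds, under the Clifford correspondence $\psi \mapsto \induced{\psi}{I}{G}$, to one of these characters, we get $e = 1$; thus $\restricted{\chi}{K} = \theta_{1} + \dots + \theta_{b}$ and $\subgroupIndex{I}{K} = m/b$.

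Next I would count. An irreducible $\psi$ of $G$ satisfies $\restricted{\psi}{K} = \restricted{\chi}{K}$ if and only if $\theta$ is a constituent of $\restricted{\psi}{K}$: ``only if'' is obvious, and ``if'' holds because Clifford's theorem together with the $e=1$ argument (now applied to $\psi$) forces $\restricted{\psi}{K} = \theta_{1} + \dots + \theta_{b}$. So the characters in question are exactly the irreducible constituents of $\induced{\theta}{K}{G}$, and each such constituent occurs with multiplicity $\innerProduct{\induced{\theta}{K}{G}}{\psi}_{G} = \innerProduct{\theta}{\restricted{\psi}{K}}_{K} = 1$ by Frobenius reciprocity. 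Hence $\induced{\theta}{K}{G}$ is a sum of distinct irreducibles, so their number equals $\innerProduct{\induced{\theta}{K}{G}}{\induced{\theta}{K}{G}}_{G}$; by Frobenius reciprocity and the Clifford decomposition $\restricted{(\induced{\theta}{K}{G})}{K} = \sum_{gK \in G/K}{}^{g}\theta$ (valid since $K \normalSubgroup G$), this inner product counts the cosets $gK$ with ${}^{g}\theta = \theta$, i.e.\ it equals $\subgroupIndex{I}{K} = m/b$. Equivalently, one reads the count directly off the Clifford correspondence and Gallagher's theorem: the irreducible characters of $G$ lying over $\theta$ biject with those of $I$ lying over $\theta$, of which there are $\subgroupIndex{I}{K} = m/b$.

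The only step that is not pure bookkeeping is the extension lemma for $I$-invariant characters along the cyclic quotient $I/K$; this is precisely where the hypothesis that $C$ is cyclic enters, and everything else is Clifford's theorem, Gallagher's theorem and Frobenius reciprocity. Since the statement is entirely standard, it is also perfectly reasonable to do as the excerpt does and simply cite it.
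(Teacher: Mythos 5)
Your proof is correct. Note that the paper does not prove this statement at all: it is quoted with a citation (Corollary 4.12 of the reference on $SL_n$ characters) and used as a black box, so there is no in-paper argument to compare against. Your Clifford-theoretic route — extension of an invariant character over the cyclic quotient $I/K$, Gallagher's description of $\operatorname{Irr}(I\mid\theta)$, the Clifford correspondence, and the Frobenius-reciprocity count $\langle \operatorname{Ind}_K^G\theta,\operatorname{Ind}_K^G\theta\rangle_G=[I:K]=\lvert C\rvert/b$ — is the standard proof of exactly this statement, and all the steps check out. The only point worth flagging is notational: the theorem's $b$ is a priori the number of irreducible components of $\chi\vert_K$ (possibly with multiplicity), while you also use $b$ for $[G:I]$, the number of distinct conjugates of $\theta$; your multiplicity-freeness step ($e=1$, via the extension lemma and Gallagher, valid because $I/K$ is cyclic) is what makes these coincide, and you do prove it before relying on it, so the argument is complete.
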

	\paragraph{Remark} By {\cite[Proposition 4.11]{sln_characters}}, the restricted components are all of the same degree.

	\begin{corollary} \label{corollary:restriction_of_unipotent_characters_to_sl}
		Let $\unipotentCharacter{\mu} \in \irreducibleCharacters{\GL{n}{q}}$. Then, $\restricted{\unipotentCharacter{\mu}}{\SL{n}{q}} \in \irreducibleCharacters{\SL{n}{q}}$.
	\end{corollary}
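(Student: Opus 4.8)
The plan is to prove irreducibility of $\restricted{\unipotentCharacter{\mu}}{\SL{n}{q}}$ for all $\mu \vdash n$ simultaneously, by restricting a single induced character of $\GL{n}{q}$ to $\SL{n}{q}$ and exploiting that $\GL{n}{q}$ and $\SL{n}{q}$ carry $BN$-pairs with the same Weyl group $\symmetric{n}$. Let $B = P_{(1^n)} \subgroup \GL{n}{q}$ be the Borel subgroup of upper triangular matrices (\autoref{definition:parabolic_group_characters_and_circ}), so that $\trivialCharacter{(1^n)} = \induced{\trivialCharacter{B}}{B}{\GL{n}{q}}$. Applying \autoref{lemma:induced-trivial-character-split} to $\lambda = (1^n)$ gives
\[\induced{\trivialCharacter{B}}{B}{\GL{n}{q}} \;=\; \sum_{\mu \vdash n} f^\mu\, \unipotentCharacter{\mu}, \qquad f^\mu \define K_{\mu,(1^n)} \ge 1,\]
where $f^\mu$ is the number of standard Young tableaux of shape $\mu$ (the dimension of the irreducible $\symmetric{n}$-module $S^\mu$), so that $\sum_{\mu \vdash n}(f^\mu)^2 = \cardinality{\symmetric{n}} = n!$.

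Next I would restrict to $\SL{n}{q}$. Since $\det\colon B \to \fieldInvertible{q}$ is onto (e.g.\ via diagonal matrices), one has $\GL{n}{q} = \SL{n}{q}\cdot B$, hence a single $(\SL{n}{q},B)$-double coset, and Mackey's formula collapses to $\restricted{\trivialCharacter{(1^n)}}{\SL{n}{q}} = \induced{\trivialCharacter{B_0}}{B_0}{\SL{n}{q}}$ with $B_0 \define B \cap \SL{n}{q}$ a Borel subgroup of $\SL{n}{q}$. Because $\SL{n}{q}$ again has Weyl group $\symmetric{n}$, the inner product $\innerProduct{\induced{\trivialCharacter{B_0}}{B_0}{\SL{n}{q}}}{\induced{\trivialCharacter{B_0}}{B_0}{\SL{n}{q}}}_{\SL{n}{q}}$, which equals the number of $(B_0,B_0)$-double cosets in $\SL{n}{q}$, is $\cardinality{\symmetric{n}} = n!$.

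Finally I would combine the two computations. Restricting $\trivialCharacter{(1^n)} = \sum_\mu f^\mu \unipotentCharacter{\mu}$ to $\SL{n}{q}$ and expanding the norm,
\[n! \;=\; \innerProduct{\restricted{\trivialCharacter{(1^n)}}{\SL{n}{q}}}{\restricted{\trivialCharacter{(1^n)}}{\SL{n}{q}}}_{\SL{n}{q}} \;=\; \sum_{\mu,\nu \vdash n} f^\mu f^\nu \innerProduct{\restricted{\unipotentCharacter{\mu}}{\SL{n}{q}}}{\restricted{\unipotentCharacter{\nu}}{\SL{n}{q}}}_{\SL{n}{q}} \;\ge\; \sum_{\mu \vdash n} (f^\mu)^2 \innerProduct{\restricted{\unipotentCharacter{\mu}}{\SL{n}{q}}}{\restricted{\unipotentCharacter{\mu}}{\SL{n}{q}}}_{\SL{n}{q}} \;\ge\; \sum_{\mu \vdash n}(f^\mu)^2 \;=\; n!,\]
where the first inequality discards the nonnegative cross terms ($\mu \ne \nu$) and the second uses that each $\restricted{\unipotentCharacter{\mu}}{\SL{n}{q}}$ is a nonzero character, hence has norm square a positive integer. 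Equality must therefore hold throughout, which forces $\innerProduct{\restricted{\unipotentCharacter{\mu}}{\SL{n}{q}}}{\restricted{\unipotentCharacter{\mu}}{\SL{n}{q}}}_{\SL{n}{q}} = 1$, i.e.\ $\restricted{\unipotentCharacter{\mu}}{\SL{n}{q}} \in \irreducibleCharacters{\SL{n}{q}}$, for every $\mu \vdash n$; as a byproduct the cross terms vanish as well, so distinct unipotent characters of $\GL{n}{q}$ restrict to disjoint sets of irreducibles of $\SL{n}{q}$.

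The only delicate points I anticipate are the two structural inputs — surjectivity of $\det$ on $B$ (so that Mackey collapses) and the $(B_0,B_0)$-double-coset count being $\cardinality{\symmetric{n}}$ — both immediate from the Bruhat decomposition, choosing coset representatives of determinant $1$ in the $\SL{n}{q}$ case (the case $q = 2$ being degenerate, since then $\SL{n}{2} = \GL{n}{2}$). An alternative route, closer to the surrounding text, would invoke \autoref{theorem:restriction_to_normal_subgroup} with $K = \SL{n}{q}$ and a cyclic complement $C = \groupSpan{\mathrm{diag}(\zeta,1,\dots,1)}$ for $\zeta$ a generator of $\fieldInvertible{q}$: then the number $b$ of constituents of $\restricted{\unipotentCharacter{\mu}}{\SL{n}{q}}$ satisfies $(q-1)/b = \#\{\,\unipotentCharacter{\mu}\otimes(\theta\circ\det) : \theta \in \irreducibleCharacters{\fieldInvertible{q}}\,\}$, so one reduces to showing these $q-1$ twists are pairwise distinct; that, however, requires the deeper input that tensoring a unipotent character by a nontrivial $\det$-character leaves the unipotent Lusztig series, which the self-contained count above sidesteps, and is why I would present the count.
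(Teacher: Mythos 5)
Your proof is correct, but it takes a genuinely different route from the paper. The paper argues via Clifford theory for a cyclic quotient: it applies \autoref{theorem:restriction_to_normal_subgroup} with $K=\SL{n}{q}$ and the cyclic complement $C$ of matrices $\mathrm{diag}(x,1,\dots,1)$, notes that the $q-1$ twists $\theta(\det)\unipotentCharacter{\mu}$, $\theta\in\irreducibleCharacters{\fieldInvertible{q}}$, all restrict to $\restricted{\unipotentCharacter{\mu}}{\SL{n}{q}}$, and checks that these twists are pairwise distinct by evaluating on elements of $C$ -- so the ``deeper input'' about Lusztig series that made you avoid this route is not what the paper uses, although the distinctness does rest on the (asserted) non-vanishing of $\unipotentCharacter{\mu}$ at suitable elements of $C$; the count $(q-1)/b\ge q-1$ then forces $b=1$. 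Your argument instead takes $\lambda=(1^n)$ in \autoref{lemma:induced-trivial-character-split} to write $\trivialCharacter{(1^n)}=\sum_{\mu\vdash n}f^\mu\unipotentCharacter{\mu}$ with $f^\mu=K_{\mu,(1^n)}\ge 1$, identifies $\restricted{\trivialCharacter{(1^n)}}{\SL{n}{q}}$ with $\induced{\trivialCharacter{B_0}}{B_0}{\SL{n}{q}}$, $B_0=B\cap\SL{n}{q}$, via the single double coset coming from $\GL{n}{q}=\SL{n}{q}\,B$, and then squeezes $n!=\cardinality{B_0\backslash \SL{n}{q}/B_0}$ against $\sum_\mu (f^\mu)^2=n!$; both structural inputs you flag (surjectivity of $\det$ on $B$ and the Bruhat count of $(B_0,B_0)$-double cosets for $\SL{n}{q}$) are standard and correct, and the equality case forces every restriction to be irreducible. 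What your approach buys: it is self-contained at the level of the flag permutation character, handles all $\mu\vdash n$ simultaneously, sidesteps the twist-distinctness verification, and yields the extra conclusion that distinct unipotent characters restrict to distinct (indeed constituent-disjoint) irreducibles of $\SL{n}{q}$. What the paper's approach buys: it is shorter, works one character at a time, and relies on a quoted theorem about normal subgroups with cyclic quotient rather than on BN-pair facts for $\SL{n}{q}$; it is also the template reused in \autoref{section:nonvanishing_characters_2} for non-unipotent characters, where your counting trick would not apply.
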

	\begin{proof}
		Let $\theta \in \irreducibleCharacters{\fieldInvertible{q}}$, and consider the character ${\theta(\det(g))\unipotentCharacter{\mu}(g) \in \irreducibleCharacters{\GL{n}{q}}}$. For every $g \in \SL{n}{q}$, $\theta(\det(g))\unipotentCharacter{\mu}(g) = \unipotentCharacter{\mu}(g)$. Therefore ${\restricted{(\theta(\det)\unipotentCharacter{\mu})}{\SL{n}{q}} = \restricted{\unipotentCharacter{\mu}}{\SL{n}{q}}}$.
		
		Let $C$ the cyclic group of diagonal matrices with $1$ on the diagonal, except possibly in the top left cell. We have ${\cardinality{C} = q-1}$, ${C \cap \SL{n}{q} = 1}$, ${C \ \SL{n}{q} = \GL{n}{q}}$ and $C$ is a cyclic subgroup of $\GL{n}{q}$.
		
		It is easy to see that the characters $\theta(\det)\unipotentCharacter{\mu}$ are all distinct by evaluating them on the elements of $C$ (there exist characters $\chi$ of $\GL{n}{q}$ for which $\theta(\det)\chi$ are not all distinct). As such, the restrictions of the $q-1$ distinct irreducible characters $\theta(\det)\unipotentCharacter{\mu}$ are equal to $\restricted{\unipotentCharacter{\mu}}{\SL{n}{q}}$. Therefore, by \autoref{theorem:restriction_to_normal_subgroup}, $\restricted{\unipotentCharacter{\mu}}{\SL{n}{q}}$ splits into a single irreducible character, namely, it is irreducible itself.
	\end{proof}

	By identifying $\unipotentCharacter{\mu} \define \restricted{\unipotentCharacter{\mu}}{\SL{n}{q}}$, using \autoref{corollary:restriction_of_unipotent_characters_to_sl}, we can deduce that the non-projective version of \autoref{theorem:nonvanishing_characters_of_gln} implies the non-projective version of \autoref{theorem:nonvanishing_characters_of_sln}.
	
	\subsection{The Projective Groups} \label{section:projective_unipotent_restriction}
	\paragraph{ } In this section, we assume the non-projective versions of the main theorems, and deduce the projective counterparts.
	\begin{lemma}
		Let $G$ and $H$ be finite groups, with $p \colon G \twoheadrightarrow H$. Let ${\chi \in \irreducibleCharacters{G}}$, and let $(\pi, V)$ be its corresponding representation. Define 
		\[\ker(\chi) \define \ker(\pi) = \{g \in G \colon \chi(g) = \chi(1)\}.\]
		Assume that $\ker(\chi) \supergroup \ker(p)$. Then $\chi'(h) \define \chi(p^{-1}(h))$ is well defined, and is an irreducible character of $H$.
	\end{lemma}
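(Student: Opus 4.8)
The plan is to show that the representation $\pi$ factors through the quotient map $p$, and that the descended representation inherits irreducibility. First I would unpack the hypothesis $\ker(\chi) \supergroup \ker(p)$: since $\ker(\chi) = \ker(\pi)$ by definition, every $g \in \ker(p)$ satisfies $\pi(g) = \mathrm{id}_V$. This is the only place the hypothesis is used, and it is the crux of the well-definedness claim.

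Next I would define $\pi' \colon H \to GL(V)$ by $\pi'(h) \define \pi(g)$ for an arbitrary choice of $g \in p^{-1}(h)$, and check this does not depend on the choice: if $p(g_1) = p(g_2) = h$ then $g_1 g_2^{-1} \in \ker(p) \subseteq \ker(\pi)$, so $\pi(g_1) = \pi(g_2)$. The homomorphism property $\pi'(h_1 h_2) = \pi'(h_1)\pi'(h_2)$ is then immediate from that of $\pi$ together with the surjectivity of $p$ (pick preimages $g_i$ of $h_i$; then $g_1 g_2$ is a preimage of $h_1 h_2$). Taking traces gives $\chi'(h) = \mathrm{tr}\,\pi'(h) = \mathrm{tr}\,\pi(g) = \chi(g)$, which is exactly the asserted formula $\chi'(h) = \chi(p^{-1}(h))$, now seen to be well-defined because $\chi$ is constant on each fiber of $p$.

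For irreducibility, I would use that $p$ is surjective, so $\pi'(H) = \pi(G)$ as subgroups of $GL(V)$; hence a subspace $W \subseteq V$ is invariant under $\pi'(H)$ if and only if it is invariant under $\pi(G)$, and since $\pi$ is irreducible so is $\pi'$. Alternatively, one can compute $\innerProduct{\chi'}{\chi'}_H$ directly: grouping the sum $\sum_{g \in G} |\chi(g)|^2$ according to the fibers of $p$, each of size $\cardinality{\ker(p)} = \cardinality{G}/\cardinality{H}$, on which $\chi$ is constant, yields $\innerProduct{\chi'}{\chi'}_H = \innerProduct{\chi}{\chi}_G = 1$, so $\chi'$ is irreducible. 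There is essentially no obstacle here — the entire content is the factorization through $p$, and once well-definedness is established every remaining step is a routine verification.
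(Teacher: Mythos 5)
Your proposal is correct and follows essentially the same route as the paper: factor $\pi$ through $p$ to get $\pi'$ (using $\ker(p) \subseteq \ker(\pi)$ for well-definedness), observe that $\Ima(\pi') = \Ima(\pi)$ so irreducibility descends, and take traces to obtain $\chi'$. Your extra details (the fiberwise check and the alternative inner-product computation) are just fuller write-ups of the same argument.
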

	\begin{proof}
		Since $\ker(\pi) \supergroup \ker(p)$, there is a well defined representation $\pi'$ of $H$ with ${\pi(g) = \pi'(p(g))}$ for every $g \in G$. It is irreducible since $\Ima(\pi') = \Ima(\pi)$, which stabilizes no nontrivial subspace of $V$.
		
		By taking the trace of the representations, with the trace of $\pi'$ equal to $\chi'$, the proposition follows.
	\end{proof}
	
	\begin{corollary} \label{corollary:projection_of_nonvanishing_characters}
		Let $G' \subgroup G$ be groups, and let $\specialCharacterSet{G} \subseteq \irreducibleCharacters{G}$ be a set \textCMarksSubgroupsThatDoNotContainH{that}{$G$}{$G'$}.	Let $p \colon G \twoheadrightarrow H$, and assume that ${\forall \chi \in \specialCharacterSet{G} \colon \ker(\chi) \supergroup \ker(p)}$. Define $H' \define p(G')$ and
		\[\specialCharacterSet{H} \define \{ g \mapsto \chi(p^{-1}(g)): \chi \in \specialCharacterSet{G}\} \subseteq \irreducibleCharacters{H}.\]
		Then, \textCMarksSubgroupsThatDoNotContainH{$\specialCharacterSet{H}$}{$H$}{$H'$}.
	\end{corollary}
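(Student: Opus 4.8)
The plan is to pull subgroups of $H$ back to $G$ along $p$ and transfer the marking property by a single fiber-average. First I would fix a subgroup $L \subgroup H$ with $L \not\supseteq H'$ and set $K \define p^{-1}(L) \subgroup G$; then $\ker(p) \subgroup K$ and, since $p$ is onto, $p(K) = L$. If $K$ contained $G'$ we would get $L = p(K) \supseteq p(G') = H'$, contradicting the choice of $L$; hence $K$ does not contain $G'$. By the hypothesis that $\specialCharacterSet{G}$ marks every subgroup of $G$ not containing $G'$, some $\chi \in \specialCharacterSet{G}$ then marks $K$, i.e. $\innerProduct{\restricted{\chi}{K}}{\trivialCharacter{K}}_K > 0$.

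Next I would push this character down to $H$. The hypothesis $\ker(\chi) \supseteq \ker(p)$ is exactly what the preceding lemma requires, so $\chi'(h) \define \chi(p^{-1}(h))$ is a well-defined irreducible character of $H$ with $\chi = \chi' \circ p$ as class functions, and by definition $\chi' \in \specialCharacterSet{H}$. The key computation is the fiber average: since $\restricted{p}{K} \colon K \twoheadrightarrow L$ is $\cardinality{\ker(p)}$-to-one and $\cardinality{K} = \cardinality{\ker(p)}\cdot\cardinality{L}$,
\[
\innerProduct{\restricted{\chi}{K}}{\trivialCharacter{K}}_K
= \frac{1}{\cardinality{K}}\sum_{k \in K}\chi'(p(k))
= \frac{\cardinality{\ker(p)}}{\cardinality{K}}\sum_{\ell \in L}\chi'(\ell)
= \frac{1}{\cardinality{L}}\sum_{\ell \in L}\chi'(\ell)
= \innerProduct{\restricted{\chi'}{L}}{\trivialCharacter{L}}_L .
\]
Hence $\innerProduct{\restricted{\chi'}{L}}{\trivialCharacter{L}}_L > 0$, so $\chi'$ marks $L$; as $L$ was an arbitrary subgroup of $H$ not containing $H'$, this is precisely the assertion that $\specialCharacterSet{H}$ marks every such subgroup.

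I do not expect a genuine obstacle. The two points that need care are both already handled upstream: that $\chi'$ is well defined and irreducible is the content of the preceding lemma (using $\ker(\chi) \supseteq \ker(p)$), and that $K = p^{-1}(L)$ fails to contain $G'$ whenever $L$ fails to contain $H'$ uses only surjectivity of $p$. Everything else is the one-line averaging identity over the cosets of $\ker(p)$ inside $K$.
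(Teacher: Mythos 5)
Your proposal is correct and follows essentially the same route as the paper: pull the subgroup back to $K = p^{-1}(L)$, invoke the marking hypothesis on $G$, and transfer positivity of the inner product via the same averaging over the fibers of $p$ (you even spell out the step that $K \not\supseteq G'$ follows from surjectivity of $p$, which the paper leaves implicit). No gaps.
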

	\begin{proof}
		Let $J \subgroup H$, such that $J \not \supergroup H'$, and let $K \define p^{-1}(J)$.
		
		Since $K$ does not contain $G'$, there exists some $\chi \in \specialCharacterSet{G}$ such that ${\chiKContainsTrivial{\chi}{K}}$. Denote by $\chi' \in \specialCharacterSet{H}$ the character defined by $\chi'(g) = \chi(p^{-1}(g))$. We have
		\[\characterSum{\chi'}{J} = \frac{1}{\cardinality{J}}\sum_{h \in J}\chi'(h) = \frac{1}{\cardinality{J}} \frac{\cardinality{J}}{\cardinality{K}} \sum_{g \in K}\chi(g) = \characterSum{\chi}{K} > 0\]
		which implies that $\chiKContainsTrivial{\chi'}{J}$.
	\end{proof}

	\autoref{lemma:induced-trivial-character-split} implies that the kernels of the unipotent characters contain the center of $\GL{n}{q}$, as they are spanned by the characters $\trivialCharacter{\lambda}$, that do satisfy this. Thus, we may identify $\unipotentCharacter{\mu}$ with the corresponding characters as in \autoref{corollary:projection_of_nonvanishing_characters}.
	
	Using \autoref{corollary:projection_of_nonvanishing_characters}, we deduce that the non-projective versions of \autoref{theorem:nonvanishing_characters_of_gln} and \autoref{theorem:nonvanishing_characters_of_sln} imply their projective counterparts.

	\subsection{$\GL{n}{q}$} \label{section:nonvanishing_characters_ge_3}
	\paragraph{ } To prove the main theorems for $\GL{n}{q}$, $n \ge 3$, we need to know the possibilities for subgroups of $\GL{n}{q}$ that act $2$-transitively on the projective space. The following theorem summarizes these possibilities:
	
	\begin{theorem}[\cite{2_transitive_collineation_groups}] \label{theorem:2_transitive_collineation_groups}
		If $K \subgroup \GammaL{n}{q}, n \ge 3$ and $K$ is $2$-transitive on $\projective^{n-1}\field{q}$, then either $K \supergroup \SL{n}{q}$ or $K = \alternating{7}$, with $n = 4$ and $q = 2$.
	\end{theorem}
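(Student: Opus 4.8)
This is the classification of point-$2$-transitive subgroups of $\GammaL{n}{q}$ on projective space, and the plan is to deduce it from the classification of finite $2$-transitive permutation groups (hence, ultimately, from the classification of finite simple groups), which is the route taken in the cited reference. Write $N \define (q^{n}-1)/(q-1)$ for the number of points of $\projective^{n-1}\field{q}$. Since a $2$-transitive group is primitive, the classical dichotomy for finite $2$-transitive groups applies to the image $\bar K$ of $K$ in $\PGammaL{n}{q}$ (scalars act trivially on $\projective^{n-1}\field{q}$): $\bar K$ has a unique minimal normal subgroup $M$ which is either elementary abelian and regular --- the \emph{affine} case, forcing $N = p^{d}$ for a prime $p$ --- or a nonabelian simple group carrying a $2$-transitive action of degree $N$, the \emph{almost simple} case. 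I would show the affine case cannot occur for $n \ge 3$, and in the almost simple case that $M = \PSL{n}{q}$ unless $(n,q) = (4,2)$ and $M = \alternating{7}$.

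For the affine case, first suppose $d \ge 2$. For $n \ge 3$ with $(n,q) \ne (6,2)$, Zsigmondy's theorem supplies a primitive prime divisor $p$ of $q^{n}-1$; it divides $N$, and as $N = p^{d}$ it is the only prime dividing $N$. Primitivity forces any order-$p$ element of $\GL{n}{q}$ to have an irreducible degree-$n$ minimal polynomial over $\field{q}$, hence to act irreducibly with cyclic centraliser $\fieldInvertible{q^{n}}$, so a Sylow $p$-subgroup of $\GL{n}{q}$ lies in a Singer torus and is cyclic; intersecting $M$ with $\GL{n}{q}$ and pushing the elementary-abelian quotient through the $\aut{\field{q}}$-part of $\GammaL{n}{q}$ contradicts this unless $d = 1$ (the case $(n,q)=(6,2)$ is dismissed since $N = 63$ is not a prime power). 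When $d = 1$, i.e. $N = p$ is prime, $M \cong C_{p}$ is generated by an element inducing a $p$-cycle on the projective points, so it is conjugate to a power of a Singer cycle's image, whose normaliser in $\PGammaL{n}{q}$ has order at most $nfp$ with $q = p_{0}^{f}$; but an affine $2$-transitive group of prime degree $p$ has order at least $p(p-1) = p(q + q^{2} + \cdots + q^{n-1})$, and these bounds are incompatible for $n \ge 3$. So the affine case is eliminated.

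In the almost simple case I would run $N$ against the known list of $2$-transitive actions of simple groups: alternating groups in their natural and exceptional actions, the sporadic examples, $\PSL{m}{q'}$ on $(q'^{m}-1)/(q'-1)$ points, $\SP{2m}{2}$ on $2^{m-1}(2^{m}\pm 1)$ points, $PSU_{3}(q')$ and $Sz(q')$ and ${}^{2}G_{2}(q')$ and $\PSL{2}{q'}$ on their natural actions, and so on. The symplectic, unitary, Suzuki, Ree and $\PSL{2}$ degrees are killed by elementary coprimality arguments (for instance $N = q'+1$ would make $q' = q\cdot(q^{n-1}-1)/(q-1)$ a product of two coprime integers both exceeding $1$ when $n \ge 3$, so not a prime power, and $N = q'^{3}+1$ or $q'^{2}+1$ likewise force a nontrivial coprime factorisation of a prime power); matching the sporadic and exceptional-alternating degrees against $(q^{n}-1)/(q-1)$ leaves only $7$ (at $n=3$, $q=2$, giving $M = \PSL{3}{2}$) and $15$; and the equality $(q^{n}-1)/(q-1) = (q'^{m}-1)/(q'-1)$ together with the embedding $M \subgroup \GammaL{n}{q}$ forces $m = n$ and $q' = q$ except at $15 = (2^{4}-1)/(2-1)$, where $\alternating{7} < \alternating{8} \cong \PSL{4}{2}$ also acts $2$-transitively --- precisely the stated exception. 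In every surviving case $M = \PSL{n}{q}$; taking the preimage of $M$ in $\GL{n}{q}$ and using that $\SL{n}{q}$ is perfect for $n \ge 3$, a commutator argument yields $\SL{n}{q} \subgroup K$.

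The main obstacle is the number theory behind the affine case: deciding exactly which $(q^{n}-1)/(q-1)$ are prime powers needs Zsigmondy together with (for $d \ge 2$) control of the Nagell--Ljunggren equation $\tfrac{x^{n}-1}{x-1} = y^{m}$, and the Sylow-cyclicity argument must be meshed carefully with the $\aut{\field{q}}$-layer of $\GammaL{n}{q}$ so that no configuration slips through. Once the classification of finite simple groups is available, the almost simple half is essentially finite bookkeeping on degrees, and the Goormaghtigh-type coincidence at $31 = (2^{5}-1)/(2-1) = (5^{3}-1)/(5-1)$ is dispatched by a crude order comparison showing the competing simple group does not fit inside the relevant $\GammaL{n}{q}$.
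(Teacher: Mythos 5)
The paper itself contains no proof of this statement: it is imported wholesale from the cited classification of $2$-transitive collineation groups, whose original proof is classification-free, so there is no internal argument to compare yours with. What you outline is a rederivation from the CFSG list of finite $2$-transitive permutation groups, which is a legitimate but different route; judged as a proof, however, it is a plan with at least two genuine gaps rather than a complete argument.

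First, in the almost simple case your identification of the socle leans on the assertion that $(q'^{m}-1)/(q'-1)=(q^{n}-1)/(q-1)$ forces $(m,q')=(n,q)$ apart from the coincidences at $15$ and $31$. As an arithmetic statement this is the Goormaghtigh problem restricted to prime-power bases, which is open, so it cannot be disposed of as ``finite bookkeeping''; and your crude order comparison only kills the direction $m>n$ together with the two known coincidences. For a hypothetical coincidence with $m<n$ one has $\cardinality{\PSL{m}{q'}}\approx N^{m+1}$ with $N=(q^{n}-1)/(q-1)$, which sits comfortably below $\cardinality{\PGammaL{n}{q}}\approx N^{n+1}$, so Lagrange is silent and a different input is required --- for instance Landazuri--Seitz lower bounds on the minimal degree of faithful projective representations of $\PSL{m}{q'}$, or the Guralnick--Penttila--Praeger--Saxl analysis of subgroups whose order is divisible by a primitive prime divisor. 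Second, in the affine case with $d\ge 2$ the Sylow-cyclicity argument tacitly assumes the primitive prime divisor $p$ does not divide the field-automorphism degree $f$ (where $q$ is the $f$-th power of a prime): when $p\divides f$ the Sylow $p$-subgroup of $\PGammaL{n}{q}$ need not be cyclic, and rank-two elementary abelian $p$-subgroups can genuinely occur (e.g.\ $n=3$, $q=2^{7}$, $p=7$: a field automorphism of order $7$ commutes with an order-$7$ element of $\PGL{3}{2}$), so the step you defer as needing to be ``meshed carefully'' is an actual hole; one must show, say, that $p\divides f$ is incompatible with $N=p^{d}$ before the affine case is eliminated. The remaining ingredients --- the Burnside dichotomy applied to the image of $K$ in $\PGammaL{n}{q}$, the coprime factorisation of $N-1$ killing the $\PSL{2}{q'}$, unitary, Suzuki and Ree degrees, the $d=1$ normalizer bound $nfp$ against $p(p-1)$, the $\alternating{7}$ exception at $(n,q)=(4,2)$, and the final perfectness/commutator step recovering $\SL{n}{q}\subgroup K$ from $\PSL{n}{q}$ lying in the image of $K$ --- are sound as sketched.
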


	This theorem, \autoref{corollary:no-non-vanishing-subgroups-3} and the results of \autoref{section:projective_unipotent_restriction} show that \textCMarksSubgroupsThatAreNotxkTransitive{$\twoTransitivityCharacter$}{$\PGL{3}{q}$}{\projectiveVector{e}}{3}, since \xkGTransitivity{\projectiveVector{e}}{3}{$\PGL{3}{q}$} implies $2$-transitivity.
	
	Thus, using \autoref{corollary:glnq-stabilizer-character}, we see that $\forall n \ge 3$, \textCMarksSubgroups{$\twoTransitivityCharacter \circ \trivialCharacter{(n-3)}$}{$\GL{n}{q}$} that is not \xkGTransitive{\projectiveVector{e}}{2}{$\GL{n}{q}$}. Since $\GL{n}{q}$ is $2$-transitive with its projective action, we deduce that \textCMarksSubgroupsThatAreNotkTransitive{$(\transitivityCharacter + \twoTransitivityCharacter) \circ \trivialCharacter{(n-3)}$}{$\GL{n}{q}$}{2} with the restricted action on $\projective^{n-1}\field{q}$.
	
	Outside the exceptional case of the theorem, the subgroups of $\GL{n}{q}$ that do not act $2$-transitively on $\projective^{n-1}\field{q}$ are precisely those that do not contain $\SL{n}{q}$, by \autoref{theorem:2_transitive_collineation_groups}.
	
	Thus, to find the irreducible characters mentioned in the main theorems (for all but $\GL{4}{2}$), it remains to decompose $\twoTransitivityCharacter \circ \trivialCharacter{(n-3)}$ to irreducible characters.
	
	By the definition of $\twoTransitivityCharacter$,
	\[\twoTransitivityCharacter \circ \trivialCharacter{(n-3)} = (\trivialCharacter{(1^3)} - \trivialCharacter{(3)}) \circ \trivialCharacter{(n-3)} = \trivialCharacter{(n-3, 1^3)} - \trivialCharacter{(n-3, 3)}.\]
	
	These can be decomposed using \autoref{lemma:induced-trivial-character-split}, to get:
	\[\twoTransitivityCharacter \circ \trivialCharacter{(n-3)} =
	\begin{cases}
	\unipotentCharacter{(1^3)} + 2\unipotentCharacter{(2, 1)} 	& n = 3 \\
	\unipotentCharacter{(1^4)} + 3\unipotentCharacter{(2, 1^2)} + 2\unipotentCharacter{(2^2)} + 2\unipotentCharacter{(3, 1)}	& n = 4 \\
	\unipotentCharacter{(n-3, 1^3)} + 2\unipotentCharacter{(n-3, 2, 1)} + 3\unipotentCharacter{(n-2, 1^2)} + 2\unipotentCharacter{(n-2, 2)} + 2\unipotentCharacter{(n-1, 1)}	& n \ge 5
	\end{cases}
	\]
	This concludes the proof of \autoref{theorem:nonvanishing_characters_of_gln}.

	\subsection{$\GL{4}{2}$} \label{section:gl42}
	\paragraph{ } In this section, we study the exceptional case of $\GL{4}{2}$. Since the underlying field is $\field{2}$, we have $\GL{4}{2} = \SL{4}{2} = \PGL{4}{2} = \PSL{4}{2}$.
	
	The problem with this case is due to \autoref{theorem:2_transitive_collineation_groups}. Note that \textCMarksProperSubgroups{${\twoTransitivityCharacter\circ \trivialCharacter{(1)}}$}{$\GL{4}{2}$} that is not the $\alternating{7} \subgroup \GL{4}{2}$ referenced in the theorem.
	
	We wish to find a character $\psi \in \irreducibleCharacters{\GL{4}{2}}$ such that \textChiDoesntVanishOnK{\psi}{\alternating{7}}. For this goal, we use the following proposition to better understand the inclusion of $\alternating{7}$ in $\GL{4}{2}$.
	
	\begin{proposition}[e.g. {\cite[Chapter 7.7]{permutation_groups}}]
		$\GL{4}{2} \cong \alternating{8}$. Under this isomorphism, the $2$-transitive $\alternating{7}$ is conjugate to the stabilizer of an element in the natural action of $\alternating{8}$ on 8 elements.
	\end{proposition}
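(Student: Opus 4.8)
The plan is to produce the isomorphism $\GL{4}{2}\cong\alternating{8}$ and the claimed conjugacy at one stroke, by letting $\GL{4}{2}$ act on the cosets of the exceptional subgroup supplied by \autoref{theorem:2_transitive_collineation_groups}. First record the orders: $\cardinality{\GL{4}{2}}=(2^4-1)(2^4-2)(2^4-4)(2^4-8)=20160$ and $\cardinality{\alternating{7}}=7!/2=2520$, so the copy of $\alternating{7}$ inside $\GammaL{4}{2}=\GL{4}{2}$ that acts $2$-transitively on the $15$ points of $\projective^3\field{2}$ has index $8$ in $\GL{4}{2}$. Since $q=2$ gives $\fieldInvertible{2}=\{1\}$, we have $\GL{4}{2}=\SL{4}{2}=\PSL{4}{2}$, which is nonabelian simple; hence the action of $\GL{4}{2}$ on the $8$ cosets of this $\alternating{7}$ is faithful (its kernel, being a normal subgroup of $\GL{4}{2}$ contained in the proper subgroup $\alternating{7}$, is trivial by simplicity), giving an embedding $\GL{4}{2}\hookrightarrow\symmetric{8}$.

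Next I would pin down the image. Being nonabelian simple, the image meets the index-$2$ subgroup $\alternating{8}\normalSubgroup\symmetric{8}$ in a subgroup of index $1$ or $2$; index $2$ is impossible for a nonabelian simple group, so the image lies in $\alternating{8}$, and since $\cardinality{\GL{4}{2}}=20160=\cardinality{\alternating{8}}$ the image is all of $\alternating{8}$. Thus the coset action realizes an isomorphism $\GL{4}{2}\cong\alternating{8}$ whose underlying permutation set is the set of those $8$ cosets, and by construction the subgroup $\alternating{7}$ we started with is the stabilizer of one of these $8$ points; since $\alternating{8}$ is transitive on the $8$ points, all point stabilizers are conjugate, which is the assertion.

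I expect no serious obstacle once the two ingredients the argument uses are granted — the simplicity of $\PSL{4}{2}$ and the identification in \autoref{theorem:2_transitive_collineation_groups} of the exceptional $2$-transitive group as a copy of $\alternating{7}$ — the remainder being order bookkeeping. The one point that genuinely needs care, and the reason this coset construction is preferable to first building an abstract isomorphism $\GL{4}{2}\cong\alternating{8}$ (for instance through a common index-$15$ affine subgroup $2^{3}{:}\GL{3}{2}$) and only afterwards locating $\alternating{7}$, is that $\alternating{8}$ has more than one conjugacy class of subgroups isomorphic to $\alternating{7}$; under $\GL{4}{2}\cong\alternating{8}$ these correspond to the subgroups that are $2$-transitive on the $15$ points of $\projective^3\field{2}$ versus those $2$-transitive on the $15$ hyperplanes. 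The construction above is built from the point action — the one named in \autoref{theorem:2_transitive_collineation_groups} — so it lands in the correct class automatically.
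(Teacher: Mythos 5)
Your argument is correct, and it is genuinely more than the paper offers: the paper gives no proof at all here, it simply cites the literature for both the isomorphism and the location of $\alternating{7}$. Your coset construction --- act on the $8$ cosets of the exceptional $2$-transitive $\alternating{7}$, use simplicity of $\GL{4}{2}=\PSL{4}{2}$ to get faithfulness and to force the image inside $\alternating{8}$, then compare orders --- is a complete derivation, granted the two inputs you name (simplicity, and the existence of the exceptional subgroup, which \autoref{theorem:2_transitive_collineation_groups} as stated does not assert but which the proposition presupposes). The identification of the chosen $\alternating{7}$ with a point stabilizer of the $8$-point action, and the conjugacy of all point stabilizers, are exactly as you say.

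Your closing remark, however, rests on a false premise: $\alternating{8}$ has only \emph{one} conjugacy class of subgroups isomorphic to $\alternating{7}$. The maximal subgroups of $\alternating{7}$ have indices $7, 15, 15, 21, 35$, so $\alternating{7}$ has no subgroup of index smaller than $7$ and none of index $8$ (a subgroup of order $315$ would have to lie in a maximal subgroup of order $360$, $168$, $120$ or $72$, none of which $315$ divides); hence any copy of $\alternating{7}$ inside $\alternating{8}$ has orbits of sizes $7+1$ on the $8$ points, fixes a point, and by comparing orders equals that point's stabilizer. Consequently a single exceptional $\alternating{7} \subgroup \GL{4}{2}$ is $2$-transitive on the $15$ points and on the $15$ hyperplanes of $\projective^{3}\field{2}$ simultaneously (its two classes of $\PSL{2}{7}$-subgroups being the respective stabilizers); there is no ``wrong class'' to land in, and the conclusion of the proposition holds under \emph{any} isomorphism $\GL{4}{2} \cong \alternating{8}$, not only the one your coset action produces. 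This orbit argument would in fact let you shorten the proof: once any isomorphism is in hand, the image of the $2$-transitive $\alternating{7}$ is automatically a point stabilizer.
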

	
	Thus, we use the representation theory of $\alternating{8}$. For that, we consider the representation theory for $\symmetric{n}$ (see, for instance, \cite{sn_characters}), and use \autoref{theorem:restriction_to_normal_subgroup}. This leads to the following proposition.
	
	\begin{proposition} \label{proposition:a8_character_degrees}
		The characters of $\alternating{8}$ have degrees (with multiplicities):
		
		$\{1, 7, 14, 20, 21, 21, 21, 28, 35, 45, 45, 56, 64, 70\}$.
	\end{proposition}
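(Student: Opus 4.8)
The plan is to realise the irreducible characters of $\alternating{8}$ as restrictions of those of $\symmetric{8}$, and read off their degrees. I would apply \autoref{theorem:restriction_to_normal_subgroup} with $K = \alternating{8} \normalSubgroup \symmetric{8}$ and $C = \groupSpan{(1\,2)}$, so that $\symmetric{8} = KC$, $K \cap C = 1$ and $\cardinality{C} = 2$. Recalling the standard facts that $\irreducibleCharacters{\symmetric{8}} = \{\snCharacter{\lambda} : \lambda \vdash 8\}$, that $\snCharacter{\lambda} \cdot \sgn = \snCharacter{\lambda'}$, and that the irreducible characters of $\symmetric{8}$ with a given restriction to $\alternating{8}$ form a coset of the two-element group of linear characters of $\symmetric{8}/\alternating{8}$ (hence are exactly $\{\snCharacter{\lambda}, \snCharacter{\lambda'}\}$), \autoref{theorem:restriction_to_normal_subgroup} tells us this coset has $\cardinality{C}/b = 2/b$ elements, where $b$ is the number of irreducible components of $\restricted{\snCharacter{\lambda}}{\alternating{8}}$. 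Hence $\restricted{\snCharacter{\lambda}}{\alternating{8}}$ is irreducible (and coincides with $\restricted{\snCharacter{\lambda'}}{\alternating{8}}$) when $\lambda \ne \lambda'$, and splits into two distinct irreducible characters of $\alternating{8}$ when $\lambda = \lambda'$; by the remark following \autoref{theorem:restriction_to_normal_subgroup}, in the split case each component has degree $\tfrac{1}{2}\snCharacter{\lambda}(1)$.

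With the branching established, I would enumerate the $22$ partitions of $8$ and compute each degree by the hook length formula $\snCharacter{\lambda}(1) = 8!/\prod_c h(c)$. The non-self-conjugate partitions fall into $10$ conjugate pairs $\{\lambda, \lambda'\}$, each contributing one irreducible character of $\alternating{8}$; a direct computation gives their degrees as $1, 7, 14, 20, 21, 28, 35, 56, 64, 70$. The only self-conjugate partitions of $8$ are $(4,2,1,1)$ and $(3,3,2)$ — which one checks directly, or via the classical bijection between self-conjugate partitions of $n$ and partitions of $n$ into distinct odd parts, here $7+1$ and $5+3$ — with degrees $90$ and $42$ respectively. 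By the split case, these contribute irreducible characters of $\alternating{8}$ of degrees $45, 45$ and $21, 21$.

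Collecting the contributions, $\alternating{8}$ has $10 + 2 + 2 = 14$ irreducible characters (consistent with $\alternating{8}$ having $14$ conjugacy classes), with degree multiset $\{1, 7, 14, 20, 21, 21, 21, 28, 35, 45, 45, 56, 64, 70\}$, which is exactly the assertion.

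There is no conceptual difficulty here; the main risk is arithmetic bookkeeping — computing all $22$ hook-length products correctly, pairing conjugate partitions correctly, and in particular getting $\snCharacter{(3,3,2)}(1) = 42$ (not $21$), so that its restriction correctly accounts for two of the three characters of degree $21$. That last point is the step most prone to error.
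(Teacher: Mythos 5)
Your proposal is correct and follows essentially the same route as the paper: restrict the $\symmetric{8}$ characters to $\alternating{8}$ via \autoref{theorem:restriction_to_normal_subgroup}, with the non-self-conjugate pairs restricting irreducibly and the two self-conjugate partitions $(4,2,1,1)$ and $(3,3,2)$ (degrees $90$ and $42$) each splitting into two equal halves, yielding $45,45$ and $21,21$. The only cosmetic difference is the completeness check at the end — the paper verifies the sum of squares equals $\cardinality{\alternating{8}}$, while you appeal to the count of $14$ conjugacy classes — but the argument is the same.
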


	\paragraph{ } To prove the proposition, we use the following. See \cite{sn_characters} for information about standard tableaux.

	\begin{proposition}[e.g. {\cite[Chapters 7.2 and 7.3]{sn_characters}}]
		For every $n \in \positiveIntegers$, ${\irreducibleCharacters{\symmetric{n}} = \{ \snCharacter{\lambda} \colon \lambda \text{ is a partition of } n \}}$, for some characters with $\dim(\snCharacter{\lambda}) = f^\lambda = $ the number of standard tableaux of shape $\lambda$.
		
		In addition, $\sgn \snCharacter{\lambda} = \snCharacter{\lambda'}$, with $\sgn$ being the sign representation.
	\end{proposition}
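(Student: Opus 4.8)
The plan is to classify the irreducible representations of $\symmetric{n}$ over $\complex$ by producing one irreducible module for each partition, proving these are pairwise inequivalent, and matching their count against the number of conjugacy classes. For each $\lambda \vdash n$, fix a Young tableau $t$ of shape $\lambda$, let $R_t$ and $C_t$ be its row and column stabilizers, and form in the group algebra $\complex\symmetric{n}$ the elements $a_t = \sum_{\sigma \in R_t}\sigma$ and $b_t = \sum_{\tau \in C_t}\sgn(\tau)\tau$, together with the Young symmetrizer $c_t = a_t b_t$. First I would verify that $c_t^2 = \kappa_\lambda c_t$ for a nonzero scalar $\kappa_\lambda$, so that $\kappa_\lambda^{-1}c_t$ is idempotent and $V^\lambda \define \complex\symmetric{n}\,c_t$ is a genuine $\symmetric{n}$-module; its character I call $\snCharacter{\lambda}$, and I would identify $V^\lambda$ with the equivalent \emph{Specht module} description used in the next step.

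The heart of the argument is irreducibility together with inequivalence, both resting on combinatorial lemmas about the dominance order on partitions. For $\lambda = \mu$ one shows $c_t\,\complex\symmetric{n}\,c_t = \complex\,c_t$, a one-dimensional algebra, which by the standard lemma on idempotents in a semisimple algebra forces the cyclic module $V^\lambda$ to be irreducible. For $\lambda \neq \mu$ one shows $c_s\,\complex\symmetric{n}\,c_t = 0$ (where $s,t$ have shapes $\lambda,\mu$), whence $\operatorname{Hom}_{\symmetric{n}}(V^\lambda, V^\mu) = 0$ and the modules are inequivalent for distinct partitions. Since the partitions of $n$ index the cycle types, hence the conjugacy classes, hence the irreducible characters of $\symmetric{n}$, the family $\{\snCharacter{\lambda} \colon \lambda \vdash n\}$ is forced to exhaust $\irreducibleCharacters{\symmetric{n}}$.

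For the dimension claim I would realize $V^\lambda$ as the span of the polytabloids $e_t = b_t\cdot\{t\}$ inside the permutation module on tabloids of shape $\lambda$, and prove that the polytabloids indexed by \emph{standard} tableaux form a basis. Linear independence follows by ordering tabloids by the dominance (last-letter) order and observing that each standard $e_t$ has a distinct leading tabloid; spanning follows from a straightening algorithm that rewrites an arbitrary polytabloid as a $\integers$-combination of standard ones by repeatedly applying the Garnir relations. This gives $\dim V^\lambda = f^\lambda$, the number of standard tableaux of shape $\lambda$.

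Finally, for $\sgn\,\snCharacter{\lambda} = \snCharacter{\lambda'}$, I would use that transposing the diagram interchanges rows and columns, so that the transposed tableau $t'$ satisfies $R_{t'} = C_t$ and $C_{t'} = R_t$; hence tensoring $V^\lambda$ with $\sgn$ converts the symmetrizer $a_t b_t$ into (a conjugate of) $b_t a_t$, whose symmetrization and antisymmetrization roles are exactly those of $c_{t'}$ for the conjugate shape. Comparing the one-dimensional algebras $c_t\complex\symmetric{n}c_t$ and $c_{t'}\complex\symmetric{n}c_{t'}$ then identifies $\sgn\otimes V^\lambda$ with $V^{\lambda'}$, which yields the character identity. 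I expect the main obstacle to be the dimension count: establishing that the standard polytabloids are simultaneously independent and spanning requires the full straightening lemma, the genuinely intricate part of the argument, whereas irreducibility and inequivalence reduce cleanly to the single domination lemma.
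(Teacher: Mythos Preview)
The paper does not supply its own proof of this proposition: it is stated as a citation (``e.g.\ \cite[Chapters 7.2 and 7.3]{sn_characters}'') and used as a black box in the proof of \autoref{proposition:a8_character_degrees}. Your outline is the standard Young-symmetrizer/Specht-module development one would find in such a reference, and as a sketch it is correct; there is nothing in the paper to compare it against beyond the bare citation.

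One small point worth tightening in your last paragraph: twisting $c_t = a_t b_t$ by $\sgn$ does not literally produce $b_t a_t$ but rather $(\text{signed sum over }R_t)(\text{unsigned sum over }C_t)$, which matches $b_{t'} a_{t'}$ for the transposed tableau rather than $a_{t'} b_{t'}$. You then need the (easy but not entirely trivial) fact that $\complex\symmetric{n}\,a_t b_t \cong \complex\symmetric{n}\,b_t a_t$ as $\symmetric{n}$-modules to conclude $\sgn \otimes V^\lambda \cong V^{\lambda'}$. This is routine, but your phrase ``converts the symmetrizer $a_t b_t$ into (a conjugate of) $b_t a_t$'' glosses over exactly this step.
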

	
	\begin{proof}[Proof of \autoref{proposition:a8_character_degrees}]
		By substituting $n = 8$ and calculating the characters, the characters of $\symmetric{n}$ have degrees (with multiplicities) 
		\[\{1, 1, 7, 7, 14, 14, 20, 20, 21, 21, 28, 28, 35, 35, 42, 56, 56, 64, 64, 70, 70, 90\},\] 
		with every pair belonging to conjugate partitions.
		
		By \autoref{theorem:restriction_to_normal_subgroup}, noting that 	$\restricted{\snCharacter{\lambda'}}{\alternating{n}} = \restricted{(\sgn\snCharacter{\lambda})}{\alternating{n}} = \restricted{\snCharacter{\lambda}}{\alternating{n}}$, we deduce that the characters of $\alternating{8}$ are precisely:
		\begin{enumerate}
			\item The restrictions of $\snCharacter{\lambda}$ for $\lambda \ne \lambda'$
			\item Two summands of $\restricted{\snCharacter{\lambda}}{\alternating{n}}$ of equal dimensions for $\lambda = \lambda'$
		\end{enumerate}
		
		Thus, we obtain the following degrees:
		\[\{1, 7, 14, 20, 21, 21, 21, 28, 35, 45, 45, 56, 64, 70\}.\]
		These are all of the characters of $\alternating{8}$, as the sum of their squares is $\cardinality{\alternating{8}}$.
	\end{proof}

	The last thing we need to do to complete the proof of \autoref{theorem:nonvanishing_characters_of_gl42} is understand the decomposition of $\induced{\trivialCharacter{\alternating{7}}}{\alternating{7}}{\GL{4}{2}}$.
	
	\begin{proposition}
		$\induced{\trivialCharacter{\alternating{7}}}{\alternating{7}}{\GL{4}{2}} = \trivialCharacter{\GL{4}{2}} + \psi$, where $\psi \in \irreducibleCharacters{\GL{4}{2}}$ is the single irreducible of dimension $7$.
	\end{proposition}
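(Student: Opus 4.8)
The plan is to reinterpret $\induced{\trivialCharacter{\alternating{7}}}{\alternating{7}}{\GL{4}{2}}$ as a permutation character and then exploit multiple transitivity. Fix the isomorphism $\GL{4}{2} \cong \alternating{8}$ of the previous proposition, under which $\alternating{7}$ is identified with the stabilizer of a point in the natural action of $\alternating{8}$ on $8$ elements. By \autoref{remark:permutation-representation}, $\pi \define \induced{\trivialCharacter{\alternating{7}}}{\alternating{7}}{\GL{4}{2}}$ is then precisely the character of this permutation representation; in particular $\pi(1) = \subgroupIndex{\alternating{8}}{\alternating{7}} = 8$, and since $\alternating{8}$ acts transitively, $\trivialCharacter{\alternating{8}}$ occurs in $\pi$ with multiplicity $\innerProduct{\pi}{\trivialCharacter{\alternating{8}}}_{\alternating{8}} = 1$.

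Next I would compute the norm of $\pi$. By \nameref{frobenius_reciprocity}, $\innerProduct{\pi}{\pi}_{\alternating{8}} = \innerProduct{\restricted{\pi}{\alternating{7}}}{\trivialCharacter{\alternating{7}}}_{\alternating{7}}$, and by \nameref{burnsides_lemma} (exactly as in the proof of \autoref{proposition:stabilizer-induced-character}) the latter equals the number of orbits of $\alternating{7}$ on the $8$ elements. Since $\alternating{7}$ fixes the chosen point and acts transitively on the remaining seven, there are exactly two orbits, so $\innerProduct{\pi}{\pi}_{\alternating{8}} = 2$. Hence $\pi - \trivialCharacter{\GL{4}{2}}$ is a genuine character of degree $7$ and of norm $\innerProduct{\pi}{\pi}_{\alternating{8}} - 2\innerProduct{\pi}{\trivialCharacter{\alternating{8}}}_{\alternating{8}} + 1 = 1$, so it is irreducible.

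Finally, $\pi - \trivialCharacter{\GL{4}{2}}$ is an irreducible character of $\GL{4}{2} \cong \alternating{8}$ of degree $7$, and \autoref{proposition:a8_character_degrees} records that there is a unique such character, namely $\psi$; therefore $\induced{\trivialCharacter{\alternating{7}}}{\alternating{7}}{\GL{4}{2}} = \trivialCharacter{\GL{4}{2}} + \psi$, which is the claimed decomposition. The only substantive point is the orbit count giving $\innerProduct{\pi}{\pi}_{\alternating{8}} = 2$, i.e.\ the $2$-transitivity of the degree-$8$ action of $\alternating{8}$, so I do not anticipate any real obstacle; alternatively one could skip the inner-product computation and quote the general fact that the permutation character of a $2$-transitive action decomposes as the trivial character plus an irreducible one, then pin down the summand via \autoref{proposition:a8_character_degrees}.
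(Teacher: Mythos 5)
Your proof is correct, and it takes a slightly different route from the paper's. You establish that $\induced{\trivialCharacter{\alternating{7}}}{\alternating{7}}{\GL{4}{2}} - \trivialCharacter{\GL{4}{2}}$ is irreducible \emph{directly}, by computing the norm of the permutation character: Frobenius reciprocity plus the Burnside orbit count gives $\innerProduct{\pi}{\pi}_{\alternating{8}} = 2$ (i.e.\ you use the $2$-transitivity of $\alternating{8}$ on $8$ points, equivalently the two orbits of the point stabilizer $\alternating{7}$), and then you invoke \autoref{proposition:a8_character_degrees} only to identify the resulting degree-$7$ irreducible as $\psi$. The paper skips the norm computation entirely: it records only $\dim(\theta) = 8$ and $\innerProduct{\theta}{\trivialCharacter{\GL{4}{2}}} = 1$ (via \autoref{proposition:stabilizer-induced-character}), and then leans harder on \autoref{proposition:a8_character_degrees}, using the degree gap --- every nontrivial irreducible of $\alternating{8}$ has degree at least $7$, with a unique one of degree exactly $7$ --- to conclude that the degree-$7$ nontrivial part must be a single copy of $\psi$. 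Your argument buys independence from the degree gap (you only need uniqueness of the degree-$7$ character, and it generalizes to any $2$-transitive action), at the cost of the extra inner-product computation; the paper's argument is shorter but is specific to the fact that $7$ is the minimal nontrivial degree of $\alternating{8}$. Both are complete and both correctly pin down $\psi$ via the character degrees of $\alternating{8}$.
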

	\begin{proof}
		Let $\theta = \induced{\trivialCharacter{\alternating{7}}}{\alternating{7}}{\GL{4}{2}}$. Observe that $\dim(\theta) = \subgroupIndex{\alternating{8}}{\alternating{7}} = 8$.	By \autoref{proposition:stabilizer-induced-character}, $\innerProduct{\theta}{\trivialCharacter{\GL{4}{2}}} = 1$. Thus, since $\GL{4}{2} \cong \alternating{8}$ has a unique nontrivial character of degree at most $7$, the claim follows.
	\end{proof}

	\paragraph{Remark} By calculating the degrees of the characters, we deduce that the character $\psi$ is not a unipotent character.
	
	We can now state the following proposition.
	
	\begin{proposition}
		The only nontrivial $\psi \in \irreducibleCharacters{\GL{4}{2}}$ such that \textChiDoesntVanishOnK{\psi}{\alternating{7}} is the character of $\GL{4}{2}$ of degree $7$.
	\end{proposition}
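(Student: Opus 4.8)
The plan is to reduce the statement to Frobenius reciprocity combined with the decomposition of $\induced{\trivialCharacter{\alternating{7}}}{\alternating{7}}{\GL{4}{2}}$ already obtained above. First I would record that, for any $\chi \in \characters{\GL{4}{2}}$, \autoref{frobenius_reciprocity} gives
\[
\innerProduct{\restricted{\chi}{\alternating{7}}}{\trivialCharacter{\alternating{7}}}_{\alternating{7}} = \innerProduct{\chi}{\induced{\trivialCharacter{\alternating{7}}}{\alternating{7}}{\GL{4}{2}}}_{\GL{4}{2}}.
\]
Since the right-hand side is the multiplicity of the irreducible $\chi$ in $\induced{\trivialCharacter{\alternating{7}}}{\alternating{7}}{\GL{4}{2}}$, an irreducible $\chi$ satisfies $\chiKContainsTrivial{\chi}{\alternating{7}}$ if and only if $\chi$ is an irreducible summand of $\induced{\trivialCharacter{\alternating{7}}}{\alternating{7}}{\GL{4}{2}}$.

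Next I would invoke the preceding proposition, which asserts $\induced{\trivialCharacter{\alternating{7}}}{\alternating{7}}{\GL{4}{2}} = \trivialCharacter{\GL{4}{2}} + \psi$, where $\psi$ is the unique irreducible character of $\GL{4}{2}$ of degree $7$ (uniqueness of the degree-$7$ irreducible is exactly what \autoref{proposition:a8_character_degrees} provides). Consequently the only irreducible constituents of this induced character are $\trivialCharacter{\GL{4}{2}}$ and $\psi$, so the only \emph{nontrivial} irreducible $\chi$ with $\chiKContainsTrivial{\chi}{\alternating{7}}$ is $\chi = \psi$; and $\psi$ indeed qualifies, since it occurs in the induced character with (positive) multiplicity $1$. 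This is precisely the claim.

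There is essentially no obstacle remaining: all the substantive work has been carried out in establishing $\GL{4}{2} \cong \alternating{8}$, identifying the $2$-transitive $\alternating{7}$ with a point stabilizer in $\alternating{8}$, and the degree bookkeeping of \autoref{proposition:a8_character_degrees} that isolates $\psi$. The only point I would take care to spell out is the translation between ``$\trivialCharacter{\alternating{7}}$ is an irreducible summand of $\restricted{\chi}{\alternating{7}}$'' and ``$\innerProduct{\restricted{\chi}{\alternating{7}}}{\trivialCharacter{\alternating{7}}}_{\alternating{7}} > 0$,'' which is immediate from the definition of $\supseteq$ when the right-hand character is irreducible, so no multiplicity subtleties intervene.
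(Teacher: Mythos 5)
Your argument is correct and is essentially identical to the paper's proof: both apply \nameref{frobenius_reciprocity} to identify the irreducible characters whose restriction to $\alternating{7}$ contains the trivial character with the irreducible summands of $\induced{\trivialCharacter{\alternating{7}}}{\alternating{7}}{\GL{4}{2}} = \trivialCharacter{\GL{4}{2}} + \psi$. Nothing further is needed.
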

	\begin{proof}
		Let  $\psi \in \irreducibleCharacters{\GL{4}{2}}$ be the single irreducible character of dimension $7$.
		By \nameref{frobenius_reciprocity},
		\[
		\characterSum{\theta}{\alternating{7}} = \innerProduct{\theta}{\induced{\trivialCharacter{\alternating{7}}}{\alternating{7}}{\GL{4}{2}}}_{\GL{4}{2}} =
		\innerProduct{\theta}{\trivialCharacter{\GL{4}{2}} + \psi}_{\GL{4}{2}}.
		\]
		So the only characters where the sum is non-zero are the trivial character and $\psi$.
	\end{proof}
	
	This completes the proof of \autoref{theorem:nonvanishing_characters_of_gl42}

	\section{The Main Theorems for $n = 2$}  \label{section:nonvanishing_characters_2}
	\paragraph{ } In this section, we prove the main theorems for $n = 2$. We find a character $\additionalForGLTwo$ such that for every $K \in \classicalFamilySymbol{3} \cup \classicalFamilySymbol{6} \cup \classicalExceptions$, either \textChiDoesntVanishOnK{\additionalForGLTwo}{K} or \textChiDoesntVanishOnK{\transitivityCharacter}{K}. Then, by \autoref{corollary:no-non-vanishing-subgroups-2}, the non-projective case of \autoref{theorem:nonvanishing_characters_of_gl2} follows.
	
	\paragraph{ } The characters in the $n=2$ case are slightly simpler than in the general case. We use the character tables in \cite{gl2_characters}, and refer the readers to them for definitions of the characters $\pi(\chi)$ and $\omega_0^+$. The two notations are related as follows, with $\alpha, \beta \in \irreducibleCharacters{\fieldInvertible{q}}$ and $\chi \in \irreducibleCharacters{\fieldInvertible{q^2}}$:
	\begin{enumerate}
		\item $\rho(\mu(\alpha, \beta)) = \alpha \circ \beta$
		\item $\rho'(\alpha)$ is the linear character $\rho'(\alpha)(g) = \alpha(\det(g))$
		\item $\bar{\rho}(\alpha) = \alpha \circ \alpha-\rho'(\alpha)$. In particular, $\bar{\rho}(\trivialCharacter{\fieldInvertible{q}}) = \unipotentCharacter{(1, 1)} = \transitivityCharacter$.
		\item $\pi(\chi)$ are additional characters of $\GL{2}{q}$ called the cuspidal characters.
	\end{enumerate}
	
	\begin{definition} \label{definition:additional_character_2}
		Define $\defineNotation{\additionalForGLTwo}$ according to the following cases:
		\begin{enumerate}
			\item If $q=3$ and $G$ is $\SL{2}{q}$ or $\PSL{2}{q}$, $\additionalForGLTwo$ is $\omega_0^+$, as defined in the character table of these groups.
			\item Otherwise, if $q$ is odd, $\additionalForGLTwo$ is $\defineNotation{\cuspidalForGLTwo} \define \pi(\phi)$, for $\phi \in \irreducibleCharacters{\fieldInvertible{q^2}}$, of order\declareFootnote{phi_order}{The order condition makes sure the character projects to the projective group} $q+1$ in the character group.
			\item If $q$ is even and $q > 2$, $\additionalForGLTwo$ is $\alpha \circ \bar{\alpha}$, where $\alpha \ne \trivialCharacter{\fieldInvertible{q}}$.
			\item If $q = 2$, $\additionalForGLTwo$ is $\defineNotation{\cuspidalForGLTwo} \define \pi(\phi)$, for any of the nontrivial $\phi \in \irreducibleCharacters{\fieldInvertible{4}}$.
		\end{enumerate}
	\end{definition}

	By following the character tables, we see that the characters $\cuspidalForGLTwo$ and $\alpha \circ \bar{\alpha}$ do restrict irreducibly to the special groups, for $q \ne 3$. In the case of $q=3$, $\cuspidalForGLTwo$ splits to two summands -- one of which is the character $\omega_0^+$. In addition, the characters $\omega_0^+$, $\cuspidalForGLTwo$ and $\alpha \circ \bar{\alpha}$ project to the projective groups. Therefore, by claims similar to those made in \autoref{section:special_linear_unipotent_restriction} and \autoref{section:projective_unipotent_restriction}, the theorems for the projective and the special groups follow from the case of $\GL{2}{q}$.
	
	\subsection{$\classicalFamily{3}{\GL{2}{q}}$}
	\paragraph{ } The family $\classicalFamily{3}{\GL{2}{q}}$ is the family of stabilizers of the structure of $\field{q}^2$ as a $1$-dimensional vector space over $\field{q^2}$, that is, elements that act on $\field{q}^2 = \field{q^2}$ as either multiplication by invertible elements of $\field{q^2}$, or an automorphism of $\field{q^2}$ over $\field{q}$.
	We use \cite{gl2_characters} for the various character tables.

	\paragraph{$q$ is odd} First, we handle the case when $q$ is odd.
	
	Since $\fieldInvertible{q}$ is cyclic of even order, there exists some non square $\Delta \in \fieldInvertible{q}$. Therefore the group $\{x^2: x \in \fieldInvertible{q}\}$ is a proper subgroup of $\fieldInvertible{q}$. This implies that $x^2-\Delta$ is irreducible over $\field{q}$.
	
	Thus, we can choose the base $\{1, \delta\}$ for the field extension, where $\delta ^ 2 = \Delta$. Since every $K \in \classicalFamily{3}{\GL{2}{q}}$ respects the structure of $\field{q^2}$, we deduce:
	
	\begin{proposition}
		If $q$ is odd and $K \in \classicalFamily{3}{\GL{2}{q}}$, $K$ is conjugate to
		
		\[
		\left\{
		\begin{bmatrix}
		a		& \Delta b		 \\
		b		& a	 \\
		\end{bmatrix}
		,
		\begin{bmatrix}
		a		& -\Delta b		 \\
		b		& -a	 \\
		\end{bmatrix}
		\right\} \subgroup \GL{2}{q}.
		\]
	\end{proposition}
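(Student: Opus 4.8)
The plan is to unwind the definition of $\classicalFamily{3}{\GL{2}{q}}$ and then carry out an explicit matrix computation in a well-chosen basis. By the description of this class, up to conjugacy in $\GL{2}{q}$ a member $K$ is the image of $\GammaLExt{1}{q^2}{q} = \fieldInvertible{q^2} \rtimes \gal{\field{q^2}}{\field{q}}$ acting $\field{q}$-linearly on $\field{q}^2 \cong \field{q^2}$; concretely, $K$ consists of all maps $x \mapsto \lambda x$ and all maps $x \mapsto \lambda x^q$ with $\lambda \in \fieldInvertible{q^2}$. Since the statement only claims $K$ up to conjugacy, I am free to choose the $\field{q}$-basis of $\field{q^2}$ used to realize these maps as matrices, and I would take $\{1, \delta\}$ with $\delta^2 = \Delta$. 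This is available precisely because $q$ is odd: as recalled just before the statement, $\Delta$ is then a non-square in $\fieldInvertible{q}$, so $x^2 - \Delta$ is irreducible over $\field{q}$ and $\field{q^2} = \field{q}(\delta)$.

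The next step is to compute, in the basis $\{1, \delta\}$, the matrix of multiplication by a general element $\lambda = a + b\delta \in \fieldInvertible{q^2}$, $a, b \in \field{q}$. From $\lambda \cdot 1 = a + b\delta$ and $\lambda \cdot \delta = b\Delta + a\delta$ (using $\delta^2 = \Delta$) one reads off the matrix $\left[\begin{smallmatrix} a & \Delta b \\ b & a \end{smallmatrix}\right]$, the first matrix in the statement; its determinant is $a^2 - \Delta b^2$, which is nonzero whenever $\lambda \ne 0$ (it would force $\Delta = (a/b)^2$ to be a square, or $a = b = 0$), so these lie in $\GL{2}{q}$. Then I would handle the Galois coset: the nontrivial $\sigma \in \gal{\field{q^2}}{\field{q}}$ is $x \mapsto x^q$, which fixes $1$ and sends $\delta \mapsto \delta^q = \delta\cdot(\delta^2)^{(q-1)/2} = \Delta^{(q-1)/2}\delta = -\delta$ by Euler's criterion (again $\Delta$ a non-square). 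Hence $\sigma$ has matrix $\mathrm{diag}(1,-1)$, so an element $x \mapsto \lambda\sigma(x)$ has matrix $\left[\begin{smallmatrix} a & \Delta b \\ b & a \end{smallmatrix}\right]\mathrm{diag}(1,-1) = \left[\begin{smallmatrix} a & -\Delta b \\ b & -a \end{smallmatrix}\right]$, the second matrix (determinant $\Delta b^2 - a^2 \ne 0$ by the same argument). Taking the union of the two cosets produces exactly the displayed subgroup of $\GL{2}{q}$.

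This is a routine verification rather than a substantial argument, so I do not expect a real obstacle. The one point to be careful about is the claim that the Frobenius sends the chosen basis vector $\delta$ to $-\delta$, which is exactly where the hypothesis $q$ odd enters (via Euler's criterion and the non-squareness of $\Delta$); I would make sure to flag that this same hypothesis is what legitimizes the choice of basis $\{1,\delta\}$, so that nothing circular creeps in.
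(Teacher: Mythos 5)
Your proof is correct and follows essentially the same route as the paper: choose the basis $\{1,\delta\}$ with $\delta^2=\Delta$ a non-square (possible since $q$ is odd) and read off the matrices of multiplication by $a+b\delta$ and of the Frobenius coset, the latter via $\delta^q=-\delta$. The paper merely asserts the conclusion after setting up this basis, so your computation just fills in the details it leaves implicit.
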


	We can conclude (since $\groupCenter{\GL{2}{q}} \le K$) that $K$ has the following conjugacy classes (where $a$ and $b$ range over $\field{q}$ such that the matrices are invertible):
	\[		\left\{
	\begin{bmatrix}
	a		& \Delta b		 \\
	b		& a	 \\
	\end{bmatrix}
	,
	\begin{bmatrix}
	0		& -\Delta b		 \\
	b		& 0	 \\
	\end{bmatrix} c \text{ times},
	\begin{bmatrix}
	a		& 0		 \\
	0		& -a	 \\
	\end{bmatrix} (q+1-c) \text{ times}
	\right\}.
	\]

	Using the character tables, we can see that:
	
	\begin{proposition} \label{proposition:c_3_gl2_odd_q}
		If $q$ is odd and $K \in \classicalFamily{3}{\GL{2}{q}}$, \textChiDoesntVanishOnK{\cuspidalForGLTwo}{K}.
	\end{proposition}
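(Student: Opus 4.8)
The plan is to prove the sharper statement $\innerProduct{\restricted{\cuspidalForGLTwo}{K}}{\trivialCharacter{K}}_K = 1$, which is exactly the assertion that $\restricted{\cuspidalForGLTwo}{K}$ contains $\trivialCharacter{K}$. First I would record the structure of $K$: with respect to the basis $\{1,\delta\}$ fixed above, $K = T\sqcup Tw$ is the normalizer of the non-split torus $T\cong\fieldInvertible{q^2}$, where $\zeta=a+\delta b$ corresponds to the matrix $\begin{bmatrix}a&\Delta b\\b&a\end{bmatrix}$ with eigenvalues $\zeta$ and $\zeta^q=a-\delta b$, and $w=\begin{bmatrix}1&0\\0&-1\end{bmatrix}$ implements the Galois automorphism, so that the coset element $\zeta w=\begin{bmatrix}a&-\Delta b\\b&-a\end{bmatrix}$ satisfies $(\zeta w)^2=N(\zeta)I$ with $N(\zeta)=\zeta^{q+1}$ the norm into $\field{q}$. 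Then I would invoke two facts from the representation theory of $\GL{2}{q}$: since $\cuspidalForGLTwo=\pi(\phi)$ with $\phi$ of order $q+1$, its kernel is the unique subgroup of $\fieldInvertible{q^2}$ of order $q-1$, namely $\fieldInvertible{q}$, so $\phi$ is trivial on $\fieldInvertible{q}$ and $\phi(-1)=1$; and $\pi(\phi)$ takes the value $(q-1)\phi(z)$ on a scalar $zI$ with $z\in\fieldInvertible{q}$, the value $0$ on a split regular semisimple element, and the value $-(\phi(\mu)+\phi(\mu^q))$ on an elliptic semisimple element with eigenvalues $\mu,\mu^q\in\fieldInvertible{q^2}\setminus\fieldInvertible{q}$. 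Every element of $K$ is semisimple (scalar, or with distinct eigenvalues $\zeta\ne\zeta^q$ on $T$, or with distinct eigenvalues $\pm\sqrt{N(\zeta)}$ on $Tw$), so these three values cover $K$ entirely and no non-semisimple row of the table is used.

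Next I would evaluate $\cardinality{K}^{-1}\sum_{k\in K}\cuspidalForGLTwo(k)$ over $T$ and over $Tw$ in turn. The torus sum is routine: the $q-1$ scalars give $(q-1)\sum_{z\in\fieldInvertible{q}}\phi(z)=(q-1)^2$, and the elliptic elements of $T$ give $-2\sum_{\zeta\in\fieldInvertible{q^2}\setminus\fieldInvertible{q}}\phi(\zeta)=2(q-1)$ because $\phi$ is nontrivial and $\zeta\mapsto\zeta^q$ preserves $\fieldInvertible{q^2}\setminus\fieldInvertible{q}$; hence $\sum_{t\in T}\cuspidalForGLTwo(t)=q^2-1$. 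For the coset, $\zeta w$ is split regular semisimple (contributing $0$) exactly when $N(\zeta)$ is a square in $\fieldInvertible{q}$, and otherwise it is elliptic with eigenvalues $\pm\eta$, $\eta\in\field{q^2}\setminus\field{q}$, $\eta^2=N(\zeta)$, $\eta^q=-\eta$, so $\cuspidalForGLTwo(\zeta w)=-(\phi(\eta)+\phi(-\eta))=-2\phi(\eta)$. Since the norm is $(q+1)$-to-one onto $\fieldInvertible{q}$, this gives $\sum_{k\in Tw}\cuspidalForGLTwo(k)=-2(q+1)\sum_{s}\phi(\eta_s)$, where $s$ runs over the non-squares of $\fieldInvertible{q}$ and $\eta_s$ is a square root of $s$ in $\field{q^2}$.

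The hard part will be the final step: identifying $\{\eta\in\field{q^2}:\eta^2\text{ is a non-square of }\fieldInvertible{q}\}$ as precisely $\{\eta:\eta^{q-1}=-1\}$, i.e. the non-trivial coset $\eta_0\fieldInvertible{q}$ inside the $2(q-1)$-th roots of unity, and then fixing the sign of $\phi(\eta_0)$. Summing $\phi$ over the two roots $\pm\eta_s$ of each non-square gives $\sum_s\phi(\eta_s)=\tfrac{1}{2}\,\phi(\eta_0)\sum_{z\in\fieldInvertible{q}}\phi(z)=\tfrac{1}{2}(q-1)\phi(\eta_0)$, and since $\eta_0^2\in\fieldInvertible{q}=\ker\phi$ while $\eta_0\notin\fieldInvertible{q}$ we get $\phi(\eta_0)^2=1$ and $\phi(\eta_0)\ne1$, hence $\phi(\eta_0)=-1$. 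Therefore $\sum_{k\in Tw}\cuspidalForGLTwo(k)=q^2-1$ as well, so $\sum_{k\in K}\cuspidalForGLTwo(k)=2(q^2-1)=\cardinality{K}$ and $\innerProduct{\restricted{\cuspidalForGLTwo}{K}}{\trivialCharacter{K}}_K=1>0$, which is the claim. It is precisely the sign $\phi(\eta_0)=-1$ that makes the coset contribution reinforce rather than cancel the torus one; the rest of the bookkeeping is a direct reading of the character table, matching the phrase ``using the character tables'' in the text, and it lets us avoid having to make the count $c$ from the conjugacy-class list explicit.
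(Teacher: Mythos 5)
Your proposal is correct and takes essentially the same route as the paper: restrict $\cuspidalForGLTwo = \pi(\phi)$ to $K$, the normalizer $T \sqcup Tw$ of the non-split torus, and evaluate $\characterSum{\cuspidalForGLTwo}{K}$ directly from the $\GL{2}{q}$ character table, using that $\phi$ is trivial on $\fieldInvertible{q}$ and $\phi(\delta)=-1$, arriving at the sharp value $\characterSum{\cuspidalForGLTwo}{K}=1$. The only difference is in handling the outer coset: the paper leaves the number of elliptic classes there as an unknown $c$ and pins down $c=\frac{q+1}{2}$ by integrality of the inner product together with $0 \le c \le q+1$, whereas you identify the elliptic elements explicitly as those $\zeta w$ with $N(\zeta)$ a non-square and count via the $(q+1)$-to-one norm map and quadratic residues -- a legitimate, slightly more explicit version of the same computation.
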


	\begin{proof}
		\begin{multline*}
			\begin{aligned}
				\cardinality{K}\characterSum{\cuspidalForGLTwo}{K} & = (q-1)\sum_{x \in \fieldInvertible{q}}\phi(x)-\sum_{z \in \fieldInvertible{q^2} \setminus \fieldInvertible{q}}(\phi(z) + \phi(\bar{z})) \\
				&-c \sum_{x \in \fieldInvertible{q}}(\phi(\delta x) + \phi(-\delta x)) \\
				& = (q+1)\sum_{x \in \fieldInvertible{q}}\phi(x)-2\sum_{z \in \fieldInvertible{q^2}}\phi(z)-2c \phi(\delta) \sum_{x \in \fieldInvertible{q}}\phi(x) \\
				& = (q+1+2c)(q-1)
			\end{aligned}
		\end{multline*}
		Since $0 \le c \le q+1$, $\cardinality{K} = 2(q^2-1)$, and $\characterSum{\cuspidalForGLTwo}{K}$ is an integer, we deduce that $c = \frac{q+1}{2}$. Thus, $\characterSum{\cuspidalForGLTwo}{K} = 1$.
	\end{proof}

	This shows the desired result for the family $\classicalFamily{3}{\GL{2}{q}}$, for odd $q \ne 3$.
	
	\paragraph{The case $q = 3$}
	While \autoref{proposition:c_3_gl2_odd_q} shows the desired result for $\GL{2}{3}$, and projects to $\PGL{2}{3}$, the special cases are slightly more complicated. In these cases, as can be seen in the character tables, $\cuspidalForGLTwo$ (both in the projective and non-projective cases) decomposes to two $1$-dimensional characters called the oscillator characters ($\omega^{\pm}_o$ in \cite{gl2_characters}). Either of these satisfies the desired result, and we arbitrarily choose $\omega^{+}_o$.
	
	\paragraph{$q$ is even} Here, we handle the case when $q$ is even.
	
	If $q = q_0^2$, consider the trace morphism $\trace{\field{q}}{\field{q_0}}(x) = x^2 + x$, with its image $\field{q_0}$. Choosing $\Delta \in \field{q} \setminus \field{q_0}$, the polynomial $x^2 + x-\Delta$ is irreducible over $\field{q}$.
	
	If $q \ne q_0^2$, $\field{q}$ is not an extension of $\field{4}$. As such, the polynomial $x^2 + x-1$ is irreducible over $\field{q}$. Choose $\Delta = 1$.
	
	Thus, we can choose the basis $\{1, \delta\}$ for the field extension $\field{q^2}/\field{q}$, where $\delta ^ 2 + \delta = \Delta$. Since every $K \in \classicalFamily{3}{\GL{2}{q}}$ respects the structure of $\field{q^2}$, we deduce:
	
	\begin{proposition}
		If $q$ is even and $K \in \classicalFamily{3}{\GL{2}{q}}$, $K$ is conjugate to
		
		\[
		\left\{
		\begin{bmatrix}
		a		& \Delta b		 \\
		b		& a + b	 \\
		\end{bmatrix}
		,
		\begin{bmatrix}
		a		& a + \Delta b		 \\
		b		& a	 \\
		\end{bmatrix}
		\right\} \subgroup \GL{2}{q}.
		\]
	\end{proposition}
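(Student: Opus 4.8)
The plan is to make the standard embedding $\GammaLExt{1}{q^2}{q}\hookrightarrow\GL{2}{q}$ attached to the basis $\{1,\delta\}$ of $\field{q^2}$ over $\field{q}$ completely explicit and read off the two families of matrices directly. First I would recall that, up to conjugation in $\GL{2}{q}$, every member of $\classicalFamily{3}{\GL{2}{q}}$ is the image of $\GammaLExt{1}{q^2}{q}=\fieldInvertible{q^2}\rtimes\gal{\field{q^2}}{\field{q}}$ acting $\field{q}$-linearly on $\field{q}^2\cong\field{q^2}$, with $\fieldInvertible{q^2}$ acting by multiplication and $\gal{\field{q^2}}{\field{q}}$ by field automorphisms; two choices of $\field{q}$-basis of $\field{q^2}$ differ by a change-of-basis matrix, i.e.\ by a conjugation, so it suffices to compute in the fixed basis $\{1,\delta\}$ with $\delta^2+\delta=\Delta$.

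For the linear part I would write down the matrix $M_{a+b\delta}$ of multiplication by $a+b\delta\in\fieldInvertible{q^2}$ in the basis $\{1,\delta\}$: one has $1\mapsto a+b\delta$ and, using $\delta^2=\delta+\Delta$ in characteristic $2$, $\delta\mapsto a\delta+b\delta^2=\Delta b+(a+b)\delta$, so $M_{a+b\delta}=\begin{bmatrix} a & \Delta b \\ b & a+b \end{bmatrix}$, which is the first matrix in the statement; as $(a,b)$ ranges over $\field{q}^2\setminus\{0\}$ this recovers all of $\fieldInvertible{q^2}$. For the semilinear part, the nontrivial element of $\gal{\field{q^2}}{\field{q}}$ is the Frobenius $\sigma\colon x\mapsto x^q$; since $\delta$ and $\delta+1$ are the two roots over $\field{q}$ of $x^2+x-\Delta$ — immediate from $(\delta+1)^2+(\delta+1)=\delta^2+\delta=\Delta$ — and $\delta\notin\field{q}$, necessarily $\sigma(\delta)=\delta+1$, so $\sigma$ has matrix $S=\begin{bmatrix} 1 & 1 \\ 0 & 1 \end{bmatrix}$. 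A general semilinear element is $x\mapsto z\,\sigma(x)$ with $z=a+b\delta$, and its matrix is $M_{a+b\delta}\,S=\begin{bmatrix} a & a+\Delta b \\ b & a \end{bmatrix}$ (using $b+a+b=a$ in characteristic $2$), which is the second matrix. The union of the two families is exactly the image of $\GammaLExt{1}{q^2}{q}$, proving the claim.

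I do not expect a genuine obstacle: the computation is routine, and the only points needing care are the characteristic-$2$ simplifications ($-\Delta=\Delta$, $\delta^2=\delta+\Delta$, $b+a+b=a$), the fact that $\sigma$ fixes $\field{q}$ pointwise and swaps the two roots of the Artin--Schreier polynomial $x^2+x-\Delta$, and the reduction to a single representative via conjugacy of all members of $\classicalFamily{3}{\GL{2}{q}}$. One should also keep in mind the exceptional convention that $\classicalFamily{3}{\GL{2}{2}}=\{\GL{1}{4}\}$ rather than the full $\GammaLExt{1}{4}{2}$ (which equals $\GL{2}{2}$), so the description above is the relevant one precisely for even $q>2$, matching the corresponding choice of $\additionalForGLTwo$ in \autoref{definition:additional_character_2}.
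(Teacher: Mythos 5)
Your proposal is correct and follows essentially the same route as the paper: the paper fixes the basis $\{1,\delta\}$ with $\delta^2+\delta=\Delta$ (having arranged $x^2+x-\Delta$ irreducible beforehand) and deduces the matrix form from the fact that $K$ respects the $\field{q^2}$-structure, exactly the multiplication-plus-Frobenius computation you make explicit. Your added care about the characteristic-$2$ identities, the action of $\sigma$ on the roots of $x^2+x-\Delta$, and the exceptional convention for $\classicalFamily{3}{\GL{2}{2}}$ is consistent with the paper's footnote and its separate treatment of $q=2$.
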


	Thus, we can conclude (since $\groupCenter{\GL{2}{q}} \le K$) that $K$ has the following conjugacy classes (where $a$, $b$ and $x$ range over $\field{q}$ such that the matrices are invertible):
	\[		\left\{
	\begin{bmatrix}
	a		& \Delta b		 \\
	b		& a + b	 \\
	\end{bmatrix}
	,
	\begin{bmatrix}
	x		& 1		 \\
	0		& x	 \\
	\end{bmatrix} q+1 \text{ times}
	\right\}
	\]
	
	Using the character tables, we can see that:
	
	\begin{proposition}
		If $q \ne 2$ is even and $K \in \classicalFamily{3}{\GL{2}{q}}$, \textChiDoesntVanishOnK{\alpha \circ \bar{\alpha}}{K}.
	\end{proposition}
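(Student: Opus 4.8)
The plan is to compute the multiplicity $\characterSum{(\alpha\circ\bar{\alpha})}{K}$ directly, using the explicit conjugacy‑class description of $K$ obtained in the preceding proposition together with the character table of $\GL{2}{q}$ from \cite{gl2_characters}. Since $\chiKContainsTrivial{(\alpha\circ\bar{\alpha})}{K}$ is unchanged if $K$ is replaced by a conjugate, I may assume $K$ is exactly the displayed subgroup, which is isomorphic to $\GammaLExt{1}{q^2}{q} = \fieldInvertible{q^2}\rtimes\gal{\field{q^2}}{\field{q}}$ and has order $2(q^2-1)$. By the list of conjugacy classes just obtained, the $2(q^2-1)$ elements of $K$ are: the $q^2-1$ matrices $\begin{bmatrix} a & \Delta b \\ b & a+b\end{bmatrix}$, of which the $q-1$ with $b=0$ are the central scalars $aI$ and the $q^2-q$ with $b\ne0$ are regular semisimple with eigenvalues in $\field{q^2}\setminus\field{q}$; together with $q^2-1$ further elements, each $\GL{2}{q}$‑conjugate to a Jordan block $\begin{bmatrix} x & 1 \\ 0 & x\end{bmatrix}$ with $x\in\fieldInvertible{q}$ (occurring $q+1$ times for each such $x$).

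Next I would read off from the $\GL{2}{q}$ character table the values of the principal series character $\alpha\circ\bar{\alpha}=\rho(\mu(\alpha,\bar{\alpha}))$; note it is irreducible of degree $q+1$, since $\bar{\alpha}=\alpha^{-1}$ and $\alpha\ne\alpha^{-1}$ (as $q-1$ is odd, $\trivialCharacter{\fieldInvertible{q}}$ is the only character of order dividing $2$, and $\alpha\ne\trivialCharacter{\fieldInvertible{q}}$). Its value is $(q+1)\,\alpha(a)\bar{\alpha}(a)$ on $aI$, it is $\alpha(x)\bar{\alpha}(x)$ on $\begin{bmatrix} x & 1 \\ 0 & x\end{bmatrix}$, and it vanishes on regular semisimple elements whose eigenvalues are not in $\field{q}$. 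The key simplification is that $\bar{\alpha}(y)=\alpha(y)^{-1}$ for all $y\in\fieldInvertible{q}$, so $\alpha(y)\bar{\alpha}(y)=1$ identically; hence $\alpha\circ\bar{\alpha}$ takes the constant value $q+1$ on the $q-1$ central elements of $K$, the constant value $1$ on the $q^2-1$ Jordan‑type elements of $K$, and $0$ on the remaining $q^2-q$ elements.

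Summing over $K$ then gives
\[
\sum_{g\in K}(\alpha\circ\bar{\alpha})(g) \;=\; (q-1)(q+1) \;+\; (q^2-1)\cdot 1 \;+\; 0 \;=\; 2(q^2-1) \;=\; \cardinality{K},
\]
so $\characterSum{(\alpha\circ\bar{\alpha})}{K}=1$, which is precisely the assertion $\chiKContainsTrivial{(\alpha\circ\bar{\alpha})}{K}$, and completes the treatment of $\classicalFamily{3}{\GL{2}{q}}$ for even $q>2$. This is a direct verification rather than a deep argument; the only points demanding care are matching the conventions of the cited character table to the $\circ$‑notation for $\alpha\circ\bar{\alpha}$, and handling the conjugacy‑class multiplicities correctly — in particular that the ``$q+1$ times'' bookkeeping for the Jordan‑type elements amounts to $q^2-1$ terms in the sum. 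One also uses here that $q>2$, which guarantees a nontrivial $\alpha\in\irreducibleCharacters{\fieldInvertible{q}}$ exists; the case $q=2$ is handled separately via $\cuspidalForGLTwo$.
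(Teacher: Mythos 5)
Your proposal is correct and follows essentially the same route as the paper: both compute $\cardinality{K}\characterSum{\alpha \circ \bar{\alpha}}{K}$ directly from the conjugacy-class description of $K$ and the $\GL{2}{q}$ character table, using that $\alpha(y)\bar{\alpha}(y)=1$ so the scalars contribute $(q-1)(q+1)$, the Jordan-type elements contribute $q^2-1$, and the elliptic elements contribute $0$, giving $2(q^2-1)=\cardinality{K}$ and hence multiplicity $1$. Your version merely spells out the bookkeeping that the paper compresses into one line.
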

	
	\begin{proof}
		\begin{multline*}
			\begin{aligned}
				\cardinality{K}\characterSum{\alpha \circ \bar{\alpha}}{K} = 2(q+1)\sum_{x \in \fieldInvertible{q}}1 = 2(q^2-1).
			\end{aligned}
		\end{multline*}
		Dividing by $\cardinality{K} = 2(q^2-1)$, we get $\characterSum{\alpha \circ \bar{\alpha}}{K} = 1$.
	\end{proof}
	
	By the character tables, if $\alpha \ne \trivialCharacter{\fieldInvertible{q}}$, $\alpha \circ \bar{\alpha}$ is irreducible, and does restrict irreducibly to the special and projective cases. Such a choice exists for all even $q \ne 2$.	This shows the desired result for the family $\classicalFamily{3}{\GL{2}{q}}$, for even $q \ne 2$.
	
	\paragraph{The case $q = 2$}
	In this case, a simple calculation shows that $\additionalForGLTwo$ marks the single proper subgroup in $\classicalFamily{3}{\GL{2}{2}}$ which is of order $3$.

	\subsection{$\classicalFamily{6}{\GL{2}{q}}$ and $\classicalExceptions(\GL{2}{q})$} \label{section:c6_and_s_for_n_eq_2}

	\begin{theorem}[{\cite[Table 8.3]{maximal_subgroups}}] \label{theorem:c6_and_classical_exceptions_of_sl2}
		$(\classicalFamilySymbol{6} \cup \classicalExceptions)(\SL{2}{q})$ are given in \autoref{table:c6_and_classical_exceptions_of_sl2}.
		\declareAbcCounter{c6ClassicalExceptionsCounter}
		\begin{table}[]
			\centering
			\caption{$(\classicalFamilySymbol{6} \cup \classicalExceptions)(\SL{2}{q})$} \label{table:c6_and_classical_exceptions_of_sl2}
			\begin{threeparttable}
				\begin{tabular}{cCCC}
					\hline
					Case & K' \le \SL{2}{q} & \text{Conditions on } q
					& \text{Order}\\ \hline \hline

					\abcCounter{c6ClassicalExceptionsCounter} & 2 \symmetric{4}
					&\makecell{q\text{ is prime} \\ q \equiv \pm 1 \mod 8}
					& 48  \\ \hline
					
					\abcCounter{c6ClassicalExceptionsCounter} & 2\alternating{4}
					&\makecell{q\text{ is prime} \\ q \equiv \pm 3, 5, \pm 11, \pm 13, \pm 19 \mod 40}
					&24 \\ \hline
					
					\abcCounter{c6ClassicalExceptionsCounter} \label{table:c6_and_classical_exceptions_of_sl2_alternating5_1} & 2\alternating{5}
					&\makecell{q\text{ is prime } \\ q \equiv \pm 1 \mod 10 }
					& 120  \\ \hline

					\abcCounter{c6ClassicalExceptionsCounter} \label{table:c6_and_classical_exceptions_of_sl2_alternating5_2} & 2\alternating{5}
					&\makecell{q\text{ is the square of a prime }p \\ p \equiv \pm 3 \mod 10 }
					& 120  \\ \hline
				\end{tabular}
			\end{threeparttable}
		\end{table}
	\end{theorem}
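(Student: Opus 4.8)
This statement is quoted verbatim from \cite[Table 8.3]{maximal_subgroups}, so within the present paper the ``proof'' is just that citation; the plan below sketches how the underlying classification is obtained, should one wish to reconstruct it. I would start from Dickson's classification of the subgroups of $\PSL{2}{q}$ --- equivalently, via the quotient $\SL{2}{q}\twoheadrightarrow\PSL{2}{q}$, of $\SL{2}{q}$ --- which says that, up to conjugacy, a subgroup of $\PSL{2}{q}$ is cyclic, dihedral, contained in a Borel subgroup, a subfield subgroup $\PSL{2}{q'}$ or $\PGL{2}{q'}$ with $\field{q'}\subseteq\field{q}$, or isomorphic to one of $\alternating{4}$, $\symmetric{4}$, $\alternating{5}$. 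The first four families are exactly the geometric classes already treated in \autoref{section:geometric_subgroups_of_glnq_description}: $\classicalFamilySymbol{1}$ is the Borel (point stabilizer on $\projective^1\field{q}$), $\classicalFamilySymbol{2}$ the dihedral ``imprimitive'' case (stabilizer of an unordered pair of points), $\classicalFamilySymbol{3}$ the normalizer of the nonsplit torus coming from $\field{q^2}$, and $\classicalFamilySymbol{5}$ the subfield subgroups. Stripping these off leaves precisely the subgroups of $\SL{2}{q}$ whose image in $\PSL{2}{q}$ is $\alternating{4}$, $\symmetric{4}$ or $\alternating{5}$, i.e.\ copies of $2\alternating{4}\cong\SL{2}{3}$, the binary octahedral group $2\symmetric{4}$, and $2\alternating{5}\cong\SL{2}{5}$, and I would separate these between $\classicalFamilySymbol{6}$ and $\classicalExceptions$ by identifying $\classicalFamilySymbol{6}$ as the normalizer in $\SL{2}{q}$ of the quaternion group $Q_8$ (which lies in $\SL{2}{q}$ exactly when $4\mid q-1$) --- this normalizer being $2\symmetric{4}$, or $2\alternating{4}$ when it is smaller --- and $\classicalExceptions$ as the almost-simple $2\alternating{5}$.

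The remaining work is to record, for each of these three groups, the prime powers $q$ for which it embeds in $\SL{2}{q}$ \emph{and} is maximal among subgroups not containing $\SL{2}{q}$. I would use the classical embedding conditions --- $\SL{2}{5}\hookrightarrow\SL{2}{q}$ iff $\field{q}$ contains $\sqrt{5}$ (and $5\nmid q$), $2\symmetric{4}\hookrightarrow\SL{2}{q}$ iff $\field{q}$ contains $\sqrt{2}$, and $\SL{2}{3}\hookrightarrow\SL{2}{q}$ for every odd prime power $q$ --- and unwind them over $\field{p}$ and over $\field{p^2}$ to get the congruence conditions on $q$ appearing in the table. Maximality then forces the containment bookkeeping: a given copy of $\SL{2}{5}$ or $2\symmetric{4}$ may lie inside a subfield subgroup (which is what separates the $q=p$ rows from the $q=p^2$ row for $2\alternating{5}$), and a copy of $\SL{2}{3}$ may lie inside some $2\symmetric{4}$ or inside $\SL{2}{5}$, so it survives into $\classicalFamilySymbol{6}\cup\classicalExceptions$ only for the congruence classes of $q$ that avoid every such containment --- which is exactly the residue list in the table; the listed orders $24$, $48$, $120$ are just $|2\alternating{4}|$, $|2\symmetric{4}|$, $|2\alternating{5}|$.

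The main obstacle --- and the reason this is cited rather than reproved --- is twofold. First, Dickson's classification itself: its hard part is ruling out unanticipated primitive subgroups of $\PSL{2}{q}$, which is handled by analysing a minimal counterexample together with the structure of groups acting transitively on $\projective^1\field{q}$. Second, the arithmetic layer: deciding, congruence class by congruence class of $q$ (splitting $q$ prime versus $q$ a prime square), which sporadic subgroup actually occurs and is maximal, which requires tracking every containment among the sporadic subgroups and the subfield subgroups. Both are carried out in detail in \cite{maximal_subgroups}, so I would simply invoke that reference, exactly as the paper does.
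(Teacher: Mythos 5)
Your proposal is correct and takes the same route as the paper: this theorem is not proved here but imported verbatim from \cite[Table 8.3]{maximal_subgroups}, exactly as you say. Your background sketch via Dickson's classification and the embedding/containment conditions is consistent with how that table is established in the reference, but no part of it is (or needs to be) reproduced in the paper itself.
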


	We can see that the orders of these families are bounded. If ${K \in (\classicalFamilySymbol{6} \cup \classicalExceptions)(\GL{2}{q})}$ does not act transitively on $\projective^{1}\field{q}$, then \textChiDoesntVanishOnK{\transitivityCharacter}{K}, and our main claim follows.
	
	As such, we use an argument about the transitivity of their action on $\projective^1\field{q}$ to bound $q$, namely:
	
	\begin{proposition} \label{proposition:c6_and_classical_exceptions_of_sl2_order_bound}
		Assume $K \in (\classicalFamilySymbol{6} \cup \classicalExceptions)(\GL{2}{q})$, and that it acts transitively on $\projective^1 \field{q}$. Denote $K' \define K \cap \SL{2}{q}$. Then, $(q+1) \divides \cardinality{K'}$.
	\end{proposition}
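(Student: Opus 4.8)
The plan is to run the same orbit–stabilizer argument used in the $n=3$ case, i.e.\ to exploit the identity \eqref{corollary:orbit_stabilizer_orders}, together with the fact that every group in these two families contains the full scalar subgroup of $\GL{2}{q}$. Set $Z \define \groupCenter{\GL{2}{q}}$, the group of scalar matrices; it is cyclic of order $q-1$ and is precisely the kernel of the action of $\GL{2}{q}$ on $\projective^1\field{q}$. First I would record that $Z \subgroup K$. When $K \in \classicalFamily{6}{\GL{2}{q}}$ this is immediate, since $K = \normalizer{R}{\GL{2}{q}}$ is a normalizer and $Z$ normalizes every subgroup; when $K \in \classicalExceptions(\GL{2}{q})$ it holds because the members of the Aschbacher classes of $\GL{n}{q}$ are taken to contain $\groupCenter{\GL{n}{q}}$ (equivalently, the groups $K'$ of \autoref{table:c6_and_classical_exceptions_of_sl2} arise as $K' = K \cap \SL{2}{q}$ with $K = Z\cdot K'$).

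Given $Z \subgroup K$, the kernel of the action of $K$ on $\projective^1\field{q}$ is exactly $K \cap Z = Z$. Since $\cardinality{\projective^1\field{q}} = q+1$ and $K$ acts transitively, \eqref{corollary:orbit_stabilizer_orders} applied with the group $K$ gives $(q+1) \divides \subgroupIndex{K}{Z}$, hence $q^2-1 = (q-1)(q+1)$ divides $\cardinality{K}$. On the other hand, by the second isomorphism theorem $\subgroupIndex{K}{K'} = \subgroupIndex{K\,\SL{2}{q}}{\SL{2}{q}}$ divides $\subgroupIndex{\GL{2}{q}}{\SL{2}{q}} = q-1$; write $q-1 = \subgroupIndex{K}{K'}\cdot s$ for a positive integer $s$, so that $q^2-1 = s\,(q+1)\,\subgroupIndex{K}{K'}$. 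From $(q^2-1) \divides \cardinality{K} = \cardinality{K'}\,\subgroupIndex{K}{K'}$ I would cancel the common factor $\subgroupIndex{K}{K'}$ to get $s\,(q+1) \divides \cardinality{K'}$, and in particular $(q+1) \divides \cardinality{K'}$, which is the claim.

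The only step that is not pure bookkeeping is the very first one: that $K$ contains the scalar subgroup $Z$. For $\classicalFamilySymbol{6}$ this is automatic, and for $\classicalExceptions$ it is part of the standard conventions for the Aschbacher classes of the general linear group; if one wishes to sidestep that convention entirely, one can instead pass to $\PGL{2}{q}$ and use $\gcd(q-1,q+1)\in\{1,2\}$, which already yields $(q+1)\divides\cardinality{K'}$ when $q$ is even and reduces the odd case to the finitely many small $q$ occurring in \autoref{table:c6_and_classical_exceptions_of_sl2}, to be checked directly. Either way, once $Z\subgroup K$ is in hand the rest is elementary, and the proposition will then feed into the usual order bound (as in \autoref{corollary:no-non-vanishing-subgroups-3}): combined with the bound $\cardinality{K'}\le 120$ from the table, $(q+1)\divides\cardinality{K'}$ forces $q$ into a small finite range, which can be treated by hand.
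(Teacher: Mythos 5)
Your argument is correct, and it uses the same engine as the paper's proof --- the orbit--stabilizer identity \eqref{corollary:orbit_stabilizer_orders} together with $\subgroupIndex{K}{K'} \divides q-1$ --- but the bookkeeping that transfers the divisibility from $K$ down to $K' = K \cap \SL{2}{q}$ is genuinely different. The paper never assumes $\groupCenter{\GL{2}{q}} \subgroup K$: it reads off from \autoref{table:c6_and_classical_exceptions_of_sl2} that $q$ is odd, passes to the images $J$, $J'$ in the projective groups, proves $\groupCenter{\SL{2}{q}} \subgroup K'$ by a maximality-plus-order argument based on the table, and concludes from $\cardinality{J} \in \{\cardinality{J'}, 2\cardinality{J'}\}$ that $\cardinality{J}$ divides $\cardinality{K'}$. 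You instead stay inside $\GL{2}{q}$: from $\groupCenter{\GL{2}{q}} \subgroup K$ you get $(q^2-1) \divides \cardinality{K}$ and then cancel $\subgroupIndex{K}{K'}$ against $q-1$; the cancellation giving $s(q+1) \divides \cardinality{K'}$ is valid, and this route is shorter, needs neither the parity of $q$ nor the specific orders in the table, and in fact mirrors the paper's own treatment of $\classicalExceptions(\GL{3}{q})$ in \autoref{section:non_geometric_subgroups}. The price is exactly the step you flag, $\groupCenter{\GL{2}{q}} \subgroup K$: for $\classicalFamilySymbol{6}$ it is immediate since normalizers contain the center, and for $\classicalExceptions$ it does hold under the conventions of the cited source (class-$\classicalExceptions$ members of $\GL{n}{q}$ are normalizers of quasisimple subgroups, hence contain the scalars); note that only $\groupCenter{\GL{2}{q}} \subgroup K$ is needed --- your parenthetical $K = \groupCenter{\GL{2}{q}}\cdot K'$ need not hold and is not used. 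If you prefer to avoid the convention altogether, a cleaner patch than your projective fallback is available: if $K$ is maximal among subgroups not containing $\SL{2}{q}$ and $\groupCenter{\GL{2}{q}} \not\subgroup K$, then $\groupCenter{\GL{2}{q}}K = \GL{2}{q}$, which forces $K \normalSubgroup \GL{2}{q}$ with abelian quotient, hence $K \supseteq [\GL{2}{q},\GL{2}{q}] = \SL{2}{q}$, a contradiction. With that point secured, your proof of the proposition is complete.
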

	
	\begin{proof}
		By \autoref{table:c6_and_classical_exceptions_of_sl2}, we can see that $q$ is odd. Let $J \define \factor{K}{\groupCenter{\GL{2}{q}}}$ and $J' \define \factor{K'}{\groupCenter{\SL{2}{q}}}$.
		
		By \eqref{corollary:orbit_stabilizer_orders}, $(q+1) \divides \cardinality{J}$. As $\cardinality{K'}$ is given by \autoref{table:c6_and_classical_exceptions_of_sl2}, by the maximality of $K'$ along with its order\declareFootnote{maximality_and_order}{Otherwise, $K' \groupCenter{\SL{2}{q}} = \SL{2}{q}$. However, this implies that $2 \cardinality{K'} = \cardinality{\SL{2}{q}}$ (since $q$ is odd, $\cardinality{\groupCenter{\SL{2}{q}}} = 2$). None of the orders from \autoref{table:c6_and_classical_exceptions_of_sl2} satisfies this condition.}, $K' \supergroup \groupCenter{\SL{2}{q}}$. Thus, $\cardinality{K'} = 2\cardinality{J'}$. Then, lifting $J'$ to $J \le \PGL{2}{q}$, we get either $\cardinality{J} = \cardinality{J'}$ or $\cardinality{J} = 2\cardinality{J'}$. In either case, $\cardinality{J}$ divides $\cardinality{K'}$. So we get that $(q+1) \divides \cardinality{K'}$.
	\end{proof}
	
	To ease the handling of cases \ref{table:c6_and_classical_exceptions_of_sl2_alternating5_1} and \ref{table:c6_and_classical_exceptions_of_sl2_alternating5_2}, we use the following:
	
	\begin{proposition}[{\cite[Sections 258-259]{psl2_subgroups}}]
		$\alternating{5} \le \PSL{2}{q}$ is, up to conjugation, generated by the following: 
		
		If $q \equiv \pm 1 \mod 5$
		\[
		V_5 \define 
		\begin{bmatrix}
		\zeta		& 0		 \\
		0		& \zeta^{-1}	 \\
		\end{bmatrix}
		,
		\text{ some } V_2 \text{ of order } 2
		\]
		where $\zeta$ is a primitive root of unity of order $5$.

		If $q \equiv 0 \mod 5$
		\[
		V_5 \define
		\begin{bmatrix}
		1		& \mu		 \\
		0		& 1	 \\
		\end{bmatrix}
		,
		\text{ some } V_2 \text{ of order } 2
		\]
		where $\mu \ne 0$.
	\end{proposition}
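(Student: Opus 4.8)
The plan is to reduce the proposition to two classical inputs: that $\alternating{5}$ is generated by an element of order $5$ together with an element of order $2$, and the description of the conjugacy classes of elements of order $5$ in $\PSL{2}{q}$. First I would fix an embedding $\iota \colon \alternating{5} \hookrightarrow \PSL{2}{q}$ (this is precisely the hypothesis $\alternating{5} \le \PSL{2}{q}$) and choose abstract generators $a, b$ of $\alternating{5}$ with $a$ of order $5$ and $b$ of order $2$. For the generation fact, take $a$ a $5$-cycle and $b$ a double transposition not inverting $a$ (such involutions exist, since only $5$ of the $15$ involutions of $\alternating{5}$ invert a fixed $5$-element); a subgroup of $\alternating{5}$ of order divisible by $5$ is $\alternating{5}$, $\groupSpan{a}$, or a dihedral group of order $10$, and in the last two cases there is no involution that fails to invert the $5$-element, so $\groupSpan{a, b} = \alternating{5}$. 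Then $\iota(\alternating{5}) = \groupSpan{\iota(a), \iota(b)}$, so it suffices to conjugate $\iota$ so that $\iota(a)$ takes the stated normal form $V_5$ and then put $V_2 \define \iota(b)$.

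Now $\iota(a) \in \PSL{2}{q}$ has order $5$. Recalling the standard trichotomy of elements of $\PSL{2}{q}$ — unipotent, contained in the image of a split torus, or contained in the image of a nonsplit torus, the latter two being cyclic of orders $(q-1)/\gcd(2,q-1)$ and $(q+1)/\gcd(2,q-1)$ — an element of order $5$ is unipotent exactly when $\mathrm{char}(\field{q}) = 5$, i.e.\ $q \equiv 0 \bmod 5$; otherwise it forces $5 \mid q-1$ (the split case, $q \equiv 1 \bmod 5$) or $5 \mid q+1$ (the nonsplit case, $q \equiv -1 \bmod 5$). In particular $q \equiv \pm 2 \bmod 5$ would give no order-$5$ element at all, so $\alternating{5} \le \PSL{2}{q}$ indeed forces $q \equiv 0, \pm 1 \bmod 5$ and the three listed cases are exhaustive. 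In the unipotent case I would conjugate $\iota(a)$ into the upper unitriangular subgroup, where it becomes $\begin{bmatrix} 1 & \mu \\ 0 & 1 \end{bmatrix}$ with $\mu \neq 0$. In the split case I would diagonalize a lift of $\iota(a)$ to $\SL{2}{q}$ of order $5$ (replacing a lift $g$ with $g^5 = -I$ by $-g$); since $-I$ has order $2$ and $\iota(a)$ has odd order $5$, the image of $\mathrm{diag}(\zeta, \zeta^{-1})$ in $\PSL{2}{q}$ again has order $5$, so $\iota(a)$ is conjugate to such an image with $\zeta \in \field{q}$ a primitive $5$th root of unity.

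The nonsplit case $q \equiv -1 \bmod 5$ is where I expect the only genuine friction: here $\iota(a)$ is not diagonalizable over $\field{q}$, so the matrix $V_5$ in the statement must be read over $\field{q^2}$, where $\iota(a)$ is conjugate to $\mathrm{diag}(\zeta, \zeta^{q})$ for a primitive $5$th root of unity $\zeta \in \field{q^2}$, and $\zeta^{q} = \zeta^{-1}$ precisely because $q \equiv -1 \bmod 5$; equivalently one fixes a rational block form of the nonsplit torus of $\PSL{2}{q}$ and notes that its unique subgroup of order $5$ is generated by an element of that shape, determined up to conjugacy and Galois twist. Having placed $\iota(a)$ in the appropriate normal form, conjugating $\iota$ correspondingly makes $\iota(\alternating{5})$ contain $V_5$ and be generated by $V_5$ and the involution $V_2 = \iota(b)$, which is the assertion. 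The further bookkeeping — independence of the configuration of the choice of primitive root $\zeta$ or of $\mu \neq 0$, and the count of conjugacy classes of such subgroups $\alternating{5}$ — is not needed for the statement as phrased; the substantive ingredient is the conjugacy classification of order-$5$ elements of $\PSL{2}{q}$, which is exactly the content of the cited sections of Dickson.
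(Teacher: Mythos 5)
The paper does not prove this proposition at all: it is imported verbatim as a classical fact from Dickson (the cited Sections 258--259 of \emph{Linear Groups}), so there is no internal proof to compare against. Your reconstruction is sound and is essentially the standard argument behind Dickson's statement. The two ingredients are correct as you present them: (i) $\alternating{5}=\groupSpan{a,b}$ with $a$ of order $5$ and $b$ an involution not inverting $a$ — your subgroup argument works, since the only subgroups of $\alternating{5}$ of order divisible by $5$ are $\cyclic{5}$, the dihedral group of order $10$ (in which every involution inverts the rotation) and $\alternating{5}$ itself, and $10$ of the $15$ involutions do not invert a fixed $5$-cycle; and (ii) the trichotomy for an order-$5$ element of $\PSL{2}{q}$ (unipotent in characteristic $5$, split torus when $5 \divides q-1$, nonsplit torus when $5 \divides q+1$), which both exhausts the congruence conditions and yields the normal forms, with the minor verification (which you waved at but which does go through) that the diagonalization can be arranged by a determinant-one conjugator, and that scaling $\mu$ is immaterial since any $\mu \ne 0$ is allowed. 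Your reading of the case $q \equiv -1 \pmod 5$ — that $V_5 = \operatorname{diag}(\zeta,\zeta^{-1})$ must then be interpreted over $\field{q^2}$, with $\zeta^q=\zeta^{-1}$ — is exactly Dickson's convention and is the right way to make the statement literally true, since no rational diagonal form exists in that case. So your proposal is a correct self-contained derivation of a result the paper simply cites; what the paper's choice buys is brevity, and what yours buys is independence from the reference at the cost of the (routine but real) conjugacy bookkeeping.
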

	
	\begin{corollary} \label{corollary:c6_and_classical_exceptions_of_sl2_alternating_5_q}
		Assume $K \in (\classicalFamilySymbol{6} \cup \classicalExceptions)(\GL{2}{q})$, $K$ acts transitively on $\projective^1 \field{q}$ and one of cases \ref{table:c6_and_classical_exceptions_of_sl2_alternating5_1} or \ref{table:c6_and_classical_exceptions_of_sl2_alternating5_2} holds. Then $(q+1) \divides 24$.
	\end{corollary}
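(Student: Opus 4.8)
The plan is to bootstrap from \autoref{proposition:c6_and_classical_exceptions_of_sl2_order_bound}, which already gives $(q+1)\divides\cardinality{K'}=120$. Since $120=2^{3}\cdot 3\cdot 5$ while $24=2^{3}\cdot 3$, every divisor of $120$ that fails to divide $24$ is a multiple of $5$, so it suffices to show $5\ndivides q+1$. So I would assume $5\divides q+1$ and derive a contradiction with transitivity. Note first that by \autoref{table:c6_and_classical_exceptions_of_sl2} the hypothesis forces $q$ odd and $K'\cong 2\alternating{5}$, and since $5$ cannot divide both $q-1$ and $q+1$ we get $5\ndivides q-1$, i.e. $q\equiv-1\pmod 5$.

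Next I would pass to the projective picture. The image $J'\define K'/\groupCenter{\SL{2}{q}}$ is a subgroup of $\PSL{2}{q}$ of order $\cardinality{K'}/2=60$, hence $J'\cong\alternating{5}$ by the classification of subgroups of $\PSL{2}{q}$. By the displayed proposition on generators of $\alternating{5}$ (applicable since $q\equiv\pm1\pmod 5$), $J'$ is generated by an element $V_5$ of order $5$ with eigenvalues a primitive fifth root of unity $\zeta$ and $\zeta^{-1}$, together with an involution $V_2$. Because $5\divides q+1$ and $5\ndivides q-1$ we have $\zeta\in\field{q^{2}}\setminus\field{q}$ and $\zeta^{-1}=\zeta^{q}$, so $V_5$ is a regular semisimple element lying in a non-split torus of $\PSL{2}{q}$; in particular $V_5$ has no fixed point on $\projective^{1}\field{q}$, and $\langle V_5\rangle$ is a full Sylow $5$-subgroup of $\PSL{2}{q}$ (the divisibility $(q+1)\divides120$ forces $25\ndivides q+1$). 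Writing $J\define K/\groupCenter{\GL{2}{q}}$, the proof of \autoref{proposition:c6_and_classical_exceptions_of_sl2_order_bound} gives $\cardinality{J}\in\{60,120\}$; but a subgroup of $\PGL{2}{q}$ of order $120$ containing $\alternating{5}$ would have to be $\PGL{2}{5}\cong\symmetric{5}$, which does not embed in $\PGL{2}{q}$ when $5\ndivides q$, so $J=J'\cong\alternating{5}$.

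Now $J\cong\alternating{5}$ acts transitively on the $q+1$ points of $\projective^{1}\field{q}$, and I would feed this into Burnside's Lemma (\autoref{burnsides_lemma}): the $24$ elements of order $5$ fix nothing, the $15$ involutions fix two points each, and the $20$ elements of order $3$ fix $0$, $1$, or $2$ points each according as they are elliptic, unipotent, or split. This yields $(q+1)+30+20f=60$ with $f\in\{0,1,2\}$, so $f\le 1$ and $q+1\in\{10,30\}$, i.e. $q\in\{9,29\}$. To finish I would exclude these two values: for $q=29$, $3\divides q+1$ forces the order-$3$ elements to be elliptic and $f=0$; for $q=9$, $3\divides q$ forces them unipotent and $f=1$; in each case one inspects the embedding $\alternating{5}\le\PSL{2}{q}$ directly — for $q=9$ through the exceptional isomorphism $\PSL{2}{9}\cong\alternating{6}$, and for $q=29$ through the list of possible point stabilizers inside a Borel of $\PGL{2}{29}$ — to see that no $K$ occurring as a maximal member of $\classicalFamilySymbol{6}\cup\classicalExceptions$ under the hypotheses of \autoref{table:c6_and_classical_exceptions_of_sl2} is transitive. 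Hence $5\ndivides q+1$, and therefore $(q+1)\divides 24$.

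The hard part will be precisely that last step, excluding $q=9$ and $q=29$: all the coarse invariants — the divisibility $(q+1)\divides 120$, the Burnside count, and the Sylow congruence $n_{5}\equiv 1\pmod 5$ — are simultaneously consistent with these two values, so ruling them out requires finer structural information about how $\alternating{5}$ sits inside $\PSL{2}{q}$, or a careful reading of the maximality/novelty conditions of \autoref{table:c6_and_classical_exceptions_of_sl2} for these small $q$.
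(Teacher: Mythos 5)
Your route is genuinely different from the paper's, which is a three-line orbit--stabilizer argument: by the quoted Dickson generators, the order-$5$ generator $V_5$ of $\alternating{5} \subgroup \PSL{2}{q}$ is projectively diagonal and hence fixes $\projectiveVector{e}_1$, so $5 \divides \cardinality{\stab{K}{\projectiveVector{e}_1}}$, and transitivity together with \eqref{corollary:orbit_stabilizer_orders} gives $5(q+1) \divides \cardinality{2\alternating{5}} = 120$, i.e.\ $(q+1) \divides 24$. Your reduction (``it suffices to show $5 \ndivides q+1$'') is a valid reformulation, and Burnside is the natural tool, but the execution has two genuine gaps. First, the claim that the $15$ involutions fix two points each is only true when $q \equiv 1 \pmod 4$; for $q \equiv 3 \pmod 4$ the involutions of $\PSL{2}{q}$ are elliptic and fix no point of $\projective^1\field{q}$, so your count must also allow $(q+1) + 20f = 60$, which adds $q = 19$ and $q = 59$ to the surviving values.

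Second, and decisively, the exclusion step you defer to the end is not just hard but impossible: for each of $q \in \{9, 19, 29, 59\}$ the correct fixed-point numbers (order-$5$ elements elliptic; involutions split exactly when $q \equiv 1 \pmod 4$; order-$3$ elements unipotent for $q = 9$, split for $q = 19$, elliptic for $q = 29, 59$) sum to exactly $60$, so Burnside shows that \emph{every} copy of $\alternating{5}$ in $\PSL{2}{q}$ is transitive on $\projective^1\field{q}$ --- for $q = 29$ the split involutions alone supply the missing $30$ fixed points, and for $q = 59$ the action is even regular. Hence no contradiction can be extracted from $5 \divides q+1$, by your coarse invariants or by any finer structural analysis. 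This is not an artifact of your method: the paper's own proof uses the diagonal form of $V_5$, which lies over $\field{q}$ only when $\zeta \in \field{q}$, i.e.\ when $5 \divides q-1$, whereas $5 \divides q+1$ always holds in case \ref{table:c6_and_classical_exceptions_of_sl2_alternating5_2} and can hold in case \ref{table:c6_and_classical_exceptions_of_sl2_alternating5_1}. So the stated divisibility actually fails for $q \in \{9, 19, 29, 59\}$, and these transitive $2\alternating{5}$'s would need to be handled directly (e.g.\ by a character computation as in \autoref{table:nontransitive_sl2_c6_exceptional_with_generators_and_characters}) rather than ruled out; your proposal cannot be completed as written, but it does correctly locate the regime $5 \divides q+1$ where the argument breaks down.
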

	\begin{proof}
		In these cases, the projective image of $K = 2\alternating{5}$ (which is the lifting of $\alternating{5} \subgroup \PSL{2}{q}$ to $\PGL{2}{q}$) contains a subgroup of order $5$ stabilizing $\projectiveVector{e}_1$. As such, $5 \divides \cardinality{\stab{K}{\projectiveVector{e}_1}}$
		
		Since $K$ acts transitively on $\projective^1 \field{q}$, by \eqref{corollary:orbit_stabilizer_orders}, $(q+1) \cardinality{\stab{K}{\projectiveVector{e}_1}} = \cardinality{K}$. Therefore, $5(q+1) \divides \cardinality{2\alternating{5}} = 120$. This implies that $(q+1) \divides 24$.
	\end{proof}
	
	By using these \autoref{proposition:c6_and_classical_exceptions_of_sl2_order_bound} and \autoref{corollary:c6_and_classical_exceptions_of_sl2_alternating_5_q}, along with the conditions in \autoref{table:c6_and_classical_exceptions_of_sl2}, we arrive at the following:
	
	\begin{proposition} \label{proposition:nontransitive_sl2_c6_exceptional}
		Only the following possibilities for $K \cap \SL{2}{q}$ can act transitively on $\projective^{1}\field{q}$, for $K \in (\classicalFamilySymbol{6} \cup \classicalExceptions)(\GL{2}{q})$:
		\begin{multicols}{3}
			\begin{enumerate}
				\item $2\alternating{4} \le \SL{2}{5}$
				\item $2\symmetric{4} \le \SL{2}{7}$
				\item $2\alternating{5} \le \SL{2}{11}$
				\item $2\symmetric{4} \le \SL{2}{23}$
				\item $2\symmetric{4} \le \SL{2}{31}$
				\item $2\symmetric{4} \le \SL{2}{47}$
			\end{enumerate}
		\end{multicols}
	
		We do not claim these possibilities for $K$ do act transitively on the projective space.
	\end{proposition}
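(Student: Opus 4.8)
The plan is a finite enumeration, carried out one family of \autoref{table:c6_and_classical_exceptions_of_sl2} at a time. Within a single row of that table the isomorphism type of $K' \define K \cap \SL{2}{q}$ is fixed --- in particular $\cardinality{K'}$ is one of $24$, $48$, $120$ --- and $q$ ranges over the primes, or squares of primes, satisfying a prescribed congruence. So the whole task reduces to intersecting that congruence condition with the transitivity constraint, which the two preceding results convert into a divisibility bound making $q$ bounded.

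First I would dispatch the two rows with $K' \cong 2\alternating{5}$. For these, \autoref{corollary:c6_and_classical_exceptions_of_sl2_alternating_5_q} says that transitivity of $K$ on $\projective^{1}\field{q}$ forces $(q+1) \mid 24$, so $q+1 \in \{1,2,3,4,6,8,12,24\}$ and $q \le 23$. Intersecting with ``$q$ prime, $q \equiv \pm 1 \bmod 10$'' in one row and ``$q = p^2$, $p \equiv \pm 3 \bmod 10$'' in the other leaves only $2\alternating{5} \subgroup \SL{2}{11}$.

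For the two remaining rows, $K' \cong 2\symmetric{4}$ of order $48$ and $K' \cong 2\alternating{4}$ of order $24$, the specialised corollary does not apply, so I would instead quote \autoref{proposition:c6_and_classical_exceptions_of_sl2_order_bound}: transitivity on $\projective^{1}\field{q}$ forces $(q+1) \mid \cardinality{K'}$, i.e.\ $(q+1) \mid 48$ (respectively $(q+1) \mid 24$), which again bounds $q$. Intersecting the divisor constraint with the relevant congruence/primality conditions of \autoref{table:c6_and_classical_exceptions_of_sl2}, and discarding the small values of $q$ for which the tabulated group is not a proper subgroup of $\SL{2}{q}$ --- $q = 2$, excluded since these classes only occur for $q$ odd, and $q = 3$, where $2\alternating{4} = \SL{2}{3}$ --- leaves a finite list of pairs $(K', q)$, each of which is among those displayed in the statement. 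Collecting the surviving pairs from all four rows proves the proposition.

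There is no genuine conceptual difficulty here; the argument is bookkeeping. The two places where care is needed are: (i) reading the orders in \autoref{table:c6_and_classical_exceptions_of_sl2} as orders of subgroups of $\SL{2}{q}$, not of $\PSL{2}{q}$ or $\GL{2}{q}$, since the divisibility bounds are stated for $\cardinality{K'}$ with $K' \subgroup \SL{2}{q}$; and (ii) keeping in mind that $(q+1)\mid 24$ and $(q+1)\mid\cardinality{K'}$ are \emph{necessary} but not manifestly sufficient for transitivity, so the enumeration yields only a list of candidates --- which is exactly why the statement ends with ``We do not claim these possibilities for $K$ do act transitively on the projective space'' and why the list need not be minimal.
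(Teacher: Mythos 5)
Your route is the same as the paper's: the paper gives no separate proof, only the sentence instructing the reader to combine \autoref{proposition:c6_and_classical_exceptions_of_sl2_order_bound}, \autoref{corollary:c6_and_classical_exceptions_of_sl2_alternating_5_q} and the conditions of \autoref{table:c6_and_classical_exceptions_of_sl2}, which is exactly your sieve. The gap is in your final assertion that every surviving pair ``is among those displayed in the statement.'' Run your own constraints for the $2\alternating{4}$ row: $(q+1) \divides 24$, $q$ prime, $q \equiv \pm 3, 5, \pm 11, \pm 13, \pm 19 \pmod{40}$. The value $q = 11$ passes all of them ($12 \divides 24$, and $11 \equiv 11 \pmod{40}$ is one of the admitted congruences, the novelty case in which $K \subgroup \GL{2}{11}$ has projective image $\symmetric{4}$ and $K \cap \SL{2}{11} = 2\alternating{4}$), yet $2\alternating{4} \le \SL{2}{11}$ is not in the displayed list. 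So your enumeration does not land inside the stated list, and you cannot close the gap by ruling that case out: the image $\alternating{4} \subgroup \PSL{2}{11}$ has order $12$, coprime to the order $55$ of a point stabilizer in $\PSL{2}{11}$, so every orbit of $\alternating{4}$ on the $12$ points of $\projective^{1}\field{11}$ is regular and the action is transitive. In other words, the mismatch is not a bookkeeping slip in your write-up; it reflects the fact that the proposition's list is not what the cited bounds produce, and as literally stated it omits a genuinely transitive possibility.

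The mismatch also goes the other way: your sieve gives $(q+1) \divides 48$ for the $2\symmetric{4}$ row, so $q = 31$ (with $32 \ndivides 48$) is \emph{not} produced, although $2\symmetric{4} \le \SL{2}{31}$ appears in the list. That direction is harmless for an ``only these can act transitively'' claim (and indeed \autoref{table:nontransitive_sl2_c6_exceptional_with_generators_and_characters} marks the $q = 31$ groups with $\transitivityCharacter$, i.e.\ they are not transitive), but it confirms that the displayed list and the outcome of the enumeration you describe simply do not coincide, so the sentence ``each of which is among those displayed'' is where your proof fails. A correct statement provable by your argument would list exactly the sieve's survivors, including $2\alternating{4} \le \SL{2}{11}$; the later case analysis would then also have to show that this $K \subgroup \GL{2}{11}$ is marked by $\cuspidalForGLTwo$, since, being transitive, it cannot be marked by $\transitivityCharacter$.
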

	
	These can be checked by hand, using the character table in \cite{gl2_characters}, to arrive at:

	\begin{proposition}
		For the possibilities listed in \autoref{proposition:nontransitive_sl2_c6_exceptional}, the different $K \in (\classicalFamilySymbol{6} \cup \classicalExceptions)(\GL{2}{q})$, along with a set of generators (up to conjugation) and a character $\chi$ such that \textChiDoesntVanishOnK{\chi}{K}, are given in \autoref{table:nontransitive_sl2_c6_exceptional_with_generators_and_characters}.
		\begin{table}[]
		\centering
			\caption{Generators and Characters} \label{table:nontransitive_sl2_c6_exceptional_with_generators_and_characters}
			\begin{threeparttable}
				\begin{tabular}{CCCC}
					\hline
					q & K \cap \SL{2}{q} & \text{Generators in } \PGL{2}{q}
					& \text{Character}\\ \hline \hline
					5 & 2 \alternating{4} 
					& \begin{bmatrix}3&4\\2&3\end{bmatrix},\begin{bmatrix}1&0\\1&2\end{bmatrix} 
					& \cuspidalForGLTwo \\ \hline
					7 & 2 \symmetric{4} 
					& \begin{bmatrix}2&6\\5&5\end{bmatrix},\begin{bmatrix}4&6\\3&5\end{bmatrix} 
					& \cuspidalForGLTwo \\ \hline
					7 & 2 \symmetric{4} 
					& \begin{bmatrix}2&0\\0&1\end{bmatrix},\begin{bmatrix}6&5\\6&4\end{bmatrix} 
					& \cuspidalForGLTwo \\ \hline
					11 & 2 \alternating{5} 
					& \begin{bmatrix}8&7\\6&4\end{bmatrix},\begin{bmatrix}8&8\\8&4\end{bmatrix} 
					& \cuspidalForGLTwo \\ \hline
					11 & 2 \alternating{5} 
					& \begin{bmatrix}3&9\\5&8\end{bmatrix},\begin{bmatrix}6&2\\6&4\end{bmatrix} 
					& \cuspidalForGLTwo \\ \hline
					23 & 2 \symmetric{4} 
					& \begin{bmatrix}8&2\\18&18\end{bmatrix},\begin{bmatrix}15&1\\1&8\end{bmatrix} 
					& \cuspidalForGLTwo \\ \hline
					23 & 2 \symmetric{4} 
					& \begin{bmatrix}1&15\\17&8\end{bmatrix},\begin{bmatrix}2&11\\17&17\end{bmatrix} 
					& \cuspidalForGLTwo \\ \hline
					31 & 2 \symmetric{4} 
					& \begin{bmatrix}11&19\\7&4\end{bmatrix},\begin{bmatrix}1&13\\30&26\end{bmatrix} 
					& \transitivityCharacter \\ \hline
					31 & 2 \symmetric{4} 
					& \begin{bmatrix}5&25\\18&6\end{bmatrix},\begin{bmatrix}0&22\\20&0\end{bmatrix} 
					& \transitivityCharacter \\ \hline
					47 & 2 \symmetric{4} 
					& \begin{bmatrix}34&10\\3&13\end{bmatrix},\begin{bmatrix}24&32\\11&6\end{bmatrix} 
					& \transitivityCharacter \\ \hline
					47 & 2 \symmetric{4} 
					& \begin{bmatrix}11&2\\29&36\end{bmatrix},\begin{bmatrix}46&44\\11&21\end{bmatrix} 
					& \transitivityCharacter \\ \hline
				\end{tabular}
			\end{threeparttable}
		\end{table}
	\end{proposition}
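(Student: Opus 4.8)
This is a finite verification, carried out row by row of \autoref{table:nontransitive_sl2_c6_exceptional_with_generators_and_characters} (eleven rows: one for $q=5$ and two for each of $q=7,11,23,31,47$, matching the six lines of \autoref{proposition:nontransitive_sl2_c6_exceptional}). Fix a row, with prime power $q$ and matrices $g_1,g_2\in\GL{2}{q}$, let $\bar{K}\define\groupSpan{\bar{g}_1,\bar{g}_2}\subgroup\PGL{2}{q}$ be the subgroup generated by their images, and let $K\define\groupSpan{g_1,g_2,\groupCenter{\GL{2}{q}}}$ be the corresponding member of $(\classicalFamilySymbol{6}\cup\classicalExceptions)(\GL{2}{q})$. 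Both candidate characters, $\transitivityCharacter=\unipotentCharacter{(1,1)}$ and $\cuspidalForGLTwo=\pi(\phi)$ with $\phi$ of order $q+1$, are trivial on $\groupCenter{\GL{2}{q}}$, so $\characterSum{\chi}{K}$ depends only on $\bar{K}$ and the whole computation lives inside this small subgroup of $\PGL{2}{q}$. The first step for each row is to identify $\bar{K}$: compute the orders of $\bar{g}_1,\bar{g}_2$ and of a few short words in them, enumerate $\bar{K}$, and match its order and isomorphism type against \autoref{table:c6_and_classical_exceptions_of_sl2}, confirming that $K$ is (a conjugate of) the group named in \autoref{proposition:nontransitive_sl2_c6_exceptional}. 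As $(\classicalFamilySymbol{6}\cup\classicalExceptions)(\GL{2}{q})$ is a conjugacy-closed family of subgroups maximal among those not containing $\SL{2}{q}$, checking the order and that $\bar{K}$ does not contain $\PSL{2}{q}$ already pins this down.

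For the two rows with $q\in\{31,47\}$, whose assigned character is $\transitivityCharacter$, the remaining task is to verify that $K$ is not transitive on $\projective^1\field{q}$. For $q=31$ this is automatic: $\cardinality{\projective^1\field{31}}=32$ does not divide $\cardinality{K\cap\SL{2}{q}}=48$, so by \autoref{proposition:c6_and_classical_exceptions_of_sl2_order_bound} the action is not transitive (so this row is in fact already handled by that proposition). For $q=47$ one carries out a short computation of the $\bar{K}$-orbit of $\projectiveVector{e}_1$ and observes it is a proper subset of $\projective^1\field{47}$. In either case \autoref{corollary:permutation-character-marks-transitive} — equivalently \autoref{proposition:stabilizer-induced-character} via Burnside's lemma — gives $\characterSum{\transitivityCharacter}{K}\ge 1$, i.e.\ \textChiDoesntVanishOnK{\transitivityCharacter}{K}.

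For the rows with $q\in\{5,7,11,23\}$, whose assigned character is $\cuspidalForGLTwo=\pi(\phi)$, I would run through the elements of $\bar{K}$, determine the $\GL{2}{q}$-conjugacy type of each representative from its characteristic polynomial (central; non-semisimple of the form $\begin{bmatrix}a&1\\0&a\end{bmatrix}$; split semisimple; or non-split semisimple with eigenvalues $\theta,\theta^{q}\in\field{q^2}\setminus\field{q}$), read off the value of $\pi(\phi)$ on that class from the character table in \cite{gl2_characters} (degree $q-1$; values $0$ on split classes, $-\phi(a)$ on the non-semisimple class with eigenvalue $a$, $(q-1)\phi(z)$ on the scalar $zI$, and $-(\phi(\theta)+\phi(\theta^{q}))$ on the non-split classes), and evaluate
\[
	\characterSum{\cuspidalForGLTwo}{K}=\frac{1}{\cardinality{K}}\sum_{g\in K}\cuspidalForGLTwo(g),
\]
checking that it equals $1$, hence is positive; this gives \textChiDoesntVanishOnK{\cuspidalForGLTwo}{K}.

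From here the argument is entirely routine — the content is the bookkeeping over eleven small groups. The single point that needs care is the split into the two characters: $\transitivityCharacter$ marks $K$ precisely when $K$ fails to act transitively on $\projective^1\field{q}$, so for the genuinely $2$-transitive exceptional subgroups ($q=5,7,11,23$) it is useless and one must instead use the cuspidal character $\cuspidalForGLTwo$, whose restriction is controlled directly by the character table rather than by a transitivity count — which is exactly why $\cuspidalForGLTwo$ was introduced in \autoref{definition:additional_character_2}.
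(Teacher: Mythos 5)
Your proposal is correct and matches the paper's approach: the paper itself justifies this proposition only by the remark that the eleven rows ``can be checked by hand, using the character table in the reference,'' and your row-by-row plan (identify the projective group generated, reduce to $\PGL{2}{q}$ since both $\transitivityCharacter$ and $\cuspidalForGLTwo$ are trivial on the center, read off $\cuspidalForGLTwo$-values from the character table for $q=5,7,11,23$, and verify non-transitivity for $q=31,47$ so that $\transitivityCharacter$ marks those rows) is exactly that finite verification, spelled out in more detail. Your observation that the $q=31$ rows are already forced to be non-transitive by the divisibility bound is a small bonus consistent with the paper's assignment of $\transitivityCharacter$ there.
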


	Thus, either $\chiContainsTrivial{\cuspidalForGLTwo}{K}$ or $\chiContainsTrivial{\transitivityCharacter}{K}$ every $K \in (\classicalFamilySymbol{6} \cup \classicalExceptions)(\GL{2}{q})$. In addition, both characters restrict to the projective and special cases.
	
	This completes the proof of \autoref{theorem:nonvanishing_characters_of_gl2} and \autoref{theorem:nonvanishing_characters_of_sl2}.
	
	\section{Open Questions} \label{section:open_questions}
	
	\paragraph{Minimality of $\specialCharacterSetGLnq{n}{q}$}
	In \autoref{theorem:nonvanishing_characters_of_gln} we have shown that the set $\specialCharacterSetGLnq{n}{q} \subseteq \irreducibleCharacters{\GL{n}{q}}$ defined in \autoref{section:main_results} suffices to mark every subset of $\GL{n}{q}$ that does not contain $\SL{n}{q}$. We have not shown that this set is indeed minimal. This raises the following question, and similar questions for the relatives of $\GL{n}{q}$.
	
	\begin{question}
		Is there a set $\specialCharacterSetGLnq{n}{q} \subseteq \irreducibleCharacters{\GL{n}{q}}$ with at most $4$ nontrivial irreducible characters \textCMarksSubgroupsThatDoNotContainH{that}{$\GL{n}{q}$}{$\SL{n}{q}$}?
	\end{question}
	
	\paragraph{$\specialCharacterSet{G}$ for other groups} Given a group $G$, we denote by $\specialMinCharacterSetCardinality{G}$ the minimal cardinality of a subset of nontrivial irreducible characters \textCMarksProperSubgroups{that}{$G$}.
	
	\begin{question}
		Given a finite group $G$, what is $\specialMinCharacterSetCardinality{G}$?
	\end{question}

	\begin{question}
		Given a finite group $G$, what subsets of characters achieve $\specialMinCharacterSetCardinality{G}$? Is there a unique minimal set?
	\end{question}

	\paragraph{Families of groups} Given a family of finite groups, we consider $\max_G(\specialMinCharacterSetCardinality{G})$, where the maximum ranges over all of the groups in the family.

	\begin{question}
		Given a family of finite groups, what is $\max_G(\specialMinCharacterSetCardinality{G})$, where the maximum ranges over all of the groups in the family?
	\end{question}

	In particular, for the case of $\symmetric{n}$, we prove in \autoref{section:sn} that $\max_n(\specialMinCharacterSetCardinality{\symmetric{n}}) \le 8$ for $n \ge 34$.
	
	\begin{question}
		What is the value of $\max_n(\specialMinCharacterSetCardinality{\symmetric{n}})$?
	\end{question}

	\appendix
	
	\section{Characters and Subgroups of $\symmetric{n}$} \label{section:sn}
	\paragraph{ } In this section, we consider the group $\symmetric{n}$. We use methods similar to those used in this paper to find a set of characters $\specialCharacterSet{\symmetric{n}}$ \textCMarksAllProperSubgroups{that}{$\symmetric{n}$}. Note that in \autoref{example:four_transitive_characters_of_sn}, we presented such a set with $12$ elements, and the set we present in this section consists of $8$ elements.
	
	We introduce notation similar to the notation in the case of $\GL{n}{q}$. Let $\alpha = (\alpha_1, \alpha_2, \dots, \alpha_s)$, such that $\sum_{i=1}^s \alpha_i = n$. We denote the induction of the trivial character of $\symmetric{{\alpha_1}} \times \dots \times \symmetric{{\alpha_s}}$ to $\symmetric{n}$ by $\trivialCharacter{\alpha}$.
	
	\begin{lemma}[e.g. {\cite[Chapter 7.3, Corollary 1]{sn_characters}}] \label{lemma:sn-induced-trivial-character-split}
		Let $\lambda = (\lambda_1, \dots, \lambda_l) \vdash n$. Then
		\[\trivialCharacter{\lambda} = \sum_{\mu \vdash n}K_{\mu \lambda}\snCharacter{\mu},\]
		where $K_{\mu \lambda}$ are the Kostka numbers, and $\snCharacter{\mu}$ are irreducible characters.
	\end{lemma}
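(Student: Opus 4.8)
The plan is to transport the statement, via the Frobenius characteristic map, into a classical identity of symmetric functions; this is essentially Young's rule. Recall that $\operatorname{ch} \colon \bigoplus_{n \ge 0} R(\symmetric{n}) \to \Lambda$, from the graded ring of virtual characters of the symmetric groups to the ring $\Lambda$ of symmetric functions, is a ring isomorphism which is an isometry for the standard pairings (characters on the left, the Hall inner product on the right), which sends an external product of characters followed by induction to the relevant $\symmetric{n}$ to ordinary multiplication in $\Lambda$, and which sends $\trivialCharacter{\symmetric{m}}$ to the complete homogeneous symmetric function $h_m$. The one further input specific to $\symmetric{n}$ that I would quote from the cited reference is that $\operatorname{ch}(\snCharacter{\mu}) = s_\mu$, the Schur function indexed by $\mu$; equivalently, that the $\snCharacter{\mu}$ are exactly the irreducible characters and correspond under $\operatorname{ch}$ to the Schur basis.

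First I would apply $\operatorname{ch}$ to the left-hand side. By the definition of $\trivialCharacter{\lambda}$ as the induction of the trivial character of $\symmetric{\lambda_1} \times \cdots \times \symmetric{\lambda_l}$ to $\symmetric{n}$, multiplicativity of $\operatorname{ch}$ gives
\[
\operatorname{ch}(\trivialCharacter{\lambda}) \;=\; h_{\lambda_1} h_{\lambda_2} \cdots h_{\lambda_l} \;=\; h_\lambda .
\]
Next I would expand $h_\lambda$ in the Schur basis. Since $\operatorname{ch}$ is an isometry and the $s_\mu$ are orthonormal, the multiplicity of $\snCharacter{\mu}$ in $\trivialCharacter{\lambda}$ equals $\left\langle h_\lambda, s_\mu \right\rangle$ in $\Lambda$. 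Using that $\{h_\lambda\}$ and $\{m_\mu\}$ (the monomial symmetric functions) are dual bases, so $\left\langle h_\lambda, m_\mu \right\rangle = \delta_{\lambda\mu}$, together with the combinatorial expansion $s_\mu = \sum_\nu K_{\mu\nu} m_\nu$, where $K_{\mu\nu}$ is the number of \emph{semistandard} Young tableaux of shape $\mu$ and content $\nu$ --- the Kostka number --- one obtains $\left\langle h_\lambda, s_\mu \right\rangle = K_{\mu\lambda}$. Hence $h_\lambda = \sum_{\mu \vdash n} K_{\mu\lambda} s_\mu$, and applying $\operatorname{ch}^{-1}$ yields $\trivialCharacter{\lambda} = \sum_{\mu \vdash n} K_{\mu\lambda} \snCharacter{\mu}$, as claimed. (One can reach the same symmetric-function identity by iterating Pieri's rule instead of invoking the duality of $\{h\}$ and $\{m\}$.)

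The only genuinely non-routine ingredient here is the identity $h_\lambda = \sum_\mu K_{\mu\lambda} s_\mu$ paired with the correspondence $\operatorname{ch}(\snCharacter{\mu}) = s_\mu$; both are precisely what the cited source establishes, so in the write-up I would cite them rather than rebuild the theory of Specht modules and the characteristic map from scratch. If a self-contained, symmetric-function-free proof were wanted, I would instead follow James's route: realise $\trivialCharacter{\lambda}$ as the permutation character on $\lambda$-tabloids and, for each $\mu$, exhibit $K_{\mu\lambda}$ explicit \emph{semistandard} $\symmetric{n}$-homomorphisms from the Specht module $S^\mu$ into this permutation module, then show that they form a basis of the relevant $\operatorname{Hom}$-space. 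Proving that these semistandard homomorphisms are simultaneously linearly independent and spanning is the delicate step, which is why I would prefer the characteristic-map argument here.
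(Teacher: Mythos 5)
Your proof is correct. The paper does not prove this lemma at all --- it is quoted directly from the cited reference as Young's rule --- and your Frobenius-characteristic argument ($\operatorname{ch}$ sends $\trivialCharacter{\lambda}$ to $h_\lambda$, then $h_\lambda=\sum_\mu K_{\mu\lambda}s_\mu$ by duality of the $h$ and $m$ bases, with $\operatorname{ch}(\snCharacter{\mu})=s_\mu$) is exactly the standard derivation underlying that citation. The only point worth flagging is that in the paper the lemma is also used to \emph{define} the characters $\snCharacter{\mu}$ (with the remark that this agrees with the usual labeling), so your step quoting $\operatorname{ch}(\snCharacter{\mu})=s_\mu$ is precisely that agreed-upon identification rather than something to be proved here.
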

	
	We define the characters \defineTextNotation{$\snCharacter{\mu}$}$\in \irreducibleCharacters{\symmetric{n}}$ as these irreducible summands of $\trivialCharacter{\lambda}$. Unlike the case of $\GL{n}{q}$, these are all the irreducible characters of $\symmetric{n}$. This definition agrees with the usual correspondence between partitions and irreducible characters (e.g. \cite{sn_characters}).
	
	We begin by considering the character $\trivialCharacter{(n-3, 1^3)}$ of $\symmetric{n}$. It is the character induced by the permutation representation of $\symmetric{n}$ on $3$-tuples of elements of $X_n = \{1,\dots,n\}$. Therefore, by \autoref{corollary:permutation-character-marks-transitive}, $\trivialCharacter{(n-3, 1^3)} - \trivialCharacter{(n)}$ marks every subgroup of $\symmetric{n}$ that does not act on $X_n$ $3$-transitively. By decomposing the character, we arrive at the following:
	
	\begin{proposition} \label{proposition:sn_not_3_transitive}
		Let $n \ge 6$ and let 
		\[\specialCharacterSet{\symmetric{n}} =
		\begin{Bmatrix}
		\snCharacter{(n-1, 1)} & \snCharacter{(n-2, 2)} & \snCharacter{(n-2, 1^2)} \\ \snCharacter{(n-3, 3)} & \snCharacter{(n-3, 2, 1)} & \snCharacter{(n-3, 1^3)}
		\end{Bmatrix} \subseteq \irreducibleCharacters{\symmetric{n}}.\]
		Then, \textCMarksSubgroupsThatAreNotkTransitive{$\specialCharacterSet{\symmetric{n}}$}{$\symmetric{n}$}{3} under the usual action.
	\end{proposition}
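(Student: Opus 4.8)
The plan is to run the argument of \autoref{example:four_transitive_characters_of_sn} one transitivity degree lower: realize $\trivialCharacter{(n-3,1^3)}$ as a permutation character, strip off the trivial summand, and read off its remaining irreducible constituents via Kostka numbers. As already noted in the paragraph preceding the statement, $\trivialCharacter{(n-3,1^3)}$ is induced from the trivial character of $\symmetric{n-3}\times\symmetric{1}\times\symmetric{1}\times\symmetric{1}$, which is the stabilizer in $\symmetric{n}$ of a fixed ordered triple of distinct elements of $X_n=\{1,\dots,n\}$; hence by \autoref{remark:permutation-representation} it is the character of the permutation representation of $\symmetric{n}$ on the set $X_n^{(3)}$ of such ordered triples. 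A subgroup $K\subgroup\symmetric{n}$ is transitive on $X_n^{(3)}$ precisely when its action on $X_n$ is $3$-transitive, so \autoref{corollary:permutation-character-marks-transitive} shows that $\trivialCharacter{(n-3,1^3)}-\trivialCharacter{(n)}$ marks every non-$3$-transitive subgroup of $\symmetric{n}$.

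Next I would decompose this difference into irreducibles. By \autoref{lemma:sn-induced-trivial-character-split}, $\trivialCharacter{(n-3,1^3)}=\sum_{\mu\vdash n}K_{\mu,(n-3,1^3)}\snCharacter{\mu}$, and the standard fact that $K_{\mu\lambda}\neq 0$ if and only if $\mu$ dominates $\lambda$ tells us the only $\mu$ contributing are those dominating $(n-3,1^3)$, namely $(n)$, $(n-1,1)$, $(n-2,2)$, $(n-2,1^2)$, $(n-3,3)$, $(n-3,2,1)$ and $(n-3,1^3)$; the hypothesis $n\ge 6$ guarantees that the last three are genuine partitions and that all seven are distinct. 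Since $K_{(n),(n-3,1^3)}=1$, subtracting $\trivialCharacter{(n)}=\snCharacter{(n)}$ leaves $\trivialCharacter{(n-3,1^3)}-\trivialCharacter{(n)}$ equal to a linear combination, with strictly positive integer coefficients, of exactly the six irreducible characters $\snCharacter{(n-1,1)},\snCharacter{(n-2,2)},\snCharacter{(n-2,1^2)},\snCharacter{(n-3,3)},\snCharacter{(n-3,2,1)},\snCharacter{(n-3,1^3)}$ comprising $\specialCharacterSet{\symmetric{n}}$.

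To conclude, let $K\subgroup\symmetric{n}$ fail to be $3$-transitive under the usual action. Then $\innerProduct{\restricted{(\trivialCharacter{(n-3,1^3)}-\trivialCharacter{(n)})}{K}}{\trivialCharacter{K}}_K>0$ by the first paragraph, and since that character is a non-negative linear combination of the members of $\specialCharacterSet{\symmetric{n}}$ while each summand $\psi\in\specialCharacterSet{\symmetric{n}}$ has $\innerProduct{\restricted{\psi}{K}}{\trivialCharacter{K}}_K\ge 0$, at least one $\psi$ must satisfy $\chiKContainsTrivial{\psi}{K}$; thus $\specialCharacterSet{\symmetric{n}}$ marks $K$, which is the assertion. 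I do not expect a genuine obstacle here: the only computational content is identifying which partitions dominate $(n-3,1^3)$ and observing that their Kostka numbers against $(n-3,1^3)$ are positive, both immediate from the dominance criterion, while everything else is a direct application of the already-established permutation-character and induced-trivial-character lemmas.
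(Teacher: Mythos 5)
Your proposal is correct and follows essentially the same route as the paper: realize $\trivialCharacter{(n-3,1^3)}$ as the permutation character on ordered triples, apply \autoref{corollary:permutation-character-marks-transitive} to see that $\trivialCharacter{(n-3,1^3)}-\trivialCharacter{(n)}$ marks every non-$3$-transitive subgroup, and decompose via \autoref{lemma:sn-induced-trivial-character-split} into exactly the six listed irreducibles. The only difference is that you spell out the Kostka/dominance computation and the final positivity argument, which the paper leaves implicit.
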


	Therefore, it remains to consider the $3$-transitive subgroups of $\symmetric{n}$. By the classification of of finite multiply transitive groups, we arrive at the following:
	
	\begin{proposition} \label{proposition:3_transitive_groups}
		Let $n \ge 25$. Then, the only possibilities for $3$-transitive subgroups of $\symmetric{n}$, along with the $3$-transitive actions, are:
		\begin{enumerate}
			\item\label{proposition:3_transitive_groups1} The groups $\symmetric{n}$ and $\alternating{n}$ with the usual action on $\{1, \dots, n\}$.
			\item\label{proposition:3_transitive_groups2} Groups $\PSL{2}{q} \subgroup G \subgroup \PGammaL{2}{q}$ with the usual action on $\projective^{1} \field{q}$, for $q = n-1 $ a prime power. Such a group $G$ is $3$-transitive if and only if\declareFootnote{3_transitive_examples}{For example, $\PSL{2}{q}$ is not $3$-transitive when $q$ is odd, while $\PGL{2}{q}$ is always $3$-transitive.} for every $A \in \PGL{2}{q}$ there exists some $\sigma \in \aut{\field{q}}$ such that $(A, \sigma) \in G$.
			\item\label{proposition:3_transitive_groups3} The affine linear groups $\AGL{d}{2}$ with the usual action on $\field{2}^d$, for $n = 2 ^ d$ with $d \ge 5$.
		\end{enumerate}
	\end{proposition}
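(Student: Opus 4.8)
The plan is to deduce this from the classification of finite $2$-transitive permutation groups (see, e.g., \cite[Chapter 7.7]{permutation_groups} or \cite[Theorem 5.3]{permutation_groups_2_transitive}), since a $3$-transitive group is in particular $2$-transitive. I would organize that list into its two standard families: the \emph{affine type} groups $V \rtimes G_0$, with $V \cong \field{p}^d$ an elementary abelian regular normal subgroup and $G_0 \subgroup \GL{d}{p}$, and the \emph{almost simple type} groups, whose socle is one of a short list of nonabelian simple groups, each equipped with a prescribed $2$-transitive action. For each entry I would then test $3$-transitivity using the elementary criterion that a $2$-transitive group $K$ on a set $X$ is $3$-transitive if and only if a one-point stabilizer $\stab{K}{x}$ acts $2$-transitively on $X \setminus \{x\}$.

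In the almost simple case most entries fail this test for a geometric reason: for the projective groups $\PSL{d}{q} \subgroup K \subgroup \PGammaL{d}{q}$ with $d \ge 3$ acting on $\projective^{d-1}\field{q}$, no element can move a collinear triple to a non-collinear one, and analogous obstructions kill the natural $2$-transitive actions of the unitary groups $\mathrm{PSU}_3(q)$, the Suzuki and Ree groups, and the symplectic groups $\mathrm{Sp}_{2m}(2)$. What survives is the socle $\alternating{m}$ on $m$ points — giving $\symmetric{n}$ and $\alternating{n}$ and hence case \ref{proposition:3_transitive_groups1} — the socle $\PSL{2}{q}$ on $q+1$ points — case \ref{proposition:3_transitive_groups2} — and the Mathieu groups $M_{11}$ (degrees $11$ and $12$), $M_{12}$, $M_{22}$, $M_{23}$, $M_{24}$ and $\aut{M_{22}}$, all of degree at most $24$. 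The hypothesis $n \ge 25$ is precisely what removes the Mathieu groups. For case \ref{proposition:3_transitive_groups2} I would then pin down exactly which overgroups $\PSL{2}{q} \subgroup G \subgroup \PGammaL{2}{q}$ are $3$-transitive: because $\PGL{2}{q}$ acts sharply $3$-transitively on $\projective^1\field{q}$ and every field automorphism fixes the three points $0, 1, \infty$, an arbitrary triple lies in the $G$-orbit of $(0, 1, \infty)$ if and only if for every $A \in \PGL{2}{q}$ some element $(A, \sigma)$ with $\sigma \in \aut{\field{q}}$ lies in $G$, which is the stated condition.

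For the affine case, $G = V \rtimes G_0$ is $3$-transitive on $V$ exactly when $G_0$ is $2$-transitive on $V \setminus \{0\}$. If $p$ is odd then $v$ and $2v$ are distinct nonzero vectors while any semilinear map fixing $v$ also fixes $2v$, so $G_0$ cannot be $2$-transitive on $V \setminus \{0\}$; hence $p = 2$, and $G_0 \subgroup \GL{d}{2}$ acts $2$-transitively on the $2^d - 1$ nonzero vectors. Feeding this back into the classification of $2$-transitive groups of degree $2^d - 1$ (equivalently, Hering's theorem on transitive linear groups plus the normalizer structure of a Singer cycle), one finds that once $2^d - 1 \ge 24$, i.e. $d \ge 5$, the only possibility is $G_0 = \GL{d}{2}$, giving $\AGL{d}{2}$ and case \ref{proposition:3_transitive_groups3}; every other affine $3$-transitive group — $\AGL{d}{2}$ for $d \le 4$, and $2^4 \rtimes \alternating{7}$ — has degree at most $16$. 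Assembling the three surviving cases yields the proposition.

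I expect the bookkeeping near the bottom of the range to be the main obstacle: one has to be certain that no $3$-transitive group of degree between $17$ and $24$ is overlooked (this is where $M_{22}$, $M_{23}$, $M_{24}$ sit) and that the affine analysis for small $d$ is exhaustive, so that $n \ge 25$ is exactly the right threshold. By comparison, verifying the $3$-transitivity condition in case \ref{proposition:3_transitive_groups2} is routine once sharpness of $\PGL{2}{q}$ is in hand, but it must be stated carefully, since these intermediate groups genuinely interpolate between the $3$-transitive $\PGL{2}{q}$ and — for odd $q$ — the merely $2$-transitive $\PSL{2}{q}$.
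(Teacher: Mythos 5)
Your proposal is correct, and its skeleton matches the paper's proof: both start from the classification of multiply transitive groups to reduce to the three candidate families, both settle case (2) by combining the sharp $3$-transitivity of $\PGL{2}{q}$ with the fact that field automorphisms fix a triple of points of the prime field, and both handle the affine family by first forcing characteristic $2$ and then identifying the linear part. The differences are in the affine case. Your elimination of odd characteristic -- no element of $\GL{d}{p}$ (or of any semilinear group, since $\sigma(2)=2$) can fix $v$ and move $2v$ -- is uniform in $d$ and more elementary than the paper's argument, which works inside $\AGammaL{d}{q}$, splits into the cases $d \ge 2$ and $d = 1$, and in the $\GammaL{1}{q}$ case needs an orbit count for $\aut{\field{q}}$ together with the bound $q \ge 25$; your version buys simplicity and avoids using the degree bound at that step. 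To pin down $G_0 = \GL{d}{2}$ you invoke Hering's theorem (equivalently the $2$-transitive classification in degree $2^d-1$), whereas the paper reuses its own \autoref{theorem:2_transitive_collineation_groups} on $2$-transitive collineation groups, noting $\field{2}^d \setminus \{0\} = \projective^{d-1}\field{2}$ and $d \ge 5$ to exclude the $\alternating{7}$ exception -- the paper's route is self-contained within results already quoted in the text, while yours leans on an external (but standard) citation. Finally, you re-derive the list of $3$-transitive groups by testing each almost simple $2$-transitive family with the point-stabilizer criterion (which is where the Mathieu groups of degree at most $24$ appear and are killed by $n \ge 25$), whereas the paper simply quotes the $3$-transitive list from its references; your bookkeeping makes explicit why $25$ is the threshold, which the paper leaves implicit in the citation.
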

	\begin{proof}
		The possibilities for $2$-transitive groups can be seen in, e.g. \cite[Chapter 7.7]{permutation_groups}. The only $3$-transitive groups are either $\symmetric{n}$ or $\alternating{n}$ with their usual action (listed in (\ref{proposition:3_transitive_groups1})), groups $\PSL{2}{q} \subgroup G \subgroup \PGammaL{2}{q}$ with the usual action on $\projective^{1} \field{q}$, for $q = n-1 $ a prime power, or subgroups of $\AGammaL{d}{q}$ with the action on $\field{q}^d$. This can be seen in, e.g. \cite[Theorem 5.3]{permutation_groups_2_transitive}.
		
		\paragraph{$\mathbf{\PSL{2}{q} \subgroup G \subgroup \PGammaL{2}{q}}$}
		If $q = n-1 $ is a prime power, a group ${\PSL{2}{q} \subgroup G \subgroup \PGammaL{2}{q}}$ with the usual action on $\projective^{1} \field{q}$ is $3$-transitive if and only if given some distinct $x_1, x_2, x_3 \in \projective^{1} \field{q}$, for every distinct $y_1, y_2, y_3 \in \projective^{1} \field{q}$ there exists some $g \in G$ such that $gx_i = y_i$ for $i = 1, 2, 3$. In particular, denote by $q_0$ the characteristic of $\field{q}$. Then $\field{q_0}$ is a subfield of $\field{q}$ fixed by the automorphisms in $\aut{\field{q}}$.
		
		In particular, if we choose distinct ${x_1, x_2, x_3 \in \projective^{1} \field{q_0}}$, we get ${(A, \sigma)(x_i) = Ax_i}$ for every $(A, \sigma) \in G \subgroup \PGammaL{2}{q}$. Then, $G$ is $3$-transitive if and only if for every distinct $y_1, y_2, y_3 \in \projective^{1} \field{q}$ there exists some $(A,\sigma) \in G$ such that $Ax_i = y_i$ for $i = 1, 2, 3$.
		
		Since $\PGL{2}{q}$ acts $3$-transitively on $\projective^{1} \field{q}$ and is of order $q(q^2-1)$, it is sharply $3$-transitive. Therefore, there is a bijection between distinct ${y_1, y_2, y_3 \in \projective^{1} \field{q}}$ and ${A \in \PGL{2}{q}}$ such that $Ax_i = y_i$ for $i = 1, 2, 3$. We denote ${A = A(y_1, y_2, y_3)}$.
		
		To sum up, the following are equivalent:
		\begin{enumerate}
			\item $G$ is $3$-transitive.
			\item For every distinct $y_1, y_2, y_3 \in \projective^{1} \field{q}$ there exists some $(A,\sigma) \in G$ such that $Ax_i = y_i$ for $i = 1, 2, 3$.
			\item For every distinct $y_1, y_2, y_3 \in \projective^{1} \field{q}$ there exists some $\sigma \in \aut{\field{q}}$ such that $(A(y_1, y_2, y_3), \sigma) \in G$.
			\item For every $A \in \PGL{2}{q}$ there exists some $\sigma \in \aut{\field{q}}$ such that $(A, \sigma) \in G$.
		\end{enumerate}
		
		\paragraph{$\mathbf{G \subgroup \AGammaL{d}{q}}$}
		The only case remaining is where $G \subgroup \AGammaL{d}{q}$. Assume $G$ is $3$-transitive and consider the stabilizer $\stab{G}{0} \subgroup \GammaL{d}{q}$, which acts $2$-transitively on $\field{q}^d \setminus \{0\}$. We use linearity to claim this implies that $q = 2$. Denote by $q_0$ the characteristic of $\field{q}$, and let $\ell$ be the exponent, $q = q_0 ^ \ell$.
		\begin{description}[leftmargin=2cm]
			\item[$\bm{d \ge 2, q \ge 3}$] There is no $g \in \stab{G}{0}$ with $g(e_1, a e_1) = (e_1, e_2)$, for $a \in \field{q}, a \ne 0, 1$.
			\item[$\bm{d = 1}$] Let $q = q_0^\ell$ with $q_0$ prime. Observe that ${25 \le n = q}$. Let ${a \in \field{q}}$, ${a \ne 0, 1}$. Consider the orbit $\orbit{\aut{\field{q}}}{a}$, which is of size at most $\ell$. Therefore, $\cardinality{\field{q} \setminus (\{0, 1\} \cup \orbit{\aut{\field{q}}}{a})} = q - \ell - 2$. It is easy to see that $q \ge \ell + 3$ for $q \ge 25$. As such, we can choose $b$ such that $b \in \field{q}$, $b \ne 0, 1$ and $b$ is not in the same orbit of $\aut{\field{q}}$ as $a$. Then, there is no $g \in \stab{G}{0}$ with ${g(e_1, a e_1) = (e_1, b e_1)}$.
		\end{description}
		
		Therefore, $q = 2$. Since $n \ge 25$, we get that $d \ge 5$. As such, $\stab{G}{0}$ acts $2$-transitively on ${\field{2}^d \setminus \{0\} = \projective^{d-1} \field{2}}$. By \autoref{theorem:2_transitive_collineation_groups}, ${\stab{G}{0} \supergroup \SL{d}{2} = \GL{d}{2} = \GammaL{d}{2}}$. Therefore, $\stab{G}{0} = \GL{d}{2}$ and ${G = \AGL{d}{2}}$.
	\end{proof}

	\begin{proposition} \label{proposition:3_transitive_groups_chi}
		Let $n \ge 25$ and $n \ne 33$. Then, ${\trivialCharacter{(n-4, 4)} - \trivialCharacter{(n)}}$ marks every $3$-transitive proper subgroup of $\symmetric{n}$, except $\alternating{n}$.
	\end{proposition}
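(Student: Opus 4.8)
The plan is to reduce the statement to a purely combinatorial one about transitivity on $4$-element subsets, and then to rule this out for the two families of $3$-transitive groups produced by \autoref{proposition:3_transitive_groups}.

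\textbf{Reduction.} By definition $\trivialCharacter{(n-4,4)}$ is the induction to $\symmetric n$ of the trivial character of $\symmetric{n-4}\times\symmetric 4$, and since $\symmetric{n-4}\times\symmetric 4$ is the stabilizer of a $4$-element subset of $\{1,\dots,n\}$, by \autoref{remark:permutation-representation} this is the character of the permutation representation of $\symmetric n$ on $4$-element subsets. Hence \autoref{corollary:permutation-character-marks-transitive} shows that $\trivialCharacter{(n-4,4)}-\trivialCharacter{(n)}$ marks every subgroup of $\symmetric n$ that does not act transitively on $4$-element subsets. It therefore suffices to prove that any $3$-transitive proper subgroup $K\subgroup\symmetric n$ with $K\ne\alternating n$ fails to act transitively on $4$-element subsets. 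Since $n\ge 25$, \autoref{proposition:3_transitive_groups} applies, and as $K$ is a proper subgroup other than $\alternating n$ we are left with exactly two possibilities: (i) a group $G$ with $\PSL{2}{q}\subgroup G\subgroup\PGammaL{2}{q}$ acting on $\projective^1\field q$, where $q=n-1$ is a prime power; or (ii) an affine group $\AGL{d}{2}$ acting on $\field 2^d$, where $n=2^d$ and $d\ge 5$.

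\textbf{The affine case.} An element of $\AGL{d}{2}$ acts by $x\mapsto Ax+b$, so it carries a $4$-subset $\{v_0,v_1,v_2,v_3\}$ to one whose elements sum to $A(v_0+v_1+v_2+v_3)+4b=A(v_0+v_1+v_2+v_3)$, the term $4b$ vanishing in characteristic $2$. Thus whether the four elements of a $4$-subset sum to $0$ is an invariant of the $\AGL{d}{2}$-action. For $d\ge 3$ both values of this invariant occur — $\{0,e_1,e_2,e_1+e_2\}$ sums to $0$ while $\{0,e_1,e_2,e_3\}$ does not — so $\AGL{d}{2}$ is not transitive on $4$-subsets, and a fortiori neither is any subgroup.

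\textbf{The projective-line case.} It is enough to show that the full group $\PGammaL{2}{q}$ is intransitive on $4$-subsets of $\projective^1\field q$, since $G\subgroup\PGammaL{2}{q}$. Because $\PGL{2}{q}$ is sharply $3$-transitive on $\projective^1\field q$, the $\PGL{2}{q}$-orbits on ordered $4$-tuples of distinct points are classified by the cross-ratio, which takes every value in $\field q\setminus\{0,1\}$; reordering the four points changes the cross-ratio by an element of the anharmonic group (a group of order dividing $6$), and a semilinear element $(A,\sigma)$ changes it to $\sigma(\lambda)$. Since every $\sigma\in\aut{\field q}$ fixes $0$ and $1$, it normalizes the anharmonic group, and so the $\PGammaL{2}{q}$-orbits on $4$-subsets are in bijection with the orbits on $\field q\setminus\{0,1\}$ of a group of order at most $6\,|\aut{\field q}|=6e$, where $q=q_0^e$ with $q_0$ prime. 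Hence $\PGammaL{2}{q}$ has at least $(q-2)/(6e)$ orbits on $4$-subsets and is intransitive whenever $q-2>6e$. This inequality holds in every admissible case: for $e\ge 6$ one has $q-2\ge 2^e-2>6e$, and for $e\le 5$ the smallest prime power $q=q_0^e$ with $q\ge n-1\ge 24$ in each case (namely $29,25,27,81$ for $e=1,2,3,4$, and $3^5$ or $2^5$ for $e=5$) already satisfies it, the single exception being $q=2^5=32$, i.e. $n=33$, which is excluded by hypothesis.

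\textbf{Main difficulty.} The heart of the argument is the cross-ratio orbit count in case (i); the only delicate point is the borderline $q=32$, where $q-2$ and $6e$ are both $30$ and the bound degenerates — and indeed at degree $n=33$ the group $\PGammaL{2}{32}$ genuinely \emph{is} transitive on $4$-subsets, which is precisely why the hypothesis $n\ne 33$ cannot be dropped. One should also check that for $n=33$ no intermediate group slips through: $\PGammaL{2}{32}/\PSL{2}{32}\cong\cyclic 5$ is simple, and the only other candidate $\PGL{2}{32}=\PSL{2}{32}$ has order smaller than $\binom{33}{4}$ and so cannot be transitive on $4$-subsets. For every other $n\ge 25$, the estimate above completes the proof.
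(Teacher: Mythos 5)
Your proof is correct, and its first two steps (the reduction via \autoref{corollary:permutation-character-marks-transitive} to intransitivity on $4$-subsets, and the affine case via the characteristic-$2$ invariant ``sum of the four points'', which is exactly the paper's condition $x_1+x_2=x_3+x_4$) coincide with the paper's argument. Where you genuinely diverge is the projective-line case. The paper argues by pure order-counting: if $K\subgroup\PGammaL{2}{q}$ were transitive on $4$-subsets, then by the orbit--stabilizer relation $\binom{q+1}{4}$ would divide $\cardinality{\PGammaL{2}{q}}=\ell(q^3-q)$, equivalently $(q-2)\divides 24\ell$, and a divisibility check over prime powers $q=n-1\ge 24$ rules out everything except $q=32$. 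You instead bound the number of $\PGammaL{2}{q}$-orbits on $4$-subsets from below by $(q-2)/(6\ell)$ via the cross-ratio (modulo the anharmonic group and the Galois action), reducing to the inequality $q-2>6\ell$, again with $q=32$ as the sole borderline case. Both routes are sound and both isolate the same exception; the paper's is a one-line divisibility consequence of results already set up (equation \eqref{corollary:orbit_stabilizer_orders}), while yours trades that for a standard but externally imported fact about cross-ratios, and in return explains \emph{why} $n=33$ must be excluded -- $\PGammaL{2}{32}$ really is transitive on $4$-subsets -- a point the paper leaves implicit (it only omits $q=32$ because the divisibility happens to hold there). Your closing remark that no intermediate group at $n=33$ needs separate treatment is fine but superfluous, since $n\ne 33$ is a hypothesis.
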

	\begin{proof}
		Let $K$ be one of the groups listed in \autoref{proposition:3_transitive_groups_chi}. Note that $\trivialCharacter{(n-4, 4)}$ is the character of the permutation representation of $\symmetric{n}$ acting on $Y_n$, the collection subsets of $X_n = \{1, \dots, n\}$ of size $4$.
		
		By \autoref{corollary:permutation-character-marks-transitive}, $\trivialCharacter{(n-4, 4)}$ marks precisely the subgroups of $\symmetric{n}$ that act transitively on $Y_n$. Thus, it is sufficient to show that $K$ does not act transitively on $Y_n$.
		
 		\begin{description}[leftmargin=2cm]
		 	\item[$\bm{K \subgroup \PGammaL{2}{q}}$] Let $q = q_0^\ell$, with $q_0$ prime. By \eqref{corollary:orbit_stabilizer_orders}, $\cardinality{Y_n} \divides \cardinality{K}$. Note that $\cardinality{Y_n} = \binom{n}{4}$. Since $\cardinality{K} \divides \cardinality{\PGammaL{2}{q}}$, we get ${\binom{q+1}{4} = \cardinality{Y_{q+1}} \divides \cardinality{\PGammaL{2}{q}} = \ell (q^3-q)}$, which is equivalent to ${(q-2) \divides 24\ell}$. It is easy to see that no ${q = n-1 \ge 24}$ except $q = 32$ (which is explicitly omitted) satisfies this equation.
		 	\item[$\bm{K = \AGL{d}{2}}$] We know that $d \ge 5$. It is easy to see that if $x_1, x_2, x_3, x_4$ satisfy $x_1 + x_2 = x_3 + x_4$ then $gx_1 + gx_2 = gx_3 + gx_4$ for every $g \in K$. As such, there exists no $g \in K$ such that ${g \{0, e_1, e_2, e_1 + e_2\} = \{0, e_1, e_2, e_3\}}$.
		 \end{description}
	\end{proof}

	We use \autoref{proposition:sn_not_3_transitive} and \autoref{proposition:3_transitive_groups_chi}. Then, we decompose the character ${\trivialCharacter{(n-4, 4)} - \trivialCharacter{(n)} + \trivialCharacter{(n-3, 1^3)} - \trivialCharacter{(n)}}$ to irreducible characters and add the sign character (which is equal to $\snCharacter{(1^n)}$) to arrive at the following:
	
	\begin{corollary}
		Let $n \ge 34$ and let 
		\[\specialCharacterSet{\symmetric{n}} = 
		\begin{Bmatrix}
		\snCharacter{(n-1, 1)} & \snCharacter{(n-2, 2)} & \snCharacter{(n-2, 1^2)} & \snCharacter{(n-3, 3)} \\
		\snCharacter{(n-3, 2, 1)} & \snCharacter{(n-3, 1^3)} & \snCharacter{(n-4, 4)} & \snCharacter{(1^n)}
		\end{Bmatrix} \subseteq \irreducibleCharacters{\symmetric{n}}.\]
		Then, \textCMarksAllProperSubgroups{$\specialCharacterSet{\symmetric{n}}$}{$\symmetric{n}$}.
	\end{corollary}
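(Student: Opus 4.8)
The plan is to assemble the corollary from the three preceding results, splitting on whether a given proper subgroup $K \properSubgroup \symmetric{n}$ is $3$-transitive in the usual action on $\{1,\dots,n\}$. If $K$ is not $3$-transitive, \autoref{proposition:sn_not_3_transitive} immediately produces a character among $\snCharacter{(n-1,1)}, \snCharacter{(n-2,2)}, \snCharacter{(n-2,1^2)}, \snCharacter{(n-3,3)}, \snCharacter{(n-3,2,1)}, \snCharacter{(n-3,1^3)}$ that marks $K$, and all of these lie in $\specialCharacterSet{\symmetric{n}}$. So the real work is to handle the $3$-transitive proper subgroups, and since $n \ge 34$ forces $n \ge 25$ and $n \ne 33$, the hypotheses of \autoref{proposition:3_transitive_groups} and \autoref{proposition:3_transitive_groups_chi} are in force.

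By \autoref{proposition:3_transitive_groups} such a $K$ is either $\alternating{n}$, or a group with $\PSL{2}{q} \subgroup K \subgroup \PGammaL{2}{q}$ ($q = n-1$), or some $\AGL{d}{2}$ ($n = 2^d$, $d \ge 5$). For the first case I would note only that $\restricted{\snCharacter{(1^n)}}{\alternating{n}} = \restricted{\sgn}{\alternating{n}} = \trivialCharacter{\alternating{n}}$, so $\snCharacter{(1^n)} \in \specialCharacterSet{\symmetric{n}}$ marks $\alternating{n}$. For the remaining two cases, \autoref{proposition:3_transitive_groups_chi} gives that $\trivialCharacter{(n-4,4)} - \trivialCharacter{(n)}$ marks $K$: such a $K$ fails to be transitive on the set of $4$-element subsets of $\{1,\dots,n\}$, and $\trivialCharacter{(n-4,4)}$ is the permutation character of that action, so \autoref{corollary:permutation-character-marks-transitive} applies.

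It then remains to pass from the (reducible) character $\trivialCharacter{(n-4,4)} - \trivialCharacter{(n)}$ to an irreducible summand in $\specialCharacterSet{\symmetric{n}}$. Here I would invoke \autoref{lemma:sn-induced-trivial-character-split}: the only $\mu \vdash n$ with $K_{\mu,(n-4,4)} \ne 0$ are those dominating $(n-4,4)$, and since $\mu_1 + \mu_2 \ge n$ together with $|\mu| = n$ forces $\mu$ to have at most two parts, these are exactly $(n), (n-1,1), (n-2,2), (n-3,3), (n-4,4)$, each with Kostka number $1$. Hence
\[
\trivialCharacter{(n-4,4)} - \trivialCharacter{(n)} = \snCharacter{(n-1,1)} + \snCharacter{(n-2,2)} + \snCharacter{(n-3,3)} + \snCharacter{(n-4,4)}.
\]
Because restriction to $K$ and the inner product with $\trivialCharacter{K}$ are additive and nonnegative on genuine characters, if the left side marks $K$ then at least one of the four summands on the right — all in $\specialCharacterSet{\symmetric{n}}$ — marks $K$. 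Combining the three cases yields that $\specialCharacterSet{\symmetric{n}}$ marks every proper subgroup of $\symmetric{n}$.

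The only place any computation hides is this Kostka expansion of $\trivialCharacter{(n-4,4)}$ (together with the expansion of $\trivialCharacter{(n-3,1^3)}$ already folded into \autoref{proposition:sn_not_3_transitive}): one must check that the union of the supports of these two decompositions, together with $\snCharacter{(1^n)}$, is precisely the eight-element set displayed in the statement — equivalently, that $\trivialCharacter{(n-4,4)}$ contributes no partition outside $\{(n-1,1),(n-2,2),(n-3,3),(n-4,4)\}$. The conceptual heart of the argument is the classification of $3$-transitive groups underlying \autoref{proposition:3_transitive_groups}, but it is cited rather than reproved, so the present corollary is essentially bookkeeping over the earlier results.
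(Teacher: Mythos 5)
Your proposal is correct and follows the paper's own route: combine \autoref{proposition:sn_not_3_transitive} for non-$3$-transitive subgroups with \autoref{proposition:3_transitive_groups_chi} for the $3$-transitive ones (valid since $n \ge 34$ gives $n \ge 25$, $n \ne 33$), handle $\alternating{n}$ via the sign character $\snCharacter{(1^n)}$, and decompose $\trivialCharacter{(n-4,4)} - \trivialCharacter{(n)}$ into the two-row unipotent-type summands, which all lie in the displayed set. Your explicit Kostka computation $\trivialCharacter{(n-4,4)} - \trivialCharacter{(n)} = \snCharacter{(n-1,1)} + \snCharacter{(n-2,2)} + \snCharacter{(n-3,3)} + \snCharacter{(n-4,4)}$ is exactly the decomposition the paper leaves implicit, so the argument matches in substance.
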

	
	\section{Group Extensions} \label{section:group_extensions}
	\paragraph{ } Throughout this section, given a finite group $H$, we denote by $\specialMinCharacterSetCardinality{H}$ the minimal cardinality of a subset of nontrivial irreducible characters \textCMarksProperSubgroups{that}{$H$}. To simplify the definitions, we say that a set \defineTextNotation{$\specialCharacterSet{H}$ \textit{achieves} $\specialMinCharacterSetCardinality{H}$} if $\specialCharacterSet{H}$ is a subset of nontrivial irreducible characters of $H$ \textCMarksProperSubgroups{that}{$H$} with $\cardinality{\specialCharacterSet{H}} = \specialMinCharacterSetCardinality{H}$.
	
	Let $G$ be a finite group and let $N \normalSubgroup G$. We study the relations between $\specialMinCharacterSetCardinality{G}$, $\specialMinCharacterSetCardinality{\factor{G}{N}}$ and $\specialMinCharacterSetCardinality{N}$.
	
	\subsection{$\specialMinCharacterSetCardinality{G}$ and $\specialMinCharacterSetCardinality{\factor{G}{N}}$}
	\paragraph{ } Denote by $f \colon G \twoheadrightarrow \factor{G}{N}$ the quotient epimorphism. If $\chi \in \characters{\factor{G}{N}}$, the character can be pulled back by $f$ to obtain a character $\pullbackChiByF{\chi}{f}$ of $G$, ${\pullbackChiByF{\chi}{f}(x) = \chi(f(x))}$. It is easy to see that $\chi$ is irreducible if and only if $\pullbackChiByF{\chi}{f}$ is irreducible, and that ${\pullbackSymbol{f} \colon \characters{\factor{G}{N}} \to \characters{G}}$ is injective.
	
	For every $K \subgroup G$ such that $K \supergroup N$, we denote the quotient epimorphism $K \twoheadrightarrow \factor{K}{N}$ by $f_K$, and the pull-back from characters of $\factor{K}{N}$ to characters of $K$ by $\pullbackSymbol{f_K}$.
	
	The following properties of the pull-back of an epimorphism are easy to verify:
	
	\begin{lemma} \label{lemma:basic_properties_of_pullback}
		Let $G$ be a finite group and let $N \normalSubgroup G$. Let $f \colon G \twoheadrightarrow \factor{G}{N}$ be the quotient epimorphism, let $\phi, \psi \in \characters{\factor{G}{N}}$, let $\theta \in \characters{\factor{K}{N}}$, and let $N \subgroup K \subgroup G$. Then:
		\begin{enumerate}
			\item $\trivialCharacter{G} = \pullbackChiByF{\trivialCharacter{\factor{G}{N}}}{f}$
			\item $\pullbackChiByF{(\phi + \psi)}{f} = \pullbackChiByF{\phi}{f} + \pullbackChiByF{\psi}{f}$
			\item $\innerProduct{\phi}{\psi}_{\factor{G}{N}} = \innerProduct{\pullbackChiByF{\phi}{f}}{\pullbackChiByF{\psi}{f}}_{G}$
			\item $\pullbackChiByF{\induced{\theta}{\factor{K}{N}}{\factor{G}{N}}}{f} = \induced{\pullbackChiByF{\theta}{f_K}}{K}{G}$
		\end{enumerate}
	\end{lemma}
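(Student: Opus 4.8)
The statement collects four identities, and the plan is to establish each by a direct computation from the definitions, in the order listed.

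Items 1 and 2 are immediate. Evaluating at an arbitrary $g \in G$, one has $\pullbackChiByF{\trivialCharacter{\factor{G}{N}}}{f}(g) = \trivialCharacter{\factor{G}{N}}(f(g)) = 1 = \trivialCharacter{G}(g)$, and $\pullbackChiByF{(\phi + \psi)}{f}(g) = (\phi+\psi)(f(g)) = \phi(f(g)) + \psi(f(g)) = \pullbackChiByF{\phi}{f}(g) + \pullbackChiByF{\psi}{f}(g)$.

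For item 3, I would use that $f$ is surjective with kernel $N$, so every fibre $f^{-1}(\bar g)$ has exactly $\cardinality{N}$ elements; hence for any function on $\factor{G}{N}$, the sum of its pull-back over $G$ equals $\cardinality{N}$ times the sum over $\factor{G}{N}$. Applying this to $\bar g \mapsto \phi(\bar g)\overline{\psi(\bar g)}$ and using $\cardinality{N}/\cardinality{G} = 1/\cardinality{\factor{G}{N}}$ yields $\innerProduct{\pullbackChiByF{\phi}{f}}{\pullbackChiByF{\psi}{f}}_G = \innerProduct{\phi}{\psi}_{\factor{G}{N}}$.

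Item 4 is the one requiring a little more care, and is the step I expect to take the most work, though no genuine difficulty arises. The key point is that since $N \subgroup K$, the subgroup $K$ is the full preimage of $\factor{K}{N}$ under $f$; consequently, for $t, g \in G$, we have $t^{-1}gt \in K$ if and only if $f(t)^{-1}f(g)f(t) \in \factor{K}{N}$, and for such $t$, $\pullbackChiByF{\theta}{f_K}(t^{-1}gt) = \theta(f(t)^{-1}f(g)f(t))$. I would then expand the right-hand side $\induced{\pullbackChiByF{\theta}{f_K}}{K}{G}(g)$ via the induced-character formula (\autoref{proposition:induced_character_formula}); both the summation condition and the summand depend on $t$ only through $f(t)$, so partitioning the elements $t \in G$ into the $\cardinality{N}$-element fibres of $f$ collapses the sum to $\frac{\cardinality{N}}{\cardinality{K}}\sum_{\bar t}\theta(\bar t^{-1}f(g)\bar t)$, the sum ranging over those $\bar t \in \factor{G}{N}$ with $\bar t^{-1}f(g)\bar t \in \factor{K}{N}$. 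Since $\cardinality{N}/\cardinality{K} = 1/\cardinality{\factor{K}{N}}$, this equals $\induced{\theta}{\factor{K}{N}}{\factor{G}{N}}(f(g)) = \pullbackChiByF{\induced{\theta}{\factor{K}{N}}{\factor{G}{N}}}{f}(g)$, completing the proof. The only bookkeeping to watch is the fibre-size counting together with the identification $K = f^{-1}(\factor{K}{N})$.
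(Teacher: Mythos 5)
Your proof is correct; the paper itself states this lemma without proof, introducing it only as "easy to verify," and your direct computations — the trivial checks for the first two items, the fibre-counting argument for the inner product, and the collapse of the induced-character formula over the $\cardinality{N}$-element fibres of $f$ using $K = f^{-1}\left(\factor{K}{N}\right)$ — are precisely the intended verification. Nothing further is needed.
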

	
	\begin{proposition} \label{proposition:supergroups_of_n_nonvanishing}
		Let $G$ be a finite group, and let $N \normalSubgroup G$. Denote by ${f \colon G \twoheadrightarrow \factor{G}{N}}$ the quotient epimorphism. Let $K \subgroup G$ be some subgroup such that $K \supergroup N$. Let $\chi \in \irreducibleCharacters{G}$ be an irreducible character that satisfies \textChiDoesntVanishOnK{\chi}{K}. Then $\chi \in \Ima(\pullbackSymbol{f})$.
	\end{proposition}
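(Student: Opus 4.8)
The plan is to reduce the statement to showing $N \subgroup \ker(\chi)$. Indeed, by the remarks preceding the proposition, $\pullbackSymbol{f}$ is injective and preserves irreducibility, so an irreducible character $\chi$ of $G$ lies in $\Ima(\pullbackSymbol{f})$ exactly when the representation affording it is trivial on $N$, that is, when $N \subgroup \ker(\chi)$. The bridge from the hypothesis \textChiDoesntVanishOnK{\chi}{K} to this kernel condition is \nameref{frobenius_reciprocity} together with the elementary fact that a normal subgroup contained in $K$ is contained in the kernel of the action of $G$ on $\factor{G}{K}$.

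First I would translate the hypothesis: \textChiDoesntVanishOnK{\chi}{K} means, by definition, that $\characterSum{\chi}{K} \ge 1$, so by \nameref{frobenius_reciprocity}, $\innerProduct{\chi}{\inducedTrivial{K}{G}}_G \ge 1$; equivalently, $\chi$ is an irreducible constituent of $\theta \define \inducedTrivial{K}{G}$, which by \autoref{remark:permutation-representation} is the character of the permutation representation of $G$ acting on the left coset space $\factor{G}{K}$. Next I would observe that $N$ lies in the kernel of that action: since $N \normalSubgroup G$ and $N \subgroup K$, we have $gNg^{-1} = N \subgroup K$, hence $N \subgroup gKg^{-1}$, for every $g \in G$; therefore $N \subgroup \bigcap_{g \in G} gKg^{-1}$, which is precisely the kernel of the action on $\factor{G}{K}$. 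Consequently the representation affording $\theta$ factors through $f$; concretely, by part 4 of \autoref{lemma:basic_properties_of_pullback} applied with $\factor{K}{N} \subgroup \factor{G}{N}$ (and part 1 to identify $\trivialCharacter{K}$ with the pull-back of $\trivialCharacter{\factor{K}{N}}$), we get $\theta = \pullbackChiByF{(\inducedTrivial{\factor{K}{N}}{\factor{G}{N}})}{f} \in \Ima(\pullbackSymbol{f})$.

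Finally I would close the loop using that $\Ima(\pullbackSymbol{f})$ is closed under passing to irreducible constituents: if $(\pi, V)$ affords $\chi$ and $\chi$ is a constituent of $\theta$, then $V$ embeds as a subrepresentation of a $G$-representation on which $N$ acts trivially, so $N$ acts trivially on $V$, i.e.\ $N \subgroup \ker(\chi)$, which finishes the argument by the first paragraph. Alternatively one can stay inside the character ring: write $\theta = \chi + \rho$ with $\rho \in \characters{G}$, apply the additivity and injectivity of $\pullbackSymbol{f}$ from \autoref{lemma:basic_properties_of_pullback}, and use that $\chi$ is irreducible if and only if $\pullbackChiByF{\chi}{f}$ is, to extract the required preimage of $\chi$. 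I do not expect a genuine obstacle: the whole content is the standard normal-core observation plus Frobenius reciprocity, and the only step needing a line of care is this last closure property. It is also worth flagging that the hypothesis $N \subgroup K$ — rather than some weaker containment — is exactly what makes the normal-core step go through.
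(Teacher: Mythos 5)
Your proposal is correct and takes essentially the paper's route: Frobenius reciprocity makes $\chi$ a constituent of $\inducedTrivial{K}{G}$, and parts 1 and 4 of \autoref{lemma:basic_properties_of_pullback} identify $\inducedTrivial{K}{G}$ with $\pullbackChiByF{\inducedTrivial{\factor{K}{N}}{\factor{G}{N}}}{f}$. The only difference is cosmetic, in the closing step: the paper decomposes $\inducedTrivial{\factor{K}{N}}{\factor{G}{N}}$ into irreducibles of $\factor{G}{N}$ and pulls each back, so every irreducible constituent of $\inducedTrivial{K}{G}$ is manifestly a pullback, whereas your main text first deduces $N \subgroup \ker(\chi)$ and then descends $\chi$ -- and your stated ``alternative'' is exactly the paper's argument.
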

	\begin{proof}
		Since $\chiKContainsTrivial{\chi}{K}$, using \nameref{frobenius_reciprocity} we deduce that
		\[0 < \characterSum{\chi}{K} = \innerProduct{\chi}{\inducedTrivial{K}{G}}_G\]
		which implies that $\phiContainsPsi{\inducedTrivial{K}{G}}{\chi}$.
		
		Let $\inducedTrivial{\factor{K}{N}}{\factor{G}{N}} = \sum_{i}{\phi_i}$ for some $\phi_i \in \irreducibleCharacters{\factor{G}{N}}$, possibly with repetitions. We have $\inducedTrivial{K}{G} = \induced{\pullbackChiByF{\trivialCharacter{\factor{K}{N}}}{f_K}}{K}{G} = \pullbackChiByF{\inducedTrivial{\factor{K}{N}}{\factor{G}{N}}}{f} = \sum_{i}{\pullbackChiByF{\phi_i}{f}}$, which implies that all irreducible summands of $\inducedTrivial{K}{G}$ are in $\Ima(\pullbackSymbol{f})$. In particular, $\chi \in \Ima(\pullbackSymbol{f})$.
	\end{proof}
	
	\begin{proposition} \label{proposition:characters_of_g_to_g_mod_n}
		Let $G$ be a finite group, and let $N \normalSubgroup G$. Denote by ${f \colon G \twoheadrightarrow \factor{G}{N}}$ the quotient epimorphism. Let $\specialCharacterSet{G} \subseteq \irreducibleCharacters{G}$ be some set of nontrivial irreducible characters \textCMarksProperSubgroups{that}{$G$}, and let 
		\[\specialCharacterSet{\factor{G}{N}} \define \left\{\chi \in \irreducibleCharacters{\factor{G}{N}} \colon \pullbackChiByF{\chi}{f} \in \specialCharacterSet{G}\right\}.\]
		Then, \textCMarksProperSubgroups{$\specialCharacterSet{\factor{G}{N}}$}{$\factor{G}{N}$}, and consists of nontrivial irreducible characters.
	\end{proposition}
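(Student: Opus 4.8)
The plan is to dualize the argument of \autoref{corollary:projection_of_nonvanishing_characters}, using \autoref{proposition:supergroups_of_n_nonvanishing} to guarantee that whatever character of $G$ marks a given subgroup actually descends to the quotient $\factor{G}{N}$. First I would fix a proper subgroup $J \properSubgroup \factor{G}{N}$ and set $K \define f^{-1}(J)$. Since $f$ is surjective, $f(K) = J \ne \factor{G}{N}$, so $K$ is a proper subgroup of $G$; moreover $K \supergroup f^{-1}(\{\unit\}) = N$. Because \textCMarksProperSubgroups{$\specialCharacterSet{G}$}{$G$}, there is some $\chi \in \specialCharacterSet{G}$ with $\chiKContainsTrivial{\chi}{K}$.

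Next, since $\chi$ is irreducible, $K \supergroup N$, and $\chiKContainsTrivial{\chi}{K}$, \autoref{proposition:supergroups_of_n_nonvanishing} yields $\chi \in \Ima(\pullbackSymbol{f})$; write $\chi = \pullbackChiByF{\chi'}{f}$ for the unique $\chi' \in \irreducibleCharacters{\factor{G}{N}}$, where uniqueness and irreducibility of $\chi'$ come from the injectivity of $\pullbackSymbol{f}$ and the fact that $\pullbackChiByF{\chi'}{f}$ is irreducible if and only if $\chi'$ is. By definition of the set, $\chi' \in \specialCharacterSet{\factor{G}{N}}$, so it remains to show that $\chi'$ marks $J$. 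The restriction $\restricted{f}{K} \colon K \twoheadrightarrow J$ is an epimorphism with kernel $N$, so $\cardinality{K} = \cardinality{J}\cardinality{N}$ and each $h \in J$ has exactly $\cardinality{N}$ preimages in $K$; hence
\[
\characterSum{\chi'}{J} = \frac{1}{\cardinality{J}}\sum_{h \in J}\chi'(h) = \frac{1}{\cardinality{J}\cardinality{N}}\sum_{g \in K}\pullbackChiByF{\chi'}{f}(g) = \frac{1}{\cardinality{K}}\sum_{g \in K}\chi(g) = \characterSum{\chi}{K} > 0,
\]
which says exactly that $\chiContainsTrivial{\restricted{\chi'}{J}}{J}$, i.e. $\chi'$ marks $J$.

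Finally, for the last clause: every member of $\specialCharacterSet{\factor{G}{N}}$ lies in $\irreducibleCharacters{\factor{G}{N}}$ by construction, and if some $\chi \in \specialCharacterSet{\factor{G}{N}}$ were the trivial character, then $\pullbackChiByF{\chi}{f} = \trivialCharacter{G}$ (using the identity $\pullbackChiByF{\trivialCharacter{\factor{G}{N}}}{f} = \trivialCharacter{G}$ from \autoref{lemma:basic_properties_of_pullback}) would be a trivial character inside $\specialCharacterSet{G}$, contradicting the hypothesis that $\specialCharacterSet{G}$ consists of nontrivial characters.

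I expect there to be no genuine obstacle here: all the real content has been packaged into \autoref{proposition:supergroups_of_n_nonvanishing}, and what is left is bookkeeping of preimages under $f$. The two points needing a word of care are that $K = f^{-1}(J)$ is a \emph{proper} subgroup of $G$ (immediate from surjectivity of $f$ and properness of $J$) and that the displayed chain of equalities is legitimate (immediate from $|K| = |J|\,|N|$ and constancy of $\pullbackChiByF{\chi'}{f}$ on $N$-cosets).
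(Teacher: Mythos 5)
Your proposal is correct and follows essentially the same route as the paper: lift the proper subgroup $J$ to $K = f^{-1}(J) \supergroup N$, use \autoref{proposition:supergroups_of_n_nonvanishing} to see that the marking character is a pull-back, and transfer $\characterSum{\chi}{K} > 0$ to $\characterSum{\chi'}{J} > 0$ (the paper cites \autoref{lemma:basic_properties_of_pullback} for this last step, which is exactly the fiber-counting computation you wrote out). No gaps.
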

	\begin{proof}
		Let $J \subgroup \factor{G}{N}$. By the correspondence theorem, there exists some $K \subgroup G$ with $K \supergroup N$ and $f(K) = J$. Thus, there exists some $\chi \in \specialCharacterSet{G}$ such that $\chiKContainsTrivial{\chi}{K}$.
		
		By \autoref{proposition:supergroups_of_n_nonvanishing}, $\chi \in \Ima(\pullbackSymbol{f})$. Therefore, there exists some $\chi' \in \irreducibleCharacters{\factor{G}{N}}$ such that $\pullbackChiByF{\chi'}{f} = \chi$. This implies that $\chi' \in \specialCharacterSet{\factor{G}{N}}$.
		
		By \autoref{lemma:basic_properties_of_pullback} (with $G=K$), $\characterSum{\chi'}{J} = \characterSum{\chi}{K} > 0$.
		
		To prove the characters in $\specialCharacterSet{\factor{G}{N}}$ are nontrivial and irreducible, let ${\chi \in \specialCharacterSet{\factor{G}{N}}}$. Since ${\pullbackChiByF{\chi}{f} \in \specialCharacterSet{G}}$, it is nontrivial and irreducible. By the properties of pull-backs, so is $\chi$.
	\end{proof}

	\paragraph{Remark} This definition of $\specialCharacterSet{\factor{G}{N}}$ is the same definition as the one made in \autoref{section:projective_unipotent_restriction}.

	\begin{corollary} \label{corollary:XG_ge_XG_N}
		\[\specialMinCharacterSetCardinality{\factor{G}{N}} \le \specialMinCharacterSetCardinality{G}.\]
	\end{corollary}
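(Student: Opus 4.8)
The plan is to realize this as an immediate consequence of \autoref{proposition:characters_of_g_to_g_mod_n} together with the injectivity of the pull-back $\pullbackSymbol{f}$. First I would fix a set $\specialCharacterSet{G} \subseteq \irreducibleCharacters{G}$ of nontrivial irreducible characters that marks every proper subgroup of $G$ and that achieves $\specialMinCharacterSetCardinality{G}$, i.e.\ $\cardinality{\specialCharacterSet{G}} = \specialMinCharacterSetCardinality{G}$; such a set exists by the definition of $\specialMinCharacterSetCardinality{G}$. Let $f \colon G \twoheadrightarrow \factor{G}{N}$ be the quotient epimorphism.

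Next I would apply \autoref{proposition:characters_of_g_to_g_mod_n} to this $\specialCharacterSet{G}$, obtaining the set
\[
\specialCharacterSet{\factor{G}{N}} \define \left\{\chi \in \irreducibleCharacters{\factor{G}{N}} \colon \pullbackChiByF{\chi}{f} \in \specialCharacterSet{G}\right\},
\]
which by that proposition consists of nontrivial irreducible characters of $\factor{G}{N}$ and marks every proper subgroup of $\factor{G}{N}$. Hence $\specialMinCharacterSetCardinality{\factor{G}{N}} \le \cardinality{\specialCharacterSet{\factor{G}{N}}}$ directly from the definition of $\specialMinCharacterSetCardinality{\cdot}$.

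It remains to bound $\cardinality{\specialCharacterSet{\factor{G}{N}}}$ by $\cardinality{\specialCharacterSet{G}}$. The map sending $\chi \in \specialCharacterSet{\factor{G}{N}}$ to $\pullbackChiByF{\chi}{f} \in \specialCharacterSet{G}$ is well defined (by the very definition of $\specialCharacterSet{\factor{G}{N}}$) and injective, since $\pullbackSymbol{f} \colon \characters{\factor{G}{N}} \to \characters{G}$ is injective, as noted in the paragraph preceding \autoref{lemma:basic_properties_of_pullback}. Therefore $\cardinality{\specialCharacterSet{\factor{G}{N}}} \le \cardinality{\specialCharacterSet{G}} = \specialMinCharacterSetCardinality{G}$, and chaining the inequalities gives $\specialMinCharacterSetCardinality{\factor{G}{N}} \le \specialMinCharacterSetCardinality{G}$, as claimed. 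There is essentially no obstacle here: all the content is packaged into \autoref{proposition:characters_of_g_to_g_mod_n}, and what remains is the bookkeeping observation that pulling back an optimal marking set for $G$ cannot increase its size when passing to $\factor{G}{N}$.
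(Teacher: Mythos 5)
Your proposal is correct and matches the paper's own proof essentially verbatim: both take a set $\specialCharacterSet{G}$ achieving $\specialMinCharacterSetCardinality{G}$, invoke \autoref{proposition:characters_of_g_to_g_mod_n} to obtain $\specialCharacterSet{\factor{G}{N}}$, and use the injectivity of the pull-back $\pullbackSymbol{f}$ restricted to $\specialCharacterSet{\factor{G}{N}}$ to chain the inequalities $\specialMinCharacterSetCardinality{\factor{G}{N}} \le \cardinality{\specialCharacterSet{\factor{G}{N}}} \le \cardinality{\specialCharacterSet{G}} = \specialMinCharacterSetCardinality{G}$. No gaps.
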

	\begin{proof}
		Let $\specialCharacterSet{G} \subseteq \irreducibleCharacters{G}$ be a set achieving $\specialMinCharacterSetCardinality{G}$, let ${f \colon G \twoheadrightarrow \factor{G}{N}}$ be the quotient epimorphism, and let $\specialCharacterSet{\factor{G}{N}}$ be as defined in \autoref{proposition:characters_of_g_to_g_mod_n}. We get an injection \[{\restricted{\pullbackSymbol{f}}{\specialCharacterSet{\factor{G}{N}}} \colon \specialCharacterSet{\factor{G}{N}} \to \specialCharacterSet{G}},\]
		which implies
		\[{\specialMinCharacterSetCardinality{\factor{G}{N}} \le  \cardinality{\specialCharacterSet{\factor{G}{N}}} \le \cardinality{\specialCharacterSet{G}} = \specialMinCharacterSetCardinality{G}}.\]
	\end{proof}

	\begin{corollary} \label{corollary:X_contains_character_for_each_normal_subgroup}
		Let $G$ be a finite group, and let $\specialCharacterSet{G} \subseteq \irreducibleCharacters{G}$ be some set of nontrivial irreducible characters \textCMarksProperSubgroups{that}{$G$}. Then, for every $N \normalSubgroup G$ there exist $\specialMinCharacterSetCardinality{\factor{G}{N}}$ many characters $\chi \in \specialCharacterSet{G}$ such that ${\restricted{\chi}{N} = \dim(\chi)\trivialCharacter{N}}$.
		
		In particular, if $N \normalSubgroup G$ is proper, there exists at least one such character.
	\end{corollary}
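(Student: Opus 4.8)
The plan is to reduce the statement to \autoref{proposition:characters_of_g_to_g_mod_n} by observing that the condition $\restricted{\chi}{N} = \dim(\chi)\trivialCharacter{N}$ is exactly the statement that $\chi$ is pulled back from $\factor{G}{N}$. First I would record the following elementary fact: if $\chi \in \irreducibleCharacters{G}$ has corresponding representation $(\pi, V)$, then $\restricted{\chi}{N} = \dim(\chi)\trivialCharacter{N}$ holds if and only if $N \subseteq \ker(\pi)$, i.e.\ if and only if $\chi \in \Ima(\pullbackSymbol{f})$, where $f \colon G \twoheadrightarrow \factor{G}{N}$ is the quotient epimorphism. Indeed, $\chi(n) = \dim(\chi) = \dim(\chi)\trivialCharacter{N}(n)$ for all $n \in N$ forces each eigenvalue of the (diagonalizable, finite-order) operator $\pi(n)$ to equal $1$, so $\pi(n)$ is the identity; conversely, a character pulled back along $f$ is plainly trivial on $N = \ker(f)$.

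Next I would invoke \autoref{proposition:characters_of_g_to_g_mod_n}: the set
\[\specialCharacterSet{\factor{G}{N}} = \left\{\chi \in \irreducibleCharacters{\factor{G}{N}} \colon \pullbackChiByF{\chi}{f} \in \specialCharacterSet{G}\right\}\]
consists of nontrivial irreducible characters of $\factor{G}{N}$ and marks every proper subgroup of $\factor{G}{N}$. Since $\pullbackSymbol{f}$ is injective on characters, it restricts to a bijection between $\specialCharacterSet{\factor{G}{N}}$ and the subset of $\specialCharacterSet{G}$ consisting of those characters lying in $\Ima(\pullbackSymbol{f})$; by the first step, the latter is precisely the set of $\chi \in \specialCharacterSet{G}$ satisfying $\restricted{\chi}{N} = \dim(\chi)\trivialCharacter{N}$. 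Hence the number of such characters in $\specialCharacterSet{G}$ equals $\cardinality{\specialCharacterSet{\factor{G}{N}}}$, and since $\specialCharacterSet{\factor{G}{N}}$ is a set of nontrivial irreducible characters of $\factor{G}{N}$ marking all its proper subgroups, its cardinality is at least $\specialMinCharacterSetCardinality{\factor{G}{N}}$ by definition of $\specialMinCharacterSetCardinality{\factor{G}{N}}$. This proves the main claim.

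For the ``in particular'' clause: if $N \normalSubgroup G$ is proper then $\factor{G}{N}$ is a nontrivial group, so its trivial subgroup is a proper subgroup that must be marked by $\specialCharacterSet{\factor{G}{N}}$; in particular $\specialCharacterSet{\factor{G}{N}}$ is nonempty, so $\specialMinCharacterSetCardinality{\factor{G}{N}} \ge 1$ and there exists at least one $\chi \in \specialCharacterSet{G}$ with $\restricted{\chi}{N} = \dim(\chi)\trivialCharacter{N}$. I do not anticipate a real obstacle here; the only point requiring a little care is the bookkeeping that $\pullbackSymbol{f}$ carries $\specialCharacterSet{\factor{G}{N}}$ bijectively onto exactly those members of $\specialCharacterSet{G}$ trivial on $N$, which follows from injectivity of $\pullbackSymbol{f}$ combined with the kernel characterization established in the first step.
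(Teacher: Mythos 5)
Your proposal is correct and takes essentially the same route as the paper: both arguments define $\specialCharacterSet{\factor{G}{N}}$ as in \autoref{proposition:characters_of_g_to_g_mod_n}, use the injectivity of $\pullbackSymbol{f}$ to map it into $\specialCharacterSet{G}$, observe that pulled-back characters restrict to $\dim(\chi)\trivialCharacter{N}$ on $N$, and handle the ``in particular'' clause by noting the trivial subgroup of the nontrivial group $\factor{G}{N}$ must be marked. Your additional observation that triviality on $N$ exactly characterizes membership in $\Ima(\pullbackSymbol{f})$ is a mild refinement of bookkeeping, not a different argument.
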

	\begin{proof}
		Let $N \normalSubgroup G$, denote by $f \colon G \to \factor{G}{N}$ the quotient epimorphism, and let $\unit_{\factor{G}{N}}$ be the identity element of $\factor{G}{N}$. We define $\specialCharacterSet{\factor{G}{N}}$ as in \autoref{proposition:characters_of_g_to_g_mod_n}. Then, ${\specialMinCharacterSetCardinality{\factor{G}{N}} \le  \cardinality{\specialCharacterSet{\factor{G}{N}}}}$ and every character in $\specialCharacterSet{\factor{G}{N}}$ is mapped to a different character in $\specialCharacterSet{G}$ by the injection $\pullbackSymbol{f}$. For every $\chi \in \specialCharacterSet{\factor{G}{N}}$, the pull-back $\pullbackChiByF{\chi}{f} \in \specialCharacterSet{G}$ satisfies
		\[\pullbackChiByF{\chi}{f}(g) = \chi(f(g)) = \chi\left(\unit_{\factor{G}{N}}\right) = \dim(\chi),\]
		for every $g \in N$. Thus, the claim follows.
		
		If $N \normalSubgroup G$ is proper, $\factor{G}{N}$ is not the trivial group and thus has the group $\left\{\unit_{\factor{G}{N}}\right\}$ as a proper subgroup. As such, $1 \le \specialMinCharacterSetCardinality{\factor{G}{N}}$, as every $\specialCharacterSet{\factor{G}{N}}$ achieving it must contain at least one character to mark this proper subgroup.
	\end{proof}

	\subsection{Abelian Groups}
	\paragraph{ } In this section, we consider $G$ which are finite abelian groups. These provide useful examples for \autoref{section:XG_and_XN}.
	
	\begin{proposition} \label{corollary:abelian_group_maximal_marking}
		Let $G$ be a finite abelian group, and let $\specialCharacterSet{G} \subseteq \irreducibleCharacters{G}$ be a set achieving $\specialMinCharacterSetCardinality{G}$ (since $G$ is abelian, its elements are homomorphisms). Then, there is a bijection
		\[\left\{ H \subgroup G \colon H \text{ is maximal}\right\} \leftrightarrow \specialCharacterSet{G},
		H \mapsto \chi_H
		\]
		where $\restricted{\chi_H}{H} = \trivialCharacter{H}$ and $H = \ker(\chi_H)$.
	\end{proposition}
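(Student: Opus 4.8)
The plan is to reduce the statement to elementary facts about kernels of one–dimensional characters, and then to pin down $\specialMinCharacterSetCardinality{G}$ exactly by a counting argument. Since $G$ is abelian, every $\chi \in \irreducibleCharacters{G}$ is one–dimensional, so for any $K \subgroup G$ the restriction $\restricted{\chi}{K}$ is one–dimensional; hence $\chi$ marks $K$ (that is, $\chiKContainsTrivial{\chi}{K}$) if and only if $\restricted{\chi}{K} = \trivialCharacter{K}$, i.e.\ if and only if $K \subgroup \ker(\chi)$. In particular, if $\chi$ is nontrivial then $\ker(\chi)$ is a proper subgroup of $G$, and the relation ``$\chi$ marks $K$'' is closed under passing to smaller subgroups. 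Since every proper subgroup of a finite group sits inside a maximal one, I would first conclude that a set $\specialCharactersSymbol \subseteq \irreducibleCharacters{G}$ of nontrivial characters marks every proper subgroup of $G$ if and only if it marks every maximal subgroup of $G$.

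The next step is to identify, for a fixed maximal subgroup $M \subgroup G$, which characters mark it. If $\chi$ marks $M$, then $M \subgroup \ker(\chi) \properSubgroup G$, so maximality of $M$ forces $\ker(\chi) = M$. Conversely, $\factor{G}{M}$ is a simple abelian group, hence $\factor{G}{M} \cong \cyclic{p}$ for a prime $p$, and pulling back the (faithful) nontrivial characters of $\cyclic{p}$ along $G \twoheadrightarrow \factor{G}{M}$ produces exactly the $p-1$ nontrivial $\chi \in \irreducibleCharacters{G}$ with $\ker(\chi) = M$. From this I get two things. On the one hand, any set $\specialCharacterSet{G}$ marking every proper subgroup must contain, for each maximal $M$, at least one character with kernel exactly $M$; fixing such a choice $M \mapsto \chi_M$ defines a map from the maximal subgroups of $G$ into $\specialCharacterSet{G}$ which is injective because a character determines its kernel, so $\#\{\text{maximal subgroups of } G\} \le \cardinality{\specialCharacterSet{G}}$. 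On the other hand, taking one nontrivial character with kernel $M$ for each maximal $M$ gives a set of $\#\{\text{maximal subgroups of } G\}$ nontrivial irreducible characters that marks every maximal subgroup, hence every proper subgroup; so $\specialMinCharacterSetCardinality{G} \le \#\{\text{maximal subgroups of } G\}$.

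Putting these together with $\specialCharacterSet{G}$ chosen to achieve $\specialMinCharacterSetCardinality{G}$ yields the equality $\specialMinCharacterSetCardinality{G} = \#\{\text{maximal subgroups of } G\}$, and now the injection $M \mapsto \chi_M$ is a map between two finite sets of the same cardinality, hence a bijection; its inverse is $\chi \mapsto \ker(\chi)$. Since $\ker(\chi_M) = M$ by construction, we have $\restricted{\chi_M}{M} = \trivialCharacter{M}$ automatically, which is precisely the claimed correspondence. I do not anticipate a real obstruction; the only points requiring a little care are that both inequalities must be in hand before the counting argument promotes the injection to a bijection, and that it is the faithfulness of the pulled–back character — equivalently, the cyclicity of $\factor{G}{M}$, where simplicity of $\factor{G}{M}$ is used — that makes the kernel equal to $M$ rather than strictly larger.
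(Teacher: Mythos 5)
Your proof is correct, and it reaches the bijection by a somewhat different route than the paper. The paper invokes its general normal-subgroup machinery (\autoref{corollary:X_contains_character_for_each_normal_subgroup}, itself built on pulling characters back along $G \twoheadrightarrow \factor{G}{H}$ and Frobenius reciprocity) to produce, for each maximal $H \normalSubgroup G$, a character $\chi_H \in \specialCharacterSet{G}$ with $\restricted{\chi_H}{H} = \dim(\chi_H)\trivialCharacter{H}$; linearity and maximality then give $\ker(\chi_H) = H$, injectivity comes from taking kernels, and surjectivity is obtained directly from minimality: the subset $\{\chi_H\}$ already marks every maximal (hence every proper) subgroup, so it must exhaust $\specialCharacterSet{G}$. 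You instead work entirely from the elementary equivalence, valid for linear characters, that $\chi$ marks $K$ if and only if $K \subgroup \ker(\chi)$; this yields the forced characters $\chi_M$ without the pull-back corollary, and you then prove the intermediate equality $\specialMinCharacterSetCardinality{G} = \#\{\text{maximal subgroups of } G\}$ by exhibiting an external marking set (pullbacks of the faithful characters of $\factor{G}{M} \cong \cyclic{p}$, one per maximal $M$) and conclude surjectivity by a counting argument between finite sets of equal size rather than by the paper's direct appeal to minimality of the given set. Both arguments rest on the same two pillars --- a nontrivial character marking a maximal subgroup of an abelian group must have exactly that subgroup as its kernel, and minimality of $\cardinality{\specialCharacterSet{G}}$ --- but yours is more self-contained (no reliance on the earlier pull-back results) and makes the formula $\specialMinCharacterSetCardinality{G} = \#\{\text{maximal subgroups of } G\}$ explicit, which is precisely what the paper later extracts from this proposition when computing $\specialMinCharacterSetCardinality{\cyclic{n}}$ and $\specialMinCharacterSetCardinality{\field{p}^\ell}$.
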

	\begin{proof}
		Since $G$ is abelian, every subgroup of $G$ is normal. Thus, by \autoref{corollary:X_contains_character_for_each_normal_subgroup}, we can choose a character $\chi_H \in \specialCharacterSet{G}$ for every maximal subgroup $H$ that satisfies $\restricted{\chi_H}{H} = \dim(\chi_H)\trivialCharacter{H}$. Since every character of an abelian group is linear, $\dim(\chi_H) = 1$ and $H \le \ker(\chi_H)$. By the maximality of $H$ and nontriviality of $\chi_H$,  $H = \ker(\chi_H)$. It remains to show that this choice is a bijection.
		
		\begin{description}[leftmargin=1.1cm]
			\item[One-to-one] Let $H, H' \subgroup G$ be maximal subgroups with $\chi_H = \chi_{H'}$. Taking the kernels of these characters, we get $H = H'$.
			
			\item[Onto] The set $\{\chi_H \colon H \text{ is maximal}\}$ marks every maximal subgroup of $G$, and as such, it marks every proper subgroup of $G$ (by the same argument as the one made in the beginning of \autoref{section:maximal_subgroups_of_glnq}). By the minimality of ${\specialMinCharacterSetCardinality{G} = \cardinality{\specialCharacterSet{G}}}$, the mapping is onto.
		\end{description}
	\end{proof}

	\paragraph{Cyclic groups} Let $\cyclic{n} = \factor{\integers}{n\integers}$ be the cyclic group of order $n$, for $n \in \positiveIntegers$.
	
	\begin{proposition} \label{proposition:XCn_cardinality}
		For every $n \in \positiveIntegers$, $\specialMinCharacterSetCardinality{\cyclic{n}}$ is precisely the number of different prime divisors of $n$.
	\end{proposition}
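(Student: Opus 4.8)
The plan is to read off $\specialMinCharacterSetCardinality{\cyclic{n}}$ from the subgroup lattice of $\cyclic{n}$ together with \autoref{corollary:abelian_group_maximal_marking}. First I would recall the classical fact that a cyclic group $\cyclic{n}$ has exactly one subgroup of order $d$ for each divisor $d \mid n$; hence a proper subgroup $H \subgroup \cyclic{n}$ of order $d$ is maximal if and only if its index $n/d$ is prime. Consequently the maximal subgroups of $\cyclic{n}$ are precisely the unique subgroups of index $p$, one for each prime $p$ dividing $n$, so their number equals the number of distinct prime divisors of $n$.

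Next, since $\cyclic{n}$ is abelian, \autoref{corollary:abelian_group_maximal_marking} supplies a bijection between the set of maximal subgroups of $\cyclic{n}$ and any set $\specialCharacterSet{\cyclic{n}} \subseteq \irreducibleCharacters{\cyclic{n}}$ achieving $\specialMinCharacterSetCardinality{\cyclic{n}}$. Taking cardinalities gives $\specialMinCharacterSetCardinality{\cyclic{n}} = \cardinality{\specialCharacterSet{\cyclic{n}}}$, which by the previous paragraph equals the number of distinct prime divisors of $n$. (The degenerate case $n = 1$ is consistent: $\cyclic{1}$ has no proper subgroups, the empty character set vacuously marks them all, and $1$ has no prime divisors.)

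If one prefers an argument not routed through \autoref{corollary:abelian_group_maximal_marking}, the same count follows directly. For the upper bound, write $n = p_1^{a_1}\cdots p_k^{a_k}$ and for each $i$ pick the linear character $\chi_{p_i}$ of $\cyclic{n}$ whose kernel is the unique maximal subgroup of index $p_i$; then $\{\chi_{p_1},\dots,\chi_{p_k}\}$ marks every maximal subgroup, hence (by the reduction noted at the start of \autoref{section:maximal_subgroups_of_glnq}) every proper subgroup, so $\specialMinCharacterSetCardinality{\cyclic{n}} \le k$. For the lower bound, apply \autoref{corollary:X_contains_character_for_each_normal_subgroup} to the $k$ pairwise distinct maximal normal subgroups: any marking set must contain a character trivial on each of them, and since a nontrivial linear character of $\cyclic{n}$ is determined by its kernel these $k$ characters are distinct, giving $\specialMinCharacterSetCardinality{\cyclic{n}} \ge k$.

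There is essentially no genuine obstacle: the substance is already contained in \autoref{corollary:abelian_group_maximal_marking}, and the only point requiring care is the elementary dictionary ``maximal subgroup of $\cyclic{n}$ $\longleftrightarrow$ prime divisor of $n$'', which rests on the uniqueness of subgroups of a prescribed order in a cyclic group.
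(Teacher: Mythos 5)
Your proof is correct and follows essentially the same route as the paper: identify the maximal subgroups of $\cyclic{n}$ with the prime divisors of $n$ and then invoke the bijection of \autoref{corollary:abelian_group_maximal_marking} to count a minimal marking set. The alternative direct argument you sketch (explicit linear characters for the upper bound, \autoref{corollary:X_contains_character_for_each_normal_subgroup} for the lower bound) is also sound, but it is not needed beyond what the paper already does.
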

	\begin{proof}
		Let $g \in \cyclic{n}$ be a generator. It is known that the subgroups of $\cyclic{n}$ are precisely $\{ \groupSpan{g^k} \colon k \divides n\}$, and all of these groups are different. It is easy to see that $\groupSpan{g^k}$ is maximal if and only if $k$ is prime. Thus, by \autoref{corollary:abelian_group_maximal_marking}, ${\specialMinCharacterSetCardinality{\cyclic{n}} = \cardinality{\{p \in \positiveIntegers \colon p \text{ is prime}, p \divides n\}}}$
	\end{proof}

	\begin{corollary}
		There exist infinitely many finite groups $G$ with $N \normalSubgroup G$ such that ${\specialMinCharacterSetCardinality{G} = a \specialMinCharacterSetCardinality{\factor{G}{N}}}$, for every $a \in \positiveIntegers$.
	\end{corollary}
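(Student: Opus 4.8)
The plan is to deduce this directly from \autoref{proposition:XCn_cardinality}, staying entirely within the world of cyclic groups. Fix $a \in \positiveIntegers$ and let $p_1 = 2 < p_2 = 3 < \dots < p_a$ be the first $a$ primes. For each integer $k \ge 1$, set $n_k \define p_1^{\,k}\, p_2 p_3 \cdots p_a$ and $G_k \define \cyclic{n_k}$. The set of distinct prime divisors of $n_k$ is $\{p_1, \dots, p_a\}$ independently of $k$, so \autoref{proposition:XCn_cardinality} gives $\specialMinCharacterSetCardinality{G_k} = a$. Inside $G_k$ take the unique subgroup $N_k$ of order $n_k / p_1$; since $G_k$ is cyclic (hence abelian) $N_k \normalSubgroup G_k$, and the quotient $\factor{G_k}{N_k}$ is cyclic of order $p_1$, so by \autoref{proposition:XCn_cardinality} it has $\specialMinCharacterSetCardinality{\factor{G_k}{N_k}} = 1$. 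Hence $\specialMinCharacterSetCardinality{G_k} = a = a\,\specialMinCharacterSetCardinality{\factor{G_k}{N_k}}$.

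It then remains only to observe that this yields infinitely many distinct pairs: the orders $n_k$ are strictly increasing in $k$, so the groups $G_k$ are pairwise non-isomorphic, and $\{(G_k, N_k)\}_{k \ge 1}$ is an infinite family of the desired type. If one prefers $N_k$ to be a nontrivial proper normal subgroup, restrict to $k \ge 2$, where $N_k$ has order $p_1^{\,k-1} p_2 \cdots p_a \notin \{1, n_k\}$. Since $a$ was arbitrary, this establishes the corollary.

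I do not expect a genuine obstacle here: the statement is an essentially immediate corollary of the cyclic-group computation \autoref{proposition:XCn_cardinality} (which itself rests on \autoref{corollary:abelian_group_maximal_marking}). The only routine points to verify are that a quotient of a cyclic group is again cyclic with exactly the prescribed prime divisors, and that the chosen family is genuinely infinite — handled above by letting the exponent of $p_1$ grow. An equally good variant is to fix the exponents and instead let the largest prime range over the infinitely many primes exceeding $p_{a-1}$, with $N$ taken to be the subgroup whose quotient is cyclic of prime order; either construction works.
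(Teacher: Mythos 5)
Your proof is correct and follows essentially the same route as the paper: both arguments work entirely inside cyclic groups, invoke \autoref{proposition:XCn_cardinality} to get $\specialMinCharacterSetCardinality{G}=a$ for a cyclic group whose order has exactly $a$ distinct prime divisors, and choose $N$ so that the quotient is a cyclic group of prime-power order (hence $\specialMinCharacterSetCardinality{\factor{G}{N}}=1$), with the infinite family obtained by letting one exponent grow. The only cosmetic difference is that the paper takes $G=\cyclic{2^m n}$ with quotient $\cyclic{2^m}$, while you take the quotient to be $\cyclic{2}$.
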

	\begin{proof}
		Let $G = \cyclic{2^m n}$, where $n$ is odd, and has $a-1$ distinct prime factors. For $N = \cyclic{n}$, the quotient satisfies $\factor{G}{N} = \cyclic{2^m}$. Then, ${\specialMinCharacterSetCardinality{\cyclic{2^m n}} = a = a \specialMinCharacterSetCardinality{\cyclic{2^m}}}$.
	\end{proof}

	\begin{corollary} \label{corollary:XGLn_lower_bound}
		Let $n \in \positiveIntegers$. Then, $\specialMinCharacterSetCardinality{\GL{n}{q}}$ is at least the number of prime divisors of $q-1$.
	\end{corollary}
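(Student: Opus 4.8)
The plan is to exhibit a cyclic quotient of $\GL{n}{q}$ of order $q-1$ and then quote the two results immediately preceding the statement. First I would take $N \define \SL{n}{q} \normalSubgroup \GL{n}{q}$, which is normal because it is the kernel of the determinant homomorphism $\det \colon \GL{n}{q} \to \fieldInvertible{q}$. This homomorphism is surjective, so it induces an isomorphism $\factor{\GL{n}{q}}{\SL{n}{q}} \cong \fieldInvertible{q}$. Since $\fieldInvertible{q}$ is cyclic of order $q-1$, we have $\factor{\GL{n}{q}}{\SL{n}{q}} \cong \cyclic{q-1}$.

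Next I would apply \autoref{corollary:XG_ge_XG_N} with $G = \GL{n}{q}$ and $N = \SL{n}{q}$ to obtain
\[
\specialMinCharacterSetCardinality{\cyclic{q-1}} = \specialMinCharacterSetCardinality{\factor{\GL{n}{q}}{\SL{n}{q}}} \le \specialMinCharacterSetCardinality{\GL{n}{q}}.
\]
Finally, \autoref{proposition:XCn_cardinality} identifies $\specialMinCharacterSetCardinality{\cyclic{q-1}}$ with the number of distinct prime divisors of $q-1$, which gives the claimed lower bound.

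I do not expect any real obstacle here: the statement is a direct corollary of the general extension inequality and the computation of $\specialMinCharacterSetCardinality{}$ for cyclic groups, so the only thing to get right is the elementary observation that the abelianization of $\GL{n}{q}$ (in the relevant sense) is exactly $\fieldInvertible{q}$ via the determinant. The one mild point worth stating explicitly is that this argument is uniform in $n \ge 1$, since $\det$ is surjective for every $n$; in particular the bound does not improve with $n$, which is consistent with the upper bound $\cardinality{\specialCharacterSetGLnq{n}{q}} \le 5$ only when $q-1$ has at most five prime factors, and explains why one cannot hope for a single extra character to mark all proper subgroups of $\GL{n}{q}$.
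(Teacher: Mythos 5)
Your proposal is correct and is exactly the paper's argument: apply \autoref{corollary:XG_ge_XG_N} with $N = \SL{n}{q}$, identify $\factor{\GL{n}{q}}{\SL{n}{q}} \cong \cyclic{q-1}$ via the determinant, and invoke \autoref{proposition:XCn_cardinality}. Your additional remarks on surjectivity of $\det$ and uniformity in $n$ are fine but not needed beyond what the paper states.
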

	\begin{proof}
		By \autoref{corollary:XG_ge_XG_N} with $N = \SL{n}{q}$, $\specialMinCharacterSetCardinality{\GL{n}{q}} \ge \specialMinCharacterSetCardinality{\cyclic{q-1}}$. The claim follows by \autoref{proposition:XCn_cardinality}.
	\end{proof}

	\medskip

	\paragraph{Vector spaces} We continue with the additive groups of the vector spaces $\field{p}^\ell$, where $p$ is prime and $\ell \in \positiveIntegers$.

	\begin{proposition} \label{proposition:XFpd_cardinality}
		Let $p$ be prime and let $\ell \in \positiveIntegers$. Then, $\specialMinCharacterSetCardinality{\field{p}^\ell} = \frac{p^\ell - 1}{p - 1}$.
	\end{proposition}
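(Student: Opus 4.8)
The plan is to invoke \autoref{corollary:abelian_group_maximal_marking}, which reduces the computation of $\specialMinCharacterSetCardinality{\field{p}^\ell}$ to a count of maximal subgroups. Since the additive group $\field{p}^\ell$ is a finite abelian group, that proposition provides a bijection between its maximal subgroups and any set $\specialCharacterSet{\field{p}^\ell}$ of nontrivial irreducible characters achieving $\specialMinCharacterSetCardinality{\field{p}^\ell}$ (each $H$ corresponding to $\chi_H$ with $\ker(\chi_H) = H$). Hence it suffices to show that $\field{p}^\ell$ has exactly $\frac{p^\ell - 1}{p - 1}$ maximal subgroups.

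First I would identify these maximal subgroups as the hyperplanes of the $\field{p}$-vector space $\field{p}^\ell$. On the one hand, since $\cardinality{\field{p}^\ell} = p^\ell$, every maximal subgroup has prime index, hence index $p$; and since $\field{p}^\ell$ has exponent $p$, every subgroup $H \subgroup \field{p}^\ell$ is closed under multiplication by scalars from $\field{p}$ (scalar multiplication is just repeated addition), so $H$ is an $\field{p}$-subspace. A subspace of index $p$ is exactly a codimension-one subspace. On the other hand, every hyperplane is a subgroup of index $p$, hence maximal. So the maximal subgroups of $\field{p}^\ell$ are precisely its hyperplanes.

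It then remains to count hyperplanes. Each hyperplane is the kernel of a nonzero linear functional $\field{p}^\ell \to \field{p}$, and two nonzero functionals have the same kernel if and only if they differ by an element of $\fieldInvertible{p}$. Therefore the number of hyperplanes is $\frac{\cardinality{\field{p}^\ell} - 1}{\cardinality{\fieldInvertible{p}}} = \frac{p^\ell - 1}{p - 1}$, which is also the number $\cardinality{\projective^{\ell - 1}\field{p}}$ of points of the projective space (dually, hyperplanes correspond to points of the dual space). Combining this with the first paragraph yields $\specialMinCharacterSetCardinality{\field{p}^\ell} = \frac{p^\ell - 1}{p - 1}$.

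There is no substantial obstacle here: the content is entirely in \autoref{corollary:abelian_group_maximal_marking}, and what remains are the two elementary observations that every subgroup of $\field{p}^\ell$ is an $\field{p}$-subspace (exponent $p$) and that maximal subgroups have index $p$ (the only prime dividing the order), together with the standard count of hyperplanes. The only point worth stating carefully is the passage from "maximal subgroup" to "hyperplane" so that the count applies.
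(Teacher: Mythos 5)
Your argument is correct and follows the paper's own route: both rely on \autoref{corollary:abelian_group_maximal_marking} to reduce the claim to counting maximal subgroups, identify these with hyperplanes (kernels of nonzero linear functionals, i.e.\ points of $\projective^{\ell-1}\field{p}$), and obtain the count $\frac{p^\ell-1}{p-1}$. Your write-up merely spells out a bit more explicitly why every maximal subgroup is an $\field{p}$-hyperplane, which the paper asserts without detail.
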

	\begin{proof}
		Denote by $f_w(x) = \sum_{i = 1}^{\ell}x_i w_i$. A maximal subgroup of $\field{p}^\ell$ is a $(\ell - 1)$-dimensional subspace of $\field{p}^\ell$, namely, some ${W_{\projectiveVector{w}} = \{x \in \field{p}^\ell \colon f_w(x) = 0\} = \ker f_w}$ where $0 \ne w \in \field{p}^\ell$ and $\projectiveVector{w} \in \projective^{\ell-1}\field{p}$ is the point in the projective space represented by $w$. This shows a bijection between the maximal subgroups of $\field{p}^\ell$ and $\projective^{\ell-1}\field{p}$. Thus, \[\specialMinCharacterSetCardinality{\field{p}^\ell} = \cardinality{\projective^{\ell-1}\field{p}} = \frac{p^\ell - 1}{p - 1}.\]
	\end{proof}

	\subsection{$\specialMinCharacterSetCardinality{G}$ and $\specialMinCharacterSetCardinality{N}$} \label{section:XG_and_XN}
	\paragraph{ } In this section, we study the relation between $\specialMinCharacterSetCardinality{G}$ and $\specialMinCharacterSetCardinality{N}$. We begin with the following proposition.
	
	\begin{proposition} \label{proposition:XG_from_XN_XG_N}
		For a finite group $H$, let $\specialCharacterSet{H}$ be a set achieving $\specialMinCharacterSetCardinality{H}$. Let $G$ be a finite group, let $N \normalSubgroup G$, and let $f \colon G \twoheadrightarrow \factor{G}{N}$ be the quotient epimorphism. Define
		\[\overline{\specialCharacterSet{G}} \define \{\induced{\chi}{N}{G} \colon \chi \in \specialCharacterSet{N}\} \cup \{\pullbackChiByF{\phi}{f} \colon \phi \in \specialCharacterSet{\factor{G}{N}}\}.\]
		
		Then, \textCMarksProperSubgroups{$\overline{\specialCharacterSet{G}}$}{$G$}.
	\end{proposition}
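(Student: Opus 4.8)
The plan is to verify directly that $\overline{\specialCharacterSet{G}}$ marks an arbitrary proper subgroup $K \properSubgroup G$, splitting according to whether $N \subgroup K$ or $N \not\subgroup K$. In each case I will exhibit one character of $\overline{\specialCharacterSet{G}}$ whose restriction to $K$ contains $\trivialCharacter{K}$, which is exactly what ``$\overline{\specialCharacterSet{G}}$ marks $K$'' means.

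\textbf{Case $N \subgroup K$.} Here $f(K) = \factor{K}{N}$ is a subgroup of $\factor{G}{N}$, and it is proper by the correspondence theorem (if $\factor{K}{N} = \factor{G}{N}$ then $K = G$). Since $\specialCharacterSet{\factor{G}{N}}$ marks every proper subgroup of $\factor{G}{N}$, choose $\phi \in \specialCharacterSet{\factor{G}{N}}$ with $\chiContainsTrivial{\restricted{\phi}{f(K)}}{f(K)}$. As in the proof of \autoref{corollary:projection_of_nonvanishing_characters}, the map $\restricted{f}{K} \colon K \twoheadrightarrow f(K)$ has all fibres of the same size $\cardinality{K \cap N}$, so
\[
\characterSum{\pullbackChiByF{\phi}{f}}{K} = \frac{1}{\cardinality{K}}\sum_{k \in K}\phi(f(k)) = \frac{1}{\cardinality{f(K)}}\sum_{j \in f(K)}\phi(j) = \characterSum{\phi}{f(K)} > 0,
\]
so $\pullbackChiByF{\phi}{f} \in \overline{\specialCharacterSet{G}}$ marks $K$.

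\textbf{Case $N \not\subgroup K$.} Then $K \cap N$ is a proper subgroup of $N$, so there is $\chi \in \specialCharacterSet{N}$ with $\characterSum{\chi}{K \cap N} > 0$. I claim $\induced{\chi}{N}{G}$ marks $K$. Since $N \normalSubgroup G$, \autoref{proposition:induced_character_formula} gives $\induced{\chi}{N}{G}(g) = 0$ for $g \notin N$ and $\induced{\chi}{N}{G}(g) = \frac{1}{\cardinality{N}}\sum_{t \in G}\chi(t^{-1}gt)$ for $g \in N$. Substituting this into $\characterSum{\induced{\chi}{N}{G}}{K}$ and re-indexing the inner sum (the computation runs parallel to \autoref{lemma:stabilizer-character}) gives
\[
\characterSum{\induced{\chi}{N}{G}}{K} = \frac{1}{\cardinality{K}\,\cardinality{N}}\sum_{t \in G}\cardinality{t^{-1}Kt \cap N}\cdot\characterSum{\chi}{t^{-1}Kt \cap N}.
\]
Every summand is a nonnegative integer, being the multiplicity of $\trivialCharacter{t^{-1}Kt \cap N}$ in $\restricted{\chi}{t^{-1}Kt \cap N}$, and the $t = \unit$ summand equals $\cardinality{K \cap N}\characterSum{\chi}{K \cap N} > 0$. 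Hence $\characterSum{\induced{\chi}{N}{G}}{K} > 0$, i.e. $\chiContainsTrivial{\restricted{\induced{\chi}{N}{G}}{K}}{K}$, so $\induced{\chi}{N}{G} \in \overline{\specialCharacterSet{G}}$ marks $K$. (Alternatively, by \nameref{frobenius_reciprocity} one has $\characterSum{\induced{\chi}{N}{G}}{K} = \innerProduct{\chi}{\restricted{\inducedTrivial{K}{G}}{N}}_N$, and Mackey's decomposition of $\restricted{\inducedTrivial{K}{G}}{N}$ contains $\induced{\trivialCharacter{K \cap N}}{K \cap N}{N}$ as a summand.)

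Since the two cases are exhaustive, $\overline{\specialCharacterSet{G}}$ marks every proper subgroup of $G$. I expect the only mildly technical point to be the displayed identity for $\characterSum{\induced{\chi}{N}{G}}{K}$ together with the nonnegativity of its summands, both routine given \autoref{proposition:induced_character_formula} and the fact that restrictions of genuine characters are genuine characters. It is worth noting that $\overline{\specialCharacterSet{G}}$ need not consist of irreducible characters: the $\induced{\chi}{N}{G}$ may be reducible, although none of them contains $\trivialCharacter{G}$ since $\innerProduct{\induced{\chi}{N}{G}}{\trivialCharacter{G}}_G = \innerProduct{\chi}{\trivialCharacter{N}}_N = 0$ for nontrivial irreducible $\chi$. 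To obtain from this an actual set $\specialCharacterSet{G}$ of nontrivial irreducible characters, and hence a bound on $\specialMinCharacterSetCardinality{G}$, one would afterwards decompose each induced character into its irreducible constituents.
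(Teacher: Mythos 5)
Your proposal is correct and follows essentially the same route as the paper: for $K$ not containing $N$ it expands $\characterSum{\induced{\chi}{N}{G}}{K}$ via \autoref{proposition:induced_character_formula}, uses normality of $N$ to reduce the inner sum to $K \cap N$ (the paper keeps the conjugate character $\chi^{(t)}$ on $K\cap N$ where you change variables to $t^{-1}Kt\cap N$, which is equivalent), and isolates the $t=\unit$ term using nonnegativity of the others; for $K \supseteq N$ it pulls back a marking character of $\factor{G}{N}$ exactly as in the paper. The Mackey/Frobenius aside and the closing remark on reducibility are fine but not needed.
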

	\begin{proof}
		Let $K \subgroup G$ and let $\chi \in \specialCharacterSet{N}$. Denote by $\chi^{(t)}$ the character of $N$ defined by $\chi^{(t)}(g) = \chi(t^{-1}gt)$, for every $t \in G$.
		
		\begin{multline*}
			\begin{aligned}
				\characterSum{\induced{\chi}{N}{G}}{K}
				&= \frac{1}{\cardinality{K}}\sum_{k \in K} \induced{\chi}{N}{G}(k) \\
				&= \frac{1}{\cardinality{K} \cdot \cardinality{N}}\sum_{k \in K} \left(\sum_{t \in G \mathSuchThat t^{-1}kt \in N} \chi(t^{-1}kt)\right) \\
				&= \frac{1}{\cardinality{K} \cdot \cardinality{N}}\sum_{t \in G} \left(\sum_{k \in K \mathSuchThat t^{-1}kt \in N} \chi(t^{-1}kt)\right) \\
				&= \frac{1}{\cardinality{K} \cdot \cardinality{N}}\sum_{t \in G} \sum_{k \in K \cap N} \chi(t^{-1}kt) \\
				&= \frac{\cardinality{N \cap K}}{\cardinality{K} \cdot \cardinality{N}}\sum_{t \in G} \characterSum{\chi^{(t)}}{K \cap N} \\
				& \ge \frac{\cardinality{N \cap K}}{\cardinality{K} \cdot \cardinality{N}} \characterSum{\chi}{K \cap N}.
			\end{aligned}
		\end{multline*}
		
		In particular, if $K \cap N \ne N$ there exists some $\chi \in \specialCharacterSet{N}$ such that $\characterSum{\induced{\chi}{N}{G}}{K} > 0$.
		
		Therefore, \textCMarksSubgroupsThatDoNotContainH{$\overline{\specialCharacterSet{G}}$}{$G$}{$N$}, and we may assume that $K \supergroup N$.
		
		In this case, there exists some $\phi \in \specialCharacterSet{\factor{G}{N}}$ such that \textChiDoesntVanishOnK{\phi}{\factor{K}{N}}. Then, $\characterSum{\pullbackChiByF{\phi}{f}}{K} = \characterSum{\phi}{\factor{K}{N}} > 0$, which completes the proof. 
	\end{proof}

	\medskip
	
	The problem with this proposition is that the set $\overline{\specialCharacterSet{G}}$ may not consist of irreducible characters. Therefore, we decompose the characters of $\overline{\specialCharacterSet{G}}$ to get a bound on $\specialMinCharacterSetCardinality{G}$.

	\begin{corollary} \label{corollary:XG_inequality_XN_XG_N}
		Let $G$ be a finite group and let $N \normalSubgroup G$. Then
		
		\[\specialMinCharacterSetCardinality{\factor{G}{N}} \le \specialMinCharacterSetCardinality{G} \le \subgroupIndex{G}{N} \specialMinCharacterSetCardinality{N} + \specialMinCharacterSetCardinality{\factor{G}{N}}\]
	\end{corollary}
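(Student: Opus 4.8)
The left-hand inequality $\specialMinCharacterSetCardinality{\factor{G}{N}} \le \specialMinCharacterSetCardinality{G}$ is exactly \autoref{corollary:XG_ge_XG_N}, so all the work lies in the upper bound $\specialMinCharacterSetCardinality{G} \le \subgroupIndex{G}{N}\specialMinCharacterSetCardinality{N} + \specialMinCharacterSetCardinality{\factor{G}{N}}$. The plan is to start from \autoref{proposition:XG_from_XN_XG_N}: fix sets $\specialCharacterSet{N}$ and $\specialCharacterSet{\factor{G}{N}}$ of nontrivial irreducible characters achieving $\specialMinCharacterSetCardinality{N}$ and $\specialMinCharacterSetCardinality{\factor{G}{N}}$, and let $f \colon G \twoheadrightarrow \factor{G}{N}$ be the quotient epimorphism. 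That proposition provides the set
\[\overline{\specialCharacterSet{G}} = \{\induced{\chi}{N}{G} \colon \chi \in \specialCharacterSet{N}\} \cup \{\pullbackChiByF{\phi}{f} \colon \phi \in \specialCharacterSet{\factor{G}{N}}\},\]
which marks every proper subgroup of $G$. The pull-backs $\pullbackChiByF{\phi}{f}$ are already nontrivial irreducible characters (by the elementary properties of pull-backs of epimorphisms recalled before \autoref{lemma:basic_properties_of_pullback}, together with $\trivialCharacter{G} = \pullbackChiByF{\trivialCharacter{\factor{G}{N}}}{f}$ and injectivity of $\pullbackSymbol{f}$), so the only obstruction is that the induced characters $\induced{\chi}{N}{G}$ need not be irreducible. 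I would therefore replace each of them by its finite set of distinct irreducible constituents and argue that this preserves the marking property and introduces no trivial character.

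The key estimate is that for an \emph{irreducible} character $\chi$ of $N$, the character $\induced{\chi}{N}{G}$ has at most $\subgroupIndex{G}{N}$ distinct irreducible constituents, none of them trivial. For the count: if $\psi$ is an irreducible constituent of $\induced{\chi}{N}{G}$, then by \nameref{frobenius_reciprocity} $\innerProduct{\chi}{\restricted{\psi}{N}}_N > 0$, so $\chi$ occurs in $\restricted{\psi}{N}$ and hence $\dim(\psi) \ge \dim(\chi)$; writing $\induced{\chi}{N}{G} = \sum_i m_i \psi_i$ with the $\psi_i$ distinct and $m_i \ge 1$ and comparing degrees, $\subgroupIndex{G}{N}\dim(\chi) = \dim(\induced{\chi}{N}{G}) = \sum_i m_i\dim(\psi_i) \ge \dim(\chi)\sum_i m_i$, so the number of distinct constituents is at most $\sum_i m_i \le \subgroupIndex{G}{N}$. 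For nontriviality: if $\trivialCharacter{G}$ occurred in $\induced{\chi}{N}{G}$, Frobenius reciprocity would give $\innerProduct{\chi}{\trivialCharacter{N}}_N > 0$, forcing $\chi = \trivialCharacter{N}$, contrary to $\chi \in \specialCharacterSet{N}$ being nontrivial.

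With this in hand I would define $\specialCharacterSet{G}$ to be the union of $\{\pullbackChiByF{\phi}{f} \colon \phi \in \specialCharacterSet{\factor{G}{N}}\}$ with all distinct irreducible constituents of the characters $\induced{\chi}{N}{G}$ for $\chi \in \specialCharacterSet{N}$. This is a set of nontrivial irreducible characters of $G$ of size at most $\subgroupIndex{G}{N}\specialMinCharacterSetCardinality{N} + \specialMinCharacterSetCardinality{\factor{G}{N}}$. To see it still marks every proper $K \subgroup G$: by \autoref{proposition:XG_from_XN_XG_N} some element of $\overline{\specialCharacterSet{G}}$ marks $K$; if it is a pull-back we are done, and if it is $\induced{\chi}{N}{G} = \sum_i m_i\psi_i$ then $0 < \characterSum{\induced{\chi}{N}{G}}{K} = \sum_i m_i\characterSum{\psi_i}{K}$, so $\characterSum{\psi_i}{K} > 0$ for some $i$, i.e. that constituent $\psi_i \in \specialCharacterSet{G}$ marks $K$. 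Hence $\specialMinCharacterSetCardinality{G} \le \cardinality{\specialCharacterSet{G}}$ yields the claimed inequality. The only genuinely substantive step is the constituent-count bound for $\induced{\chi}{N}{G}$; the rest is bookkeeping on top of \autoref{proposition:XG_from_XN_XG_N} and \autoref{corollary:XG_ge_XG_N}.
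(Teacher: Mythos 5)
Your proposal is correct and follows essentially the same route as the paper: the left inequality is quoted from \autoref{corollary:XG_ge_XG_N}, and the right inequality comes from \autoref{proposition:XG_from_XN_XG_N} together with the same Frobenius-reciprocity-plus-degree-count argument showing $\induced{\chi}{N}{G}$ has at most $\subgroupIndex{G}{N}$ irreducible constituents. Your explicit checks that the constituents are nontrivial and that marking passes to a constituent are details the paper leaves implicit, but the substance of the argument is identical.
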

	\begin{proof}
		The left inequality follows from \autoref{corollary:XG_ge_XG_N}. To prove the right inequality it is sufficient to show, by \autoref{proposition:XG_from_XN_XG_N}, that for every $\chi \in \irreducibleCharacters{N}$ the induced character $\induced{\chi}{N}{G}$ decomposes to at most $\subgroupIndex{G}{N}$ irreducible characters of $G$.
		
		Let $\chi \in \irreducibleCharacters{N}$, and let $\phi$ be an irreducible summand of $\induced{\chi}{N}{G}$. By \nameref{frobenius_reciprocity}, $\innerProduct{\restricted{\phi}{N}}{\chi}_N = \innerProduct{\phi}{\induced{\chi}{N}{G}}_G > 0$. Therefore $\phiContainsPsi{\restricted{\phi}{N}}{\chi}$, which implies that $\dim(\phi) = \dim(\restricted{\phi}{N}) \ge \dim(\chi)$.
		
		Assume that $\induced{\chi}{N}{G}$ decomposes to $k$ irreducible characters of $G$, which we denote by $\phi_1, \dots, \phi_k$ (these may be not distinct). By calculating the degrees:		
		\[\subgroupIndex{G}{N}\dim(\chi) = \dim(\induced{\chi}{N}{G}) = \dim\left(\sum_{i = 1}^{k}\phi_i\right) = \sum_{i = 1}^{k}\dim(\phi_i) \ge k\dim(\chi)\]
		which implies that $k \le \subgroupIndex{G}{N}$.
	\end{proof}
	
	We show these inequalities are sharp by considering two examples.
	
	\begin{example}
		Let $G = \cyclic{p ^ \ell}$ and $N = \cyclic{p ^ {\ell-1}} \normalSubgroup G$. By \autoref{proposition:XCn_cardinality}, every $\specialCharacterSet{G}$ achieving $\specialMinCharacterSetCardinality{G}$ consists of a single character, which we denote by $\chi$. By the properties of $\specialCharacterSet{G}$, when restricting $\chi$ to $N$, we get ${\restricted{\chi}{N} = \trivialCharacter{N}}$, which cannot be a member of $\specialCharacterSet{N}$. Then, by \nameref{frobenius_reciprocity}, $\chi$ cannot be the induction of a nontrivial irreducible character of $N$. In this case, ${\specialMinCharacterSetCardinality{\factor{G}{N}} = \specialMinCharacterSetCardinality{G}}$.
	\end{example}
	
	\begin{example}
		Let $p$ be prime and let $\ell \in \positiveIntegers$. Let $G = \field{p}^\ell$ and $N \normalSubgroup G$. We use standard facts about $G$ and \autoref{proposition:XFpd_cardinality}.
		\begin{enumerate}
			\item $G \cong \field{p} ^ \ell$, $\specialMinCharacterSetCardinality{G} = \frac{p^\ell - 1}{p - 1}$
			\item $N \cong \field{p} ^ i$ for some $0 \le i \le \ell$, $\specialMinCharacterSetCardinality{N} = \frac{p^i - 1}{p - 1}$
			\item $\factor{G}{N} \cong \field{p} ^ {\ell - i}$, $\specialMinCharacterSetCardinality{\factor{G}{N}} = \frac{p^{\ell - i} - 1}{p - 1}$
		\end{enumerate}
	
		From these we deduce: $\specialMinCharacterSetCardinality{G} = \subgroupIndex{G}{N} \specialMinCharacterSetCardinality{N} + \specialMinCharacterSetCardinality{\factor{G}{N}}$.
	\end{example}

	\section*{Acknowledgments}
	This paper forms part of an M.Sc.~Thesis written by the author under the supervision of Dr. Doron Puder from Tel Aviv University. I deeply thank my thesis supervisor for introducing me to many fascinating topics in mathematics and showing me the beauty in them, as well as for his advice, guidance, patient readings of the various drafts and the time he has invested in the process of making this paper.
	I thank Professor David Soudry from Tel Aviv University, for sharing his knowledge about the general linear group.
	The research was supported by the Israel Science Foundation, ISF grant 1071/16.

	\bibliographystyle{alpha}
	\bibliography{paper}
\end{document}